\documentclass[a4paper, 11pt, reqno, oneside]{amsart}
\usepackage{amsmath, amsthm, amssymb, amsfonts}
\usepackage{mathtools}
\usepackage{bbm}
\usepackage{enumerate}
\usepackage{tikz}
\usepackage{xcolor}
\usepackage{MnSymbol}
\usepackage[margin=1in]{geometry}
\usepackage{enumitem}
\usepackage{lipsum}
\usepackage{subcaption}

\usepackage{duckuments}

\usepackage[normalem]{ulem}

\definecolor{mutedCyan}{RGB}{20,110,150}
\usepackage[hypertexnames=false]{hyperref}
\hypersetup{
    colorlinks=true,
    citecolor=cyan,
    linkcolor=black,
}

\usepackage{thmtools}
\renewcommand{\thmcontinues}[1]{continued}
\usepackage{thm-restate}

\allowdisplaybreaks[2]

\let\originalleft\left
\let\originalright\right
\renewcommand{\left}{\mathopen{}\mathclose\bgroup\originalleft}
\renewcommand{\right}{\aftergroup\egroup\originalright}

\newcommand{\ve}{\varepsilon}

\newcommand\norm[1]{\left\lVert#1\right\rVert}
\newcommand\absv[1]{\left\lvert#1\right\rvert}
\renewcommand\brack[1]{\left (#1\right)}
\newcommand\abrack[1]{\left \langle #1 \right \rangle}
\newcommand\sbrack[1]{\left [ #1 \right ]}
\newcommand\cbrack[1]{\left \{ #1 \right \}}

\newcommand{\dd}{\mathop{}\!\mathrm{d}}

\renewcommand\P[1]{\mathbb{P}\brack{#1}}
\newcommand{\bbE}{\mathbb{E}}
\newcommand\E[1]{\bbE\sbrack{#1}}

\newcommand\V[1]{\mathrm{Var}\brack{#1}}

\newcommand{\vbar}{\overline{v}}
\newcommand{\btd}{\mathbb{T}^d}
\newcommand{\bt}{\mathbb{T}}

\newcommand{\bx}{\boldsymbol{x}}
\newcommand{\bv}{\boldsymbol{v}}
\newcommand{\bvbar}{\overline{\boldsymbol{v}}}
\newcommand{\bB}{\boldsymbol{B}}
\newcommand{\bj}{\boldsymbol{j}}
\newcommand{\by}{\boldsymbol{y}}
\newcommand{\bk}{\boldsymbol{k}}
\newcommand{\bxi}{\boldsymbol{\xi}}
\newcommand{\bn}{\boldsymbol{n}}
\newcommand{\bN}{\boldsymbol{\mathcal{N}}}

\newcommand{\xpde}{X_{\mathrm{SPDE}}}
\newcommand{\xcs}{X_{\mathrm{CS}}}

\newcommand{\ip}[2]{\left\langle #1,#2\right\rangle}
\newcommand{\hmtwo}{\boldsymbol{H}^{-2}}

\newcommand{\sav}{\overline{\mathcal{S}}}

\newtheorem{theorem}{Theorem}[section]
\newtheorem{proposition}[theorem]{Proposition}
\newtheorem{lemma}[theorem]{Lemma}
\newtheorem{corollary}[theorem]{Corollary}

\theoremstyle{definition}
\newtheorem{definition}[theorem]{Definition}

\newtheorem{remark}[theorem]{Remark}
\newtheorem{assumption}[theorem]{Assumption}
\newtheorem{inftheorem}[theorem]{Informal Theorem}

\numberwithin{equation}{section}
\numberwithin{theorem}{section}

\usepackage{amssymb}
\newcommand{\Tr}{\texttt{Tr}}

\usepackage{comment}

\title{On reduced SPDE models for stochastic Cucker--Smale flocking dynamics}
\author{Kamran Arora}
\address{Kamran Arora, \newline Department of Mathematics, University of Bath BA2 7AY, UK}
\email{ka679@bath.ac.uk}
\author{Federico Cornalba}
\address{Federico Cornalba, \newline Department of Mathematics, University of Bath BA2 7AY, UK}
\email{fc402@bath.ac.uk}
\author{Ana Djurdjevac}
\address{Ana Djurdjevac, \newline Mathematical Institute, University of Oxford OX2 6GG, UK}
\email{ana.djurdjevac@maths.ox.ac.uk}

\begin{document}

\begin{abstract}

Flocking refers to a phenomenon in collective motion where particles align their velocities with each other, commonly modelled using Cucker--Smale dynamics. In this work, we introduce and analyse two general reduced SPDE models for stochastic Cucker--Smale particle dynamics that encompass environmental and i.i.d.-type noises. Our models are computationally cheaper alternatives to the underlying particle system and preserve the flocking behaviour. To derive our reduced models, we consider suitable empirical particle densities which depend on position and time, but feature velocity dependency in a multiplicative fashion: this setup crucially allows for a very intuitive and convenient notion of flocking at the SPDE level. In $d=1$ we prove flocking of the reduced SPDE models, and, for the environmental noise, we also quantify the discrepancy between the reduced SPDE and mollified particle system. We also discuss the technical modifications required to extend the results to higher dimensions. We then provide extensive numerical experiments to verify our theoretical results in $d=1$ and to explore the behaviour of our models in $d=2$.

\end{abstract}

\maketitle

\section{Introduction}

This paper is devoted to modelling, analysis, and simulation of efficient continuous SPDE models which describe stochastic fluctuations for a vast and comprehensive set of interacting particle systems exhibiting \emph{flocking} phenomena. Flocking refers to a prominent phenomenon of collective motion, in which particle systems acquire ordered state by having specific features aligning over time. The particle systems we consider are of inertial type (in position $\bx$ and velocity $\bv$). Their general form (for $N$ particles) reads
\begin{equation}\label{general_part_sys}
\begin{cases}\tag{g-PS}
    \dd \bx_i(t) = \bv_i(t) \dd t, \\
    \dd \bv_i(t) = N^{-1}\sum_{j=1}^N a(\bx_i(t) - \bx_j(t))(\bv_j(t) - \bv_i(t)) \dd t + \dd \bn_i, \qquad i \in \{1,\dots, N\},
\end{cases}
\end{equation}
where $a$ is an interaction kernel, and $\bn_i$ are microscopic noises specific to the application. Here, flocking has to do with the velocities $\bv_i(t)$ converging (i.e.\ aligning) to the average velocity $\overline{\bv}(t)$ over time: informally, this means that
\begin{align}\label{informal_flocking}
    \bv_i(t) \approx \overline{\bv}(t) := N^{-1}\sum_{i=1}^{N}{\bv_i}(t)\qquad \mbox{for }t\rightarrow \infty
\end{align}
in ways that will be made precise later.
The system \eqref{general_part_sys} is quite general, and encompasses applications to collective animal behaviour  (bird flocks, school of fish), consensus phenomena (opinion dynamics), dissipative particle dynamics, among others. 

Although the noise-less dynamics $\bn_i\equiv 0$ in \eqref{general_part_sys} provides a solid starting point for describing flocking phenomena, a faithful and realistic description can often be better achieved by taking into account the intrinsic random fluctuations of the particles.

Furthermore, simulating \eqref{general_part_sys} on the microscopic scale is a computationally demanding task when $N\gg 1$.
In order to address the aforementioned two aspects (the need to account for the fluctuations and the need to reduce the computational burden of the simulations) continuous models (usually in the form of SPDEs) are being proposed more and more as computationally tractable, accurate descriptors of \eqref{general_part_sys}. 
In essence, such models augment the `average' mean-field PDE dynamics of the particle system using suitable space-time noises designed to provide an aggregate description of the particle's ensemble fluctuations. 

Though several stochastic noises $\bn_i$ in \eqref{general_part_sys} may be considered, we will restrict our considerations to just two physically relevant noises.

\textbf{Environmental noise.}
The first noise is
    \begin{align}\label{env_micr_noise}\tag{env-noise}
        \bn_i = \sigma(\bv_i - \overline{\bv})\bB
    \end{align}
    for $\sigma\in\mathbb{R}$ and a single $d$-dimensional Brownian motion $\bB$, and where the product $\sigma(\bv_i - \bvbar)\bB$ is to be read component-wise. This noise, introduced in \cite{ahn2010stochastic} and further studied in \cite{ton2014flocking,cattiaux2018stochastic}, models a shared stochastic perturbation from the environment: all particles are driven by the same Brownian motion $\bB$, and the noise intensity is proportional to the deviation from the mean velocity $\bvbar$. By construction, the average velocity is preserved over time, and the noise vanishes as the velocities align. Physically, this is well-suited to collective motion in fluctuating environments, such as bird flocks subject to turbulent wind or fish schools in oscillating water currents, as well as opinion dynamics models driven by a common external stochastic influence. 
    
  \textbf{Independent noise:} The second type of noise is 
   \begin{align}\label{indep_micr_noise}\tag{iid-noise}
        \bn_i = \sigma(\bv_i - \overline{\bv})\bB_i,
    \end{align}
    for $\sigma\in \mathbb{R}$ and independent $d$-dimensional Brownian motions $(\bB_i)_{i=1}^{N}$. This noise, studied in \cite{erban2016cucker,pedeches2018asymptotic,sun2015positive,cattiaux2018stochastic}, assigns each particle its own independent random fluctuation, modelling individual variability while retaining the velocity-aligning structure. Direct applications of our theory include pedestrian and agent-based dynamics where each individual carries its own intrinsic uncertainty. Furthermore, our theory covers specific instances of mesoscopic fluid models of dissipative particle dynamics (DPD) \cite{hoogerbrugge1992simulating,espanol1995statistical} in zero-temperature regimes. We envisage that the results of this paper may also be extended to encompass more general DPD cases.

Our approach produces SPDE models of reduced type, which track the particle density and momentum density 
\begin{align}
\rho(\bx,t) & = N^{-1}\sum_{i=1}^{N}{\delta(\bx - \bx_i(t))},\label{rho} \\
 \bj(\bx,t) & = N^{-1}\sum_{i=1}^{N}{\bv_i(t)\delta(\bx - \bx_i(t))},\label{j}
\end{align}
thus only considering position variables $\bx_i$ as independent variables. 
The formulation \eqref{rho}--\eqref{j} is extremely convenient for reading off flocking: specifically, on a formal level, one can `pull out' the velocities $\bv_i(t)$ in the $\bj$ density in \eqref{j} in light of the flocking condition \eqref{informal_flocking}, thus identifying flocking at the continuous level via the condition 
\begin{align}\label{informal_flock_continuous}
\bj(\bx,t) \approx \overline{\bv}(t)\rho(\bx, t),
\end{align}
where the approximation will be made precise later. The model reduction entails several advantages, above all a cost reduction over full-density models corresponding to the full empirical density $N^{-1}\sum_{i=1}^{N}{\delta(\bx - \bx_i(t),\bv - \bv_i(t))}$. In contrast to most reduced models, ours does not hinge on the full mean-field limit, but highlights the dependency on the velocity directly at the molecular level in \eqref{j}.

Our SPDE, in skeleton form, reads
    \begin{equation}\label{skeleton_spde}\tag{red-SPDE}
    \begin{cases}
        \dd \rho = -\nabla \cdot \bj \dd t, \\
        \dd \bj = \boldsymbol{\mathcal{D}}_{conv}\dd t + \boldsymbol{\mathcal{D}}_{kin} \dd t + \dd \bN,
    \end{cases}
    \end{equation}
where $\boldsymbol{\mathcal{D}}_{conv}$ treats convolutional terms arising from the kernel $a$, while $\boldsymbol{\mathcal{D}}_{kin}$  accounts for kinetic-type approximations. The skeleton equation \eqref{skeleton_spde} will be substantiated depending on the noise type, the space dimension, and the nature of the potential $a$ (constant or non-constant).
Furthermore, depending on the specific setting, we may need to complement the general equation \eqref{rho}--\eqref{j} with additional equations for reduced quantities, in a kinetic-type object $\boldsymbol{k}$. We defer the specific discussion to later stages of the paper.

Python implementations of the stochastic interacting particle systems and reduced SPDEs in arbitrary spatial dimension $d$ can be found in the open-source GitHub repository \url{https://github.com/kamran-arora/reduced-SPDE-stochastic-cucker-smale}.

\begin{remark}[Dean--Kawasaki noise]\label{remark: dk-noise} Another common choice of microscopic noise would be $\bn_i = \boldsymbol{B}_i$ for independent Brownian motions $(\boldsymbol{B}_i)_{i=1}^{N}$. This type of noise, while not feauturing the velocity-aligning factor $\bv_i - \overline{\bv}$ of \eqref{env_micr_noise}--\eqref{indep_micr_noise}, gives non-trivial flocking behaviour \cite{pedeches2018asymptotic}, and leads to a spatial Dean--Kawasaki-type noise \cite{Dean1996, kawasaki1994stochastic}. However, the absence of the factor $\bv_i - \overline{\bv}$ prevents a direct analysis under the continuous alignment ansatz \eqref{informal_flock_continuous}, and we therefore defer its analysis to subsequent works.

\end{remark}

\subsection{Overarching mathematical  strategy and main results}

Regardless of the specific nature of the particle system \eqref{general_part_sys} and corresponding reduced model \eqref{skeleton_spde}, our approach to proving flocking estimates in the continuous reduced SPDE models follows a unified pattern. 
The main mathematical ingredients are as follows:
\begin{itemize}
\item a quantity $X_t$ codifying flocking at the continuum level. In light of \eqref{informal_flock_continuous}, this is usually 
\begin{align}\label{informal_SPDE_flocking_quantity}
X_t := \|\bj(\bx,t)-\overline{\bv}(t)\rho(\bx,t)\|_{L^2}^2
\end{align}
or a closely related quantity.
\item a weighted $L^2$-energy norm $\eta$ (typically $\eta := \|\bj(t)\|_{L^2}^2 + |\overline{\bv}(t)|^2\|\rho(t)\|_{L^2}^2$, or closely related quantity).
\item a stopping time $\tau$ incorporating two pieces of information, namely: the exit time of the energy norm $\eta(t)$ from a ball of radius $M$, and the first time the average velocity $\overline{\bv}(t)$ becomes `degenerate' (i.e., its absolute value gets too close to zero, or too large). 
\end{itemize}
Furthermore, crucial quantities related to the skeleton equation \eqref{skeleton_spde} and feeding directly into our results are the scaling (alternatively, `magnitude', `amplitude') of the noise $\bN$ (which we denote by $\mathcal{S}$) and of its spatial average $\int{\bN}$ (which we denote by $\sav$) with respect to relevant parameters. The specifics of $\mathcal{S},\sav$ are given in the informal statements of our main results, Theorem \ref{thm_informal_env} (environmental noise case) and Theorem \ref{thm_informal_iid} (i.i.d. noise case) below. Before we state these results, we provide a skeleton of the strategy behind our proof of SPDE flocking.
\begin{enumerate}[label=\emph{Step \arabic*:}, leftmargin=*]
    \item \emph{Moment bounds for flocking quantity $X_t$ up to stopping time $\tau$}. Here, we show exponential decay for the flocking quantity $X^p_{t\wedge \tau}$, for $p\in \mathbb{N}$ up to
    \begin{align}
        p \lesssim \mathcal{S}^{-2}.
    \end{align}
    The `strength' of notion of flocking which we consider differs depending on the setting, but we can always obtain an estimate of the type 
    \begin{align}\label{step1_flocking}
    \mathbb{E}[e^{\kappa p(t \wedge \tau)}X^p(t \wedge \tau)]\leq \E{X^p(0)}.
    \end{align}
    for some $\kappa>0$.
    Informally, flocking occurs in expectation up to the stopping time $\tau$, i.e., so long as the energy $\eta$ of the system does not get too large and the average velocity does not become degenerate.
    We therefore need to study the properties of the stopping time $\tau$.
    
    \item \emph{Bounding the probability of the average velocity $\overline{\bv}(t)$ ever becoming degenerate}. 
    We show that 
    \begin{align}\label{exp_bound_av_vel_informal}
    \P{\overline{\bv}(t)\mbox{ is degenerate for some }t>0}\lesssim \exp\{-C\sav^{-\alpha}\}
    \end{align}
    for some $C,\alpha >0$.
    \item \emph{Bounding the moments of the system's energy $\eta_t$}. Here we show that, up to the stopping time $\tau$, the expected value of the energy moment $\eta^p(t\wedge \tau)$ is bounded uniformly-in-time by a constant $K^p$. Under suitable and realistic smallness assumptions on initial data and other relevant parameters,  we can guarantee $K<M$, (we recall that $M$ is part of the definition of the stopping time $\tau$).
    \item \emph{Bounding the probability of the energy $\eta$ exceeding the threshold $M$}. Using Steps 2 and 3, and a Chebyshev inequality-type argument, we get 
$$
\P{\mbox{energy }\eta(s)\mbox{ exceeds threshold }M\mbox{ for some }s<t}\leq \frac{\E{\eta_{t\wedge \tau}^p}}{M^p} \leq \left(\frac{K}{M}\right)^p.
$$
By taking the maximum value of $p$ for which flocking occurs (so $p
\lesssim \mathcal{S}^{-2}$) and considering that $K<M$, this readily implies
\begin{align}\label{exp_bound_eta_informal}
\P{\mbox{energy }\eta(s)\mbox{ exceeds threshold }M\mbox{ for some }s<t} \lesssim \exp\{-\mathcal{S}^{-2}\}.
\end{align}
\item \emph{Finalising the argument}.
Combining \eqref{exp_bound_av_vel_informal} and \eqref{exp_bound_eta_informal} (i.e.\ addressing the dual nature of the stopping time) gives
    \begin{align}\label{exp_decaying_bound_prob_tau}
    \mathbb{P}(\tau < \infty) \lesssim \exp\{-C\overline{\mathcal{S}}^{-\alpha}\} + \exp(-\mathcal{S}^{-2}).
    \end{align} 
    Combining \eqref{exp_decaying_bound_prob_tau} (which decays exponentially as the size of the noise goes to zero) and \eqref{step1_flocking} we get the flocking result
    \begin{align}\label{step3_flocking}
        \mathbb{E}[\mathbf{1}_{\tau = \infty}X(t)]\leq e^{- \kappa t}\E{X(0)}.
    \end{align}
\end{enumerate}

\begin{remark}
    \emph{Steps 1--5} above amount to the most `complete' version of the flocking proof: some of these steps are actually not necessary in certain cases (see the environmental noise case in Theorem \ref{thm_informal_env}). Furthermore, due to technicalities, it may be necessary to intertwine Steps 2 and 3 (see the independent noise case in Theorem \ref{thm_informal_iid}).
\end{remark}
\begin{remark}
    We note that, in the limit $\sigma\rightarrow 0$, $\lim_{\sigma\rightarrow 0}\P{\tau<\infty} = 0$, and this is consistent with flocking results for the deterministic flocking models in \cite{zimper2025reduced}.
\end{remark}

\subsubsection{Main Results}

Our rigorous mathematical results are formulated for $d=1$. Adaptations for the case $d>1$ are discussed later. 

\begin{inftheorem}[SPDE flocking for environmental noise case and error analysis, $d=1$]\label{thm_informal_env} In dimension $d=1$, consider the environmental noise \eqref{env_micr_noise}. The associated SPDE \eqref{skeleton_spde} is 
\begin{align*}
        \dd \rho & = -\partial_x j \dd t, \\
        \dd j & = [\rho(a*j) - j(a*\rho) - \vbar^2 \partial_x \rho] \dd t + \sigma(j-\vbar \rho) \dd B_t,
\end{align*}
and the associated noise scaling is $\mathcal{S} = \sav = \sigma$. For either a constant kernel $a = C_a$ or sufficiently regular kernel $a$ not significantly deviating from a suitable reference value $C_a$, and upon suitable smallness assumptions on the initial data $\rho_0$ and $j_0$ and of the noise intensity $\sigma$ so that $\sigma^2 - C_a < 0$, we get the bounds \eqref{step3_flocking} and \eqref{exp_decaying_bound_prob_tau} with $\kappa = C_a - \sigma^2$, see Theorem \ref{thm: stopping-time-exp-decay}.  
Additionally, we quantify the discrepancy between the SPDE densities $(\rho,j)$ and the associated particle dynamics in Theorem \ref{thm: error}. 
\end{inftheorem}

\begin{inftheorem}[SPDE flocking for independent noise case, $d=1$]\label{thm_informal_iid}
In dimension $d=1$, consider the independent noise \eqref{indep_micr_noise}. The associated SPDE \eqref{skeleton_spde} (complemented by a kinetic term $k$) is 
\begin{align*}
    d \rho & = - \nabla \cdot j dt ,\\
    d j & = \left\{\rho (a\ast j)  - j (a \ast \rho) - \overline{v}^2(t) \nabla \rho\right\}dt + \sigma N^{-1/2}\sqrt{k}\,d\xi
,\\
    d k & =
    2(j-\overline{v}(t)\rho)a\ast (j-\overline{v}(t)\rho)dt + \left(\sigma^2(1-2N^{-1})- 2 a\ast \rho\right)k dt - \overline{v}(t)\nabla kdt, 
\end{align*}
where $\xi$ is a coloured noise with $L^2(\mathbb{T})$-trace $\Tr(\xi)$, and where the associated noise scaling is $\mathcal{S} = \sigma N^{-1/2}\sqrt{\Tr(\xi)}, \sav = \sigma N^{-1/2}$. 
In the constant kernel case $a=C_a$, upon suitable smallness assumptions on the initial data $\rho_0$, $j_0$, and $k_0$, and of the noise intensity $\mathcal{S} = \sigma N^{-1/2}\sqrt{\Tr(\xi)}$, we prove well-posedness of the systems (Theorem \ref{thm:global_H1}) and get the bounds \eqref{step3_flocking} and \eqref{exp_decaying_bound_prob_tau} with $\kappa = \delta C_a$, for some small $\delta >0$ (see Theorem \ref{thm: constant-kernel-flocking-iid}). An analogous flocking result can be produced for the general kernel case (see Remark \ref{thm: general-kernel-flocking-iid-exp}), although in a discretised setting.
\end{inftheorem}

\begin{remark}\label{remark_about_multi_d}
In this paper, we conduct the full rigorous analysis in dimension $d=1$. We expect the results to be generalised to $d>1$ (see the SPDEs \eqref{eq:spde-vanishing-common-nd} and \eqref{reduced_CS_rho_iid_B_gen_multidim}--\eqref{reduced_CS_k_iid_B_gen-multidim}) with some minor modelling modifications: these modifications have the goal of mitigating the loss of integration-by-parts formulas which can only be used in $d=1$, and typically involve including diffusivity mechanisms in the reduced SPDEs. However, these diffusive mechanisms only kick-in in extremely singular (and  experimentally rarely observed) settings, see \cite{zimper2025reduced} for a similar discussion. This observation reinforces the use of our reduced SPDE models also in $d>1$: in particular, we conduct extensive simulations in the case $d=2$. More generally, we view the formalisation of continuous SPDE flocking provided in this paper to be a baseline upon which to build more tailored and custom-designed methods.  
\end{remark}

\subsection{Literature review}\label{subsec_lit_review}

This work falls under the broad label of collective motion, a 
ubiquitous phenomenon observed in interacting particle systems in which particles self-organise from an unordered to an ordered state. See, for example, the review paper \cite{vicsek2012collective} for an introduction to this topic. The basis of our work is the Cucker--Smale (CS) particle system \cite{cucker2007emergent, cucker2007mathematics} where particles adjust their velocities by averaging their velocity differences with all others, weighted by a communication kernel dependent on their relative positions. This is a well-studied model, with various analytical results characterizing the conditions under which flocking occurs and the rate at which it emerges, depending on factors such as the system parameters and initial configurations \cite{cucker2007emergent, cucker2007mathematics, ha2008particle, ha2009simple, ahn2012collision}.

In many realistic settings, however, purely deterministic dynamics are insufficient to capture the behaviour observed in real life. For example, particles may experience environmental noise, have incomplete information or there may be an intrinsic randomness in their actions. This has led to the introduction of stochastic models for collective motion, notably stochastic Cucker--Smale (sCS) models where randomness is (typically) injected into the velocity dynamics of the particles. Once again, there is a range of quantitative flocking results for sCS models \cite{cucker2008flocking, ahn2010stochastic, ton2014flocking, sun2015positive, cattiaux2018stochastic} where one also has to account for different notions of alignment, for example pathwise or in expectation, among others.

Simulating interacting particle systems becomes computationally expensive for large numbers of particles, a difficulty that is exacerbated in stochastic systems where multiple realisations are required to accurately estimate statistical observables. To overcome these problems, the mean-field limit (MFL) is commonly used to provide a macroscopic description of the system. For deterministic CS models, the MFL is a kinetic PDE and this is a well-studied object, see \cite{ha2008particle, ha2009simple}. In the presence of noise, the MFL can be deterministic or stochastic. Specifically, independent noises on each particle at the microscopic scale lead to deterministic MFLs since, roughly speaking, the noise will get averaged out. Conversely, a common environmental noise in the particle system will persist in the limit so the MFL is a stochastic PDE. For quantitative results regarding the derivation of such MFLs and associated flocking results, see \cite{bolley2011stochastic, choi2017cucker, pedeches2018asymptotic, rosello2019weak}.

For large particle systems, direct simulation of the particles can be computationally expensive, since evaluating all pairwise interactions typically requires  $O(N^2)$ operations per time step. In some settings, solving the corresponding mean-field equation can offer a more efficient alternative, where the actual cost depends on the numerical method and the dimension of the phase space. In particular, traditional MFL models are still $(2d+1)$-dimensional (S)PDEs in $d$-dimensional space ($d$ position variables, $d$ velocity variables and time) leading to high computational costs in $d=2,3$. This issue has motivated the derivation, simulation and analysis of reduced models \cite{carrillo2010asymptotic, carrillo2010particle, tadmor2014critical, choi2017emergent} which, roughly speaking, trade some accuracy in exchange for computational savings while still exhibiting flocking. Specifically, these models are often obtained via the MFL. That is, the key quantities in the reduced model (e.g. local mass, momentum and energy densities) are derived from the MFL. The resulting hydrodynamic equations in these quantities are not closed (due to the reduction) but can be closed under suitable assumptions at the microscopic level, see for example, the mono-kinetic approach  \cite{carrillo2010particle}. In the stochastic setting, however, the situation regarding reduced models is far less developed. In \cite{zimper2025reduced} a reduced SPDE model for stochastic flocking dynamics in the presence of environmental noise was introduced and simulated, however the authors did not perform a theoretical analysis. Moreover, to the best of our knowledge the present work is the first to consider a reduced continuum model for stochastic flocking in the presence of an i.i.d noise.

In contrast to model reduction performed at the level of the MFL, it is becoming increasingly common to derive reduced models directly from appropriate empirical measures defined at the level of the particles. The models studied in this paper are of the latter type and build upon the reduced PDE framework developed for the deterministic CS setting in \cite{zimper2025reduced}. More generally, our work is closely related to the rigorous framework for reduced inertial models of fluctuating hydrodynamics introduced in \cite{cornalba2019regularized, cornalba2020weakly, cornalba2021well}. This reduction technique is also being applied in other contexts that fall into the category of collective motion. For example, in \cite{zimper2026clustering} the authors use reduced SPDE models to study clustering in opinion dynamics.

It is also worth noting that our work is naturally connected to the Dean--Kawasaki equation \cite{Dean1996, kawasaki1994stochastic}, a formal SPDE for the empirical measure of overdamped Langevin particles. In recent years, it has become popular to use Dean--Kawasaki type SPDEs as (full) continuum models to study features of interacting particles systems. Applications include agent-based dynamics \cite{helfmann2021interacting}, opinion dynamics \cite{djurdjevac2022feedback}, and clustering \cite{wehlitz2025approximating} to name a few.

\subsection{Notation}

We work on the torus $\mathbb{T}^d$ and omit the domain when specifying function spaces. The $L^2$ norm will be denoted by $\norm{\cdot}$. We equip $L^2 \times L^2$ with the weighted norm $\norm{(\cdot, \cdot)}$ defined via 
$$
\norm{(f, g)}^2 \coloneqq \vbar^2(t)\norm{f}^2 + \norm{g}^2.
$$
The weight $\vbar(t)$ is the alignment velocity of the particle system being studied and is defined in \eqref{eq: vbar-definition}. 
The weighted norm is time-dependent (in the i.i.d.\ case) and requires $\vbar(t) \neq 0$, for all $t$  to be a norm (and not a semi-norm).

Throughout, we fix a probability space $(\Omega, \mathcal{F}, \mathbb{P})$ with a complete and right continuous filtration $(\mathcal{F}_t)_{t \geq 0}$, and we denote expectation by $\mathbb{E}$, and $L^p$ spaces over $\Omega$ by $\mathbb{L}^p$. 
\section{The reduced SPDE model for common noise}\label{sec: models}

\subsection{Standard Cucker--Smale model}

The Cucker--Smale system \cite{cucker2007emergent, cucker2007mathematics} is a well-established model for flocking dynamics that describes how particles interact with each other in order to align their movements. Each particle is represented by a position $\bx_i \in \mathbb{T}^d$ and a velocity $\bv_i \in \mathbb{R}^d$. These evolve according to the following dynamics
\begin{equation}\label{eq: standard-cs}
\begin{cases}
    \dd \bx_i(t) = \bv_i(t) \dd t, \\
    \dd \bv_i(t) = N^{-1}\sum_{j=1}^N a(\bx_i(t) - \bx_j(t))(\bv_j(t) - \bv_i(t)) \dd t,
\end{cases}
\end{equation}
where $a$ is the interaction kernel representing how particles communicate with each other based on their relative distance. Throughout, we work with a standard choice of interaction kernel given by
\begin{equation}\label{eq: kernel-definition}
a(\bx_i - \bx_j) \coloneqq \frac{\lambda}{(1+ \norm{\bx_i - \bx_j}^2)^r},
\end{equation}
for some $\lambda, r \geq 0$. Define the average velocity $\bvbar(t)$ by 
\begin{equation}\label{eq: vbar-definition}
\bvbar(t) \coloneqq N^{-1}\sum_{i=1}^N \bv_i(t).
\end{equation}
By symmetry of the interaction kernel $a$, it is clear that $\bvbar(t)$ is conserved under the dynamics of \eqref{eq: standard-cs} therefore we write $\bvbar \coloneqq \bvbar(0)$. Flocking is then defined as the alignment of the individual velocities $\bv_i(t)$ with $\bvbar$, as well as group formation 
as made precise by the following definition:
\begin{definition}\label{def: std-flocking}
    We say a solution $\{(\bx_i, \bv_i)\}_{i=1}^N$ to \eqref{eq: standard-cs} is flocking if
    $$
    \lim_{t \rightarrow \infty}\norm{\bv_i - \bvbar} = 0, \quad \text{and} \quad         \sup_{t \geq 0} \norm{\bx_i(t) - \bx_j(t)} < +\infty
    $$
    for all $i, j = 1, \ldots, N$. That is, we have velocity alignment and group formation.
\end{definition}

\begin{remark}\label{rmk: group-formation-1}
    Note that the group formation requirement exists since, classically, the problem is posed for positions in $\mathbb{R}^d$. For simplicity in this paper, we work on the torus where this condition is automatically satisfied.
\end{remark}

It should also be observed that the flocked state is invariant under the dynamics of the system, that is, once the particles are aligned, they stay aligned (see \eqref{eq: standard-cs}).
We will see that particular choices of sCS systems have this property and we will design our reduced SPDE model to preserve this. It is well-known that $\eqref{eq: standard-cs}$ flocks unconditionally provided $r \leq 1/2$ \cite{ha2008particle}, and for other values of $r$ it can be shown to flock under restrictions on the initial configuration (for example, \cite{cucker2007mathematics}). Despite \eqref{eq: standard-cs} being a prototypical model for collective motion with desirable theoretical guarantees, its applicability to real-world systems is limited by its deterministic nature motivating the introduction of stochastic variants.

\subsection{Stochastic Cucker--Smale model}\label{subsec: stochastic-cs-particle}

We now introduce a stochastic variant of the CS model \eqref{eq: standard-cs}. Consider perturbing the interaction term between particles by a common white noise. This models an environmental influence that affects the communication between all particles equally and doesn't depend on space. Under this perturbation, the velocity dynamics become
$$
\dd \bv_i(t) = N^{-1}\sum_{j=1}^N a(\bx_i(t) - \bx_j(t))(\bv_j(t) - \bv_i(t)) \dd t - \sigma N^{-1} \sum_{j=1}^N (\bv_j(t) - \bv_i(t)) \dd \bB(t)
$$
where $\sigma$ is a constant, and $\bB(t)$ is a standard $d$-dimensional Brownian motion adapted to the filtration $(\mathcal{F}_t)_{t \geq 0}$.  
Using the definition of $\bvbar(t)$, the noise term can be separated from the deterministic component leading to the following dynamics:
\begin{equation}\label{eq: stochastic-cs-vc}
\begin{cases}
    \dd \bx_i(t) = \bv_i(t) \dd t, \\
    \dd \bv_i(t) = N^{-1}\sum_{j=1}^N a(\bx_i(t) - \bx_j(t))(\bv_j(t) - \bv_i(t)) \dd t + \sigma(\bv_i(t) - \bvbar(t)) \dd \bB(t).
\end{cases}
\end{equation}
This model is (up to notational differences) precisely the sCS model introduced by \cite{ton2014flocking}
and later studied in the review paper \cite[Equation 1.8]{cattiaux2018stochastic}.
Crucially, observe that symmetry of the interaction kernel $a$, and the structure of the noise means $\bvbar(t)$ is conserved under the dynamics, as with the CS model \eqref{eq: standard-cs}: this can be easily seen from the identity $d \overline{\bv}(t) = \frac{1}{N^2}\sum_{i=1}^{N} \sum_{j=1}^{N} a(\bx_i-\bx_j)(\bv_j-\bv_i)dt +\frac{\sigma}{N}\sum_{i=1}^{N} (\bv_i - \overline{\bv})d\bB_t = 0$. 
We therefore write $\bvbar \coloneqq \bvbar(0)$. 
We can now define flocking for \eqref{eq: stochastic-cs-vc} as alignment of the individual velocities with the average initial velocity. The stochastic nature of the system means there is no single way to define such a property but we consider two natural choices used in \cite{cattiaux2018stochastic}:
\begin{definition}\label{def: flock-as}
    We say a solution $\{(\bx_i, \bv_i)\}_{i=1}^N$ to \eqref{eq: stochastic-cs-vc} is flocking almost surely if
    $$
    \lim_{t \rightarrow \infty}\norm{\bv_i - \bvbar} = 0, \quad \text{and} \quad         \sup_{t \geq 0} \norm{\bx_i(t) - \bx_j(t)} < +\infty,
    $$
    holds $\mathbb{P}$-a.s. for all $i, j = 1, \ldots, N$. 
\end{definition}

\begin{definition}\label{def: flock-expect}
    We say a solution $\{(\bx_i, \bv_i)\}_{i=1}^N$ to \eqref{eq: stochastic-cs-vc} is flocking in $\mathbb{L}^{p, q}$ ($p,q\geq 1$) if
    $$
    \lim_{t \rightarrow \infty}\E{\norm{\bv_i - \bvbar}^p} = 0, \quad \text{and} \quad \sup_{t \geq 0} \E{\norm{\bx_i(t) - \bx_j(t)}^q} < +\infty.
    $$
\end{definition}

Observe that in the setting of a conserved flocking velocity, these are natural extensions of the deterministic definition given in Definition \ref{def: std-flocking}.

In \cite{cattiaux2018stochastic}, flocking results are obtained for \eqref{eq: stochastic-cs-vc} both in the simple setting of a constant interaction term, and in the general case. When the interaction term is constant, that is, $a(\bx_i - \bx_j) \equiv C_a \geq 0$ for all $i, j$, they prove that the system is always almost surely flocking and flocking in $L^{2, 1}$ if and only if $\sigma^2 < 2C_a$. For details, see \cite[Theorem 2.1]{cattiaux2018stochastic}. For the general kernel, it is sufficient to consider their results that assume the kernel $a$ is bounded from below since this incorporates our $\btd$ setting. In this case they once again obtain almost sure flocking for any $\sigma>0$ and $\mathbb{L}^{2,2}$ flocking provided $\sigma^2 < 2a_{\mathrm{min}}$ where $a_{\mathrm{min}} \coloneqq \inf_{i, j} a(\bx_i - \bx_j)$. Full details with appropriate adjustment can be found in \cite[Theorem 6.3]{cattiaux2018stochastic}. 

\subsection{Derivation of the reduced SPDE}\label{subsec: derivation-env-noise}

In order to derive a reduced SPDE, consider the empirical mass density $\rho$ \eqref{rho} and empirical momentum density $\bj$ of the particle system \eqref{j}, which we recall here for simplicity:
\begin{align*}
\rho(\bx, t) \coloneqq N^{-1}\sum_{i=1}^N \delta(\bx - \bx_i(t)), \quad \bj(\bx, t) \coloneqq N^{-1}\sum_{i=1}^N \bv_i(t) \delta(\bx - \bx_i(t)).
\end{align*}
We give a sketch derivation with full details given in \cite{zimper2025reduced} and references therein. A formal application of It\^o's formula gives
$$
\dd \rho(\bx, t) = -N^{-1}\sum_{i=1}^N \partial_{x_k}\delta(\bx - \bx_i(t))v_{i, k}(t) = - \nabla \cdot \bj(\bx, t) \dd t.
$$
Similarly, for the $k$th component of $\bj$ we get
$$
\dd j_k(\bx, t) = N^{-1}\sum_{i=1}^N \dd v_{i, k}\delta(\bx - \bx_i(t)) + N^{-1}\sum_{i=1}^N v_{i, k} \dd \delta(\bx - \bx_i(t)) := T_1 + T_2.
$$
To deal with $T_1$, we insert the definition of $\dd v_{i, k}$ which gives
\begin{multline*}
T_1 = N^{-1}\sum_{i=1}^N \delta(\bx - \bx_i(t)) N^{-1}\sum_{j=1}^N a(\bx_i(t) - \bx_j(t))(v_{j, k}(t) - v_{i, k}(t)) \dd t \\ + N^{-1}\sum_{i=1}^N \sigma (v_{i, k}(t) - \vbar_k)\delta(\bx - \bx_i(t)) \dd B_k(t).
\end{multline*}
The second term is equal to $\sigma(j_k-\vbar_k \rho) \dd B_k(t)$ by definition. Using properties of the Dirac delta, it can be shown that the first term is $\rho(a*j_k) - j_k(a*\rho)$ (see \cite{zimper2025reduced}). For $T_2$, we need to make an approximation since it is not closable in the densities $\rho$ and $\bj$. Specifically, we make an equilibrium assumption that we are in the flocking regime, that is, $\bv_i(t) \equiv \bvbar(t)$ for all $i\in \{1,\dots,N\}$ and $t$. It is then possible to write
$$
T_2 = 
\begin{cases}
    -\vbar^2 \partial_x \rho(x, t) \dd t, \quad &d = 1, \\
    -\bvbar \nabla \cdot \bj(\bx, t) \dd t, \quad &d>1.
\end{cases}
$$
Again, we refer to \cite{zimper2025reduced} and references therein for full details of the model reduction and closure but mention that the specific choice in $d=1$ allows for stronger analytical results. The key takeaway is that we use an equilibrium assumption, and so expect our model to perform better the closer we are to the flocking regime. Overall, we have the reduced SPDEs
\begin{enumerate}
    \item $d=1$
    \begin{equation}\label{eq:spde-vanishing-common-1d}
    \begin{cases}
        \dd \rho = -\partial_x j \dd t, \\
        \dd j = [\rho(a*j) - j(a*\rho) - \vbar^2 \partial_x \rho] \dd t + \sigma(j-\vbar \rho) \dd B_t,
    \end{cases}
    \end{equation}
    \item $d>1$
    \begin{equation}\label{eq:spde-vanishing-common-nd}
    \begin{cases}
        \dd \rho = -\nabla \cdot \bj \dd t, \\
        \dd \bj = [\rho(a*\bj) - \bj(a*\rho) - \bvbar \nabla \cdot \bj] \dd t + \sigma(\bj-\bvbar \rho) \dd \bB_t.
    \end{cases}
    \end{equation}
\end{enumerate}

For the analytical considerations of this section we restrict to $d=1$ (i.e., we study \eqref{eq:spde-vanishing-common-1d}), and drop the boldface lettering for clarity. However, numerical simulations providing evidence of flocking will be given in dimensions $d=1,2$. This allows us to avoid theoretical complications arising due to the integration-by-parts formulas, as already mentioned in Remark \ref{remark_about_multi_d}.

\subsection{Choice of initial condition and conservation of mass}

The empirical mass and momentum densities \eqref{rho}--\eqref{j} have masses $1$ and $\vbar$ respectively, and these masses are conserved for all times under the particle dynamics. It is therefore natural to require the same for our reduced SPDE model. We do this by choosing initial data $\rho_0$ and $j_0$ with masses $1$ and $\vbar$ respectively. Moreover, the symmetry of the interaction kernel $a$, and the noise structure means these masses are conserved for all times. We prove this in the following proposition:

\begin{proposition}\label{prop: conservation-of-mass}
    Consider the reduced SPDE system \eqref{eq:spde-vanishing-common-1d} with initial data $(\rho_0, j_0)$ such that
    $$
    \int_\bt \rho_0(x) \dd x = 1, \quad \int_\bt j_0(x) \dd x = \vbar.
    $$
    Then the total mass and total momentum are conserved pathwise, i.e.,
    $$
    \int_\bt \rho(x, t) \dd x = 1, \quad \int_\bt j(x, t) \dd x = \vbar, \quad t \geq 0.
    $$
\end{proposition}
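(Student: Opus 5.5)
We test the SPDE system \eqref{eq:spde-vanishing-common-1d} against the constant function $1$ on $\bt$, i.e.\ we integrate both equations in space. Concretely, we work with the integral (weak) formulation of the SPDE and take as test function $\varphi\equiv 1$, which is smooth and periodic. We assume the solution is regular enough for this, for instance $\rho, j \in L^2$ with the weak formulation holding for all smooth test functions. This yields, pathwise for all $t\geq 0$,
\begin{align*}
\int_\bt \rho(x,t)\dd x &= \int_\bt \rho_0 \dd x - \int_0^t \int_\bt \partial_x j \dd x \dd s,\\
\int_\bt j(x,t)\dd x &= \int_\bt j_0\dd x + \int_0^t \int_\bt \big[\rho(a*j) - j(a*\rho) - \vbar^2\partial_x\rho\big]\dd x\dd s + \sigma\int_0^t \int_\bt (j - \vbar\rho)\dd x \dd B_s.
\end{align*}
In the weak formulation the derivative terms $\int_\bt \partial_x j \dd x$ and $\int_\bt \vbar^2\partial_x\rho \dd x$ vanish, since they pair against $\partial_x 1 = 0$; equivalently, this follows by periodicity. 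The first equation therefore gives $\int_\bt\rho(t) = 1$ directly.

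For the momentum, the key step is the convolution term. Since $a$ in \eqref{eq: kernel-definition} is even, $a(x-y)=a(y-x)$, Fubini gives
$$\int_\bt \rho(x)(a*j)(x)\dd x = \iint a(x-y)\rho(x)j(y)\dd y\dd x = \int_\bt j(y)(a*\rho)(y)\dd y,$$
so the drift integrates to zero. Fubini is justified because $a$ is bounded on $\bt$ and $\rho,j\in L^1$.

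It remains to handle the noise term. Set $m(t) := \int_\bt j(t)\dd x$. Using the already established $\int_\bt\rho(t) = 1$, the momentum equation reduces to the linear scalar SDE
$$\dd m = \sigma(m - \vbar)\dd B_t,\qquad m(0)=\vbar.$$
Its unique solution is $m(t) - \vbar = (m(0)-\vbar)\exp(\sigma B_t - \sigma^2 t/2) = 0$. Uniqueness of this linear SDE, with its Lipschitz coefficient, gives $m\equiv\vbar$ almost surely for all $t$ simultaneously, by continuity of paths.

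The main subtlety is this last step. The noise does not vanish trivially; it vanishes only because of the coupling with mass conservation, so the order of the argument matters: mass conservation must be proved first and then fed into the momentum equation. A secondary point is the regularity needed to use $\varphi\equiv1$ as a test function and to interchange the stochastic integral with the spatial integral. We would handle this via the weak formulation, or via a stochastic Fubini theorem under the $L^2$ integrability assumed of solutions.
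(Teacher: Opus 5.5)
Your proposal is correct and follows essentially the same route as the paper: you integrate in space, the transport terms vanish by periodicity, and the convolution terms cancel by Fubini and the evenness of $a$. You then use conservation of $\int\rho$ to reduce $\int j$ to the linear SDE $\dd m = \sigma(m-\vbar)\dd B$, and conclude from its explicit geometric-Brownian-motion solution with zero initial offset. The paper states this slightly more generally, giving conservation whenever $m_j(0)=\vbar\, m_\rho(0)$, and does not spell out the weak-formulation and stochastic Fubini justifications that you flag.
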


\begin{proof}
    Define the masses
    $$
    m_\rho(t) \coloneqq \int_\bt \rho(x, t) \dd x, \quad m_j(t) \coloneqq \int_\bt j(x, t) \dd x.
    $$
    We in fact prove the more general result that if $m_j(0) = \vbar m_\rho(0)$, we have conservation of mass. First compute
    $$
    \dd m_\rho(t) = \int_\bt \dd \rho(x, t) \dd x = - \int_\bt \partial_x j(x, t) \dd x \dd t = 0,
    $$
    by integration by parts. Therefore, $m_\rho(t) = m_\rho(0)$. Similarly, we compute
    $$
    \dd m_j(t) = \int_\bt \left\{[\rho(a * j) - j(a * \rho)](x, t) - \vbar^2 \partial_x \rho(x, t) \right\}\dd x \dd t + \sigma \dd B(t) \int_\bt j(x, t) - \vbar \rho(x, t) \dd x.
    $$
    The integral of the convolution terms vanishes using Fubini's Theorem and the symmetry of $a$, whilst $\int_\bt \partial_x \rho$ vanishes using integration by parts. Moreover, conservation of mass for $\rho$ means the expression reduces to the SDE
    $$
    \dd m_j(t) = \sigma (m_j(t) - \vbar m_\rho(0))\dd B(t),
    $$
    with initial condition $m_j(0)$. Define the shifted process $y(t) \coloneqq m_j(t) - \vbar m_\rho(0)$. Then $y(t)$ satisfies
    $$
    \dd y(t) = \sigma y(t) \dd B(t), \quad y(0) = m_j(0) - \vbar m_\rho(0).
    $$
    This has solution
    $$
    y(t) = y(0)\exp \brack{\sigma B(t) - \frac{1}{2}\sigma^2 t}, \quad t \geq 0
    $$
    so that
    $$
    m_j(t) = (m_j(0) - \vbar m_\rho(0))\exp \brack{\sigma B(t) - \frac{1}{2}\sigma^2 t} + \vbar m_\rho(0), \quad t \geq 0.
    $$
    Thus, choosing $m_j(0) = \vbar m_\rho(0)$ gives conservation of mass for $j$.
\end{proof}

\begin{remark}
The previous result extends to the reduced SPDE system
\eqref{eq:spde-vanishing-common-nd} on $\mathbb{T}^d$, $d>1$.
The proof is obtained by adapting the arguments to the vector-valued setting.
\end{remark}

In order to treat random initial conditions in our analysis, we require the following independence assumption.

\begin{assumption}
\label{ass:IC}
We assume the following:
\begin{enumerate}[label=(A\arabic*)]
    \item \label{ass:idp-ic}
     The initial condition $(\rho_0, j_0)\in L^2(\Omega; H^1\times H^1)$ is $\mathcal{F}_0$-measurable, and is independent of the Brownian motion $B(t)$ for all $t \geq 0$.
    \item \label{ass:p}
     For a given $p$, the initial condition $(\rho_0, j_0)\in L^p(\Omega; H^1\times H^1)$ is $\mathcal{F}_0$-measurable, and is independent of the Brownian motion $B(t)$ for all $t \geq 0$. 
\end{enumerate}
\end{assumption}

\subsection{SPDE flocking results in 1D}\label{sec: main-results}

We first make the notion of SPDE flocking introduced in \eqref{informal_SPDE_flocking_quantity} more precise.

\begin{definition}[SPDE flocking almost surely]
    For \eqref{eq:spde-vanishing-common-1d}, we say SPDE flocking occurs almost surely if
    $$
    \lim_{t \rightarrow \infty} \norm{j(t) - \vbar \rho(t)} = 0,
    $$
    holds $\mathbb{P}$-a.s. 
\end{definition}

\begin{definition}[SPDE flocking in expectation]
    For \eqref{eq:spde-vanishing-common-1d}, we say SPDE flocking occurs in expectation if
    $$
    \lim_{t \rightarrow \infty} \E{\norm{j(t) - \vbar \rho(t)}^2} = 0.
    $$
\end{definition}

These definitions characterise SPDE flocking as the momentum density $j$ aligning with a multiple of the density $\rho$ with the multiple being the particle flocking velocity $\vbar$. Observe that these are both natural considering the physical meanings of $\rho$ and $\bj$, but are also the natural analogues of almost sure flocking and $\mathbb{L}^{2,2}$ flocking (see Definitions \ref{def: flock-as}, \ref{def: flock-expect}) used in \cite{cattiaux2018stochastic}. Moreover, as noted in \cite[Section 2(d)]{zimper2025reduced}, flocking at the particle level implies flocking at the SPDE level in suitable weak metrics.

It will be convenient to decompose the interaction kernel \eqref{eq: kernel-definition} as the sum of a constant part and spatially varying part, that is, 
\begin{equation}\label{eq: decomposed-kernel}
a(x) = C_a + \theta g(x),
\end{equation}
where $C_a > 0$, $g \in L^2$, and $\theta$ is a small parameter. Without loss of generality, we assume $\theta \geq 0$. We stress that the representation \eqref{eq: decomposed-kernel} is not unique, and that the choice of $C_a$, $\theta$, and $g$ will be dictated by suitable smallness assumptions.

Before we proceed with proofs related to flocking of the reduced SPDE system, we give a well-posedness result for \eqref{eq:spde-vanishing-common-1d}. The proof is provided in Appendix \ref{app: well-posedness}.

\begin{restatable}{theorem}{wellposedness}\label{thm: well-posedness}
    Consider \eqref{eq:spde-vanishing-common-1d} with $\mathcal{F}_0$-measurable initial condition $(\rho_0, j_0) \in L^2(\Omega; H^1 \times H^1)$. Then we have the following well-posedness results:
    \begin{enumerate}[label=(\alph*)]
        \item If the interaction kernel is constant (i.e. $a \equiv C_a$), \eqref{eq:spde-vanishing-common-1d} admits a unique, global-in-time $H^1 \times H^1$-valued strong solution. 
        \item Consider the full interaction kernel \eqref{eq: kernel-definition}. Then \eqref{eq:spde-vanishing-common-1d} admits a unique local-in-time $H^1 \times H^1$-valued strong solution. Moreover, with probability one, the $H^1 \times H^1$ norm is finite where the $L^2 \times L^2$ norm is uniformly bounded.
    \end{enumerate}
    In both cases, the solution has a continuous modification where it is defined.  
\end{restatable}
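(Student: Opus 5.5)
The plan is to remove the multiplicative noise by a pathwise change of variables. After that the problem becomes a random PDE, which can be treated with deterministic methods for every fixed $\omega$.

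\emph{Step 1: removing the noise.} Set $w := j - \vbar\rho$. Since $\vbar$ is constant and $\dd\rho = -\partial_x j\,\dd t$, the pair $(\rho, w)$ satisfies
\begin{equation*}
\dd\rho = -\partial_x(w+\vbar\rho)\,\dd t, \qquad \dd w = \big[\rho(a*w) - w(a*\rho) - \vbar^2\partial_x\rho + \vbar\,\partial_x(w+\vbar\rho)\big]\dd t + \sigma w\,\dd B_t .
\end{equation*}
The $\vbar(a*\rho)\rho$ terms cancel. Let $Z_t := \exp(\sigma B_t - \tfrac12\sigma^2 t)$ and $u := Z_t^{-1} w$. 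By Itô's formula the stochastic term cancels, and $(\rho,u)$ solves a random PDE system with continuous-in-time coefficients $Z_t^{\pm1}$:
\begin{equation*}
\partial_t\rho = -\partial_x(Z_t u + \vbar\rho), \qquad \partial_t u = \rho(a*u) - u(a*\rho) + \vbar\,\partial_x u .
\end{equation*}
The terms $-\vbar^2\partial_x\rho$ and $\vbar^2\partial_x\rho$ cancel exactly. The system is therefore a transport system in which $u$ is advected at speed $-\vbar$, and the derivative loss sits only in $\rho$ through the term $Z_t\partial_x u$. I then solve pathwise by a Picard or Galerkin scheme in $H^1\times H^1$. Measurability and adaptedness follow because the solution map is continuous in the path of $B$. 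Transforming back gives a strong solution of \eqref{eq:spde-vanishing-common-1d}, and pathwise uniqueness transfers as well, since the transformation is invertible. Time-continuity of the modification in $H^1$ comes from the energy equality for transport equations.

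\emph{Step 2: a priori estimates.} For the $u$-equation, the transport term $\vbar\partial_x u$ is skew-symmetric in both $L^2$ and $H^1$. The nonlinear terms obey
\begin{equation*}
\|\rho(a*u)\|_{H^1} \lesssim \|a\|_{W^{1,1}}\|\rho\|_{H^1}\|u\|_{L^2},
\end{equation*}
and similarly for $u(a*\rho)$, using $H^1\subset L^\infty$ in $d=1$. The $\rho$-equation is linear transport with a source $Z_t\partial_x u$, which costs one derivative of $u$. I would therefore estimate $\rho$ in $H^1$ through $\|\partial_x u\|_{H^1}$. Instead, I would use the structure directly: $\partial_t\rho + \vbar\partial_x\rho = -Z_t\partial_x u$, and $u$ itself is transported along the same characteristics. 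Writing $\rho$ by Duhamel along characteristics $x-\vbar t$ gives $\rho(t) = \rho_0(\cdot - \vbar t) - \int_0^t Z_s(\partial_x u)(s, \cdot - \vbar(t-s))\,\dd s$. The $H^1$ norm of the integral can then be controlled after integrating $\partial_x u$ against the $u$-equation, which converts $\partial_x u$ into time derivatives and lower-order terms. Concretely, $\vbar\partial_x u = \partial_t u - \text{(nonlinear)}$. When $\vbar \neq 0$ this gives the estimate. When $\vbar = 0$, the $\rho$-equation reduces to $\partial_t\rho = -Z_t\partial_x u$, and $u$ solves an ODE in $H^1$ with no transport, so the estimate is immediate.

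\emph{Step 3: globality versus locality.} For constant $a\equiv C_a$ we have $\rho(a*u) - u(a*\rho) = C_a(m_u\rho - u)$, where $m_u = \int u = 0$ by Proposition \ref{prop: conservation-of-mass}. The $u$-equation is therefore linear, $\partial_t u = -C_a u + \vbar\partial_x u$, with exponential decay in $H^1$. The $\rho$-bounds above are then linear, and Gronwall gives global existence, proving (a). For general $a$ the system is quadratic, and Gronwall yields only a local solution. The estimate
\begin{equation*}
\tfrac{\dd}{\dd t}\|(\rho,u)\|_{H^1}^2 \lesssim C(\|(\rho,u)\|_{L^2})\,(1+Z_t)\,\|(\rho,u)\|_{H^1}^2
\end{equation*}
is linear in the $H^1$ norm, because derivatives fall on only one factor and the convolution factor is controlled in $L^\infty$ by $L^2$ norms. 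Hence the $H^1$ norm cannot blow up while the $L^2$ norm stays bounded, which is (b).

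\emph{Main obstacle.} The main obstacle is the derivative loss $Z_t\partial_x u$ in the $\rho$-equation within the $H^1$ estimate, that is, making Step 2 rigorous and uniform in $\vbar$. If the characteristic argument becomes messy, the fallback is to work with the hyperbolic symmetrisation. The original $(\rho, j)$ system has principal part $\partial_t(\rho,j) + (\partial_x j, \vbar^2\partial_x\rho) = \dots$, which is symmetric in the weighted norm $\vbar^2\|\cdot\|^2 + \|\cdot\|^2$ of the Notation section. Applying Itô's formula to $\vbar^2\|\partial_x\rho\|^2 + \|\partial_x j\|^2$, the highest-order terms cancel by integration by parts, leaving the closed estimate above with Itô corrections of size $\sigma^2\|\partial_x w\|^2$.
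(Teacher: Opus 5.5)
\textbf{There is a genuine gap in your main route, and it sits exactly where you flagged the obstacle.} In the variables $(\rho,u)$, the equation $\partial_t\rho+\vbar\partial_x\rho=-Z_t\partial_x u$ loses a full derivative. So the pathwise inequality $\frac{\dd}{\dd t}\norm{(\rho,u)}_{H^1}^2\lesssim C(1+Z_t)\norm{(\rho,u)}_{H^1}^2$ that Step 3 relies on is false: the cross term $-2Z_t\langle\partial_x\rho,\partial_x^2u\rangle$ is not controlled. No fixed-$\omega$ argument repairs this.

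The constant kernel already shows it. There $\hat u_k(t)=e^{(ik\vbar-C_a)t}\hat u_k(0)$ and
\[
\hat\rho_k(t)=e^{-ik\vbar t}\Big(\hat\rho_k(0)-ik\,\hat u_k(0)\int_0^t e^{(2ik\vbar-C_a)s}Z_s\,\dd s\Big).
\]
Hence $\rho(t)\in H^1$ for every $u(0)\in H^1$ if and only if $\sup_k\absv{k}\,\absv{\int_0^t e^{(2ik\vbar-C_a)s}Z_s\,\dd s}<\infty$.

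Two things follow from this formula:
\begin{enumerate}
\item The phase $2ik\vbar$ shows that $u$ and $\rho$ move along \emph{opposite} characteristics, not the same ones as you state in Step 2. If they moved along the same ones, trading $\partial_xu$ for a time derivative would be impossible.
\item Your trade $\vbar\partial_xu=\partial_tu-(\dots)$ amounts to integrating by parts in $s$ in the integral above, which puts the time derivative on $Z_s$. That is harmless for paths of bounded variation. But $Z$ is only H\"older continuous, and for general H\"older paths the $\absv{k}^{-1}$ decay can fail. For Brownian paths you get the It\^o integral $\sigma\int_0^t e^{(2ik\vbar-C_a)s}Z_s\,\dd B_s$, whose only uniform-in-$k$ control is the It\^o isometry in $L^2(\Omega)$.
\end{enumerate}
Consequently there is no $H^1\times H^1$-valued solution map continuous in the path of $B$. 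The measurability and adaptedness argument of Step 1 therefore fails, and the Step 2 bound has to be proved in expectation, using stochastic calculus.

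Your $\vbar=0$ claim is also wrong. Then $j(t)=Z_te^{-C_at}j_0$ and $\rho(t)=\rho_0-\partial_xj_0\int_0^tZ_se^{-C_as}\,\dd s$, which is not in $H^1$ whenever $j_0\in H^1\setminus H^2$. The statement tacitly needs $\vbar\neq0$, as does the paper's weighted norm.

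\textbf{Your fallback is the right mechanism, and it is essentially the paper's proof.}
\begin{enumerate}
\item The principal part $(-\partial_xj,-\vbar^2\partial_x\rho)$ is skew-adjoint in the weighted $\mathcal W^1$ norm, so it generates a contraction semigroup.
\item The paper gets mild solutions from Da Prato--Zabczyk in the global Lipschitz case and from Tappe in the local Lipschitz case, then upgrades them to strong solutions.
\item It shows the $H^1$ norm cannot blow up while the $L^2$ norm stays bounded. This uses a Gr\"onwall argument on the stopped mild formulation, your bound $\norm{\tilde F(Z)}_{\mathcal W^1}\le C(\norm{Z}_{\mathcal W^0})\norm{Z}_{\mathcal W^1}$, and a maximal inequality for stochastic convolutions.
\item Continuity comes from a truncation argument.
\end{enumerate}

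If you want to keep your substitution, replace $\rho$ by the other Riemann invariant $r:=j+\vbar\rho$. It satisfies $\dd r=[-\vbar\partial_xr+\rho(a*w)-w(a*\rho)]\,\dd t+\sigma w\,\dd B$, with no derivative loss. You can then treat $w=Z_tu$ pathwise and handle the noise in $r$ by the It\^o isometry. For $a\equiv C_a$ this yields (a) through an explicit stochastic convolution along characteristics.
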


We first state our flocking results when the interaction kernel is constant. This is a physically unrealistic scenario since it says a given particle communicates equally with all other particles, irrespective of how far away they are. However, it is a simpler setting in which we can illustrate some key ideas that carry over to the general setting.

\begin{theorem}\label{thm: constant-kernel-flocking}
    Let Assumption~\ref{ass:IC}\ref{ass:idp-ic}  hold and consider the case of a constant kernel $a \equiv C_a$. Then \eqref{eq:spde-vanishing-common-1d} admits the following global flocking results: 
    \begin{enumerate}[label=(\alph*)]
        \item \textit{almost sure} flocking for any $\sigma > 0$, that is $\mathbb{P}$-a.s.,
        $$
        \lim_{t \rightarrow \infty} \norm{j(t) - \vbar \rho(t)} = 0.
        $$
        \item flocking \textit{in expectation} provided $\sigma^2 < 2 C_a$, that is,
        $$
        \E{\norm{j(t) - \vbar \rho(t)}^2} \leq \exp \sbrack{(\sigma^2 - 2C_a)t}\E{\norm{j_0 - \vbar \rho_0}^2}, \quad t \geq 0.
        $$
    \end{enumerate}
\end{theorem}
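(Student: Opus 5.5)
The plan is to reduce everything to a scalar geometric Brownian motion for the flocking quantity $\norm{u(t)}^2$, where $u := j - \vbar\rho$. With $a \equiv C_a$, Proposition~\ref{prop: conservation-of-mass} gives pathwise $\int_\bt \rho = 1$ and $\int_\bt j = \vbar$. Hence the convolutions are constants:
$$
a*j = C_a\vbar, \qquad a*\rho = C_a .
$$
The drift of $j$ then becomes $C_a\vbar\rho - C_a j - \vbar^2\partial_x\rho = -C_a u - \vbar^2\partial_x\rho$.

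Subtracting $\vbar\,\dd\rho = -\vbar\partial_x j\,\dd t$ (recall $\vbar$ is constant here), the $\partial_x$ terms combine into $\vbar\partial_x j - \vbar^2\partial_x\rho = \vbar\partial_x u$. This gives the closed linear SPDE
$$
\dd u = \left[-C_a u + \vbar\,\partial_x u\right]\dd t + \sigma u\,\dd B_t ,
$$
which is a transport equation with linear damping and linear multiplicative noise.

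Next I apply It\^o's formula to $\norm{u}^2$. This requires a justification step. By Theorem~\ref{thm: well-posedness}(a), $(\rho,j)$ is a global $H^1\times H^1$-valued strong solution, so $u\in H^1$ and $\partial_x u\in L^2$. I would therefore invoke the It\^o formula for the square of the $L^2$ norm in the variational (Gelfand triple $H^1\subset L^2\subset H^{-1}$) framework. Alternatively, one can apply the finite-dimensional It\^o formula to a spatial mollification $u^\ve$, using that the transport and noise terms commute with convolution, and let $\ve\to 0$. This justification is the only technical obstacle.

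On the torus, $\ip{u}{\partial_x u} = \tfrac12\int_\bt \partial_x(u^2) = 0$, so the transport term drops out. We obtain
$$
\dd\norm{u}^2 = (\sigma^2 - 2C_a)\norm{u}^2\,\dd t + 2\sigma\norm{u}^2\,\dd B_t .
$$
This scalar linear SDE has the explicit solution
$$
\norm{u(t)}^2 = \norm{u_0}^2\exp\!\left(-(2C_a+\sigma^2)t + 2\sigma B_t\right).
$$

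Part (a) follows from this formula and the strong law $B_t/t\to 0$ a.s. The latter makes the exponent tend to $-\infty$ almost surely for every $\sigma>0$, since $2C_a+\sigma^2>0$.

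For part (b), Assumption~\ref{ass:IC}\ref{ass:idp-ic} makes $\norm{u_0}^2$ independent of $B$, and $\E{\norm{u_0}^2}<\infty$. Therefore
$$
\E{\norm{u(t)}^2} = \E{\norm{u_0}^2}\,\E{e^{2\sigma B_t}}\,e^{-(2C_a+\sigma^2)t} = \E{\norm{u_0}^2}\,e^{(\sigma^2-2C_a)t},
$$
using $\E{e^{2\sigma B_t}} = e^{2\sigma^2 t}$. This is the claimed bound, in fact with equality. It decays to zero precisely when $\sigma^2<2C_a$.
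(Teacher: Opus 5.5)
Your proposal is correct and follows the paper's proof essentially step for step. Conservation of mass turns both convolutions into constants, integration by parts on the torus removes the transport contribution, and $\norm{j-\vbar\rho}^2$ is identified as a geometric Brownian motion. Part (a) then follows from $B_t/t\to 0$, and part (b) from the independence of the initial data from $B$. Your remark on justifying It\^o's formula for $\norm{u}^2$, via the Gelfand triple $H^1\subset L^2\subset H^{-1}$ or via mollification, covers a point the paper passes over without comment and is a welcome addition.
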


This result directly mirrors the one obtained for the particle system in \cite{cattiaux2018stochastic} which we recalled in Section \ref{subsec: stochastic-cs-particle}.  
That is, our reduced SPDE preserves flocking at the continuous level under the same conditions on the noise as for the particle system derived in \cite{cattiaux2018stochastic}.

We now move onto the general kernel where we only have local well-posedness, and first define a stopping time $\tau$ that is used throughout our analysis.
\begin{definition}
    Let $(\rho(t), j(t))$ be a local strong $H^1 \times H^1$ solution of \eqref{eq:spde-vanishing-common-1d}. Define the stopping time $\tau$ by
    \begin{align}
    \tau \coloneqq 
    \begin{cases}\label{defn_stop_time}
        \inf \cbrack{t \geq 0 : \norm{(\rho(t), j(t))} \geq \frac{C_a \vbar}{4\theta \norm{g}}}, \quad &\text{if infimum taken over non-empty set}, \\
        \infty, \quad &\text{otherwise}.
    \end{cases}
    \end{align}
\end{definition}

\begin{remark}
    Statement (b) of Theorem \ref{thm: well-posedness} justifies our use of an $L^2 \times L^2$ stopping time since it ensures our $H^1 \times H^1$-valued solution cannot blow-up on $[0, \tau]$.
\end{remark}

To justify this definition and motivate our main theorems, we now make the key observation that a flocking-type estimate holds up to $\tau$.

\begin{lemma}\label{lem: p=2-flocking-bound}
For the flocking quantity $X(t) = \|j - \overline{v}\rho\|^2$, we have the bound 
    $$
    X(t \wedge \tau) \leq X_0 \exp \brack{(\sigma^2 - C_a)(t \wedge \tau) + 2 \sigma B(t \wedge \tau) - \frac{1}{2}(2\sigma)^2(t \wedge \tau)}.
    $$
\end{lemma}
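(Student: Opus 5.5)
The plan is to derive a closed scalar SDE inequality for $X(t)=\norm{u(t)}^2$, where $u:=j-\vbar\rho$, valid up to $\tau$, and then integrate it with an exponential integrating factor. Throughout we take $\vbar>0$; otherwise the stopping threshold in \eqref{defn_stop_time} is degenerate.

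\emph{Step 1: equation for $u$.} Using \eqref{eq:spde-vanishing-common-1d} and the fact that $\vbar$ is constant, we get
$$
\dd u=\dd j-\vbar\,\dd\rho=\big[\rho(a*j)-j(a*\rho)-\vbar^2\partial_x\rho+\vbar\partial_x j\big]\dd t+\sigma u\,\dd B .
$$
The transport terms combine into $\vbar\partial_x u$. Next insert the decomposition \eqref{eq: decomposed-kernel}. For the constant part we use Proposition~\ref{prop: conservation-of-mass}: $C_a*j=C_a\vbar$ and $C_a*\rho=C_a$, so this part equals $C_a(\vbar\rho-j)=-C_a u$. For the $g$ part, subtracting $0=\vbar\big[\rho(g*\rho)-\rho(g*\rho)\big]$ gives $\theta\big[\rho(g*u)-u(g*\rho)\big]$. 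Altogether,
$$
\dd u=\big[\vbar\partial_x u-C_a u+\theta\big(\rho(g*u)-u(g*\rho)\big)\big]\dd t+\sigma u\,\dd B .
$$

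\emph{Step 2: It\^o's formula for $X$.} Apply It\^o's formula for the squared $L^2$ norm. This is legitimate because Theorem~\ref{thm: well-posedness} gives an $H^1\times H^1$ strong solution on $[0,\tau]$, so $\partial_x u\in L^2$. We obtain
$$
\dd X=\big[2\vbar\ip{u}{\partial_x u}-2C_aX+\sigma^2X+2\theta\ip{u}{\rho(g*u)-u(g*\rho)}\big]\dd t+2\sigma X\,\dd B .
$$
On the torus $\ip{u}{\partial_x u}=0$.

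\emph{Step 3: bound the kernel term.} This is the main (though elementary) obstacle. By Cauchy--Schwarz, $\norm{g*f}_{L^\infty}\le\norm{g}\norm{f}$. Hence
$$
|\ip{u}{\rho(g*u)}|\le\norm{g}\norm{\rho}X,\qquad |\ip{u^2}{g*\rho}|\le\norm{g}\norm{\rho}X,
$$
where absolute values are needed since $g$ may change sign. So the kernel term in $\dd X$ is at most $4\theta\norm{g}\norm{\rho}X$. For $t<\tau$ we have $\vbar\norm{\rho}\le\norm{(\rho,j)}<C_a\vbar/(4\theta\norm{g})$, so $4\theta\norm{g}\norm{\rho}\le C_a$. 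Consequently, on $[0,\tau]$, writing $\dd X=D\,\dd t+2\sigma X\,\dd B$, the drift satisfies
$$
D\le(\sigma^2-C_a)X .
$$

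\emph{Step 4: integrate.} Set
$$
E(t)=\exp\big((\sigma^2-C_a)t+2\sigma B(t)-\tfrac12(2\sigma)^2t\big),
$$
which solves $\dd E=(\sigma^2-C_a)E\,\dd t+2\sigma E\,\dd B$, and put $Y=X/E$. By It\^o's product rule, the martingale terms cancel, and so do the quadratic-covariation contributions. What remains is
$$
\dd Y=E^{-1}\big(D-(\sigma^2-C_a)X\big)\dd t\le0
$$
on $[0,\tau]$. Therefore $Y(t\wedge\tau)\le Y(0)=X_0$ pathwise, which is exactly the claimed bound.

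The only delicate points are the It\^o formula justification in Step~2, which we handle by stopping and using the $H^1$ regularity, and the strict inequality at $t=\tau$. The latter is harmless because the bound on $D$ is used in $\dd t$-integrated form, so its failure at the single time $t=\tau$ does not matter, and by continuity the estimate extends to $t\wedge\tau$.
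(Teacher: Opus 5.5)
Your proof is correct. Steps 1--3 reproduce the paper's computation: Lemma~\ref{lem: Xt-SDE}, then the splitting of the $g$-term into $\theta\ip{u}{\rho(g*u)}-\theta\ip{u}{u(g*\rho)}$, each piece bounded by $\theta\norm{g}\norm{\rho}X$, and finally $4\theta\norm{g}\norm{\rho}\le C_a$ before $\tau$. Your absolute values are the right way to state those two bounds, since the second term enters with a minus sign.

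The difference is in the last step. The paper invokes the stochastic comparison theorem of \cite{ikeda2014stochastic} (Chapter VI, Theorem 1.1). It compares $X$ with the geometric Brownian motion that has drift $(\sigma^2-C_a)Y$ and the same diffusion $2\sigma Y$, and it appeals to pathwise uniqueness of that GBM to relax the drift ordering to a non-strict one. You instead compute $\dd(X/E)=E^{-1}(D-(\sigma^2-C_a)X)\,\dd t$ directly. I checked the cancellation: the $\dd B$ terms cancel, and the It\^o correction $4\sigma^2E^{-1}X\,\dd t$ from $1/E$ cancels the cross-variation term $-4\sigma^2E^{-1}X\,\dd t$.

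Your route is self-contained and pathwise. It avoids checking the comparison theorem's hypotheses in a setting where the drift of $X$ is a random adapted process rather than a function of $X$. It also gives the slightly stronger statement that $X/E$ is nonincreasing on $[0,\tau]$.

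The comparison theorem is heavier machinery here, but it is the more flexible tool. It would still apply if the available drift bound were a nonlinear Lipschitz function $b(t,X)$ rather than linear in $X$, where no explicit integrating factor exists. In the present setting, with a linear drift bound and identical linear diffusion coefficients, your integrating-factor argument is the cleaner proof.
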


Notice that this is a stopped solution to a geometric Brownian motion (GBM) and on the event $\{\tau = \infty\}$, standard properties of GBMs mean this estimate implies almost sure flocking for any $\sigma > 0$, and flocking in expectation provided $\sigma^2 < C_a$. These are analogous to the results for the constant case given in Theorem \ref{thm: constant-kernel-flocking}. Driven by this observation, our next results estimate the probability of the event $\{\tau = \infty\}$, and thus quantify how likely we are to obtain global flocking results. The first such theorem says this probability is closer to one, the closer the kernel is to being constant. 

\begin{proposition}\label{thm:p=2-stopping-time-bound}
    Let Assumption~\ref{ass:IC}\ref{ass:idp-ic} hold, let $\sigma^2 < C_a$ and assume that
    $$
    \max \cbrack{1, 2 \frac{\E{\norm{j_0}^2} + \vbar^2 \E{\norm{\rho_0}^2}}{\vbar^2} + 18 \frac{\E{\norm{j_0 - \vbar \rho_0}^2}}{\vbar^2}\brack{\frac{C_a}{C_a - \sigma^2}}^2} < \frac{C_a}{4 \theta \norm{g}}.
    $$
    Then we have the following flocking results:
    \begin{enumerate}[label=(\alph*)]
        \item almost sure flocking. That is $\mathbb{P}$-a.s.,
        $$
        \mathbf{1}_{\{\tau = \infty\}}X(t) \rightarrow 0 \quad \text{as} \quad t \rightarrow \infty.
        $$
        \item flocking in expectation. That is,
        $$\E{\mathbf{1}_{\{\tau = \infty\}}X(t)} \leq \exp \sbrack{(\sigma^2 - C_a)t}\E{\norm{j_0 - \vbar \rho_0}^2}, \quad t \geq 0.
        $$
    \end{enumerate}
    with the stopping time estimate
    $$
    \mathbb{P}\brack{\cbrack{\tau = \infty}} \geq 1 - \frac{4 \theta \norm{g}}{C_a}.
    $$
\end{proposition}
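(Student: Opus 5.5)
We first use Lemma~\ref{lem: p=2-flocking-bound}, which already controls the flocking quantity up to $\tau$. Write $\kappa := C_a-\sigma^2>0$ and $Z(t) := \exp\brack{2\sigma B(t) - 2\sigma^2 t}$, a positive martingale with $Z(0)=1$. Then
\[
X(t\wedge\tau)\le X_0\, e^{-\kappa(t\wedge\tau)} Z(t\wedge\tau).
\]
Since $X_0$ is $\mathcal{F}_0$-measurable and independent of $B$ by Assumption~\ref{ass:IC}\ref{ass:idp-ic}, optional stopping gives $\E{X_0 Z(t\wedge\tau)}=\E{X_0}$. Because $e^{-\kappa(t\wedge\tau)}\ge e^{-\kappa t}$, we multiply through and obtain
\[
\E{e^{\kappa(t\wedge\tau)}X(t\wedge\tau)}\le \E{X_0}.
\]
On $\{\tau=\infty\}$ we have $t\wedge\tau=t$, so $\E{\mathbf 1_{\{\tau=\infty\}}X(t)}\le e^{-\kappa t}\E{X_0}$, which is (b). For (a), on $\{\tau=\infty\}$ the bound reads $X(t)\le X_0 e^{-\kappa t}Z(t)$. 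The exponent is $2\sigma B(t)-(C_a+\sigma^2)t$, and $B(t)/t\to 0$ almost surely, so this tends to $0$ almost surely.

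The main task is the stopping time estimate. We use the Markov inequality on the weighted energy $\eta(t):=\norm{(\rho(t),j(t))}^2=\vbar^2\norm{\rho}^2+\norm{j}^2$, evaluated at $t\wedge\tau$, together with continuity of paths. With $M:=C_a\vbar/(4\theta\norm{g})$, on $\{\tau\le t\}$ we have $\eta(t\wedge\tau)\ge M^2$. This gives
\[
\P{\tau\le t}\le \sup_s \E{\eta(s\wedge\tau)}/M^2.
\]
We will bound $\E{\eta(s\wedge\tau)}$ uniformly in $s$ by $\vbar^2 K$, where $K$ is the quantity inside the $\max$ in the hypothesis. Then $\P{\tau<\infty}\le K\vbar^2/M^2 = K\brack{4\theta\norm g/C_a}^2$. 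Since $K<C_a/(4\theta\norm g)$, this is at most $4\theta\norm g/C_a$. The "$\max\{1,\cdot\}$" is there so that the hypothesis also forces $4\theta\norm g/C_a<1$.

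To bound $\eta$, we decompose $j=(j-\vbar\rho)+\vbar\rho$ and estimate $\norm{\rho}$. From $\dd\rho=-\partial_x j\,\dd t=-\partial_x(j-\vbar\rho)\dd t-\vbar\partial_x\rho\,\dd t$, we compute $\dd\norm{\rho}^2$. The transport term $\vbar\int\rho\partial_x\rho$ vanishes in $d=1$, which leaves $2\int \partial_x\rho\,(j-\vbar\rho)$. Alternatively, we estimate $\norm{\rho}$ through the $\vbar$-weighted energy identity for $\eta$, in which the term $-\vbar^2\partial_x\rho$ in the $j$-equation cancels against the $\rho$-equation after integration by parts. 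The drift of $\eta$ then contains $2\int j[\rho(a*j)-j(a*\rho)]$ plus the Itô term $\sigma^2X$. With $a=C_a+\theta g$ and conservation of mass (Proposition~\ref{prop: conservation-of-mass}), the constant part gives $2C_a(\vbar\int j - \norm{j}^2)= -2C_a\norm{j-\vbar\rho}^2-2C_a\vbar\int\rho(j-\vbar\rho)$. The $\theta g$ part is bounded by $C\theta\norm g\,\norm{\rho}\norm{j}\cdot\norm{j-\vbar\rho}$-type terms. Up to $\tau$ these are absorbed using $\eta\le M^2$, which is exactly why $\tau$ has the threshold $C_a\vbar/(4\theta\norm g)$.

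The leftover cross terms are linear in $\norm{j-\vbar\rho}$. Integrating in time, we bound them via $\int_0^\infty \E{X(s\wedge\tau)}^{1/2}\dd s$-type quantities. Using the decay bound $e^{-\kappa s}\E{X_0}$ from the first step, and applying Young's inequality, these produce the terms $2(\E{\norm{j_0}^2}+\vbar^2\E{\norm{\rho_0}^2})$ and $18\,\E{X_0}(C_a/\kappa)^2$ in $K\vbar^2$. The constant $18$ should come from a Cauchy--Schwarz/Young splitting of the time integral of $e^{-\kappa s/2}$. Getting this uniform-in-time bound, with the nonconstant-kernel term controlled precisely by the stopping threshold, is the main obstacle; the rest is bookkeeping.
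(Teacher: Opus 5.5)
Your treatment of (a) and (b) via Lemma~\ref{lem: p=2-flocking-bound} and optional stopping is correct. So is the reduction of the stopping-time estimate to a uniform bound $\E{\eta(t\wedge\tau)}\le\vbar^2K$ via path continuity, Markov's inequality and continuity of measure; this is what the paper does.

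The gap is the uniform energy bound itself. You call it the main obstacle but do not close it, and the mechanism you suggest does not produce the constants $2$ and $18$ that the hypothesis needs. After the It\^o computation and the absorption you describe (up to $\tau$ one has $\theta\norm{g}\norm{\rho}\le C_a/4$), the drift of $\eta$ is at most $3C_a\sqrt{X}\sqrt{\eta}-(\tfrac{3C_a}{2}-\sigma^2)X$. Hence $\E{\eta(t\wedge\tau)}\le\E{\eta(0)}+3C_a\E{\int_0^{t\wedge\tau}\sqrt{X_s}\sqrt{\eta_s}\,\dd s}$.

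The leftover term is not linear in $\norm{j-\vbar\rho}$ alone: it carries $\sqrt{\eta}$, which is the quantity being estimated. The natural ways to handle it all fail:
\begin{itemize}
\item Bounding $\sqrt{\eta}$ by the stopping threshold leads to a different smallness condition.
\item Young's inequality with a fixed weight, followed by linear Gr\"onwall, is not uniform in $t$.
\item Taking a supremum in time and solving $u^*\le a+B\sqrt{u^*}$, with $B=6C_a\E{X_0}^{1/2}/\kappa$ and your $\kappa=C_a-\sigma^2$, gives $2\E{\eta_0}+36\E{X_0}(C_a/\kappa)^2$. The stated hypothesis, with $18$, would then not suffice.
\end{itemize}
Separately, $\E{X(s\wedge\tau)}$ does not decay in $s$, since $X(s\wedge\tau)$ freezes at $X(\tau)$. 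The integrand must keep $\mathbf{1}_{\{s\le\tau\}}$; your first step does give $\E{\mathbf{1}_{\{s\le\tau\}}X_s}\le e^{-\kappa s}\E{X_0}$.

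The missing step is Cauchy--Schwarz in $\omega$ followed by a square-root (Bihari-type) Gr\"onwall inequality. Insert $\mathbf{1}_{\{s\le\tau\}}X_s\le X_0e^{-\kappa s}Z(s)$, bound $\mathbf{1}_{\{s\le\tau\}}\sqrt{\eta_s}\le\sqrt{\eta(s\wedge\tau)}$, and use $\E{X_0Z(s)}=\E{X_0}$ (independence, $\E{Z(s)}=1$). For $u(t):=\E{\eta(t\wedge\tau)}$ this gives
\[
u(t)\le u(0)+3C_a\E{X_0}^{1/2}\int_0^t e^{-\kappa s/2}\,u(s)^{1/2}\,\dd s .
\]
The inequality $u\le a+\int_0^t b\,u^{1/2}$ implies $\sqrt{u(t)}\le\sqrt{a}+\tfrac12\int_0^t b$; differentiate the square root of the right-hand side to see this. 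Hence $\sqrt{u(t)}\le\sqrt{\E{\eta_0}}+\tfrac{3C_a}{\kappa}\E{X_0}^{1/2}$ uniformly in $t$. Then $(x+y)^2\le 2x^2+2y^2$ gives exactly $2\E{\eta_0}+18\E{X_0}(C_a/\kappa)^2=\vbar^2K$. The factor $\tfrac12$ from the integral form is what yields $18$ rather than $36$. With this step supplied, the rest of your argument goes through.
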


\begin{remark}
    The requirement in Theorem \ref{thm:p=2-stopping-time-bound} is nothing but a smallness assumption detailing how far the interaction kernel can deviate from a constant value. In fact, this is the natural stochastic analogue of the equivalent restriction given in \cite[Proposition 4.2]{zimper2025reduced} to ensure flocking of their deterministic reduced PDE. One can also take the alternative viewpoint and fix a particular interaction kernel. The assumption then tells us how far the initial profile can deviate from a flocked state to ensure flocking occurs. This is  unsurprising given that our reduced model was closed using an equilibrium assumption.
\end{remark}

The result of Proposition \ref{thm:p=2-stopping-time-bound} deviates from associated result for the particle system in two ways: firstly, the noise restriction is different ($\sigma^2 < C_a$ versus $\sigma^2 < 2a_{\mathrm{min}}$) and secondly our result only holds with high probability instead of unconditionally. We believe both discrepancies are technical in nature with numerical experiments in Section \ref{sec: numerics} suggesting comparable noise restrictions for both systems.

Additionally, notice that Proposition \ref{thm:p=2-stopping-time-bound} doesn't see the noise intensity $\sigma$. That is, as we send $\sigma \rightarrow 0$, we don't recover the deterministic result of \cite[Proposition 4.2]{zimper2025reduced} which says the noiseless system flocks globally. This suggests our analysis can be improved and, accordingly, the following theorem is a small noise analysis that does exactly this.

\begin{theorem}\label{thm: stopping-time-exp-decay} 

Let Assumption~\ref{ass:IC}\ref{ass:p} hold and assume there exists a deterministic $R_0 > 0$ such that $\norm{j_0}^2 + \vbar^2 \norm{\rho_0}^2 \leq R_0^2$ holds $\mathbb{P}$-a.s. Define the constants $A \coloneqq R_0$, $B \coloneqq \sqrt{2}R_0$, $D \coloneqq B^{-1}(-A+\vbar C_a/(4\theta \norm{g}))$, and assume that $A+3B < C_a \vbar / (4 \theta \norm{g})$. Choose $\alpha \in (0, (D-3)/(D+\sqrt{2}))$ and $2\sigma^2 < \alpha C_a$. Finally, define a function $g(\alpha) < 1$ given by
$$
g(\alpha) \coloneqq \frac{4\theta \norm{g}}{C_a\vbar} \brack{A+B \frac{3+\sqrt{2}\alpha}{1-\alpha}}.
$$
Then, we have the same almost sure flocking and flocking in expectation results as in Proposition \ref{thm:p=2-stopping-time-bound} but with the following exponentially decaying stopping time estimate
$$
\mathbb{P}(\{\tau = \infty\}) \geq 1 - \exp \brack{- \frac{\alpha C_a \absv{\log g(\alpha)}}{\sigma^2}}.
$$
\end{theorem}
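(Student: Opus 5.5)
The flocking statements (a)–(b) are obtained exactly as in Proposition~\ref{thm:p=2-stopping-time-bound}. On $\{\tau=\infty\}$ Lemma~\ref{lem: p=2-flocking-bound} bounds $X(t)$ by $X_0$ times a geometric Brownian motion with drift $\sigma^2-C_a<0$. This gives almost sure convergence to $0$. Taking expectations and using Assumption~\ref{ass:IC}\ref{ass:p} (independence of $X_0$ from $B$) gives decay at rate $\sigma^2-C_a$. So the new content is the lower bound on $\mathbb{P}(\tau=\infty)$, and I would prove it pathwise rather than through a second-moment Chebyshev argument.

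\emph{Step 1 (energy identity).} Apply It\^o's formula to $\eta(t)=\norm{(\rho(t),j(t))}^2=\vbar^2\norm{\rho}^2+\norm{j}^2$.
\begin{itemize}
\item The transport terms cancel in $d=1$: $-2\vbar^2\int\rho\,\partial_x j-2\vbar^2\int j\,\partial_x\rho=0$.
\item Write $a=C_a+\theta g$. The constant-kernel part contributes $-2C_a\int j\,(j-\vbar\rho)$. Mass conservation (Proposition~\ref{prop: conservation-of-mass}) lets me rewrite this in terms of $w:=j-\vbar\rho$, so it is controlled by $\sqrt{X}\sqrt{\eta}$.
\item On $[0,\tau]$ the $\theta g$ part is bounded by $\theta\norm{g}$ times $\eta$-dependent factors. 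By the definition \eqref{defn_stop_time} of $\tau$ it is absorbed into a fraction $C_a/4$ of the dissipation.
\item The noise gives an It\^o correction $\sigma^2 X\,\dd t$ and a martingale term $2\sigma\int j\,w\,\dd x\,\dd B$.
\end{itemize}
Altogether, up to $\tau$, I expect
\[
\sqrt{\eta(t)}\le \sqrt{\eta(0)}+c\int_0^t \sqrt{X(s)}\,\dd s+\Big|\int_0^t(\cdots)\,\dd B\Big|,
\]
with the integrands proportional to $\sqrt X$ or $X/\sqrt\eta$. The constants $A=R_0$ and $B=\sqrt2 R_0$ in the statement come from $\sqrt{\eta(0)}\le R_0$ and $\sqrt{X_0}\le\sqrt2 R_0$.

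\emph{Step 2 (good event).} Set $Y_t:=2\sigma B_t-(C_a+\sigma^2)t$, so that $X(t\wedge\tau)\le X_0e^{Y_{t\wedge\tau}}$ by Lemma~\ref{lem: p=2-flocking-bound}. Define the event
\[
E:=\Big\{\sup_{t\ge0}\big(Y_t+(1-\alpha)C_a t\big)\le L\Big\},
\]
with a level $L$ chosen below. On $E$ we have $\sqrt{X(s)}\le \sqrt{X_0}\,e^{L/2}e^{-\alpha C_a s/2}$ up to $\tau$, so the time integrals in Step 1 are bounded by constants times $(1-\alpha)^{-1}$. For the stochastic integral I would avoid BDG (which only yields moments) and instead integrate by parts against $B$ along the GBM factor. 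This should produce the $3+\sqrt2\alpha$ numerator. The resulting bound is
\[
\sqrt{\eta(t\wedge\tau)}\le A+B\,\frac{3+\sqrt2\alpha}{1-\alpha}\,e^{L/2}=\frac{C_a\vbar}{4\theta\norm{g}}\,g(\alpha)\,e^{L/2}.
\]
Choosing $e^{L/2}<1/g(\alpha)$ keeps the energy strictly below the threshold in \eqref{defn_stop_time}, so $E\subset\{\tau=\infty\}$. The hypotheses $A+3B<C_a\vbar/(4\theta\norm{g})$ and $\alpha<(D-3)/(D+\sqrt2)$ are precisely what make $g(\alpha)<1$, so such an $L$ exists.

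\emph{Step 3 (probability of the good event).} The process $Y_t+(1-\alpha)C_a t=2\sigma B_t-(\alpha C_a+\sigma^2)t$ is a Brownian motion with negative drift. The classical formula for the supremum of a drifted Brownian motion gives
\[
\mathbb{P}(E^c)=\exp\!\big(-2(\alpha C_a+\sigma^2)L/(4\sigma^2)\big)\le \exp\!\big(-\alpha C_a L/(2\sigma^2)\big).
\]
Taking $L=2|\log g(\alpha)|$ (approached as a limit, or with a strict inequality absorbed through $2\sigma^2<\alpha C_a$) yields the claimed estimate $1-\exp(-\alpha C_a|\log g(\alpha)|/\sigma^2)$.

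\emph{Main obstacle.} The hard part is the pathwise control of the stochastic integral in Step 1 with explicit constants $3$ and $\sqrt2$, since moment inequalities cannot be used on a fixed path. I would first try It\^o's product rule on $e^{-Y_t/2}\sqrt{X(t)}$, which turns the $\dd B$ integral into boundary terms plus $\dd t$ integrals controlled on $E$. If the constants do not match exactly, the argument should still go through with a modified $g(\alpha)$.
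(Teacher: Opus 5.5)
Your approach is a pathwise good-event argument combined with the exact hitting law of a drifted Brownian motion, which differs from the paper's. It has a genuine gap in Step~2. The claim that fails is that on $E=\{\sup_{t\ge0}(Y_t+(1-\alpha)C_at)\le L\}$ the martingale term $M_t=\int_0^{t\wedge\tau}\sigma\,\eta^{-1/2}\langle j,j-\vbar\rho\rangle\,\dd B$ is bounded by a deterministic constant. The inclusion $E\subset\{\tau=\infty\}$, and hence the final estimate, rests on it.

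\emph{Why integration by parts does not rescue it.} Integration by parts along the GBM factor only works when the integrand is an explicit functional of $B$. For $G_s=e^{\sigma B_s-(C_a+\sigma^2)s/2}$ one does have, pathwise, $\int_0^t\sigma G_s\,\dd B_s=G_t-1+\frac{C_a}{2}\int_0^tG_s\,\dd s$. Likewise, the product rule for $e^{-Y_t/2}\sqrt{X(t)}$ removes the martingale part $\sigma\sqrt{X}\,\dd B$ of $\sqrt X$, but that only reproves Lemma~\ref{lem: p=2-flocking-bound}. Your integrand is instead $\sigma\sqrt{X_s}\,c_s$, where $c_s=\langle j,j-\vbar\rho\rangle/(\sqrt{\eta}\,\norm{j-\vbar\rho})\in[-1,1]$ is an adapted factor. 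Eliminating $c_s$ by parts requires its dynamics, and these contain uncancelled transport terms: $d\langle j,j-\vbar\rho\rangle$ contains $-2\vbar^2\langle j-\vbar\rho,\partial_x\rho\rangle\,\dd t$. That term is not controlled by $\eta$ or $X$; the transport cancellation happens only inside $d\eta$ and $dX$.

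More fundamentally, $|H_s|\le\sigma\sqrt{X_s}$ together with $E$ only yields $\langle M\rangle_\infty\le 2\sigma^2R_0^2e^L/((1-\alpha)C_a)$. A continuous martingale with bounded quadratic variation is sub-Gaussian, not bounded, so $E\subset\{\tau=\infty\}$ is not established. Correspondingly, the $\sqrt2\alpha$ you expect in the numerator has no source in your argument. In the paper it is the It\^o correction of $\eta^{p/2}$, namely $p\sigma^2(\frac p2-1)\eta^{p/2-2}\langle j,j-\vbar\rho\rangle^2\le\sqrt2\,p\sigma^2(\frac p2-1)\sqrt{X}\,\eta^{(p-1)/2}$ (using $X\le2\eta$), evaluated at $p=\alpha C_a/\sigma^2$.

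\emph{How your route could be repaired.} Intersect $E$ with $F=\{\sup_tM_t\le\lambda\}$ and use the exponential martingale inequality $\mathbb{P}(\sup_tM_t\ge\lambda,\ \langle M\rangle_\infty\le c)\le e^{-\lambda^2/(2c)}$. Combined with your correct drift bound $3\sqrt{X_0}\,e^{L/2}/(1-\alpha)$ and your correct formula $\mathbb{P}(E^c)=\exp(-(\alpha C_a+\sigma^2)L/(2\sigma^2))$, this makes $\mathbb{P}(\tau<\infty)$ exponentially small in $\sigma^{-2}$. However, it does so as a sum of two exponentials with constants different from $\alpha C_a\absv{\log g(\alpha)}$. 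That is a variant of the theorem, not the stated bound with this $g(\alpha)$.

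\emph{What the paper does instead.} It never controls paths. It applies Markov's inequality with $p$-th moments, $\mathbb{P}(\tau<t)\le\bigl(4\theta\norm{g}/(C_a\vbar)\bigr)^p\,\mathbb{E}[\eta^{p/2}(t\wedge\tau)]$. It bounds that moment by Lemma~\ref{lem:general-p-energy-estimate}, using It\^o for $\eta^{p/2}$, the GBM bound on $X$, H\"older, and a Bihari-type Gr\"onwall inequality, all valid for $p<1+C_a/\sigma^2$. Choosing $p^*=\alpha C_a/\sigma^2$ then turns the bound into exactly $g(\alpha)^{\alpha C_a/\sigma^2}$.

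A minor point: on $E$ the decay is $e^{-(1-\alpha)C_as/2}$, not $e^{-\alpha C_as/2}$, although your $(1-\alpha)^{-1}$ factor already reflects the correct rate.
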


\begin{remark}
    While the statement of Theorem \ref{thm: stopping-time-exp-decay} appears complicated, the key point is that we require a smallness assumption controlling how much the interaction kernel can deviate from a constant value. This is analogous to the smallness assumptions in Theorem \ref{thm:p=2-stopping-time-bound} and \cite[Proposition 4.2]{zimper2025reduced}.
\end{remark}

\subsection{Error analysis in 1D}

In \cite{zimper2025reduced}, they are able to prove a uniform-in-time error result between the reduced PDE and Cucker--Smale particle system measured in a weighted $\boldsymbol{H}^{-2}$ norm given by $\norm{(f, g)}_{\boldsymbol{H}^{-2}} \coloneqq \vbar^2 \norm{f}^2_{H^{-2}} + \norm{g}^2_{H^{-2}}$. Theorem \ref{thm: error} extends this result to the common noise setting. It compares $X_{\mathrm{PDE}} \coloneqq (\rho, j)$ solving \eqref{eq:spde-vanishing-common-1d} with $X_{\mathrm{CS}} \coloneqq (\rho_\ve, j_\ve)$ where the subscript $\ve$ denotes mollification of the particle empirical measure with a von Mises kernel (see Section \ref{sec: numerics} for the rigorous definition).

\begin{remark}
    The error is measured in a negative Sobolev norm to avoid using diverging-in-time propagation of chaos results. See \cite[Remark 3.3]{zimper2025reduced} for further details. In Section \ref{sec: numerics} we present experiments measuring the error in the $L^2$ norm.
\end{remark}

\begin{theorem}\label{thm: error}
    Let $d=1$ and fix $\ve > 0$. Define $\xpde \coloneqq (\rho, j)$ and $\xcs \coloneqq (\rho_\ve, j_\ve)$ where $v_i(0) \in L^2(\Omega; \mathbb{R})$ for all $i=1, \ldots, N$ and $(\rho_0, j_0) \in L^2(\Omega; H^{-2} \times H^{-2})$. Assume both are $\mathcal{F}_0$-measurable and independent of the Brownian motion $B(t)$. Assume the interaction potential $a$ has bounded first derivative and that the Fourier modes $\hat{a}$ satisfy
    $$
    \absv{\hat{a}^{(r)}(\xi)} \lesssim (1 + \absv{\xi}^2)^{-1}, \quad r=0,1,2~ \forall \xi \in \mathbb{R}.
    $$
    Further assume that $\sigma^2 < \min\{C_a, 2a_{\mathrm{min}}\}$ and define $R_v^2 \coloneqq \max_{1 \leq i \leq N} \E{(v_i(0) - \vbar)^2}$. Finally, let the assumptions of Theorem \ref{thm: stopping-time-exp-decay} hold so that $\mathbb{P}(\tau < \infty)$ is exponentially small in the size of the noise. Then we have the following error bound between the particles and SPDE:
    \begin{multline*}
    \E{\sup_{t \geq 0}\norm{\xpde(t \wedge \tau) - \xcs(t \wedge \tau)}_{\hmtwo}} \lesssim \E{\norm{\xpde(0) - \xcs(0)}_{\hmtwo}} + \frac{2\E{\norm{j_0 - \vbar \rho_0}^2}^{1/2}}{C_a -  \sigma^2}\\
    + \frac{2R_v}{2a_{\mathrm{min}}-\sigma^2} + C(\sigma) \brack{\frac{\E{\norm{j_0-\vbar \rho_0}^2}}{C_a - \sigma^2} + \frac{R_v^2}{2a_{\mathrm{min}}-\sigma^2}}^{1/2} + \frac{2R_v(1+C(a)(\ve + \sqrt{\ve}))}{2a_{\mathrm{min}}-\sigma^2} \\
    + C(\vbar)^2 \brack{1+\frac{C_a}{4 \theta \norm{g}}}^2 \frac{\E{\norm{j_0 - \vbar \rho_0}^2}}{(C_a-\sigma^2)^2}.
    \end{multline*}
\end{theorem}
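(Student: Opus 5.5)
The plan is to insert two intermediate objects between $\xpde$ and $\xcs$, both built from the flocked profile, and apply the triangle inequality in the weighted $\hmtwo$ norm. The first is $Y_{\mathrm{PDE}}(t) := (\rho(t), \vbar\rho(t))$. The second is $Y_{\mathrm{CS}}(t) := (\rho_\ve(t), \vbar\rho_\ve(t))$. We then bound
\[
\norm{\xpde - \xcs}_{\hmtwo} \le \norm{\xpde - Y_{\mathrm{PDE}}}_{\hmtwo} + \norm{Y_{\mathrm{PDE}} - Y_{\mathrm{CS}}}_{\hmtwo} + \norm{Y_{\mathrm{CS}} - \xcs}_{\hmtwo},
\]
evaluated at $t\wedge\tau$.

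The first term is $\norm{j - \vbar\rho}_{H^{-2}}\le \norm{j-\vbar\rho}$. For $\E{\sup_t \cdot}$ we do not use the pointwise-in-expectation decay. Instead we use the pathwise GBM bound of Lemma \ref{lem: p=2-flocking-bound}: we write $X(t\wedge\tau)^{1/2}$ as a stopped exponential martingale times $e^{(\sigma^2-C_a)(t\wedge \tau)/2}$ and control its supremum by integrating the decay in time. This should produce the term $2\E{\norm{j_0-\vbar\rho_0}^2}^{1/2}/(C_a-\sigma^2)$.

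The third term is handled at the particle level. It reduces to $N^{-1}\sum_i |v_i-\vbar|$, tested against mollified Diracs. Here we use the particle flocking estimate of \cite[Theorem 6.3]{cattiaux2018stochastic}, valid because $\sigma^2<2a_{\mathrm{min}}$, namely $\E{(v_i(t)-\vbar)^2}\le e^{-(2a_{\mathrm{min}}-\sigma^2)t}R_v^2$. Integrating in time gives the $R_v/(2a_{\mathrm{min}}-\sigma^2)$ contributions. The mollification produces the factor $1+C(a)(\ve+\sqrt\ve)$, using the bounded derivative of $a$ when comparing $a*\rho_\ve$ with the mollified interaction.

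The middle term is the main obstacle. We write equations for $\rho$ and $\rho_\ve$: $\partial_t\rho = -\partial_x j = -\vbar\partial_x\rho - \partial_x(j-\vbar\rho)$, and similarly for $\rho_\ve$ with $j_\ve-\vbar\rho_\ve$. The transport part $-\vbar\partial_x$ is an isometry in $H^{-2}$, so the difference $e=\rho-\rho_\ve$ satisfies
\[
\tfrac{d}{dt}\norm{e}_{H^{-2}}\le \norm{j-\vbar\rho}_{H^{-1}}+\norm{j_\ve-\vbar\rho_\ve}_{H^{-1}}.
\]
Integrating in time, taking the supremum, and using the two decay estimates above gives a time-uniform bound. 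The cross terms produced when squaring, which contain the noise, should give $C(\sigma)\big(\frac{\E{\norm{j_0-\vbar\rho_0}^2}}{C_a-\sigma^2}+\frac{R_v^2}{2a_{\mathrm{min}}-\sigma^2}\big)^{1/2}$; we would apply BDG to the martingale parts.

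The remaining quadratic term with $C(\vbar)^2(1+C_a/(4\theta\norm g))^2$ arises from the convolution nonlinearity $\rho(a*j)-j(a*\rho)$. This is where the Fourier decay assumption on $\hat a^{(r)}$ makes the products bounded from $L^2\times L^2$ into $H^{-2}$. We control $\rho(a*(j-\vbar\rho))-(j-\vbar\rho)(a*\rho)$ by $\norm{(\rho,j)}\cdot\norm{j-\vbar\rho}$. Before $\tau$, the first factor is at most $C_a\vbar/(4\theta\norm g)$, and the time integral of the second factor (squared, via Cauchy--Schwarz) gives $(C_a-\sigma^2)^{-2}$. Finally, the initial discrepancy enters through $\E{\norm{\xpde(0)-\xcs(0)}_{\hmtwo}}$.
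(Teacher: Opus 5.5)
Your decomposition works at its core. It gives a genuinely different, and much shorter, proof than the paper's.

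\textbf{The paper's route.} The paper writes $\xpde$ and $\xcs$ in mild form with the common transport semigroup, which is contractive on $\hmtwo$. Then $\xcs$ carries two remainders: $R_\ve$ (von Mises kernels replacing Diracs in the convolution term, which is where the bounded derivative of $a$ and the factor $\ve+\sqrt{\ve}$ enter) and $R_{\mathrm{closing}}$. The paper then bounds three things:
\begin{itemize}
\item the difference of the nonlinearities, via the $H^{-2}$ product estimate of \cite{zimper2025reduced}, which is where the decay of $\hat a^{(r)}$ is used;
\item the stochastic convolution, via a maximal inequality, which gives the $C(\sigma)$ term;
\item the whole error, via the sublinear Gr\"onwall inequality $\Psi\lesssim c+\int_0^t b\,\Psi^{1/2}$, whose bound $(c^{1/2}+\frac12\int b)^2$ is the origin of the quadratic term.
\end{itemize}

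\textbf{Your route.} You use $j-j_\ve=(j-\vbar\rho)-(j_\ve-\vbar\rho_\ve)+\vbar(\rho-\rho_\ve)$, together with the fact that $\partial_t\rho_\ve=-\partial_x j_\ve$ holds exactly for the mollified empirical measure. So $e=\rho-\rho_\ve$ solves a noise-free transport equation forced by $-\partial_x w$, with $w=(j-\vbar\rho)-(j_\ve-\vbar\rho_\ve)$. Duhamel with the translation group gives, pathwise,
\[
\sup_{t\geq 0}\norm{e(t\wedge\tau)}_{H^{-2}}\leq\norm{e(0)}_{H^{-2}}+\int_0^\infty\Big(\mathbf{1}_{\{s\leq\tau\}}\norm{j-\vbar\rho}(s)+\norm{j_\ve-\vbar\rho_\ve}_{H^{-1}}(s)\Big)\dd s .
\]
Lemma \ref{lem: p=2-flocking-bound} (with Cauchy--Schwarz and independence of $X_0$ and $B$) and Theorem \ref{thm: particle-flocking} bound the expectation of this by
\[
\E{\norm{e(0)}_{H^{-2}}}+\frac{2\E{X_0}^{1/2}}{C_a-\sigma^2}+\frac{2CR_v}{2a_{\mathrm{min}}-\sigma^2}.
\]
This never touches the $j$-equations. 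So the nonlinearity, the stochastic convolution, $R_\ve$, $R_{\mathrm{closing}}$, the hypotheses on $a$ and the Gr\"onwall step are all unnecessary. Your bound has neither the quadratic term nor any $\ve$-dependence.

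\textbf{What each buys.} The paper's route follows the consistency--stability template of \cite{zimper2025reduced} and makes explicit how the mollification and closure errors propagate. Yours shows that, in $\hmtwo$ and up to $\tau$, the error is controlled by the two flocking quantities alone.

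\textbf{What to fix in your write-up.}

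First, the suprema of the two flocking quantities cannot be controlled by integrating the decay in time. Use the pathwise bound $\sup_t X(t\wedge\tau)\leq X_0\sup_{s\geq0}M_s$, valid because $\sigma^2<C_a$, where $M_s=\exp(2\sigma B_s-2\sigma^2 s)$. For the particles, use the analogous bound $w\leq\hat w$ from the proof of Theorem \ref{thm: particle-flocking}, valid because $\sigma^2<2a_{\mathrm{min}}$; the summed estimate there suffices, so you do not need the per-particle bound from \cite{cattiaux2018stochastic}. The maximal inequality $\mathbb{P}(\sup_{s}M_s\geq\lambda)\leq\lambda^{-1}$ for the nonnegative martingale $M$ gives $\E{\sup_s M_s^{1/2}}\leq 2$. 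By independence, these two terms are then at most $2\E{X_0}^{1/2}$ and a constant times $R_v$. They are not $2\E{X_0}^{1/2}/(C_a-\sigma^2)$ as you claim; that term comes from the time integral in the $e$-estimate. The bare terms $\E{X_0}^{1/2}$ and $R_v$ are dominated by the stated right-hand side only with constants depending on $C_a$ and $a_{\mathrm{min}}$. That is within the looseness of the paper's $\lesssim$.

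Second, your attempts to reproduce the $C(\sigma)$ term, the factor $1+C(a)(\ve+\sqrt{\ve})$ and the quadratic term do not correspond to anything in your own argument.
\begin{itemize}
\item The $e$-equation has no martingale part, so there is nothing for BDG to act on.
\item Neither the convolution nonlinearity nor the mollified interaction ever enters your decomposition; the flocking estimates already encode them.
\end{itemize}
Drop those paragraphs. Your bound is simply dominated by the stated one, since the extra nonnegative terms on the right-hand side only weaken it.
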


\begin{proof}
    See Appendix \ref{app: error}.
\end{proof}

\begin{remark}
    While Theorem \ref{thm: error} has a complicated appearance, its interpretation is natural: it provides a uniform-in-time error result between the particles and SPDE when both systems are flocking in expectation. The error depends on the discrepancy in initial conditions, the initial flocking discrepancy in the SPDE, the initial alignment discrepancy of the velocities, how far the kernel deviates from being constant, how small the kernel gets (i.e. a measure of localisation given we are on a bounded domain), and the width of the von Mises kernel. Moreover, suppose we are initially flocking, that is, $v_i(0) \equiv \vbar$ for all $1 \leq i \leq N$. Then $R_v = 0$ and if we initialise the SPDE such that $\xpde(0) \equiv \xcs(0)$ we get $j_0 = \vbar \rho_0$. In particular, all terms in the error estimate vanish which reflects that our SPDE is exact in the flocking regime.
\end{remark}

\section{The reduced SPDE model for i.i.d. noise}

\subsection{Derivation}\label{subsec_derivation_SPDE_iid}

For clarity, we provide full details for the derivation of the one-dimensional reduced SPDE model and state later the reduced model in arbitrary dimension. Set  
$$
d\mathcal{N}_i = \sigma(v_i - \overline{v})dB_i,
$$ 
in \eqref{general_part_sys}, where $\{B_i\}_{i=1}^{N}$ are independent Brownian motions and sigma is some constant. This is exactly as in \cite[Equation (1.9)]{cattiaux2018stochastic}. Deriving an SPDE in this case takes more effort, and requires some approximations. We now detail this derivation.

\emph{$\bullet$ Dynamics of $\overline{v}(t)$.} 

The mean velocity $\overline{v}(t) = N^{-1}\sum_{i=1}^{N}{v_i(t)}$ is no longer preserved in time, as 
\begin{align}\label{mean_vel}
    d\overline{v}(t) = \sigma N^{-1}\sum_{i=1}^{N}{(v_i-\overline{v}(t))dB_i}.
\end{align}

\emph{$\bullet$ Deriving equations for $\rho$ and $j$.}

 A formal application of the chain rule on \eqref{rho}--\eqref{j} gives us
\begin{align*}
    d \rho(x,t) & = -N^{-1}\sum_{i=1}^{N}{\delta'(x-x_i(t))v_i(t)d t} = - \nabla \cdot j dt
\end{align*}
 where the symbol $'$ indicates differentiation in space, and 
 \begin{align}\label{Evolution_J}
     d j(x,t) & = N^{-1}\sum_{i=1}^{N}{dv_i(t)\delta(x-x_i(t))} + N^{-1}\sum_{i=1}^{N}{v_i(t)d\delta(x-x_i(t))}  \nonumber\\
     & = N^{-1}\sum_{i=1}^{N}{dv_i(t)\delta(x-x_i(t))} - N^{-1}\sum_{i=1}^{N}{v_i^2(t)\delta'(x-x_i(t))}dt \nonumber\\
     & = N^{-1}\sum_{i=1}^{N} \left(N^{-1}\sum_{j=1}^{N}{a(x_i-x_j)(v_j-v_i)}\right) \delta(x-x_i(t)) dt \nonumber\\
     & \quad\quad - N^{-1}\sum_{i=1}^{N}{v_i^2(t)\delta'(x-x_i(t))}dt + \sigma N^{-1}\sum_{i=1}^{N}{(v_i-\overline{v}(t))\delta(x-x_i(t))dB_i}
     \nonumber\\
     & =: T_1dt + T_2dt + T_3.
 \end{align}

 \emph{$\bullet$ Closing $T_1$ and $T_2$ from \eqref{Evolution_J}.}
 
$T_1$ and $T_2$ are dealt with similarly to \cite{zimper2025reduced}, but now using $\overline{v}(t)$, so $T_1 = \rho (a\ast j)  - j (a \ast \rho)$, $T_2 \approx - \overline{v}(t)^2 \nabla \rho$.

\emph{$\bullet$ Closing $T_3$ from \eqref{Evolution_J}.}
As for the noise $T_3$, we compute its cross-variation structure (indicated by the symbol $\llangle \cdot, \cdot \rrangle$) by taking two separate points $x_1,x_2$ and computing
\begin{align}\label{cross_variation_T3}
\langle T_3,T_3\rangle & = \left\llangle \sigma N^{-1}\sum_{i=1}^{N}{(v_i-\overline{v}(t))\delta(x_1-x_i(t))dB_i}, \sigma N^{-1}\sum_{i=1}^{N}{(v_i-\overline{v}(t))\delta(x_2-x_i(t))dB_i}\right\rrangle \nonumber\\
& \stackrel{iid. \{B_i\}_{i=1}^{N}}{=} \sigma^2 N^{-1} \left[N^{-1}\sum_{i=1}^{N}{(v_i-\overline{v}(t))^2\delta(x_1-x_i(t))}\right]\delta(x_1-x_2).
\end{align}
If we define 
\begin{align}
    k(x,t) := N^{-1}\sum_{i=1}^{N}{(v_i-\overline{v}(t))^2\delta(x-x_i(t))},
\end{align}
then the process
\begin{align}\label{noise_SPDE_ind_B}
    \sigma N^{-1/2}\sqrt{k}\,d\xi,
\end{align}
where $\xi$ is a space-time white noise, has formally the same cross-variation as \eqref{cross_variation_T3}. The computations from \eqref{cross_variation_T3}--\eqref{noise_SPDE_ind_B} have taken inspiration from \cite[Section 3.2]{cornalba2019regularized}.

\emph{$\bullet$ Hierarchy on $j$, differential of $k$.}

As $k$ is not closable in $\rho$ and $j$, we need to approximate it.
It\^o calculus gives
\begin{align}\label{differential_k}
    & dN^{-1}\sum_{i=1}^{N}{(v_i-\overline{v}(t))^2\delta(x-x_i(t))} \nonumber\\
    & \quad  = N^{-1}\sum_{i=1}^{N}{d(v_i-\overline{v}(t))^2\delta(x-x_i(t))} + N^{-1}\sum_{i=1}^{N}{(v_i-\overline{v}(t))^2d\delta(x-x_i(t))} \nonumber\\
    &  \quad = N^{-1}\sum_{i=1}^{N}{2(v_i-\overline{v}(t))\left[N^{-1}\sum_{j=1}^{N}{a(x_i-x_j)(v_j-v_i)} dt + \sigma(v_i-\overline{v}(t))dB_i -d\overline{v}(t)\right]\delta(x-x_i(t))} \nonumber\\
    &  \quad \qquad + \frac{1}{2}N^{-1}\sum_{i=1}^{N}{2\left[ d(v_i-\overline{v}(t))\right]^2}\delta(x-x_i(t))  \nonumber \\
    &  \quad \qquad - N^{-1}\sum_{i=1}^{N}{(v_i-\overline{v}(t))^2v_i\delta'(x-x_i(t))}dt.
    \end{align}
    Furthermore, using the independence of the $(B_i)_{i=1}^{N}$, we have
\begin{align}\label{differential_diff_velocities}
        \llangle d(v_i-\overline{v}(t))\rrangle & = \left\llangle \sigma (v_i -\overline{v}(t))dB_i - \sigma N^{-1}\sum_{j=1}^{N}{(v_j-\overline{v}(t))dB_j} \right\rrangle \nonumber \\
        & = \left\llangle \sigma \left (1 - \frac{1}{N}\right) (v_i -\overline{v}(t))dB_i - \sigma N^{-1}\sum_{j\neq i}{(v_j-\overline{v}(t))dB_j} \right\rrangle \nonumber\\
        & =  \sigma^2 \left (1-\frac{1}{N} \right )^2 (v_i - \overline{v}(t))^2 \dd t + \frac{\sigma^2}{N^2}\sum_{j \neq i} (v_j - \overline{v}(t))^2 \dd t \nonumber\\
        & = \sigma^2 \left ( 1-\frac{2}{N}\right)(v_i - \overline{v}(t))^2 \dd t + \frac{\sigma^2}{N^2} \sum_{j=1}^N (v_j - \overline{v}(t))^2\dd t.
\end{align}
We therefore get
\begin{align*}
    & N^{-1}\sum_{i=1}^N \left[ d(v_i-\overline{v}(t))\right]^2 \delta(x - x_i(t)) \\
    & \quad \quad = \sigma^2 \left (1-\frac{2}{N} \right) k \dd t  + N^{-1}\sum_{i=1}^N \delta(x-x_i(t)) \frac{\sigma^2}{N^2}\sum_{j=1}^N (v_j - \overline{v}(t))^2 \dd t.
\end{align*}
In the second term, the sum over $i$ is only with respect to space so we recover $\rho$. The final sum above is precisely the integral in space of $k$. 
Therefore,
\begin{align}\label{Y_3_correct}
N^{-1}\sum_{i=1}^N \left[ d(v_i-\overline{v}(t))\right]^2 \delta(x - x_i(t))  = \sigma^2 \left (1-\frac{2}{N} \right) k \dd t + \frac{\sigma^2}{N} \rho \left(\int_x k(x)\right ) \dd t. 
\end{align}

Next we combine \eqref{Y_3_correct} and \eqref{differential_diff_velocities}, dropping only the non-linear, lower-order contribution $\frac{\sigma^2}{N} \rho \left(\int_x k(x)\right )$ in \eqref{Y_3_correct}, as well as suitably adding and subtracting $\overline{v}(t)$, we obtain
\begin{align}\label{differential_k_gen}
    & dN^{-1}\sum_{i=1}^{N}{(v_i-\overline{v})^2\delta(x-x_i(t))} \nonumber\\
    & \quad \approx N^{-1}\sum_{i=1}^{N}{2(v_i-\overline{v})\left[N^{-1}\sum_{j=1}^{N}{a(x_i-x_j)((v_j-\overline{v}(t))-(v_i-\overline{v}(t)))}dt \right]\delta(x-x_i(t))} \nonumber\\
    & \quad \qquad + N^{-1}\sum_{i=1}^{N}{2(v_i-\overline{v})\left[\sigma (v_i-\overline{v}(t))dB_i - d\overline{v}(t)\right]\delta(x-x_i(t))} \nonumber\\
    &  \quad \qquad + \sigma^2 \left(1-\frac{2}{N}\right) N^{-1}\sum_{i=1}^{N}{(v_i-\overline{v})^2\delta(x-x_i(t))} dt \nonumber \\
    &  \quad \qquad - N^{-1}\sum_{i=1}^{N}{(v_i-\overline{v}(t))^2v_i\delta'(x-x_i(t))}dt \nonumber\\
    & \quad =: Y_1 + Y_2 + Y_3 + Y_4.
\end{align}

\emph{$\bullet$ Treating the terms $Y_1,\dots,Y_4$ in \eqref{differential_k}}.

The term $Y_3$ is precisely $\sigma^2 (1-2N^{-1}) kdt$. The term $Y_2$ can be neglected, as it's purely noise, so will enjoy a pre-factor $N^{-1/2}$ with respect to everything else in the evolution equation for $k$. As for the term $Y_4$, we perform the flocking approximation $v_i \approx \overline{v}(t)$, thus getting $Y_4 \approx -\overline{v}(t)\nabla k$. Finally, $Y_1$ can be rewritten (upon replacing $a(x_i(t)-x_j(t))$ with $a(x-x_j(t))$ thanks to multiplier $\delta(x-x_i(t))$, and using the convolution representations) as
\begin{align*}
    Y_1 
    & = N^{-1}\sum_{i=1}^{N}{2(v_i-\overline{v})\left[N^{-1}\sum_{j=1}^{N}{a(x-x_j)((v_j-\overline{v}(t))-(v_i-\overline{v}(t)))} dt\right]\delta(x-x_i(t))}\\
    & = N^{-1}\sum_{i=1}^{N}{2(v_i-\overline{v})\delta(x-x_i(t))\left[N^{-1}\sum_{j=1}^{N}{a(x-x_j)(v_j-\overline{v})} dt\right]} \\
    & \qquad - N^{-1}\sum_{i=1}^{N}{2(v_i-\overline{v}(t))^2\delta(x-x_i(t))\left[N^{-1}\sum_{j=1}^{N}{a(x-x_j)} dt\right]}\\
    & = 2(j-\overline{v}\rho)a\ast (j-\overline{v}\rho)dt - 2k a\ast \rho dt.
\end{align*}
Combining all the approximations, we end with the SPDE in $(\rho,j,k)$ being
\begin{align}
    d \rho & = - \nabla \cdot j dt \label{reduced_CS_rho_iid_B_gen} ,\\
    d j & = \left\{\rho (a\ast j)  - j (a \ast \rho) - \overline{v}^2(t) \nabla \rho\right\}dt + \sigma N^{-1/2}\sqrt{k}\,d\xi
\label{reduced_CS_j_iid_B_gen},\\
    d k & =
    2(j-\overline{v}(t)\rho)a\ast (j-\overline{v}(t)\rho)dt + \left(\sigma^2(1-2N^{-1})- 2 a\ast \rho\right)k dt - \overline{v}(t)\nabla kdt. \label{reduced_CS_k_iid_B_gen}
\end{align}

\emph{$\bullet$ Equation for mass of $j$}

   Using the definition of $j$ at the particle level \eqref{j} and of the average velocity \eqref{informal_flocking}, it is immediate to notice that $m_j(t) := \int_{x}{j} = \overline{v}(t)$. Furthermore, using this equivalence, we choose to translate this information at the SPDE level by setting 
   \begin{align}\label{defn_v_av}
   \overline{v}(t) := \int_{x}{j}.
   \end{align}
   In the derivation done so far, \eqref{defn_v_av} entails that, integrating the $j$ equation \eqref{reduced_CS_j_iid_B_gen}, we have the representation
    \begin{align}\label{evol_average_j}
        d\overline{v}(t) = \sigma N^{-1/2}\int_{x}\sqrt{k}d\xi.
    \end{align}
    \begin{remark}\label{rem_coloured_noise_v_av}
    Throughout, we will always use the definition \eqref{defn_v_av} at the continuum level. As we will choose to regularise the noise $\xi$ in later parts of the paper, this will by reflected in using a regularised noise in spite of a space-time white noise in \eqref{evol_average_j}.
    \end{remark}

\begin{remark}
In the special case of constant kernel $a\equiv C_a >0$,  the SPDE \eqref{reduced_CS_rho_iid_B_gen}--\eqref{reduced_CS_j_iid_B_gen}--\eqref{reduced_CS_k_iid_B_gen} reduces to
\begin{align}
    d \rho & = - \nabla \cdot j dt \label{const_reduced_CS_rho_iid_B} ,\\
    d j & = \left\{C_a[\overline{v}(t)\rho  - j] - \overline{v}^2(t) \nabla \rho\right\}dt + \sigma N^{-1/2}\sqrt{k}\,d\xi
\label{const_reduced_CS_j_iid_B},\\
    d k & = 
     \underbrace{\left(\sigma^2(1-2N^{-1})- 2 C_a\right)}_{:= -\gamma}k dt - \overline{v}(t)\nabla kdt. \label{const_reduced_CS_k_iid_B}
\end{align}
\end{remark}
\begin{remark}
The structure of \eqref{reduced_CS_rho_iid_B_gen}--\eqref{reduced_CS_k_iid_B_gen} appears to make sense with respect to proving flocking. In particular:
\begin{itemize}
\item in the case of constant kernel $a$, we have that $a\ast \rho = C_a$, thus granting, subject to small enough noise intensity $\sigma$, a dissipative term $(-2C_a + \sigma^2(1-2N^{-1})) k$ in the $k$ equation;
\item The term $2(j-\overline{v}(t)\rho)a\ast (j-\overline{v}(t)\rho)$ incorporates the flocking definition, and takes a small $N^{-1/2}$ prefactor when injected in the $L^2$-norm for $\|j\|^2$.
\end{itemize}
\end{remark}

\begin{remark}
    The procedure adopted to get \eqref{reduced_CS_rho_iid_B_gen}--\eqref{reduced_CS_k_iid_B_gen} 
is a hierarchy in the number power $p$ for the quantities 
$$
N^{-1}\sum_{i=1}^{N}{(v_i-\overline{v}(t))^p \delta(x-x_i(t))},\qquad p \in \mathbb{N},
$$
(we truncate the hierarchy by dropping the noise in the $k$-equation). The hope with this hierarchy is that it will be relatively stable, due to the small noise prefactor $N^{-1/2}$ gained in each step of the hierarchy.  
\end{remark}

\subsubsection{Derivation in higher dimensions}

The $\rho$ equation is unchanged. For $\bj$, we use It\^o's lemma on each component $j_m$. The drift contributions stay the same but the noise cross-variation calculation changes. In the It\^o differential of $j_m$, we can define a noise term
$$
T_m \coloneqq \sigma N^{-1} \sum_{i=1}^N (v_{i, m}(t) - \vbar_m(t)) \delta(\bx - \bx_i(t)) \dd B_{i, m}(t).
$$
We then compute the cross-variation $\left \llangle T_m(\bx), T_n(\by) \right \rrangle$:
$$
\left \llangle T_m(\bx), T_n(\by) \right \rrangle = \sigma^2 N^{-1} \brack{N^{-1}\sum_{i=1}^N(v_{i, m}(t) - \vbar_m(t))^2 \delta(\bx - \bx_i(t))}\delta(\bx -\by) \delta_{mn}.
$$
Motivated by this we define $\bk(\bx, t) \in \btd$ via
$$
k_m(\bx, t) \coloneqq N^{-1}\sum_{i=1}^N(v_{i, m}(t) - \vbar_m(t))^2 \delta(\bx - \bx_i(t)), \quad m = 1, \ldots, d.
$$
This means that we can close the noise term by choosing it to be $\sigma N^{-1/2}\sqrt{\bk} \odot \dd \bxi$ where the square root is taken elementwise, $\bxi$ is a $d$-dimensional space-time white noise, and $\odot$ denotes elementwise (Hadamard) product. We then compute the It\^o differential of each component of $\bk$. This is similar to the one-dimensional calculation and gives the following system of SPDEs:
\begin{align}
    \dd \rho &= - \nabla \cdot \bj \dd t, \label{reduced_CS_rho_iid_B_gen_multidim}\\
    \dd \bj &= [\rho(a * \bj) - \bj(a*\rho) - \bvbar(t) \nabla \cdot \bj] \dd t + \sigma N^{-1/2} \sqrt{\bk} \odot \dd \bxi, \label{reduced_CS_j_iid_B_gen-multidim}\\
    \dd \bk &= [2(\bj - \bvbar(t) \rho)]\odot[a * (\bj - \bvbar(t) \rho)] \dd t + \sbrack{\sigma^2(1-2N^{-1}) - 2a*\rho}\bk \dd t - \bvbar(t) \nabla \cdot \bk \dd t. \label{reduced_CS_k_iid_B_gen-multidim}
\end{align}
Similarly to the common noise setting, we note that the equations are closed using one possible choice of closure. Moreover, $\bvbar(t)$ evolves via
$$
\dd \bvbar(t) = \sigma N^{-1/2} \int_{\bx} \sqrt{\bk} \odot \dd \bxi.
$$

\subsection{SPDE flocking results in 1D}\label{main_results_iid_case}

We now state the SPDE-flocking results for the 1D independent noise case. 
In order to study the equation in an $L^2$ setting, we need to regularize it, as is also the case for singular SPDEs such as the Dean–Kawasaki equation. To this end, we replace the space-time white noise $\xi$ with a coloured noise, which, for simplicity, we continue to denote by $\xi$. Specifically, since the noise $\xi$ is coloured, we may expand it as
\begin{align}\label{coloured_noise}
\dd \xi = \sum_{z}{\sqrt{\lambda_z} e_z(x)d\beta_z(t)}.
\end{align}
where $\{e_z\}$ is orthonormal  in $L^2_x$, uniformly bounded in $L^\infty$. We denote $\Tr(\xi):= \sum_{z}{\lambda_z}$. This may reflect either coloured noise, or mode truncation resulting from a numerical discretisation. This noise regularisation setting is standard for SPDEs of fluctuating hydrodynamics, see for instance \cite{cornalba2023dean} or \cite{djurdjevac2024weak}.

We first  prove the well-posedness of \eqref{const_reduced_CS_rho_iid_B}--\eqref{const_reduced_CS_k_iid_B}, where we exchange the space-time white noise with the regularized noise given by \eqref{coloured_noise}. Furthermore, we use the notation 
\[
\|f\|^2_{H^1} = \|f\|^2_{L^2} + \|\partial_x f\|^2_{L^2}.
\]
    
\begin{theorem}[Global $H^1$ well-posedness]
\label{thm:global_H1}
Fix $T>0$ and assume that
$\rho_0,j_0\in H^1(\mathbb T)$,
$k_0\geq 0$ a.e., and
$q_0:=\sqrt{k_0}\in H^1(\mathbb T)$. Assume furthermore that the noise satisfies
\begin{equation}\label{summability_W_1_infty}
    \sum_{z\geq 1}
    \lambda_z\|e_z\|_{W^{1,\infty}}^2<\infty.
\end{equation}
Assume that the random velocity  $\overline v$
satisfies the non-degeneracy and boundedness
condition
\[
    0<v_*\leq |\overline v(t)|
    \leq v^*<\infty,
    \qquad t\in[0,T],
    \quad\text{a.s.},
\]
for some deterministic constants $v_*,v^*>0$.

Then the system \eqref{const_reduced_CS_rho_iid_B}--\eqref{const_reduced_CS_k_iid_B}
admits a unique adapted solution
$(\rho,j,k,\overline v)$ such that
$
    (\rho,j,k)\in
    C([0,T];H^1(\mathbb T)^3)
    \text{ a.s.},
$ 
and
$ 
    \overline v\in C([0,T];\mathbb R)
\text{ a.s.}
$
Moreover,
\[
    \mathbb E\left[
    \sup_{0\leq t\leq T}
    \left(
        \|\rho(t)\|_{H^1}^2
        +\|j(t)\|_{H^1}^2
    \right)
    \right]<\infty,
\]
and
\[
    k(t,x)\geq 0
    \quad\text{for all }t\in[0,T]
    \text{ and a.e. }x\in\mathbb T,
    \quad\text{a.s.}
\]
\end{theorem}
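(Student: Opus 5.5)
The key observation is that for the constant kernel $a\equiv C_a$ every coefficient in \eqref{const_reduced_CS_rho_iid_B}--\eqref{const_reduced_CS_k_iid_B} is spatially constant. The only coupling between space and the random dynamics passes through the scalar process $\overline v(t)=\int_x j$. I would therefore decouple the $k$-equation explicitly and then treat $(\rho,j)$ as a linear hyperbolic system with random, time-dependent but spatially constant coefficients. The nonlinearity enters only via $\overline v$, and I would handle it by a cutoff that the non-degeneracy and boundedness assumption renders inactive.

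\textbf{Step 1: solving for $k$.} Given a continuous adapted path $\overline v$, the $k$-equation is a linear transport equation with damping. Its solution is
\[
k(t,x)=e^{-\gamma t}k_0\bigl(x-X(t)\bigr),\qquad X(t):=\int_0^t\overline v(s)\dd s .
\]
Hence $k\ge0$ a.e., and $q:=\sqrt{k}=e^{-\gamma t/2}q_0(\cdot-X(t))$ satisfies $\|q(t)\|_{H^1}=e^{-\gamma t/2}\|q_0\|_{H^1}$. This gives nonnegativity and an a priori $H^1$ bound on $\sqrt k$ that is uniform in the path. Combined with \eqref{summability_W_1_infty}, the noise coefficient $\sigma N^{-1/2}\sum_z\sqrt{\lambda_z}\,q\,e_z$ is Hilbert--Schmidt into $H^1$, with norm at most $C\|q_0\|_{H^1}$. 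Moreover, $(\overline v,X)\mapsto q$ is Lipschitz from $C([0,T])$ into $C([0,T];L^2)$ (into $H^1$ for smooth data, and by density otherwise). Since $k\in C([0,T];H^1)$ needs $k_0\in H^1$, I would note that $k=q^2$ with $q\in H^1\subset L^\infty$ in $d=1$, so this follows.

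\textbf{Step 2: truncated system and Galerkin scheme.} I would replace $\overline v$ in the drift by $\chi_R(\overline v)$, where $\chi_R$ is a smooth bounded cutoff equal to the identity on $[v_*,v^*]$. I would then project onto Fourier modes $|m|\le n$, which commutes with $\partial_x$ and with the constant coefficients. The truncated system is a finite-dimensional SDE with locally Lipschitz coefficients, using Step 1 for $k$. For a priori bounds I would use the weighted energy
\[
\mathcal E(t)=\overline v^2\|\rho\|_{H^1}^2+\|j\|_{H^1}^2 .
\]
The transport terms cancel exactly, since $\int(-\partial_x j)\overline v^2\rho+\int(-\overline v^2\partial_x\rho)j=0$, and the same holds at the derivative level. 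The damping term gives $-C_a\|j\|^2$, and the coupling $C_a\overline v\langle\rho,j\rangle$ is bounded by $\mathcal E$. The main new contribution comes from the Itô differential of the weight, $\dd(\overline v^2)=2\overline v\,\dd\overline v+\dd\langle\overline v\rangle$. Here $\dd\langle\overline v\rangle\le\sigma^2N^{-1}\Tr(\xi)\sup_z\|e_z\|_\infty^2\|k\|_{L^1}\dd t$, which is bounded by Step 1. The martingale parts are controlled by BDG. Gronwall then gives $\E{\sup_{t\le T}\mathcal E}\le C(T,R)$ uniformly in $n$. Since $|\overline v|\ge v_*$ on the relevant event, this controls $\|\rho\|_{H^1}^2+\|j\|_{H^1}^2$.

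\textbf{Step 3: limit, uniqueness and removal of the cutoff.} I would pass $n\to\infty$ by showing the Galerkin approximations are Cauchy in $L^2(\Omega;C([0,T];L^2))$. This uses an $L^2$ energy estimate on differences, in which the Lipschitz dependence of $q$ on $\overline v$ from Step 1 and the bound $|\overline v^{(1)}-\overline v^{(2)}|\le\|j^{(1)}-j^{(2)}\|_{L^1}$ close the Gronwall argument. The uniform $H^1$ bounds then pass to the limit by weak lower semicontinuity. Continuity in $H^1$ follows from the energy equality, because the system is hyperbolic and there is no smoothing; this is the standard strong-continuity argument for linear wave-type systems. The same difference estimate yields pathwise uniqueness. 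Finally, under the standing assumption $v_*\le|\overline v|\le v^*$ the cutoff never activates, so the solution of the truncated system solves the original system on $[0,T]$.

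\textbf{Main obstacle.} I expect the hardest part to be the weighted energy estimate in Step 2 at the $H^1$ level. There the time-dependent random weight $\overline v^2$ produces both an Itô correction and a cross-variation term between $\dd\overline v$ and the noise in $j$. The lack of parabolic smoothing means these terms must be absorbed purely by Gronwall, using the exponential decay of $\|\sqrt k\|_{H^1}$ from Step 1. My first attempt would be to bound each such term by $C\bigl(1+\mathcal E\bigr)$ with constants depending on $v_*$, $v^*$, $\|q_0\|_{H^1}$ and $\sum_z\lambda_z\|e_z\|_{W^{1,\infty}}^2$.
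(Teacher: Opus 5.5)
Your outline works, but it takes a heavier route than the paper because it treats $\overline v=\int j$ as a genuine nonlinear feedback. The paper's key observation is that the velocity equation closes on its own. Since $k(t,x)=e^{-\gamma t}k_0(x-V(t))$ with $V(t)=\int_0^t\overline v(s)\,\dd s$, the pair $(V,\overline v)$ satisfies $\dd V=\overline v\,\dd t$ and $\dd\overline v=\sum_z B_z(t,V)\,\dd\beta_z$, where $B_z(t,V)=\sigma N^{-1/2}\sqrt{\lambda_z}e^{-\gamma t/2}\int_{\mathbb T}q_0(x-V)e_z(x)\,\dd x$. This coefficient is bounded and globally Lipschitz in $V$ as an $\ell^2$-valued map, by exactly your translation estimate.

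The paper solves this two-dimensional SDE first and then reads off $k$. What remains is a \emph{linear} system for $(\rho,j)$ with given coefficients bounded by $v_*,v^*$ and additive noise. This buys three things:
\begin{itemize}
\item No cutoff is needed.
\item Galerkin differences solve the same linear system, so the weighted $H^1$ energy estimate already makes the approximations Cauchy in $L^2(\Omega;C([0,T];H^1))$. $H^1$-continuity then comes directly.
\item Uniqueness reduces to uniqueness of a linear ODE on each Fourier mode.
\end{itemize}

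The same structure is hidden in your scheme. Integrate your truncated Galerkin $j$-equation, using $\int\rho_n=\int\rho_0$ and that $P_n$ preserves the mean. You find that $(X_n,\int j_n)$ solves a closed SDE whose coefficients do not depend on $n$. So all the $\overline v_n$ coincide, and the $\overline v^{(1)}-\overline v^{(2)}$ terms in your difference estimate vanish.

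Without this observation you pay several costs. You only get $L^2$-Cauchy, so you must add weak lower semicontinuity and a mollification/energy-equality argument for strong $H^1$-continuity. That argument is feasible here because the coefficients are constant in space. A term like $(\chi_R(\overline v_n)^2-\chi_R(\overline v_m)^2)\partial_x\rho_m$ would also force a localisation, since $\|\partial_x\rho_m\|$ is not pathwise bounded.

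The decoupling also makes the velocity hypothesis non-circular. In the paper the bounds are imposed on a process constructed before $(\rho,j)$. Your claim that ``the cutoff never activates'' needs the extra observation that the truncated $\overline v$ agrees with the closed-SDE solution up to its exit time. In return, your framework does not hinge on $k$ being explicit, so it is closer to what the general-kernel case (Remark~\ref{rem:non-constant-kernel}) would need.

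A few small corrections:
\begin{itemize}
\item For $q_0\in H^1$, the map $X\mapsto q_0(\cdot-X)$ is only continuous into $H^1$, not Lipschitz; density does not give Lipschitz. You only use the $L^2$ version, so nothing breaks.
\item The cross-variation between $\dd\overline v$ and the $j$-noise, which you flag as the main obstacle, does not occur. The weight $\overline v^2$ multiplies only $\|\rho\|_{H^1}^2$, which has finite variation.
\item Choose $\chi_R$ according to the sign of $\overline v$, which is fixed on $[0,T]$ by continuity, and keep it bounded away from zero. Otherwise the weighted energy stops controlling $\rho$.
\end{itemize}
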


The proof of the theorem is provided in Appendix \ref{app: well-posedness-iid}.
 
\begin{remark}
The deterministic velocity bounds are a restrictive conditional hypothesis, not a consequence of the velocity SDE \eqref{velocitySDE}. Without imposing them globally, one may instead work up to the exit time
$\tau_v=\inf\{t\geq0:|\overline v(t)|\leq v_*\text{ or }|\overline v(t)|\geq v^*\}$,
provided $v_*<|\overline v_0|<v^*$. On every stopped interval the same estimates hold with constants depending on $v_*,v^*$. This yields the corresponding local-in-time solution up to the exit time. We do not use this localization in the proof below.
\end{remark}

Next we state our flocking result (Theorem \ref{thm: constant-kernel-flocking-iid}, and related result in Remark \ref{thm: general-kernel-flocking-iid-exp}) and the associated assumptions (Assumption \ref{ass_smallness_iid}).
The proofs are provided in Appendix \ref{sec:proofs_iid}.

\begin{assumption}[Smallness assumption for i.i.d. noise case]\label{ass_smallness_iid}
In order to avoid repetitions in Theorems \ref{thm: constant-kernel-flocking-iid} and Remark \ref{thm: general-kernel-flocking-iid-exp}, we use a slight abuse of notation in this assumption only and drop subscripts in $h$ when referring to objects from Remark \ref{thm: general-kernel-flocking-iid-exp}. Assume the following:
\begin{enumerate}[label=(A\arabic*)]
    \item The initial condition $(\rho_0, j_0, k_0)\in L^2(\Omega; H^1\times H^1)$ is $\mathcal{F}_0$-measurable, and is independent of the noise $\xi(t)$ for all $t \geq 0$.
    \item \emph{(for both Theorem \ref{thm: constant-kernel-flocking-iid} and Remark \ref{thm: general-kernel-flocking-iid-exp})} The size of the $L^2$-norm of the SPDE noise (this being $\sigma N^{-1/2}\Tr^{1/2}(\xi)$) is bounded as $(\sigma N^{-1/2}\Tr^{1/2}(\xi))^2 \leq \delta \cdot C_a$ for small enough $\delta>0$. Furthermore, we assume $\Tr(\xi)\geq 1$. 
    \item \emph{(for both Theorem \ref{thm: constant-kernel-flocking-iid} and Remark \ref{thm: general-kernel-flocking-iid-exp})}  The parameter $\sigma$ is small enough so that $\sigma^2(1-2N^{-1})^2 \leq  \delta C_a$ for small enough $\delta > 0$. \label{ass:l2noise_size} 
    \item \emph{(for Theorem \ref{thm: constant-kernel-flocking-iid} only)} The initial data $Y_0:=\|j(0)-\overline{v}(0)\rho(0)\|^2 + \|k(0)\|_{H^1}$ and $\eta_0 := \|j(0)\|^2 + \overline{v}^2(0)\|\rho(0)\|^2 + \|k(0)\|_{H^1}$ are small enough relative to $\overline{v}(0)\sqrt{\Tr(\xi)}$.
    \item \emph{(for Remark \ref{thm: general-kernel-flocking-iid-exp} only)} The initial data $Y_0:=\|j(0)-\overline{v}(0)\rho(0)\|^2 + \|k(0)\|_{L^2}$ and $\eta_0 := \|j(0)\|^2 + \overline{v}^2(0)\|\rho(0)\|^2 + \|k(0)\|_{L^2}$ are small enough relative to $\min\{\overline{v}(0)\sqrt{\Tr(\xi)}; \overline{v}(0)\frac{C_a}{\theta\|g\|}\}$.
    \item \emph{(for both Theorem \ref{thm: constant-kernel-flocking-iid} and Remark \ref{thm: general-kernel-flocking-iid-exp})} We have the bound $\|j(0)\|^2 + \overline{v}^2(0)\|\rho(0)\|^2 + \|k(0)\|_{L^2} \leq R^2_0$ for some deterministic constant $R^2_0$.
\end{enumerate}
\end{assumption}

\begin{theorem}[i.i.d. noise, constant kernel case]\label{thm: constant-kernel-flocking-iid}
    Consider the case of a constant kernel $a \equiv C_a$. In \eqref{const_reduced_CS_rho_iid_B}--\eqref{const_reduced_CS_j_iid_B}--\eqref{const_reduced_CS_k_iid_B}, replace the space-time white noise with the trace-class noise \eqref{coloured_noise}. Assume the smallness conditions in Assumption \ref{ass_smallness_iid}.
     Then, for some $\delta > 0$, we have the following flocking estimate 
       \begin{align}\label{flocking_est_iid}
        \E{\mathbf{1}_{\tau=\infty}\norm{j(t) - \vbar(t) \rho(t)}^2} \leq e^{- \delta C_a t}\E{\norm{j_0 - \vbar(0) \rho_0}^2 +\|k(0)\|_{H^1}}, \quad t \geq 0.
        \end{align}
        where $\tau$ is a stopping time satisfying
        \begin{align}\label{stop_time_bound_iid}
    \mathbb{P}(\tau < \infty) \propto \exp\left\{ - C (\sigma^2 N^{-1} \gamma^{-1} \overline{v}^2(0))^{-\alpha} \right\} + \exp\{ - (\sigma N^{-1/2}\Tr^{1/2}(\xi))^{-2}\},
        \end{align}
        for some $C>0$ and $\alpha>0$. 
\end{theorem}
For the general kernel case, we work with a discretised version of the SPDE, for simplicity via finite differences, see \eqref{reduced_CS_rho_iid_B_gen_discr}--\eqref{reduced_CS_j_iid_B_gen_discr}--\eqref{reduced_CS_k_iid_B_gen_discr}. That is, we work on periodic mesh $G_h$ of $\mathbb{T}$ with mesh size $h>0$, and write $\|f_h\|_{p,h} = (\sum_{x\in G_h} hf_h^p(x) )^{1/p}$, and abbreviate $\|\cdot\|_{2,h} = \|\cdot\|_h$. The operators $\nabla_h, \nabla_h\cdot $ are standard differential operators. Furthermore, we denote $f_h\ast_h g_h(x) = \sum_{y\in G_h}{h f_h(x-y)g_h(y)}$.

As hinted in Remark \ref{rem:non-constant-kernel}, the SPDE well-posedness in this case is more complicated, and we have chosen not to pursue proving it for the discretised system \eqref{reduced_CS_rho_iid_B_gen_discr}--\eqref{reduced_CS_j_iid_B_gen_discr}--\eqref{reduced_CS_k_iid_B_gen_discr}. What we have focused on is instead verifying the SPDE flocking \emph{conditional} on well-posedness: for this reason, we frame our analysis in a remark (see Remark \ref{thm: general-kernel-flocking-iid-exp} below) rather than a theorem. However, our numerical simulations support our findings, and confirm our modelling choices.

\begin{remark}[i.i.d. noise, full kernel case]\label{thm: general-kernel-flocking-iid-exp}
    Consider the case of a general kernel $a = C_a + \theta g$. Consider a finite difference discretised version of \eqref{reduced_CS_rho_iid_B_gen}--\eqref{reduced_CS_j_iid_B_gen}--\eqref{reduced_CS_k_iid_B_gen} with mesh size $h$
    \begin{align}
    d \rho_h & = - \nabla_h \cdot j_h dt \label{reduced_CS_rho_iid_B_gen_discr} ,\\
    d j_h & = \left\{\rho_h (a\ast_h j_h)  - j_h (a \ast_h \rho_h) - \overline{v}^2(t) \nabla_h \rho_h\right\}dt + \sigma N^{-1/2}\sqrt{k_h}\,d\xi_h
\label{reduced_CS_j_iid_B_gen_discr},\\
    d k_h & =
    2(j_h-\overline{v}(t)\rho_h)a\ast (j_h-\overline{v}(t)\rho_h)dt + \left(\sigma^2(1-2N^{-1})- 2 a\ast_h \rho_h\right)k_h dt - \overline{v}(t)\nabla k_h dt, \label{reduced_CS_k_iid_B_gen_discr}
\end{align}
where $\xi_h$ is a discretised version of $\xi$ in \eqref{coloured_noise}, such that $\Tr(\xi_h) \gtrsim h^{-1/2}$. Assume we have a global solution to \eqref{reduced_CS_rho_iid_B_gen_discr}--\eqref{reduced_CS_j_iid_B_gen_discr}--\eqref{reduced_CS_k_iid_B_gen_discr}.
Assume the smallness conditions in Assumption \ref{ass_smallness_iid}.
     Then, for some $\delta > 0$, we have the following flocking estimate 
       \begin{align}\label{flocking_est_iid_general}
        \E{\mathbf{1}_{\tau=\infty}\norm{j_h(t) - \vbar(t) \rho_h(t)}_h^2} \leq e^{- \delta C_a t}\E{\norm{j_{h,0} - \vbar(0) \rho_{h,0}}_h^2 +\|k_{h,0}\|_h}, \quad t \geq 0,
        \end{align}
        where $\tau$ is a stopping time satisfying
        \begin{align}
            \mathbb{P}(\tau<\infty) & \lesssim \exp\{-(\sigma^{-2} N^{}\delta C_a \overline{v}^2(0))^{\alpha} \} + \exp\{ - (\sigma N^{-1/2}\Tr^{1/2}(\xi_h))^{-2}\}.
        \end{align}
\end{remark}

\subsection{Improved kinetic closure}\label{sec: improved-kinetic-closure}

We now outline a method for improving the behaviour of our reduced SPDE model in the pre-flocking regime. We consider $d=1$ for simplicity although we expect such a procedure could be adapted to the multi-dimensional setting. One of the main sources of error in the derivation given in Section \ref{subsec_derivation_SPDE_iid} is the closing (flocking) approximation
$$
T_2 = -N^{-1}\sum_{i=1}^N v_i^2(t) \delta'(x-x_i(t)) \approx -N^{-1}\sum_{i=1}^N \vbar^2(t) \delta'(x-x_i(t)).
$$
To circumvent such an approximation, we can expand 
$$
v_i^2(t) = (v_i(t) - \vbar(t) + \vbar(t))^2 = (v_i(t) - \vbar(t))^2 + \vbar^2(t) + 2 \vbar(t) (v_i(t) - \vbar(t)).
$$
Intuitively, we have removed the flocking approximation at the expense of picking up additional terms in the reduced SPDE. We now show 
that these terms can be written in terms of our existing densities ($\rho, j, k)$ so that the system remains closed. Specifically, we write
\begin{align*}
    T_2 &= -N^{-1}\sum_{i=1}^N v_i^2(t) \delta'(x-x_i(t)) \\
    &= -N^{-1}\sum_{i=1}^N \brack{(v_i(t) - \vbar(t))^2 + \vbar^2(t) + 2 \vbar(t) (v_i(t) - \vbar(t))}\delta'(x-x_i(t)) \\
    &= - \partial_x k - \vbar^2(t) \partial_x\rho - 2\vbar(t) \partial_x(j-\vbar(t) \rho).
\end{align*}
The remainder of the derivation is unchanged, giving the following system 
\begin{align}
    \dd \rho &= - \partial_x j dt, \label{eq: improved-rho}\\
    \dd j &= \left\{\rho (a\ast j)  - j (a \ast \rho) - \overline{v}^2(t) \partial_x \rho - \partial_x k - 2\vbar(t) \partial_x(j-\vbar(t) \rho)\right\}dt + \sigma N^{-1/2}\sqrt{k}\,d\xi, \label{eq: improved-j}\\
    \dd k &=
    2(j-\overline{v}(t)\rho)a\ast (j-\overline{v}(t)\rho)dt + \left(\sigma^2(1-2N^{-1}) - 2 a\ast \rho\right)k dt - \overline{v}(t)\partial_x kdt. \label{eq: improved-k}
\end{align}

\begin{remark}\label{remark_improved_model}
    In the improved system \eqref{eq: improved-rho}--\eqref{eq: improved-j}--\eqref{eq: improved-k}, the transport coefficient for $j - \overline{v}\rho$ switches sign ($+\overline{v}$ to $-\overline{v}$) with respect to the standard closed system \eqref{reduced_CS_rho_iid_B_gen}--\eqref{reduced_CS_j_iid_B_gen}--\eqref{reduced_CS_k_iid_B_gen}. In other words, the transport in the improved model follows the average flock velocity, rather than its opposite. 
\end{remark}
The $\rho$ and $j$ equations are now exact with the closing approximation seen entirely at the level of the $k$-SPDE. In particular, referring back to the derivation in Section \ref{subsec_derivation_SPDE_iid} (in particular, \eqref{differential_k_gen}), the It\^o differential for $k$ reads
$$
\dd k = Y_1 + Y_2 + Y_3 -N^{-1}\sum_{i=1}^N (v_i(t) - \vbar(t))^2 v_i(t) \delta'(x-x_i(t)) \dd t,
$$
where the final term is closed using the flocking approximation $v_i(t) \approx \vbar(t)$. Equivalently, one can write $v_i(t) = v_i(t) - \vbar(t) + \vbar(t)$ to get
$$
-N^{-1}\sum_{i=1}^N (v_i(t) - \vbar(t))^2 v_i(t) \delta'(x-x_i(t)) = -\vbar(t) \partial_x k -N^{-1}\sum_{i=1}^N (v_i(t) - \vbar(t))^3 \delta'(x-x_i(t)),
$$
with the closing approximation now being that we drop the final term. In fact, we could iterate this procedure to produce a hierarchy of coupled SPDEs in the quantities
$$
-N^{-1}\sum_{i=1}^N (v_i(t) - \vbar(t))^p \delta(x-x_i(t)),
$$
with the closing approximation manifesting purely via the residual $-N^{-1}\sum_{i=1}^N (v_i(t) - \vbar(t))^p \delta'(x-x_i(t))$. For sufficiently small velocity spreads $v_i(t) - \vbar(t)$, we expect the error due to the residual to be negligible. 
To substantiate our claim that this technique leads to an improved model \eqref{eq: improved-rho}--\eqref{eq: improved-j}--\eqref{eq: improved-k}, we present supporting numerical evidence in 
 Section \ref{sec: numerics-improved-closure}. 
 In addition, computations similar to those of Section \ref{subsec_SPDE_constant_kernel_env_noise} allow us to show that \eqref{eq: improved-rho}--\eqref{eq: improved-j}--\eqref{eq: improved-k} encodes flocking behaviour in the constant kernel case $a = C_a$, although controlling the energy is subject to analogous considerations as in the high dimension case. We defer a full theoretical error analysis of this to future work. 
It would also be interesting to investigate if it is beneficial to go further into the hierarchy.

\section{Numerical experiments}\label{sec: numerics}

In this section, we present numerical experiments using the standard interaction kernel \eqref{eq: kernel-definition}. Unless stated otherwise, we choose $r=1/2$ and $\lambda = 50$. The code used to generate all figures is available at \url{https://github.com/kamran-arora/reduced-SPDE-stochastic-cucker-smale}. The repository contains Python implementations of the interacting particle systems and reduced SPDEs in arbitrary spatial dimension $d$.

\subsection{Implementation details}

We discretise the reduced SPDEs using centered finite differences in space with additional artificial diffusion of the form $\nu_h \Delta_h$ and fully-explicit Euler-Maruyama in time. The symbol $\Delta_h$ denotes the standard discretisation of the Laplacian and $\nu_h = \absv{\vbar(t)}h/2$. In $d=1$ this is equivalent to first-order upwinding of the transport terms \cite[Chapter 10]{leveque2007finite} whilst in higher dimensions we treat it as a form of numerical regularisation. The discrete convolution is implemented using the Fast Fourier transform. For the SPDE with common noise, the driving Brownian motion is the same as the one used to simulate the associated particle system. For the SPDE with i.i.d.\ noise, the discrete space-time white noise $\boldsymbol{\xi}_h$ is defined as follows: let $G_h$ denote our spatial grid with spacing $h>0$ and let $\boldsymbol{e}^h_{\bx}(\by) \coloneqq h^{-d/2}\delta_{\bx \by}$ be a basis. Then,
$$
\boldsymbol{\xi}_h(t) \coloneqq \sum_{\bx \in G_h} \boldsymbol{B}_{\bx}(t)\boldsymbol{e}^h_{\bx},
$$
where $(\boldsymbol{B}_{\bx})_{\bx \in G_h}$ is a collection of independent, standard $d$-dimensional Brownian motions. Initial densities are computed by placing a von Mises (VM) kernel with variance $\ve^2$ at each particle position and then evaluating the $\ve$-smoothed empirical measures over the numerical grid. On $[-L/2, L/2]$, the VM kernel has distribution
$$
f_{\mathrm{VM}}(x|\ve) = \frac{\exp\brack{\ve^{-2} \cos\brack{\frac{2\pi x}{L}}}}{I_0(\ve^{-2})L},
$$
where $I_0(\cdot)$ is the modified Bessel function of the first kind of order $0$. In higher dimensions the VM kernel is defined via products of one-dimensional VM distributions. That is,
$$
f_{\mathrm{VM}}^{(d)}(x_1, \ldots, x_d | \ve) \coloneqq \prod_{i=1}^d f_{\mathrm{VM}}(x_i | \ve).
$$

In order to make comparisons between the particle systems and their associated reduced SPDEs it will be convenient to represent the former using smoothed empirical measures. In particular, we define
\begin{align*}
    \rho_{\ve}(\bx, t) &\coloneqq N^{-1} \sum_{i=1}^N f_{\mathrm{VM}}^{(d)}(\bx - \bx_i(t) | \ve), \\
    \bj_{\ve}(\bx, t) &\coloneqq N^{-1} \sum_{i=1}^N \bv_i(t)f_{\mathrm{VM}}^{(d)}(\bx - \bx_i(t) | \ve), \\
    \boldsymbol{k}_{\ve}(\bx, t) &\coloneqq N^{-1} \sum_{i=1}^N (\bv_i(t) - \bvbar(t))^2f_{\mathrm{VM}}^{(d)}(\bx - \bx_i(t) | \ve).
\end{align*}

\subsection{Approximation quality}\label{subsec: approx-quality}

We first consider the approximation quality of the reduced SPDE with common noise in $d=1$ \eqref{eq:spde-vanishing-common-1d}. We plot the mean and variance of $\rho$ and $j$ at times $t \in \{0.5, 2.0\}$. For each realisation, we generate a random initial condition of $N=250$ particles. The initial positions are split equally over three clusters, and within each cluster the velocities are sampled uniformly over a chosen, cluster-specific interval. This produces three clusters, each moving with a different average initial velocity. We simulate the particle system and reduced SPDE using timestep $\Delta t = 0.0005$ until $t=2.0$ for $100$ realisations on $[-50, 50]$ with $\sigma=1.0$. The VM parameter is $\ve = 0.125$, and the grid is discretised uniformly into $256$ cells. The results are shown in Figure \ref{fig: approximation-vc} where we see good agreement between the particles and SPDE, with clear alignment between $\rho$ and $j$ in the bottom panel.

\begin{figure}[h!]
    \centering
    \includegraphics[width=0.9\linewidth]{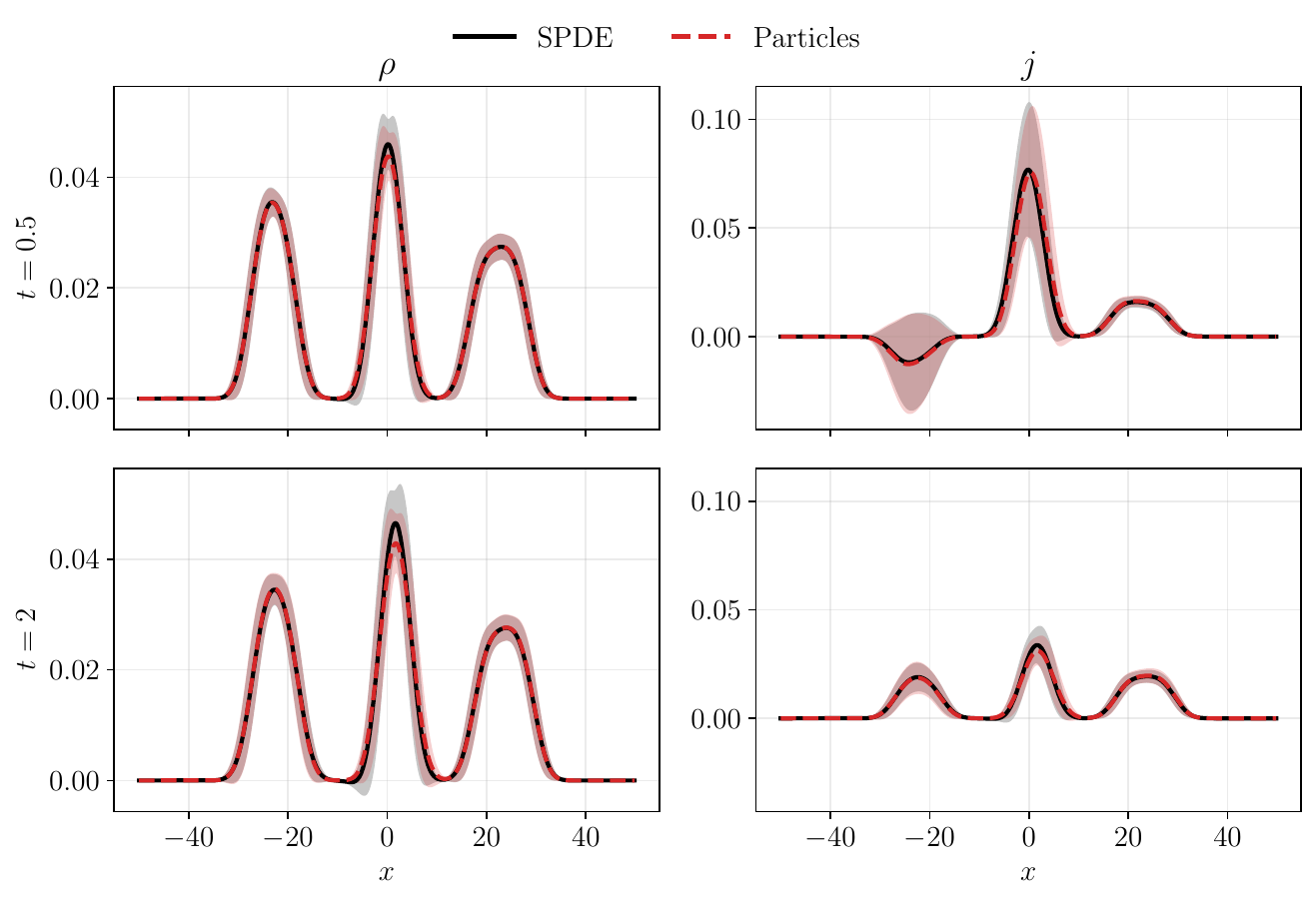}
    \caption{Comparison between the expected values of $\rho, j$ and $\rho_\ve, j_\ve$ for the common noise model averaged over $100$ realisations at times $t \in \{0.5, 2.0\}$. The shading corresponds to one standard deviation.}
    \label{fig: approximation-vc}
\end{figure}

We now consider the reduced SPDE with i.i.d.\ noise in $d=1$ \eqref{reduced_CS_rho_iid_B_gen}-\eqref{reduced_CS_k_iid_B_gen}. The setup is otherwise identical. The results are shown in Figure \ref{fig: approximation-viid} where we see good agreement between the particles and SPDE with clear alignment between $\rho$ and $j$ in the bottom panel. Moreover, by the final time $k$ has decayed close to zero.

\begin{figure}[h!]
    \centering
    \includegraphics[width=0.9\linewidth]{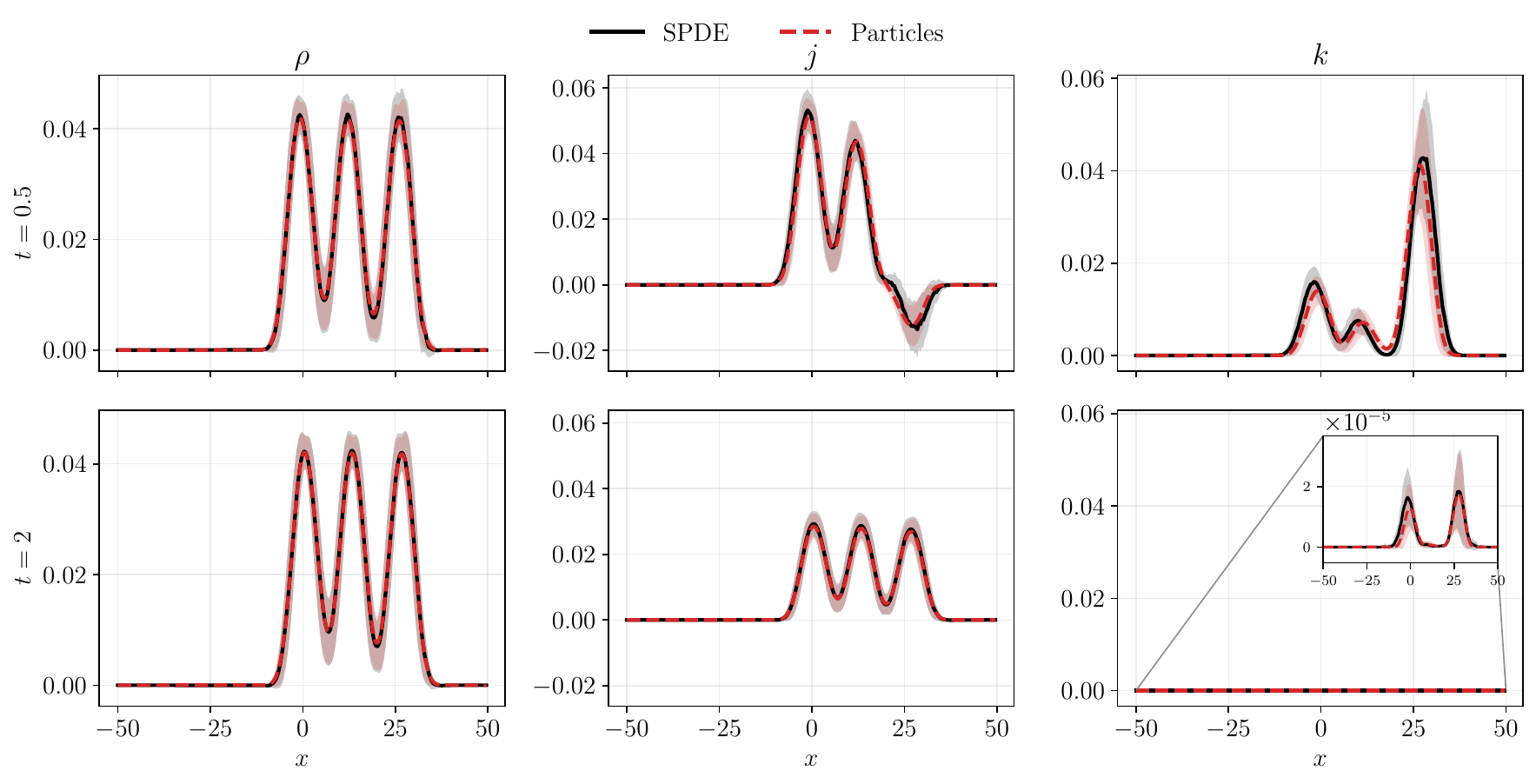}
    \caption{Comparison between the expected values of $\rho, j, k$ and $\rho_\ve, j_\ve, k_\ve$ for the i.i.d.\ noise model averaged over $100$ realisations at times $t \in \{0.5, 2.0\}$. The shading corresponds to one standard deviation.}
    \label{fig: approximation-viid}
\end{figure}

\subsection{Flocking}

We now consider flocking for the reduced SPDE with common noise in $d=1$ \eqref{eq:spde-vanishing-common-1d} and $d=2$ \eqref{eq:spde-vanishing-common-nd} by comparing the quantities $\mathbb{E}[\norm{\bj(t) - \vbar \rho(t)}^2]$ and $\mathbb{E}[\norm{\bj_{\ve}(t) - \vbar \rho_{\ve}(t)}^2]$. For each realisation, we generate a random initial configuration of $N=250$ particles using the same procedure as in the previous section. This ensures that the initial condition is firmly in the non-flocking regime. We simulate both the particles and SPDE until $t=0.5$ using timestep $\Delta t = 0.005$ for $1,000$ realisations when $d=1$ and $100$ realisations when $d=2$. The VM parameter is $\ve=0.125$ in $d=1$ and $\ve=0.2$ in $d=2$ whilst the grid is discretised uniformly into $256$ cells for $d=1$ and $64 \times 64$ cells for $d=2$. The results are shown in Figure \ref{fig: flocking-vc}\subref{fig: flocking-vc-1d} and Figure \ref{fig: flocking-vc}\subref{fig: flocking-vc-2d} for $d=1$ and $d=2$ respectively. As expected from our theory, we observe flocking for small values of $\sigma$ and non-flocking for large values of $\sigma$. Moreover, we have good agreement between the particle system and reduced SPDE.

\begin{figure}[h!]
    \centering
    \begin{subfigure}{0.48\textwidth}
        \centering
        \includegraphics[width=\linewidth]{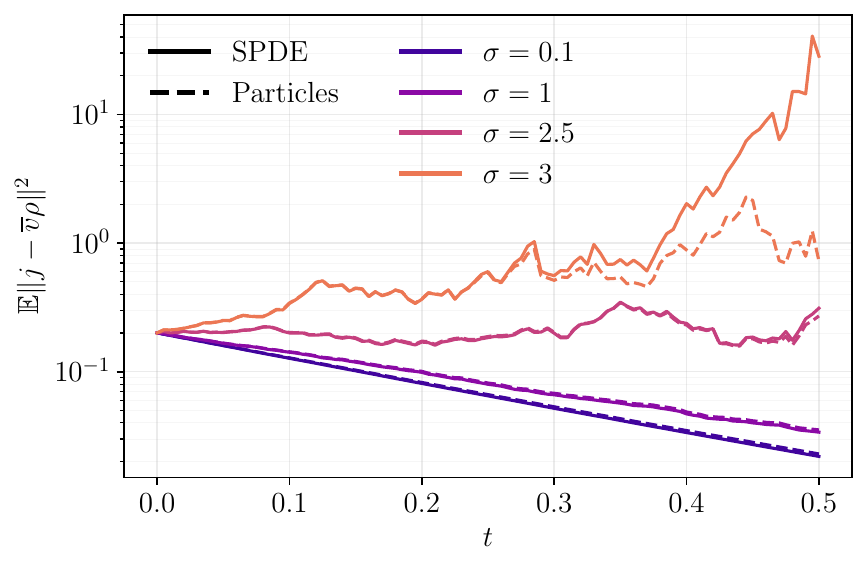}
        \caption{$d=1$}
        \label{fig: flocking-vc-1d}
    \end{subfigure}\hfill
    \begin{subfigure}{0.48\textwidth}
        \centering
        \includegraphics[width=\linewidth]{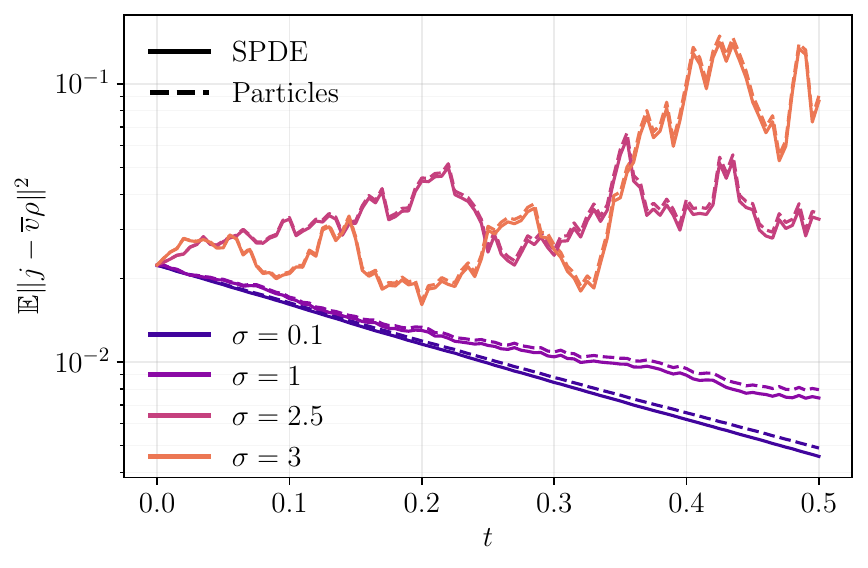}
        \caption{$d=2$}
        \label{fig: flocking-vc-2d}
    \end{subfigure}\hfill
    \caption{Comparison of the flocking quantities $\mathbb{E}\norm{j(t)-\vbar \rho(t)}^2$ and $\mathbb{E}\norm{j_\ve(t) - \vbar \rho_\ve(t)}^2$ between the particle system and reduced SPDE in the common noise setting for $d \in \{1, 2\}$ and varying values of $\sigma$.}
    \label{fig: flocking-vc}
\end{figure}

We now consider flocking for the reduced SPDE with i.i.d noise in $d=1$ \eqref{reduced_CS_rho_iid_B_gen}-\eqref{reduced_CS_k_iid_B_gen} and $d=2$ \eqref{reduced_CS_rho_iid_B_gen_multidim}-\eqref{reduced_CS_k_iid_B_gen-multidim}. The experimental setup is identical to the previous setting but we now compare the flocking quantities $\mathbb{E}[\norm{\bj(t) - \vbar \rho(t)}^2] + \mathbb{E}[\norm{k}]$ and $\mathbb{E}[\norm{\bj_{\ve}(t) - \vbar \rho_{\ve}(t)}^2] + \mathbb{E}[\norm{k_\ve}]$ where we recall that $\vbar$ is time-dependent in this setting. The results are shown in Figure \ref{fig: approximation-viid}\subref{fig: flocking-iid-1d} and Figure \ref{fig: approximation-viid}\subref{fig: flocking-iid-2d} for $d=1$ and $d=2$ respectively. Once again, our numerics support the theory, that is, we observe exponential decay for small $\sigma$ and no decay for large $\sigma$. We also see good agreement between the particles and SPDE.

\begin{figure}[h!]
    \centering
    \begin{subfigure}{0.48\textwidth}
        \centering
        \includegraphics[width=\linewidth]{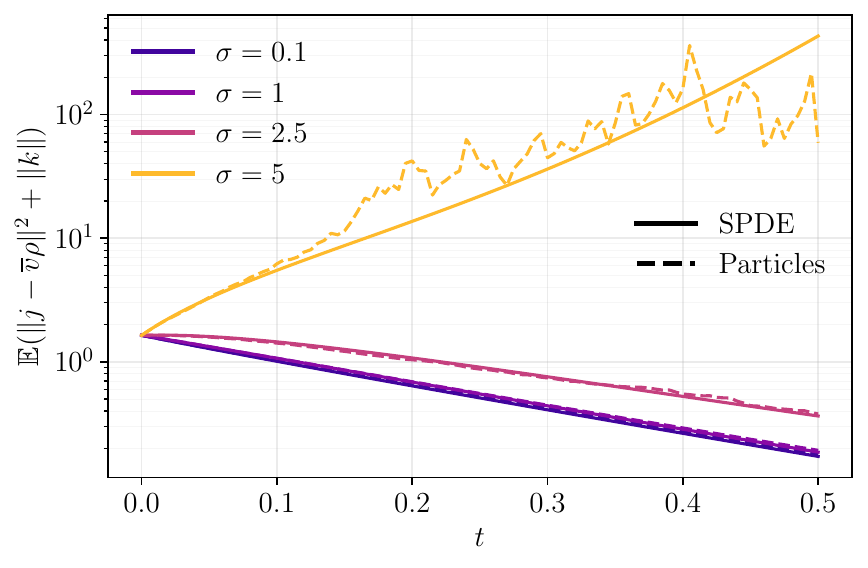}
        \caption{$d=1$}
        \label{fig: flocking-iid-1d}
    \end{subfigure}\hfill
    \begin{subfigure}{0.48\textwidth}
        \centering
        \includegraphics[width=\linewidth]{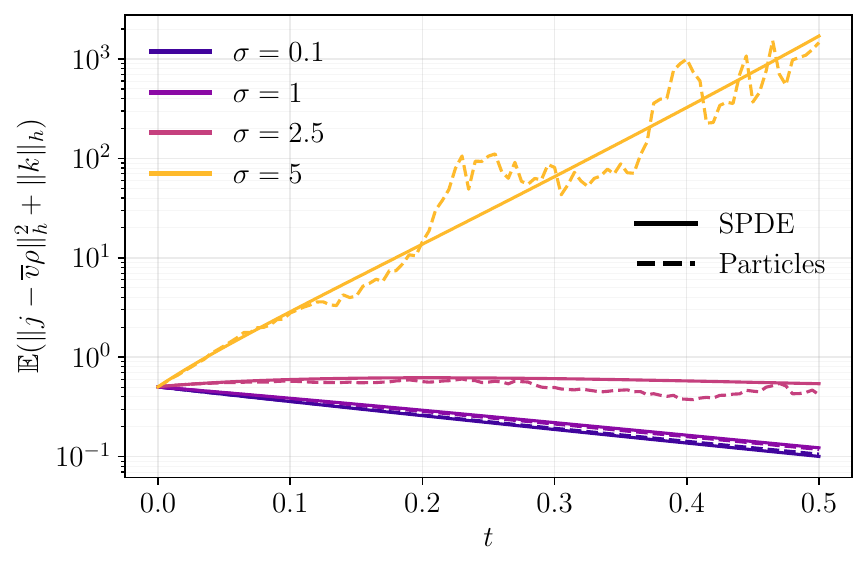}
        \caption{$d=2$}
        \label{fig: flocking-iid-2d}
    \end{subfigure}\hfill
    \caption{Comparison of the flocking quantities $\mathbb{E}\norm{j(t)-\vbar(t) \rho(t)}^2 + \mathbb{E}\norm{k}$ and $\mathbb{E}\norm{j_\ve(t) - \vbar(t) \rho_\ve(t)}^2 + \mathbb{E}\norm{k_\ve}$ between the particle system and reduced SPDE in the i.i.d setting for $d \in \{1, 2\}$ and varying values of $\sigma$.}
    \label{fig: flocking-iid}
\end{figure}

\subsection{Computational cost}

One of the core motivations for the reduced SPDE is its improved computational cost over the particle system. In particular, whereas the cost of the particle system will increase superlinearly with the total number of particles $N$, the SPDE cost will be independent of $N$, depending only on the chosen grid resolution. To demonstrate this, we vary $N$ for both models and measure the time taken to simulate one timestep $\Delta t$. We take $N \in \cbrack{2^3, \ldots, 2^{12}}$ and solve both systems for $100$ timesteps, taking the average time. We then average this result over $100$ runs, each run with a different initial condition drawn from a common distribution and a different realisation of the driving noise. For $d=1$, we discretise the domain uniformly into $256$ cells and in $d=2$ we discretise uniformly into $64 \times 64$ cells. The results are shown in Figure \ref{fig: performance-comparison} for $d \in \{1, 2\}$. As expected, the particle model exhibits superlinear growth in the number of particles $N$ whereas the cost of the reduced SPDE is independent of $N$. In 1D, we see that the common noise SPDE is more computationally efficient for $N \approx 30$ and the i.i.d noise SPDE for $N \approx 60$. Moreover, in 2D we see that the common noise SPDE is more computationally efficient for $N \approx 200$ and the i.i.d noise SPDE for $N \approx 300$.

\begin{figure}[h!]
    \centering
    \begin{subfigure}{0.48\textwidth}
        \centering
        \includegraphics[width=\linewidth]{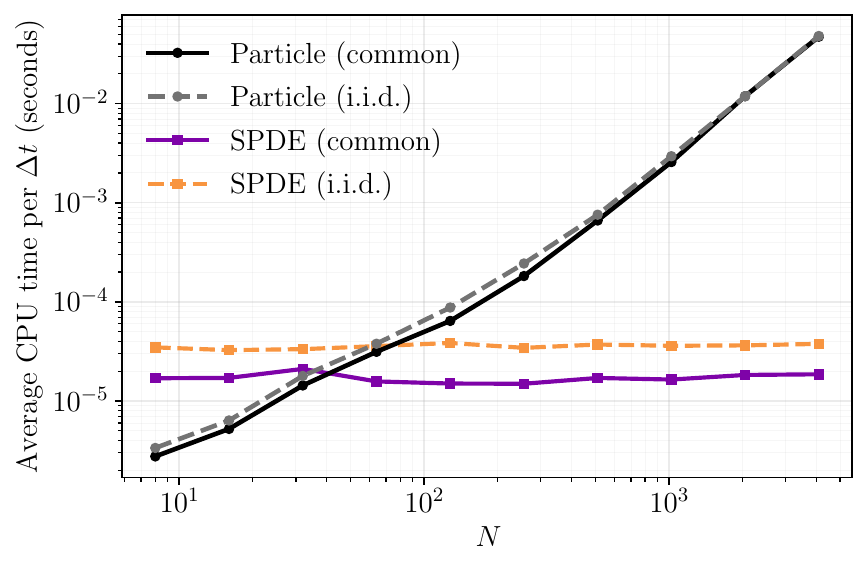}
        \caption{$d=1$}
    \end{subfigure}\hfill
    \begin{subfigure}{0.48\textwidth}
        \centering
        \includegraphics[width=\linewidth]{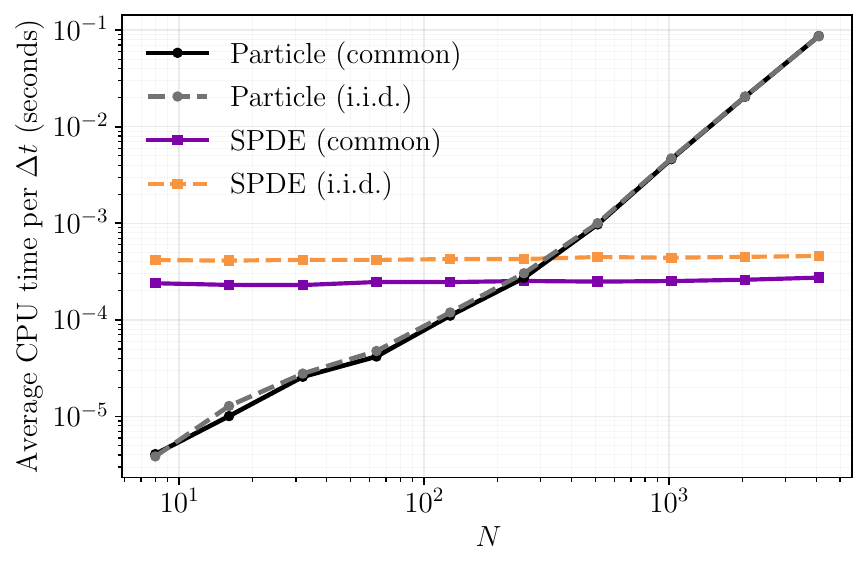}
        \caption{$d=2$}
    \end{subfigure}\hfill
    \caption{Average CPU time in seconds to compute one timestep of the particle model and associated reduced SPDE in dimensions $d \in \{1, 2\}$ for varying $N$ averaged over 100 runs.}
    \label{fig: performance-comparison}
\end{figure}

\subsection{Accuracy in the $L^2$-norm in the common noise setting}

To complement Theorem \ref{thm: error} which gives the error between the particle system and reduced SPDE in the $\hmtwo$-norm, we present numerical results for the error in the squared $L^2$-norm. Recalling the definitions $\xpde(t) \coloneqq (\rho(t), \bj(t))$ and $\xcs(t) \coloneqq (\rho_\ve(t), \bj_\ve(t))$, this amounts to considering
$$
\E{\norm{\xpde(T) - \xcs(T)}^2},
$$
for some terminal time $T>0$. We work with $d \in \{1, 2\}$ and consider two types of initial condition: the first consists of initial positions drawn uniformly at random on $[-50, 50]^d$ and initial velocities drawn uniformly at random on $[-20, 20]^d$, whilst the second corresponds to a three cluster configuration. We perform $250$ realisations with a random initial configuration for each. In $d=1$ we discretise the domain uniformly into $256$ cells and when $d=2$ we discretise the domain uniformly into $64 \times 64$ cells. We fix the terminal time $T=2.0$ and set $\sigma=0.5$. We choose $\ve \in \{0.12, 0.2\}$ for $d=1$ and $\ve \in \{0.2, 0.33\}$ for $d=2$. We plot the terminal error against the number of particles for $N \in \{2^4, \ldots, 2^{11}\}$. The results for $d=1$ are shown in Figure \ref{fig: L2-error-vc}\subref{fig: L2-error-vc-1d} where for the spatially uniform initial condition we see an expected dependence of the form $N^{-1}\ve^{-\beta}$. This is in line with \cite[Remark 3.4]{zimper2025reduced} where a similar observation is noted in the deterministic setting as a consequence of several considerations in \cite{cornalba2021well}. For the clustered initial condition, as $N$ grows we see that the error begins to level off. A possible reason is that the error in $N^{-1}$ has saturated and the plateauing is the residual error from the flocking approximation used to close the model. The results for $d=2$ are shown in Figure \ref{fig: L2-error-vc}\subref{fig: L2-error-vc-2d} where we see similar behaviour.  Theorem \ref{thm: error} also suggests that the error will depend on a number of other factors including the initial velocity spread, the flocking rate, and the variance of the von Mises kernel. We defer investigation of these dependencies to future work.

\begin{figure}[h!]
    \centering
    \begin{subfigure}{0.48\textwidth}
        \centering
        \includegraphics[width=\linewidth]{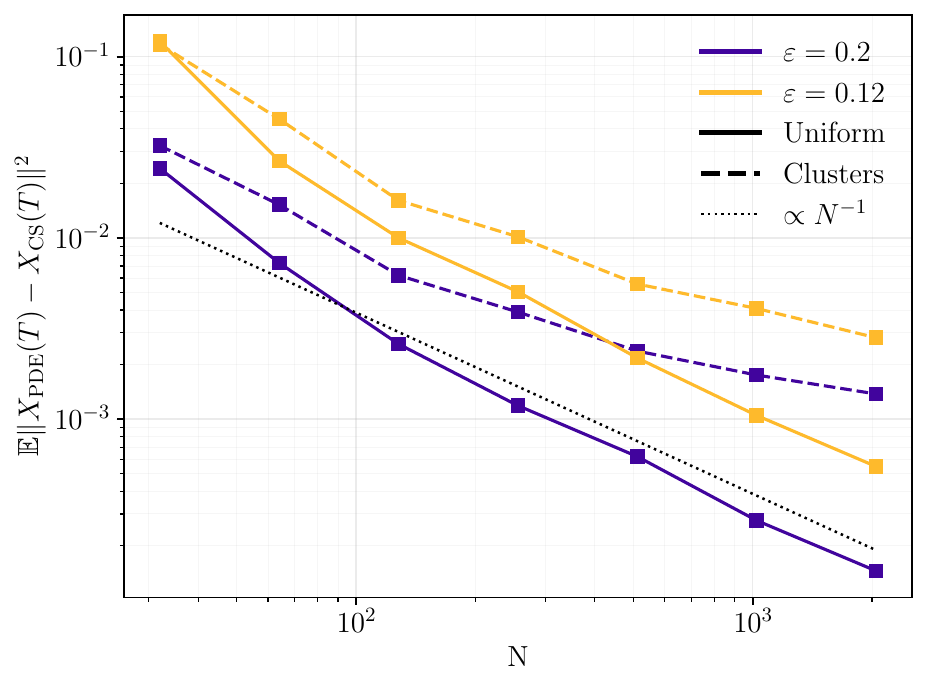}
        \caption{$d=1$}
        \label{fig: L2-error-vc-1d}
    \end{subfigure}\hfill
    \begin{subfigure}{0.48\textwidth}
        \centering
        \includegraphics[width=\linewidth]{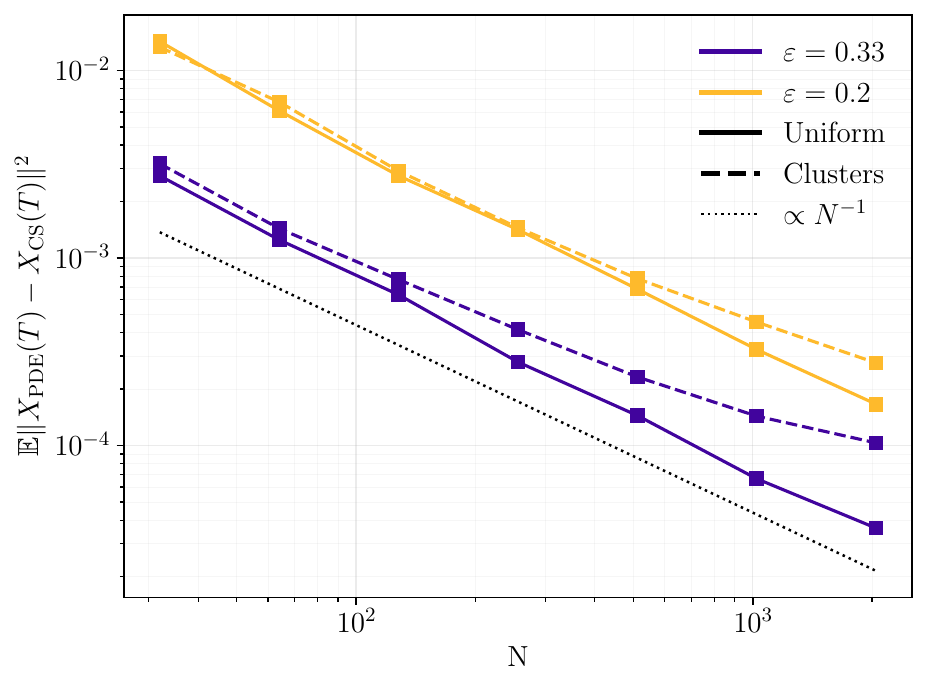}
        \caption{$d=2$}
        \label{fig: L2-error-vc-2d}
    \end{subfigure}\hfill
    \caption{Strong error between the particle system and reduced SPDE in the common noise setting for $d \in \{1, 2\}$. The error is computed for varying values of $N$ with two different choices of initial condition and two different choices of $\varepsilon$.}
    \label{fig: L2-error-vc}
\end{figure}

\subsection{Improved kinetic closure}\label{sec: numerics-improved-closure}

In Section \ref{sec: improved-kinetic-closure}, we derived an alternative formulation of the reduced SPDE with i.i.d.\ noise and claimed that it improved the model in the pre-flocking regime. We now substantiate this claim numerically. Since the proposed improvement lies in the drift of the $j$-SPDE, it is sufficient to perform experiments in the deterministic setting. Define the improvement factor for a quantity $f $ by
$$
I(f, t) \coloneqq \frac{\E{\norm{f(t) - f_\ve(t)}^2}}{\E{\lVert f^{(\mathrm{new})}(t) - f_\ve(t)\rVert^2}},
$$
where the superscript denotes the solution of the reduced SPDE with the improved kinetic closure \eqref{eq: improved-rho}-\eqref{eq: improved-k}. In particular, $I(f, t) > 1$ means the improved kinetic closure leads to a reduction in the error of quantity $f$ between the reduced SPDE and the particle system at time $t$. The expectation is due to the randomness in the initial condition since we set $\sigma = 0$ for our experiments. We fix $N=1,000$ and use $100$ realisations. The initial positions and velocities are sampled using the three-cluster strategy described in Section \ref{subsec: approx-quality}. The initial velocity spread $\E{((v_i(0) - \vbar(0))^2}$ is varied to shift how far into the pre-flocking regime we are. We set $\ve=0.125$, $T=2.0$ and discretise the domain uniformly into $256$ cells.

\begin{figure}[h!]
    \centering

    \begin{subfigure}{0.31\textwidth}
        \centering
        \includegraphics[width=\linewidth]{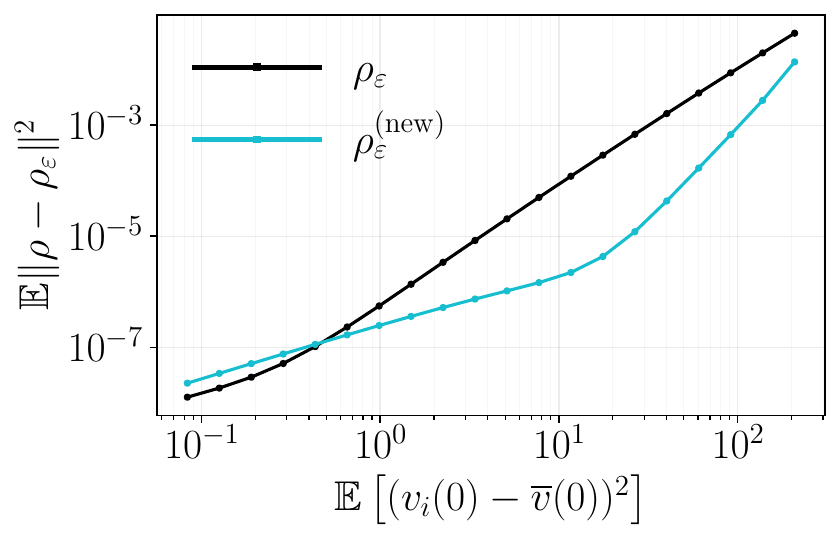}
        \caption{$\rho$: error}
        \label{fig: det-rho}
    \end{subfigure}
    \hfill
    \begin{subfigure}{0.31\textwidth}
        \centering
        \includegraphics[width=\linewidth]{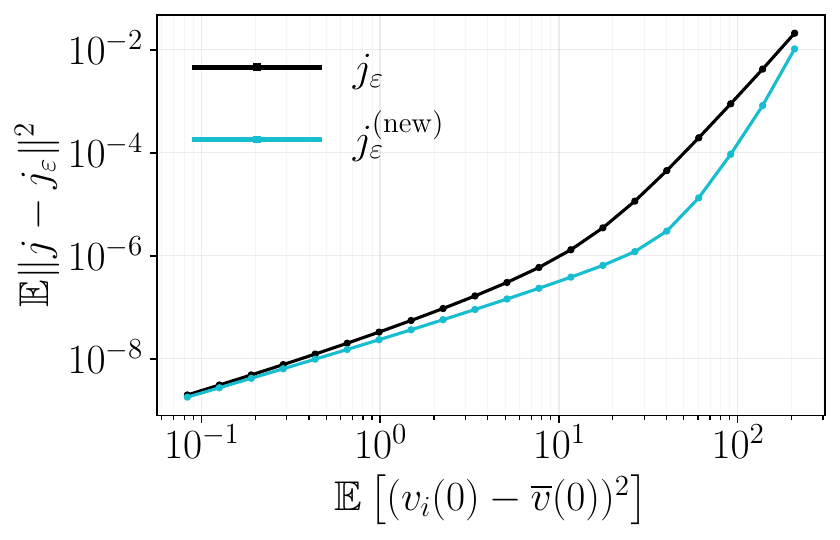}
        \caption{$j$: error}
        \label{fig: det-j}
    \end{subfigure}
    \hfill
    \begin{subfigure}{0.31\textwidth}
        \centering
        \includegraphics[width=\linewidth]{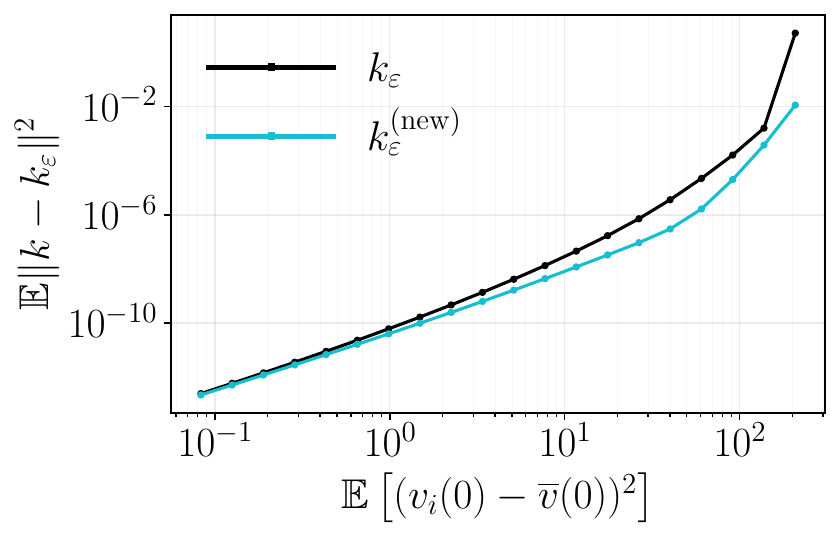}
        \caption{$k$: error}
        \label{fig: det-k}
    \end{subfigure}

    \vspace{0.75em}

    \begin{subfigure}{0.31\textwidth}
        \centering
        \includegraphics[width=\linewidth]{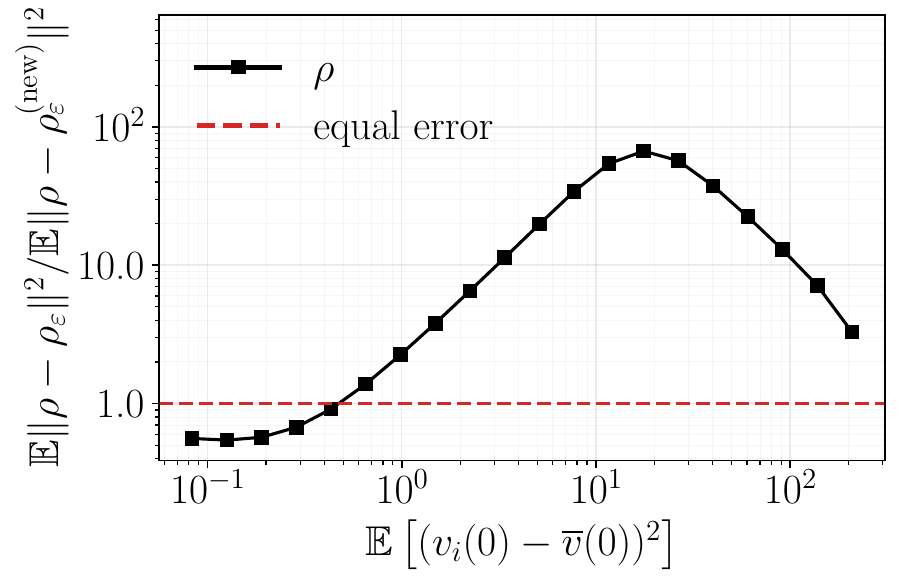}
        \caption{$\rho$: improvement factor}
        \label{fig: det-rho-improv}
    \end{subfigure}
    \hfill
    \begin{subfigure}{0.31\textwidth}
        \centering
        \includegraphics[width=\linewidth]{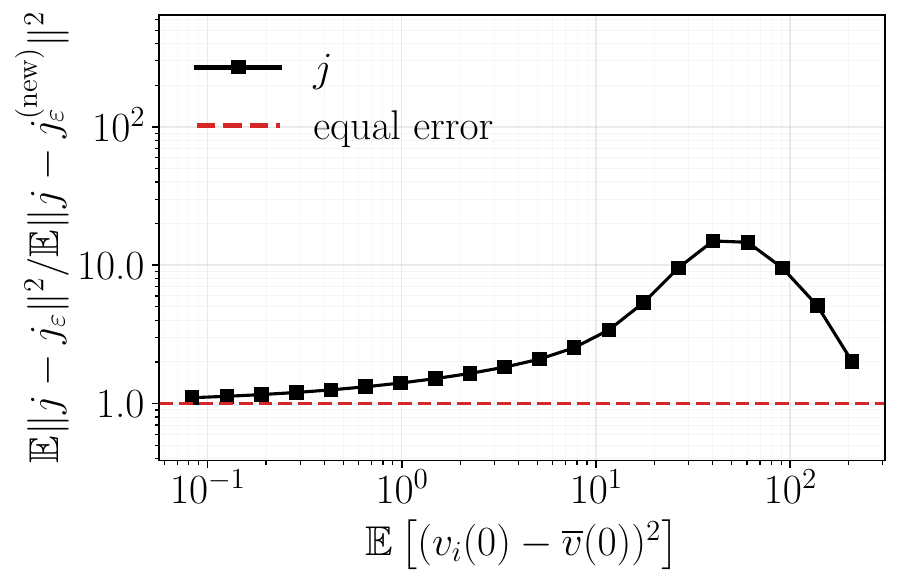}
        \caption{$j$: improvement factor}
        \label{fig: det-j-improv}
    \end{subfigure}
    \hfill
    \begin{subfigure}{0.31\textwidth}
        \centering
        \includegraphics[width=\linewidth]{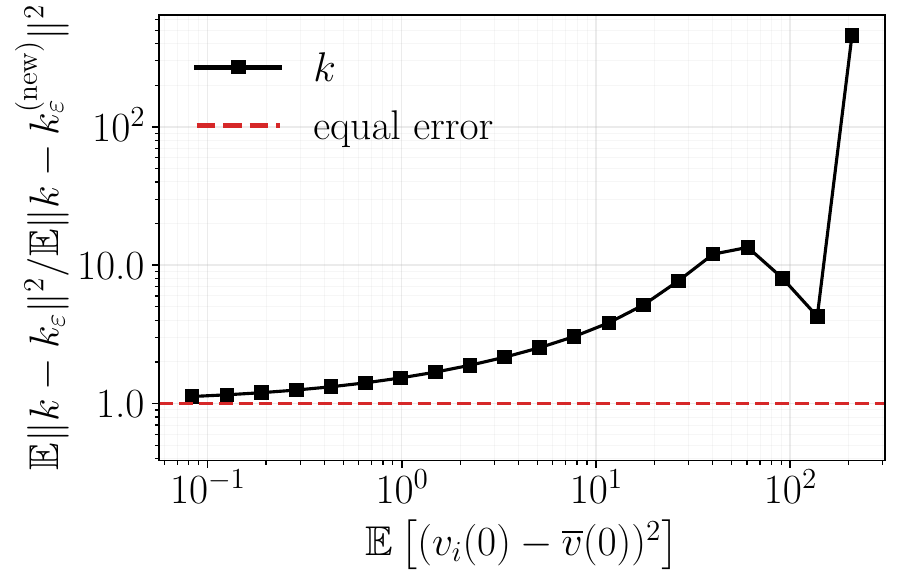}
        \caption{$k$: improvement factor}
        \label{fig: det-k-improv}
    \end{subfigure}

    \caption{Comparison of $L^2$ errors and associated improvement factors for the standard and improved SPDE models with i.i.d noise.}
    \label{fig: det-improved-closure}
\end{figure}

In Figure \ref{fig: det-improved-closure} we plot the results of our experiments. The top panels show the strong errors $\E{\norm{f(T) - f_\ve(T)}^2}$ and $\E{\norm{f^{(\mathrm{new})}(T) - f_\ve(T)}^2}$ for each of the densities $(\rho, j, k)$. The bottom panels show the associated improvement factor. For small velocity spreads (i.e.\ close to the flocking regime) we see comparable behaviour between the models. This is expected since the extra terms picked up in \eqref{eq: improved-j} are close to zero when $v_i(t) \approx \vbar(t)$. As the velocity spread increases, we see substantial reductions in error for the improved model. In particular, we see up to a $70$ times improvement in $\rho$, up to a $15$ times improvement in $j$, and up to a $10$ times improvement in $k$. For the largest velocity spreads, the behaviour of improved model degrades. We expect this is due to the error incurred in closing the $k$-SPDE (see Section \ref{sec: improved-kinetic-closure} for a more detailed discussion) becoming the dominant effect.

\subsection{Influence of noise}

On $\mathbb{R}^d$ when $r>0.5$ the deterministic CS model \eqref{eq: standard-cs} can fail to flock for certain initial conditions. Roughly speaking, this is because one can choose initial positions in say two clusters and then the interaction kernel can be too localised for particles in these two groups to interact. Thus, the system fails to flock globally. Interestingly, it is observed in \cite{cattiaux2018stochastic} that the addition of an environmental noise, for example in the sense of \eqref{eq: stochastic-cs-vc}, can induce global flocking in such situations. We now show that our reduced SPDE in the common noise setting also exhibits this behaviour.

\begin{remark}
    Note that this observation requires the domain to be $\mathbb{R}^d$ since this allows for groups of particles to diverge arbitrarily far apart whereas on $\btd$ flocking is unconditional for any $r \geq 0$ and initial configuration since particles will always meet eventually.
\end{remark}

\begin{figure}[h!]
    \centering
    \begin{subfigure}{0.48\textwidth}
        \centering
        \includegraphics[width=\linewidth]{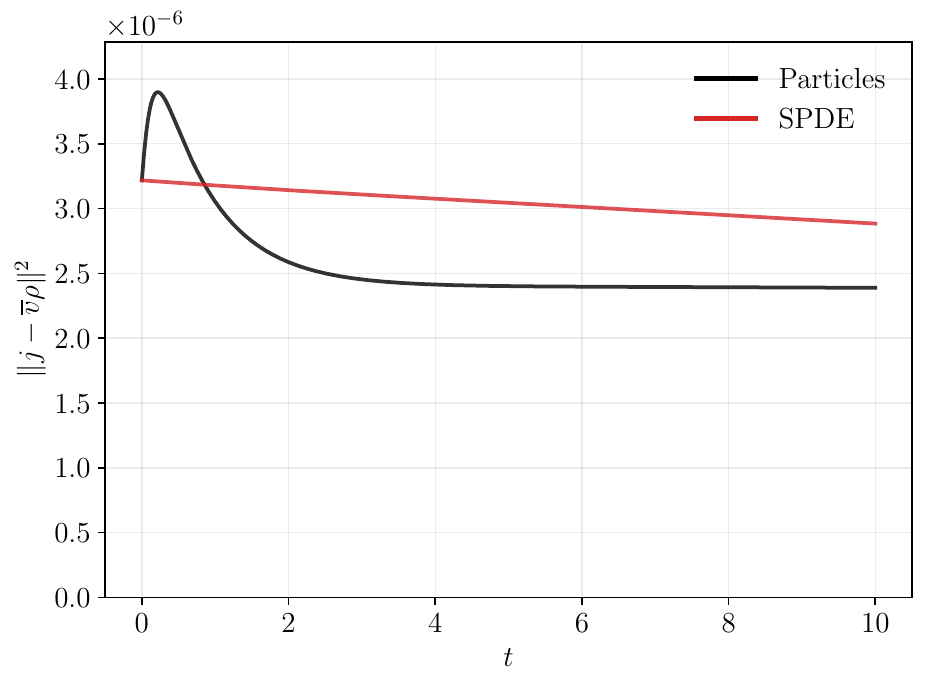}
        \caption{Non-flocking in the deterministic CS and reduced PDE dynamics}
        \label{fig:non-flock-deterministic}
    \end{subfigure}\hfill
    \begin{subfigure}{0.48\textwidth}
        \centering
        \includegraphics[width=\linewidth]{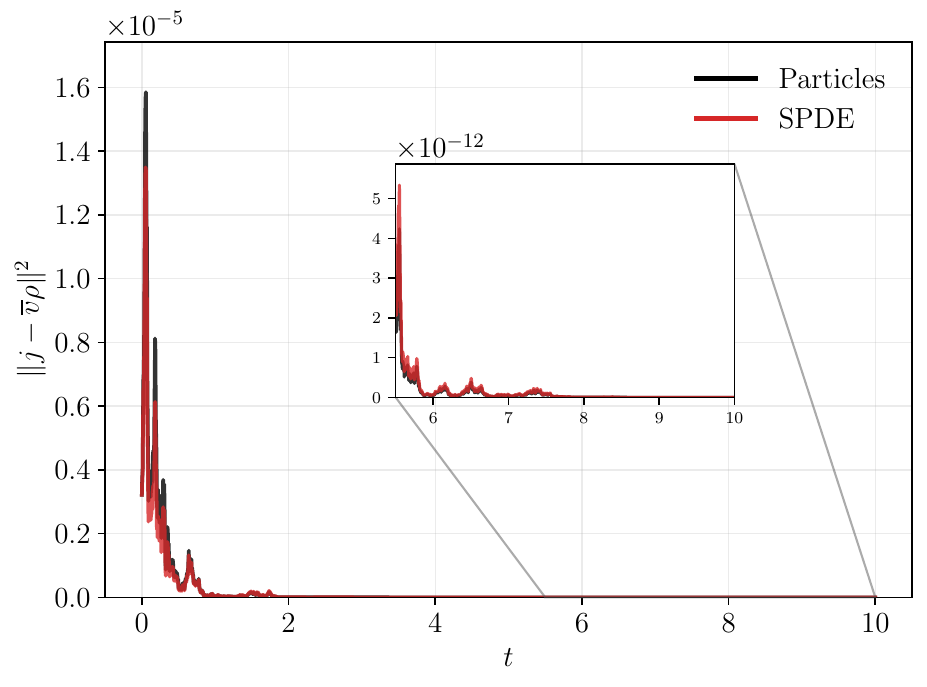}
        \caption{Recovery of flocking using sCS and reduced SPDE dynamics}
        \label{fig:flock-with-noise}
    \end{subfigure}
    \caption{Effect of environmental noise on driving flocking dynamics. (A): non-flocking in the deterministic setting. (B): recovery of flocking in the presence of environmental noise}
    \label{fig:non-flock-and-flock}
\end{figure}

We choose the interaction kernel \eqref{eq: kernel-definition} parameters to be $\lambda = 10$ and $r=4.0$. We consider $N=100$ particles. For the first $99$ particles, we sample their positions uniformly on $[-12, -10]$ and velocities uniformly on $[-0.1, 1.2]$. The final particle's position is sampled uniformly on $[10, 12]$ and velocity sampled uniformly on $[0.05, 0.15]$. We run the particle system on an unbounded domain up to $t=10.0$ with timestep $\Delta t = 0.005$, first without noise and then with $\sigma = 1.5$. The mollified densities $\rho_\ve, j_\ve$ and the SPDE initial configuration are generated using $\ve=0.125$, and the grid is discretised into $256$ uniform cells. In order to mimic an unbounded domain, we have chosen the parameters such that the dynamics occur on a spatial scale sufficiently far away from the endpoints of the domain. This ensures that the periodic wraparound in the implementation of the finite difference gradient has no effect. We also adjust the implementation of the convolution accordingly.

In Figure \ref{fig:non-flock-and-flock}\subref{fig:non-flock-deterministic}, we plot the evolution of $\norm{j(t) - \vbar \rho(t)}$ and $\norm{j_\ve(t) - \vbar \rho_\ve(t)}$ over time in the deterministic ($\sigma = 0$) setting. As expected, we see that the system doesn't flock. The discrepancy between the particle system and SPDE is expected since we are in the pre-flocking regime for the entire simulation. In Figure \ref{fig:non-flock-and-flock}\subref{fig:flock-with-noise}, we plot the same quantities for $\sigma = 1.5$ and see that both the particle system and reduced SPDE are now driven to flocking by the inclusion of an environmental noise. This is in line with the observations made in \cite{cattiaux2018stochastic}.   

\section{Conclusion and future work}\label{sec: conclusion}

In this work we have derived and analysed two general reduced SPDE models for stochastic Cucker--Smale particle dynamics that encompass environmental and i.i.d. type noises. Our models are computationally cheaper alternatives that provably preserve the flocking behaviour of the underlying particle system. To derive our model, we consider suitable empirical particle densities which depend on position and time, but feature velocity dependency in a multiplicative fashion. The models are then closed using the approximation that the particle velocities are near to alignment.

Our main theorems prove well-posedness and flocking of the reduced SPDE models in dimension $d=1$. Moreover, for an environmental noise we are able to quantify the discrepancy between the reduced SPDE and mollified particle system in a negative Sobolev norm for $d=1$. 
We then provide extensive numerical experiments to verify our theoretical results in $d=1$ and to explore the behaviour of our models in $d=2$. Furthermore, we introduced a method for improving the kinetic closure in the i.i.d.\ setting that utilises the hierarchy structure of the empirical particle densities.

One future avenue to explore is whether going further along the hierarchy entails modelling advantages in spite of the increased computational cost. We also plan to study the flocking behaviour for a reduced SPDE with Dean--Kawasaki type noise (corresponding to pure i.i.d.\ Brownian motions at the particle level). This is not covered by the present analysis due to the considerations in Remark \ref{remark: dk-noise}. Finally, it would be interesting to explore if our reduced models can act as computationally efficient tools for the optimal control of Cucker--Smale particle systems.

\medskip

\textbf{AI statement.} 
ChatGPT 6 (OpenAI) was used to assist with the proof of Theorem \ref{thm:global_H1}, specifically in reviewing the proof, checking mathematical arguments and estimates, clarifying technical justifications, and improving the presentation and LaTeX formatting. In particular, the assistance included clarifying the stochastic differential equation well-posedness argument, the Hilbert–Schmidt estimates, and the use of the Burkholder–Davis–Gundy inequality in the energy estimates. ChatGPT 6 (OpenAI) was also used for general proofreading and assistance with the numerical experiments. All suggested modifications were reviewed by the authors, who retain full responsibility for the correctness of the results.
\medskip

\textbf{Acknowledgments.} The authors wish to thank Tony Shardlow for several insightful discussions.
This work has been partially funded by the Deutsche Forschungsgemeinschaft (DFG) under
Germany’s Excellence Strategy through grant no. EXC-2046 The Berlin Mathematics Research Center MATH+
(project no. 390685689). KA is supported by a scholarship from the EPSRC Centre for Doctoral Training in Statistical Applied Mathematics at Bath (SAMBa), under the project EP/S022945/1.  ADj gratefully acknowledges funding by Daimler and Benz Foundation as part of the
scholarship program for junior professors and postdoctoral researchers.

\bibliographystyle{alpha}
\bibliography{ref.bib}

\appendix
\section{Well-posedness for the common noise case}\label{app: well-posedness}

In this Appendix we prove Theorem \ref{thm: well-posedness}.
Recall that we work on a fixed probability space $(\Omega, \mathcal{F}, \mathbb{P})$ with a complete and right continuous filtration $(\mathcal{F}_t)_{t \geq 0}$ that the Brownian motion $B(t)$ is adapted to. For $k \in \mathbb{N}_0$, define the spaces $\mathcal{W}^k \coloneqq H^k \times H^k$ where $H^0 = L^2$ with weighted norms $\norm{(f, g)}_{\mathcal{W}^k}^2 = \vbar^2\norm{f}_{H^k}^2 + \norm{g}_{H^k}^2$. The weight $\vbar \coloneqq \vbar(0)$ is the alignment velocity defined in \eqref{eq: vbar-definition}.

\subsection{Constant kernel}

When the kernel is constant, \eqref{eq:spde-vanishing-common-1d} reduces to
$$
    \begin{cases}
        \dd \rho = -\partial_x j \dd t, \\
        \dd j = [-C_a(j-\vbar \rho) - \vbar^2 \partial_x \rho] \dd t + \sigma(j-\vbar \rho) \dd B_t.
    \end{cases}
$$
Write this as a linear evolution equation on $\mathcal{W}^1$, that is, set $Z(t) = (\rho(t), j(t))$ and consider
\begin{equation}\label{eq:spde-vanishing-common-1d-pure-form-constant}
\begin{cases}
    \dd Z(t) = \sbrack{A Z(t) + F(Z(t))} \dd t + B(Z(t)) \dd W(t), \\
    Z(0) = Z_0,
\end{cases}
\end{equation}
 where $AZ(t) = (-\partial_x j(t), -\vbar^2 \partial_x \rho(t))$, $F(Z(t)) = (0, -C_a(j(t) - \vbar \rho(t))$, $B(Z(t)) = (0, \sigma(j - \vbar \rho))$, $W(t)$ is a noise process whose definition we defer, and $Z_0 = (\rho_0, j_0)$ is a $\mathcal{F}_0$-measurable random variable. Moreover, we define $\mathcal{D(A)} \coloneqq \mathcal{W}^2$.

\begin{lemma}[Drift term]\label{lem: semigroup-analysis}
    The operator $A: \mathcal{D(A)} \subset \mathcal{W}^1 \rightarrow \mathcal{W}^1$ generates a $C^0$ semigroup of contractions $\{S(t)\}_{t \geq 0}$ such that $\norm{S(t)}_{\mathrm{op}} \leq 1$ for all $t$. Furthermore, the non-linearity $F\colon \mathcal{W}^1\rightarrow \mathcal{W}^1$ is globally Lipschitz.
\end{lemma}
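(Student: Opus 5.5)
We will apply the Lumer--Phillips theorem on the Hilbert space $\mathcal{W}^1$ with the weighted inner product
$$
\ip{(f_1,g_1)}{(f_2,g_2)}_{\mathcal{W}^1} = \vbar^2\ip{f_1}{f_2}_{H^1} + \ip{g_1}{g_2}_{H^1}.
$$
The weight $\vbar^2$ on the first component is exactly what makes $A$ skew-adjoint, so it is the natural choice.

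\textbf{Dissipativity.} For $Z=(\rho,j)\in\mathcal{D}(A)=\mathcal{W}^2$ we compute
$$
\ip{AZ}{Z}_{\mathcal{W}^1} = -\vbar^2\ip{\partial_x j}{\rho}_{H^1} - \vbar^2\ip{\partial_x\rho}{j}_{H^1}.
$$
Write the $H^1$ inner product as the $L^2$ product plus the $L^2$ product of derivatives. Integration by parts on $\bt$ (no boundary terms) gives $\ip{\partial_x j}{\rho} = -\ip{j}{\partial_x \rho}$, and the same identity holds at the derivative level, $\ip{\partial_x^2 j}{\partial_x\rho} = -\ip{\partial_x j}{\partial_x^2\rho}$. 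The two terms therefore cancel exactly, so $\ip{AZ}{Z}_{\mathcal{W}^1}=0$. In particular $A$ is dissipative, and the same computation shows $-A$ is dissipative as well.

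\textbf{Density and range condition.} The domain $\mathcal{W}^2$ is dense in $\mathcal{W}^1$ since trigonometric polynomials are dense. For the range condition we need that, for some $\lambda>0$ and every $(f,g)\in\mathcal{W}^1$, the system
$$
\lambda\rho+\partial_x j=f,\qquad \lambda j+\vbar^2\partial_x\rho=g
$$
has a solution in $\mathcal{W}^2$. We solve it mode by mode in Fourier. For wavenumber $\xi$ the system is $2\times 2$ with determinant $\lambda^2+4\pi^2\vbar^2\xi^2>0$ (up to the normalisation of $\xi$). The solution coefficients are bounded by $C(1+|\xi|)^{-1}(|\hat f|+|\hat g|)$, which gains one derivative. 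Hence $(\rho,j)\in\mathcal{W}^2$, closedness of $A$ follows, and Lumer--Phillips yields a $C^0$ contraction semigroup with $\norm{S(t)}_{\mathrm{op}}\le 1$. If the normalisation is awkward, an alternative is to note that $A$ is a constant-coefficient Fourier multiplier with skew-Hermitian symbol in the weighted norm. Then $S(t)$ can be written explicitly as a unitary group on each mode, which again gives isometry and strong continuity.

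\textbf{Lipschitz property of $F$.} We have
$$
\norm{F(Z_1)-F(Z_2)}_{\mathcal{W}^1} = C_a\norm{(j_1-j_2)-\vbar(\rho_1-\rho_2)}_{H^1} \le C_a\big(\norm{j_1-j_2}_{H^1}+|\vbar|\,\norm{\rho_1-\rho_2}_{H^1}\big),
$$
which is bounded by $\sqrt2\, C_a\norm{Z_1-Z_2}_{\mathcal{W}^1}$ by the weighted norm definition. If $\vbar=0$ the weighted norm degenerates, but then the $\rho$-term is absent and the bound still holds. Since $F$ is linear, it is globally Lipschitz.

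\textbf{Main obstacle.} The only delicate step is the range (maximality) condition and choosing the weighted inner product so that the cross terms cancel. Once the weight $\vbar^2$ is used, everything reduces to the explicit Fourier computation above.
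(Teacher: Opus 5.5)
Your proposal is correct and is essentially the paper's argument: the paper verifies the Hille--Yosida conditions (dense domain, closedness, and existence of $(\lambda-A)^{-1}$ with $\norm{(\lambda-A)^{-1}}\le 1/\lambda$ for $\lambda>0$), deferring details to \cite{cornalba2019regularized}, and your Lumer--Phillips version uses the same two ingredients: the cancellation $\ip{AZ}{Z}_{\mathcal{W}^1}=0$ in the $\vbar^2$-weighted inner product, which is exactly what gives the $1/\lambda$ resolvent bound, and Fourier solvability of the resolvent equation with a one-derivative gain. Your bound $\norm{F(Z_1)-F(Z_2)}_{\mathcal{W}^1}\le\sqrt{2}\,C_a\norm{Z_1-Z_2}_{\mathcal{W}^1}$ for the linear map $F$ is the step the paper calls immediate; note only that the derivative gain in your range argument needs $\vbar\neq 0$, which the paper also tacitly assumes so that the weighted norm is a norm.
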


\begin{proof}
    This is proved by verifying the assumptions of the Hille-Yosida theorem as stated in \cite[Theorem 3.1]{pazy2012semigroups}. See \cite[Lemma 4.2]{cornalba2019regularized} for details. The Lipschitz property of $F$ is immediate. 
\end{proof}

We now deal with the noise term. Recall from \eqref{eq:spde-vanishing-common-1d}, that our noise is a single Brownian motion $\tilde B(t)$ (written this way to avoid confusion with the stochastic integrand) that acts only on the $j$-component. For compatibility with the general form given in \eqref{eq:spde-vanishing-common-1d-pure-form-constant}, we define $W(t)$ via
$W(t) = (0, 1) \tilde B(t)$. 
That is, $W(t)$ is a $Q$-Wiener process where $Q$ is the projection onto $e_2=(0,1)$. Then the stochastic integrand is a bounded linear operator $B: \mathcal{W}^1 \rightarrow L_2^0$ defined via $B((\rho, j))(a, b) = (0, \sigma(j-\vbar \rho)b)$. Here, $L_2^0$ is the space of Hilbert-Schmidt operators ${Q}^{1/2}(\mathbb{R}^2) \rightarrow \mathcal{W}^1$.

\begin{lemma}\label{lem: HS-analysis}
    The stochastic integrand $B$ is a bounded linear operator $\mathcal{W}^1 \rightarrow L_2^0$.
\end{lemma}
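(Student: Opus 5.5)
The plan is to identify the structure of $L_2^0$ explicitly, write down the Hilbert--Schmidt norm of $B(Z)$, and bound it by the $\mathcal{W}^1$-norm of $Z$. Since $Q$ is the orthogonal projection onto $e_2=(0,1)$, the space $Q^{1/2}(\mathbb{R}^2)$ is one-dimensional and spanned by $e_2$, which has unit norm. For any linear $T\colon Q^{1/2}(\mathbb{R}^2)\to \mathcal{W}^1$ we therefore have
$$
\norm{T}_{L_2^0}^2 = \norm{T e_2}_{\mathcal{W}^1}^2 .
$$
The first step is to record this, so that the Hilbert--Schmidt norm reduces to the $\mathcal{W}^1$-norm of a single vector.

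Next, I check linearity and well-definedness. For $Z=(\rho,j)\in\mathcal{W}^1$, the map $(a,b)\mapsto(0,\sigma(j-\vbar\rho)b)$ is linear in $(a,b)$, and it takes values in $\mathcal{W}^1$ because $j-\vbar\rho\in H^1$. The dependence $Z\mapsto B(Z)$ is also linear, since $j-\vbar\rho$ is linear in $(\rho,j)$ and $\vbar=\vbar(0)$ is a fixed constant in the common-noise setting.

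The key estimate is then a direct computation:
$$
\norm{B(Z)}_{L_2^0}^2=\norm{(0,\sigma(j-\vbar\rho))}_{\mathcal{W}^1}^2=\sigma^2\norm{j-\vbar\rho}_{H^1}^2\le 2\sigma^2\bigl(\norm{j}_{H^1}^2+\vbar^2\norm{\rho}_{H^1}^2\bigr)=2\sigma^2\norm{Z}_{\mathcal{W}^1}^2 .
$$
Here the inequality is $(x+y)^2\le 2x^2+2y^2$ applied to the $H^1$ norm (triangle inequality plus Young). The last equality uses exactly the weighted norm $\norm{(f,g)}_{\mathcal{W}^1}^2=\vbar^2\norm{f}_{H^1}^2+\norm{g}_{H^1}^2$. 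This gives boundedness with operator norm at most $\sqrt2\,|\sigma|$. Combined with linearity, it also gives the global Lipschitz bound $\norm{B(Z_1)-B(Z_2)}_{L_2^0}\le\sqrt2|\sigma|\norm{Z_1-Z_2}_{\mathcal{W}^1}$ needed for the mild-solution fixed point, alongside Lemma~\ref{lem: semigroup-analysis}.

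I expect no real obstacle here. The only point requiring care is the correct identification of $L_2^0$ for a degenerate $Q$: one must use the orthonormal basis of $Q^{1/2}(\mathbb{R}^2)$, namely $\{e_2\}$, rather than a basis of $\mathbb{R}^2$. Because $\operatorname{Tr}Q=1$, the operator $B(Z)$ is automatically Hilbert--Schmidt.
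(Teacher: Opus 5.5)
Your proposal is correct and follows essentially the same route as the paper. Both arguments reduce the $L_2^0$-norm to $\norm{B(Z)e_2}_{\mathcal{W}^1}$ and then bound $\sigma^2\norm{j-\vbar\rho}_{H^1}^2$ by the weighted $\mathcal{W}^1$-norm. Your version is slightly sharper: it makes the constant explicit ($2\sigma^2$, independent of $\vbar$) and writes the image correctly as the pair $(0,\sigma(j-\vbar\rho))$ rather than as a scalar function.
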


\begin{proof}
    Linearity of $B$ is clear. Now take $(\rho, j) \in \mathcal{W}^1$ and compute
    $$
    \norm{B((\rho, j))}_{L_2^0}^2 = \norm{B((\rho, j))(0, 1)}_{\mathcal{W}^1}^2 = \norm{\sigma(j-\vbar \rho)}_{\mathcal{W}^1}^2 \leq C(\sigma, \vbar) \norm{(\rho, j)}_{\mathcal{W}^1}^2.
    \qedhere
    $$
\end{proof}

We now have all the ingredients to apply standard existence and uniqueness results.

\begin{proposition}\label{prop: mild-solution-constant}
    Fix $T>0$ and assume $Z_0$ is $\mathcal{F}_0$-measurable. Then equation \eqref{eq:spde-vanishing-common-1d} admits a unique mild solution on $[0, T]$ with a continuous modification.
\end{proposition}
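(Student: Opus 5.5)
The plan is to apply the standard existence and uniqueness theorem for mild solutions of semilinear SPDEs with globally Lipschitz coefficients, in the form of Da Prato--Zabczyk, to the abstract formulation \eqref{eq:spde-vanishing-common-1d-pure-form-constant} on the Hilbert space $\mathcal{W}^1$. A mild solution on $[0,T]$ is an $\mathcal{F}_t$-predictable $\mathcal{W}^1$-valued process satisfying
$$
Z(t) = S(t)Z_0 + \int_0^t S(t-s)F(Z(s))\dd s + \int_0^t S(t-s)B(Z(s))\dd W(s).
$$

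The hypotheses are checked in three steps.

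\emph{Step 1.} By Lemma \ref{lem: semigroup-analysis}, $A$ generates a $C^0$ contraction semigroup $\{S(t)\}_{t \geq 0}$ on $\mathcal{W}^1$. Also by that lemma, $F$ is globally Lipschitz on $\mathcal{W}^1$; since it is linear, it has linear growth as well.

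\emph{Step 2.} By Lemma \ref{lem: HS-analysis}, $B\colon \mathcal{W}^1 \to L_2^0$ is bounded and linear. Hence
$$
\norm{B(Z_1)-B(Z_2)}_{L_2^0} \leq C(\sigma,\vbar)\norm{Z_1-Z_2}_{\mathcal{W}^1}.
$$
Moreover, $W$ is a $Q$-Wiener process with trace-class $Q$ (a rank-one projection) adapted to $(\mathcal{F}_t)$.

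\emph{Step 3.} The data $Z_0$ is $\mathcal{F}_0$-measurable. Under the standing assumption of Theorem \ref{thm: well-posedness}, it lies in $L^2(\Omega;\mathcal{W}^1)$.

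With these three ingredients, the theorem gives a unique mild solution in the class of predictable processes with $\sup_{t\le T}\E{\norm{Z(t)}^2_{\mathcal{W}^1}}<\infty$. If one prefers to allow $Z_0$ merely $\mathcal{F}_0$-measurable, a localisation on $\{\norm{Z_0}_{\mathcal{W}^1}\le n\}$ gives pathwise uniqueness and existence; this works because the equation is linear, so solutions for different truncations agree on the overlapping events.

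If one wants a self-contained argument rather than a citation, the existence step is a Banach fixed-point argument for the map $\Gamma$ defined by the right-hand side of the mild formulation. This map acts on the space $\mathcal{H}_T$ of predictable processes with finite norm $\sup_{t\le T}\E{\norm{Z(t)}^2_{\mathcal{W}^1}}$, equipped with an equivalent exponentially weighted norm $\sup_t e^{-\beta t}\E{\norm{\cdot}^2}$. Contractivity of $S$ together with Steps 1--2 and the It\^o isometry give
$$
\E{\norm{\Gamma Z_1(t)-\Gamma Z_2(t)}^2} \le C(T)\int_0^t \E{\norm{Z_1-Z_2}^2}\dd s,
$$
so $\Gamma$ is a contraction for $\beta$ large.

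The main obstacle is the continuous modification. The stochastic convolution $\int_0^t S(t-s)B(Z(s))\dd W(s)$ is not a martingale, so Doob/BDG does not apply directly. I would handle this with the factorisation method of Da Prato--Kwapie\'n--Zabczyk, which needs $p>2$ moments. Alternatively, since $S$ is a contraction semigroup, one can use the maximal inequality for stochastic convolutions with contraction semigroups (Kotelenez, or Hausenblas--Seidler via Sz.-Nagy unitary dilation), which gives directly
$$
\E{\sup_{t\le T}\Big\|\int_0^t S(t-s)B(Z(s))\dd W(s)\Big\|^2} \lesssim \E{\int_0^T \norm{B(Z)}^2_{L_2^0}\dd s}<\infty,
$$
together with a continuous version. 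The deterministic convolution and $S(t)Z_0$ are continuous by strong continuity of $S$. Hence $Z$ admits a continuous modification in $\mathcal{W}^1$, and running the fixed point in the space with $\E{\sup_{t\le T}\norm{\cdot}^2}$ norm gives uniqueness in that class as well.
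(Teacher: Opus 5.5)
Your proposal is correct and follows the paper's route. The paper's proof verifies the hypotheses through Lemmas \ref{lem: semigroup-analysis} and \ref{lem: HS-analysis} and then cites \cite[Theorem 7.2 (i), (ii)]{da2014stochastic}; the fixed-point argument, the localisation for merely $\mathcal{F}_0$-measurable data, and the factorisation or contraction-semigroup maximal inequality you sketch are essentially what stands behind that citation.
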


\begin{proof}
    Use Lemmas \ref{lem: semigroup-analysis}, and \ref{lem: HS-analysis} then apply \cite[Theorem 7.2 (i), (ii)]{da2014stochastic}.
\end{proof}

Lemmas \ref{lem: semigroup-analysis}, \ref{lem: HS-analysis}, together with \cite[Theorem 6.5]{da2014stochastic} imply that the mild solution to \eqref{eq:spde-vanishing-common-1d} is a weak solution. 
Furthermore, the assumptions of \cite[Propositions F.0.4]{prevot2007concise} can be verified in light of Lemma \ref{lem: HS-analysis}, and so the weak solution is upgraded to a strong solution on $[0, T]$. Finally, a bootstrapping argument extends the solution to $[0, \infty)$.

\subsection{Full kernel}

We now consider the full interaction kernel and first show how one obtains a local $\mathcal{W}^1$-valued mild solution. Write \eqref{eq:spde-vanishing-common-1d} as a semilinear evolution equation on $\mathcal{W}^1$, that is, we let $Z(t) = (\rho(t), j(t))$ and consider
\begin{equation}\label{eq:spde-vanishing-common-1d-pure-form}
\begin{cases}
    \dd Z(t) = \sbrack{A Z(t) + \tilde F(Z(t))} \dd t + B(Z(t)) \dd W(t), \\
    Z(0) = Z_0.
\end{cases}
\end{equation}
Here $A, B, W, Z_0$ are defined as in the previous section but the non-linearity is now given by $\tilde F(Z(t)) = (0, \rho(a * j) - j(a * \rho))$. Once again, we define $\mathcal{D(A)} \coloneqq \mathcal{W}^2$. The semigroup analysis and noise analysis are unchanged so we only need to deal with the non-linearity. 

\begin{lemma}\label{lem: non-linearity-analysis}
    The non-linearity $\tilde F: \mathcal{W}^1 \rightarrow \mathcal{W}^1$ is locally bounded and locally Lipschitz.
\end{lemma}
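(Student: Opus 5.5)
We need to show that $\tilde F(Z) = (0, \rho(a*j) - j(a*\rho))$ maps $\mathcal{W}^1$ to $\mathcal{W}^1$, is bounded on balls, and is Lipschitz on balls. Since $\tilde F$ is bilinear in $(\rho,j)$, write $\tilde F(Z) = \mathcal{B}(Z,Z)$ with
$$
\mathcal{B}((\rho_1,j_1),(\rho_2,j_2)) \coloneqq \bigl(0,\ \rho_1(a*j_2) - j_1(a*\rho_2)\bigr).
$$
The plan is to prove a single bilinear estimate
$$
\norm{\mathcal{B}(Z_1,Z_2)}_{\mathcal{W}^1} \leq C(a,\vbar)\norm{Z_1}_{\mathcal{W}^1}\norm{Z_2}_{\mathcal{W}^1}.
$$
Local boundedness is then immediate: on the ball of radius $R$, $\norm{\tilde F(Z)}_{\mathcal{W}^1}\le CR^2$. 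Local Lipschitz continuity follows from
$$
\tilde F(Z_1)-\tilde F(Z_2)=\mathcal{B}(Z_1-Z_2,Z_1)+\mathcal{B}(Z_2,Z_1-Z_2),
$$
which gives $\norm{\tilde F(Z_1)-\tilde F(Z_2)}_{\mathcal{W}^1}\le 2CR\norm{Z_1-Z_2}_{\mathcal{W}^1}$ for $Z_1,Z_2$ in that ball.

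For the bilinear estimate, take a product $f\,(a*h)$ with $f,h\in H^1(\mathbb{T})$ and bound it in $H^1$.

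For the $L^2$ part, Young's inequality on the torus gives $\norm{a*h}_{L^\infty}\le \norm{a}_{L^2}\norm{h}_{L^2}$. Hence $\norm{f(a*h)}_{L^2}\le \norm{a}_{L^2}\norm{f}\norm{h}$.

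For the derivative, use $\partial_x(f(a*h)) = (\partial_x f)(a*h) + f(a*\partial_x h)$. The first term is bounded by $\norm{\partial_x f}\,\norm{a}_{L^2}\norm{h}$. For the second, either use the one-dimensional Sobolev embedding $H^1(\mathbb{T})\hookrightarrow L^\infty$ to get $\norm{f}_{L^\infty}\norm{a*\partial_x h}\le C\norm{f}_{H^1}\norm{a}_{L^1}\norm{\partial_x h}$, or write $a*\partial_x h = (\partial_x a)*h$ and use that the kernel \eqref{eq: kernel-definition} is smooth on $\mathbb{T}$. Either route gives $\norm{f(a*h)}_{H^1}\le C(a)\norm{f}_{H^1}\norm{h}_{H^1}$.

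Applying this with $(f,h)=(\rho_1,j_2)$ and $(f,h)=(j_1,\rho_2)$, and absorbing the weight $\vbar^2$ in front of $\norm{\rho}_{H^1}^2$ into the constant, yields the bilinear estimate with $C$ depending on $a$ and $\vbar^{-1}$. This uses $\vbar\neq0$, so that the weighted norm is equivalent to the unweighted $H^1\times H^1$ norm.

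The only delicate step is the term $f\,(a*\partial_x h)$, where control of $\norm{f}_{L^\infty}$ is needed. In $d=1$ the Sobolev embedding $H^1\hookrightarrow L^\infty$ handles it cleanly; this is also where the restriction to one dimension is used. The remaining steps are routine.
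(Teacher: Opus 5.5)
Your proof is correct and is essentially the paper's argument: the paper also bounds each product in $H^1$ via the product rule and Young's convolution inequality, and gets the Lipschitz estimate by adding and subtracting $\rho_1(a*j_2)$. That is exactly your bilinear telescoping identity, which you package more cleanly and with the dependence on $\vbar\neq 0$ made explicit. One small correction: the term $f\,(a*\partial_x h)$ is not delicate, since $\|a*\partial_x h\|_{L^\infty}\le\|a\|_{L^2}\|\partial_x h\|_{L^2}$ handles it by Young's inequality alone, as the paper does. So no Sobolev embedding is needed, and this step does not actually use $d=1$.
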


\begin{proof}
    To prove local boundedness, choose $(\rho, j)$ in a $\mathcal{W}^1$-ball of radius $n$. It is then straightforward to estimate $\norm{\tilde F((\rho, j))}_{\mathcal{W}^1}^2 \leq C(a, n)$ using Young's convolution inequality.

    To prove $\tilde F$ is locally Lipschitz, choose $(\rho_1, j_1)$ and $(\rho_2, j_2)$ in a $\mathcal{W}^1$-ball of radius $n$. We then estimate
    $$
    \norm{\tilde F((\rho_1, j_1)) - \tilde F((\rho_2, j_2))}_{\mathcal{W}^1}^2 \leq \norm{\rho_1(a * j_1) - \rho_2(a*j_2)}_{H^1}^2 + \norm{j_1(a*\rho_1) - j_2(a*\rho_2)}_{H^1}^2.
    $$
    By definition of the $H^1$ norm, we obtain
    \begin{multline*}
        \norm{\rho_1(a*j_1) - \rho_2(a*j_2)}_{H^1}^2 \leq \norm{\rho_1(a*j_1) - \rho_2(a*j_2)}_{L^2}^2 \\
        + \norm{\partial_x\rho_1(a*j_1) - \partial_x\rho_2(a*j_2)}_{L^2}^2 + \norm{\rho_1(a*\partial_xj_1) - \rho_2(a*\partial_xj_2)}_{L^2}^2.
    \end{multline*}
    Adding and subtracting $\rho_1(a*j_2)$ in the first term, and using Young's convolution inequality gives
    \begin{align*}
        \norm{\rho_1(a*j_1) - \rho_2(a*j_2)}_{L^2}^2 &\leq \norm{\rho_1(a*(j_1-j_2))}_{L^2}^2 + \norm{(\rho_1-\rho_2)(a*j_2)}_{L^2}^2 \\
        &\leq C(n, a) \norm{(\rho_1, j_1) - (\rho_2, j_2)}_{\mathcal{W}^1}^2.
    \end{align*}
    Similar estimates bound the latter two terms in the same fashion. By symmetry, we have also dealt with $\norm{j_1(a*\rho_1) - j_2(a*\rho_2)}_{H^1}^2$. Putting everything together gives
    $$
    \norm{\tilde F((\rho_1, j_1)) - \tilde F((\rho_2, j_2))}_{\mathcal{W}^1}^2 \leq C(n, a)\norm{(\rho_1, j_1) - (\rho_2, j_2)}_{\mathcal{W}^1}^2.   \qedhere
    $$
\end{proof}

We now have all the ingredients to apply standard existence and uniqueness results.

\begin{proposition}
    Equation \eqref{eq:spde-vanishing-common-1d-pure-form} admits a unique local-in-time $\mathcal{W}^1$-valued mild solution. Moreover, the solution is c\'adl\'ag.
\end{proposition}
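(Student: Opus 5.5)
The plan is to run the standard localisation (truncation) argument for semilinear SPDEs with locally Lipschitz nonlinearities. The inputs are Lemma \ref{lem: semigroup-analysis}, which gives a contraction semigroup $S(t)$ on $\mathcal{W}^1$ generated by $A$; Lemma \ref{lem: HS-analysis}, which makes the noise coefficient $B$ bounded linear into $L_2^0$; and Lemma \ref{lem: non-linearity-analysis}, which makes $\tilde F$ locally bounded and locally Lipschitz on $\mathcal{W}^1$.

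\textbf{Truncated problems.} For each $n\in\mathbb{N}$, let $\pi_n:\mathcal{W}^1\to\mathcal{W}^1$ be the radial retraction onto the closed ball of radius $n$, i.e.\ $\pi_n(Z)=Z$ if $\norm{Z}_{\mathcal{W}^1}\le n$ and $\pi_n(Z)= nZ/\norm{Z}_{\mathcal{W}^1}$ otherwise. This map is $2$-Lipschitz. Set $\tilde F_n:=\tilde F\circ\pi_n$. By Lemma \ref{lem: non-linearity-analysis}, $\tilde F_n$ is globally Lipschitz and globally bounded on $\mathcal{W}^1$. The truncated equation
\[
\dd Z_n = [AZ_n + \tilde F_n(Z_n)]\dd t + B(Z_n)\dd W
\]
therefore satisfies the hypotheses of \cite[Theorem 7.2]{da2014stochastic}, exactly as in Proposition \ref{prop: mild-solution-constant}. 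For every $T>0$ it has a unique mild solution $Z_n\in C([0,T];\mathcal{W}^1)$ (after taking a continuous modification), and it is $\mathbb{P}$-a.s.\ unique.

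\textbf{Consistency and patching.} Define the stopping times $\tau_n:=\inf\{t\ge0:\norm{Z_n(t)}_{\mathcal{W}^1}\ge n\}\wedge T$. On $[0,\tau_n\wedge\tau_m]$ with $m>n$, both $Z_n$ and $Z_m$ solve the same stopped mild equation, because $\tilde F_n=\tilde F_m$ on the ball of radius $n$. I will then apply a Gronwall argument to $\E{\sup_{s\le t\wedge\tau_n}\norm{Z_n-Z_m}_{\mathcal{W}^1}^2}$, controlling the stochastic convolution with the factorisation method or the maximal inequality for contraction semigroups \cite[Theorem 6.10]{da2014stochastic}. This gives $Z_n=Z_m$ on $[0,\tau_n]$, so $\tau_n\le\tau_m$. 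I then define $\tau_\infty:=\lim_n\tau_n$ and $Z(t):=Z_n(t)$ for $t\le\tau_n$, which produces a mild solution of \eqref{eq:spde-vanishing-common-1d-pure-form} on $[0,\tau_\infty)$ with $\limsup_{t\uparrow\tau_\infty}\norm{Z(t)}_{\mathcal{W}^1}=\infty$ on $\{\tau_\infty<T\}$. Uniqueness follows because any other local mild solution, stopped at its exit time from the $n$-ball, solves the $n$-truncated equation and so coincides with $Z_n$. Since each $Z_n$ is continuous, $Z$ is continuous, and hence c\`adl\`ag, on $[0,\tau_\infty)$. Finally, $\tau_\infty>0$ a.s. because $Z_0\in\mathcal{W}^1$ a.s., so $\norm{Z_0}<n$ for $n$ large, and continuity of $Z_n$ then forces $\tau_n>0$.

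\textbf{Main obstacle.} The delicate point is the pathwise identification $Z_n=Z_m$ up to $\tau_n$. It requires the stopped stochastic convolution $\int_0^{t\wedge\tau_n}S(t-s)[B(Z_n)-B(Z_m)]\dd W$ to be handled correctly, since it is not a martingale in $t$. I plan to use the contraction property from Lemma \ref{lem: semigroup-analysis}, which gives a maximal inequality for stochastic convolutions (Kotelenez). An alternative would be to pass through the equivalent weak formulation via \cite[Theorem 6.5]{da2014stochastic}, where It\^o's formula can be applied to $\norm{Z_n-Z_m}^2$ directly. Both routes need only the Lipschitz bounds already established.
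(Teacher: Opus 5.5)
Your proof is correct, but it proves by hand what the paper imports. The paper's proof is a one-line citation. Lemmas \ref{lem: semigroup-analysis}, \ref{lem: non-linearity-analysis} and \ref{lem: HS-analysis} verify the hypotheses of \cite[Theorem 4.5]{tappe2012some}, an abstract local existence and uniqueness theorem for semilinear SPDEs with locally Lipschitz, locally bounded coefficients, and the path regularity is read off from \cite[Remark 4.6]{tappe2012some]}. You instead reprove that result in this special case: you truncate $\tilde F$ with the radial retraction, solve the globally Lipschitz problems via \cite[Theorem 7.2]{da2014stochastic}, and patch the $Z_n$ together along their exit times.

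The citation buys brevity. Your route is self-contained and gives continuous rather than merely c\`adl\`ag paths. It also exhibits the maximal time $\tau_\infty$ together with its blow-up alternative, which is exactly the structure ($\tau_{1,R}$, $\tau_{1,\infty}$) the paper relies on later in Lemma \ref{lem: H1-control-via-L2}. It is the same cut-off device the paper itself uses in Lemma \ref{lem: W0-cts-modification}.

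The step you flag as the main obstacle is easier than you expect, and no maximal inequality is needed. Set $\phi(t):=\mathbb{E}\bigl[\mathbf{1}_{\{t\le\tau_n\}}\|Z_n(t)-Z_m(t)\|_{\mathcal{W}^1}^2\bigr]$, which is finite because $Z_0\in L^2(\Omega;\mathcal{W}^1)$. On $\{t\le\tau_n\}$ you have $\tilde F_n(Z_n(s))=\tilde F_m(Z_n(s))$ for all $s\le t$. On the same event, almost surely,
\[
\int_0^t S(t-s)B\bigl(Z_n(s)-Z_m(s)\bigr)\,\dd W(s)=\int_0^t S(t-s)\mathbf{1}_{\{s\le\tau_n\}}B\bigl(Z_n(s)-Z_m(s)\bigr)\,\dd W(s).
\]
The It\^o isometry at fixed $t$, contractivity of $S$, and the global Lipschitz constant of $\tilde F_m$ then give $\phi(t)\le C_T\int_0^t\phi(s)\,\dd s$. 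Hence $\phi\equiv 0$, and continuity of paths upgrades this to all $t\le\tau_n$ simultaneously.

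The same fixed-time comparison is also the cleaner way to state your uniqueness step. Run it against an arbitrary local mild solution up to that solution's exit time from the $n$-ball. A stopped mild solution does not literally solve the truncated mild equation after it has been stopped. It does satisfy the indicator-localised mild identity before the exit time, and that is all the Gronwall argument uses.
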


\begin{proof}
    Existence and uniqueness follows from Lemmas \ref{lem: semigroup-analysis}, \ref{lem: non-linearity-analysis}, \ref{lem: HS-analysis}, and \cite[Theorem 4.5]{tappe2012some}. The solution is c\'adl\'ag due to \cite[Remark 4.6]{tappe2012some}. 
\end{proof}

Note that the previous Lemmas can easily be adapted to the $\mathcal{W}^0$ (i.e. $L^2 \times L^2$) case to give a local well-posedness result in this setting too.

\begin{proposition}
    Equation \eqref{eq:spde-vanishing-common-1d-pure-form} admits a unique local-in-time $\mathcal{W}^0$-valued mild solution. Moreover, the solution is c\'adl\'ag.
\end{proposition}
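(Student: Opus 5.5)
The plan is to mirror the $\mathcal{W}^1$ argument verbatim, with $H^1$ replaced by $L^2$ throughout, and then invoke the same abstract local existence theorem \cite[Theorem 4.5]{tappe2012some}. The proof therefore reduces to three verifications on $\mathcal{W}^0 = L^2\times L^2$ with the weighted norm $\vbar^2\norm{f}^2 + \norm{g}^2$.

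\emph{Generator.} First, $A(\rho,j) = (-\partial_x j, -\vbar^2\partial_x\rho)$ with domain $\mathcal{D}(A) = \mathcal{W}^1$ should generate a $C^0$ contraction semigroup on $\mathcal{W}^0$. I would check the Lumer--Phillips/Hille--Yosida hypotheses directly.
\begin{itemize}
\item Dissipativity: integration by parts on $\mathbb{T}$ gives
\[
\ip{AZ}{Z}_{\mathcal{W}^0} = -\vbar^2\int \rho\,\partial_x j - \vbar^2\int j\,\partial_x\rho = -\vbar^2\int \partial_x(\rho j) = 0,
\]
so $A$ is skew-adjoint in the weighted inner product. This is exactly why the weight $\vbar^2$ is placed on $\rho$.
\item Range condition: the range of $\lambda - A$ is all of $\mathcal{W}^0$ for $\lambda>0$. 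This follows by solving the resolvent equation mode by mode in Fourier. For each wavenumber one gets a $2\times 2$ system with determinant $\lambda^2 + 4\pi^2\vbar^2 n^2 \neq 0$, whose solution gains one derivative, so it lies in $\mathcal{W}^1$.
\end{itemize}
Equivalently, one can simply note that the computation of \cite[Lemma 4.2]{cornalba2019regularized} is performed one Sobolev level lower.

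\emph{Noise.} Second, the noise coefficient $B$ is handled exactly as in Lemma~\ref{lem: HS-analysis}: $\norm{B(\rho,j)}_{L_2^0} = \abs{\sigma}\,\norm{j-\vbar\rho} \le C(\sigma,\vbar)\norm{(\rho,j)}_{\mathcal{W}^0}$. Hence $B$ is bounded linear, and in particular globally Lipschitz.

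\emph{Nonlinearity (main obstacle).} The third check is that $\tilde F(\rho,j) = (0,\rho(a*j) - j(a*\rho))$ is locally bounded and locally Lipschitz on $\mathcal{W}^0$. In the $H^1$ case this was automatic, but in $L^2$ a product of two $L^2$ functions is only $L^1$. The resolution is that one factor is always a convolution with the kernel. Since $a$ from \eqref{eq: kernel-definition} is bounded on $\mathbb{T}$, we have $\norm{a*f}_{L^\infty} \le \norm{a}_{L^2}\norm{f}$ by Cauchy--Schwarz (Young's inequality with exponents $(2,2,\infty)$). This gives
\[
\norm{\rho(a*j)} \le \norm{\rho}\,\norm{a*j}_{L^\infty} \le \norm{a}_{L^2}\norm{\rho}\,\norm{j},
\]
and similarly for $j(a*\rho)$, so $\tilde F$ is bounded on balls. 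For the Lipschitz estimate I add and subtract $\rho_1(a*j_2)$, and likewise in the second term, exactly as in Lemma~\ref{lem: non-linearity-analysis}. This yields $\norm{\tilde F(Z_1)-\tilde F(Z_2)}_{\mathcal{W}^0} \le C(n,a,\vbar)\norm{Z_1-Z_2}_{\mathcal{W}^0}$ on a ball of radius $n$.

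With these three ingredients, \cite[Theorem 4.5]{tappe2012some} (localisation by truncating $\tilde F$ outside balls of radius $n$ and patching the resulting solutions) gives a unique local mild solution. \cite[Remark 4.6]{tappe2012some} then gives the c\`adl\`ag property.
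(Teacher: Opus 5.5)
Your proposal is correct and is exactly the paper's argument: the paper only remarks that the semigroup, Hilbert--Schmidt and local-Lipschitz lemmas carry over from $\mathcal{W}^1$ to $\mathcal{W}^0$, and then applies the same local existence theorem and c\`adl\`ag remark of Tappe. Your explicit checks (skew-symmetry of $A$ in the weighted inner product, the Fourier resolvent with determinant $\lambda^2+4\pi^2\vbar^2 n^2$, and the bound $\norm{a*f}_{L^\infty}\le\norm{a}\,\norm{f}$ for the convolution terms) fill in the details the paper leaves implicit.
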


Since $\mathcal{W}^1 \subset \mathcal{W}^0$, (local) uniqueness implies that the solutions coincide where they both exist. We now define stopping times associated to these solutions.

\begin{definition}
    For $K, R >0$, define stopping times
    $$
    \tau_{0, K} \coloneqq 
    \begin{cases}
        \inf \cbrack{t \geq 0 : \norm{(\rho(t), j(t))}_{\mathcal{W}^0} \geq K}, \quad &\text{if infimum taken over non-empty set}, \\
        \infty, \quad &\text{otherwise},
    \end{cases}
    $$
    and
    $$
    \tau_{1, R} \coloneqq 
    \begin{cases}
        \inf \cbrack{t \geq 0 : \norm{(\rho(t), j(t))}_{\mathcal{W}^1} \geq R}, \quad &\text{if infimum taken over non-empty set}, \\
        \infty, \quad &\text{otherwise},
    \end{cases}
    $$
    We also define $\tau_{1, \infty} \coloneqq \lim_{R \rightarrow \infty}\tau_{1, R}$.
\end{definition}

In the main text our estimates rely on an $L^2 \times L^2$ stopping time. However, it is not clear that the $H^1 \times H^1$-valued solution is finite up to this stopping time. In the next lemma we prove that this is, in fact, the case.

\begin{lemma}\label{lem: H1-control-via-L2}
    For any $K>0$,
    $$
    \mathbb{P}(\{\tau_{1, \infty} \leq \tau_{0, K}\})=0.
    $$
    In words, with probability one the $H^1 \times H^1$ norm is finite where the $L^2 \times L^2$ norm is uniformly bounded.
\end{lemma}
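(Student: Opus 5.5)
The plan is an $H^1$ energy estimate for the $\mathcal{W}^1$-valued solution, localised by the stopping time $\tau_{0,K}\wedge\tau_{1,R}$. The aim is a bound on $\E{\sup_{s\le T\wedge\tau_{0,K}\wedge\tau_{1,R}}\norm{(\rho,j)}_{\mathcal{W}^1}^2}$ that is independent of $R$. Once this is in hand, Chebyshev's inequality gives
$$\mathbb{P}(\tau_{1,R}\le T\wedge\tau_{0,K})\le C(K,T,Z_0)R^{-2}\to0 .$$
Hence $\mathbb{P}(\tau_{1,\infty}\le T\wedge \tau_{0,K})=0$ for every $T$. Letting $T\to\infty$ along integers covers the event $\{\tau_{1,\infty}\le\tau_{0,K}\}$, because on that event $\tau_{1,\infty}<\infty$ (the solution can only cease to exist at a finite time). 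To make the estimate rigorous I will work either with the strong formulation or with Galerkin/mollified approximations, so that Itô's formula can be applied to $\vbar^2\norm{\partial_x\rho}^2+\norm{\partial_x j}^2$. The $L^2$ part of the $\mathcal{W}^1$ norm is already bounded by $K$ before $\tau_{0,K}$.

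Differentiating the system in $x$ shows that $(\partial_x\rho,\partial_x j)$ satisfies
$$\dd\,\partial_x\rho=-\partial_x^2 j\,\dd t,\qquad \dd\,\partial_x j=\big[\partial_x(\rho(a*j)-j(a*\rho))-\vbar^2\partial_x^2\rho\big]\dd t+\sigma(\partial_x j-\vbar\partial_x\rho)\dd B .$$
In the weighted norm the transport terms cancel after integration by parts:
$$-2\vbar^2\langle \partial_x\rho,\partial_x^2 j\rangle-2\vbar^2\langle\partial_x j,\partial_x^2\rho\rangle=0 .$$
This is exactly the mechanism behind Lemma \ref{lem: semigroup-analysis}. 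The Itô correction is $\sigma^2\norm{\partial_x j-\vbar\partial_x\rho}^2$, which is at most $C(\sigma,\vbar)\norm{(\partial_x\rho,\partial_x j)}^2$. The martingale term is $2\sigma\langle\partial_x j,\partial_x j-\vbar\partial_x\rho\rangle\dd B$, and its quadratic variation is bounded by $C\norm{(\partial_x\rho,\partial_x j)}^4$.

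The main obstacle is the nonlinear drift. I need it to be \emph{linear} in the $H^1$ quantities, with coefficients controlled only by the $L^2$ norm, which is at most $K$. Expanding,
$$\partial_x(\rho(a*j))=\partial_x\rho\,(a*j)+\rho\,(a'*j),$$
and similarly for the other term. Since $a$ is smooth and bounded with bounded derivative on $\bt$, Young's inequality gives $\norm{a*j}_{L^\infty}+\norm{a'*j}_{L^\infty}\le C(a)\norm{j}\le C(a)K$. This yields
$$\langle\partial_x j,\partial_x(\rho(a*j)-j(a*\rho))\rangle\le C(a,K)\big(\norm{\partial_x j}\norm{\partial_x\rho}+\norm{\partial_x j}^2+\norm{\partial_x j}\,\norm{\rho}_{L^\infty}\big).$$
The remaining term $\norm{\rho}_{L^\infty}$ is handled by the one-dimensional Sobolev/Agmon bound $\norm{\rho}_{L^\infty}\le C(\norm{\rho}+\norm{\partial_x\rho})$. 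So the whole drift is at most $C(a,K,\vbar)(1+\norm{(\rho,j)}_{\mathcal{W}^1}^2)$.

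Writing $Y(t)=\norm{(\rho,j)}^2_{\mathcal{W}^1}$ at stopped times, I take the supremum in time, then expectations, and apply BDG to the martingale. This gives
$$\E{\sup_{s\le t}Y}\le \E{Y_0}+Ct+C\int_0^t\E{\sup_{r\le s}Y}\dd s+\tfrac12\E{\sup_{s\le t}Y}+C\int_0^t\E{\sup_{r\le s}Y}\dd s .$$
The left side is finite thanks to the $\tau_{1,R}$ localisation, so the $\tfrac12$ term can be absorbed. Gronwall then gives a bound $C(K,T)(1+\E{Y_0})$ independent of $R$, using $Z_0\in L^2(\Omega;H^1\times H^1)$. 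This completes the argument.
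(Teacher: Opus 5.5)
Your proof is correct, but it takes a different route from the paper's.

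\textbf{The paper's route.} The paper never applies It\^o's formula to the $\mathcal{W}^1$ norm. With $\theta=\tau_{0,K}\wedge\tau_{1,R}$, it writes the stopped mild formulation of $Z(t\wedge\theta)$ and bounds each piece separately:
the initial term and the drift convolution are controlled by contractivity of $S$ on $\mathcal{W}^1$ (Lemma \ref{lem: semigroup-analysis}) and Cauchy--Schwarz in time;
the stochastic convolution is controlled by the maximal inequality for stochastic convolutions.
It then applies Gr\"onwall to $\E{\norm{Z(t\wedge\theta)}_{\mathcal{W}^1}^2}$, with no supremum inside the expectation. It finishes with the same Chebyshev step you use, $\mathbf{1}_{\{\tau_{1,R}\le t\wedge\tau_{0,K}\}}\le R^{-2}\norm{Z(t\wedge\theta)}_{\mathcal{W}^1}^2$.

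\textbf{Common core.} The analytic input is identical in both arguments. Before $\tau_{0,K}$ one has $\norm{F(Z)}_{\mathcal{W}^1}\le C(a,K,\vbar)\norm{Z}_{\mathcal{W}^1}$, because in each product one factor can be put in $L^\infty$ via $\norm{a*f}_{L^\infty}\le\norm{a}\norm{f}$. This is the paper's ``local growth estimate for $F$'', and your drift bound is the same fact tested against $Z$.

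\textbf{What each route buys.} The mild route needs no extra regularity, since the transport cancellation you compute by integration by parts is already encoded in the contractivity of $S$. Your energy route gives the stronger bound on $\E{\sup_{s\le t}\norm{Z(s\wedge\theta)}_{\mathcal{W}^1}^2}$ and makes the weighted cancellation explicit. However, it needs a genuine regularisation step, and the ``strong formulation'' alone does not supply one. The system is hyperbolic, so the solution is in general not $\mathcal{W}^2$-valued and $\langle Z,AZ\rangle_{\mathcal{W}^1}$ is undefined. Galerkin truncation of the nonlinear system would require a separate convergence proof.

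The clean fix is a spatial mollifier $J_\varepsilon$, which commutes with the constant-coefficient operator $A$ (here $\vbar$ is constant). Then:
$J_\varepsilon Z$ is a classical $\mathcal{W}^1$-valued It\^o process with forcing $J_\varepsilon F(Z)\,\dd t+J_\varepsilon B(Z)\,\dd W$;
$\langle J_\varepsilon Z,AJ_\varepsilon Z\rangle_{\mathcal{W}^1}=0$;
the integrated identity passes to $\varepsilon\to0$ by dominated convergence, since all integrands are bounded by $C(K)(R^2+\norm{Z_0}_{\mathcal{W}^1}^2)$ before $\theta$.

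\textbf{A correction to your last step.} It is not true that $\tau_{1,\infty}<\infty$ on $\{\tau_{1,\infty}\le\tau_{0,K}\}$. That event contains $\{\tau_{1,\infty}=\tau_{0,K}=\infty\}$. By your own estimate, this set has the same probability as $\{\tau_{0,K}=\infty\}$, which is positive whenever the $L^2$ norm stays below $K$ forever with positive probability (e.g.\ in the regime of Proposition \ref{thm:p=2-stopping-time-bound}).

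What your argument actually proves is $\mathbb{P}(\tau_{1,\infty}<\infty,\ \tau_{1,\infty}\le\tau_{0,K})=0$, i.e.\ no $H^1$ blow-up before the $L^2$ exit time. The paper's proof establishes exactly the same thing, since its final sentence removes $T$ in the same way. This is the intended content of the lemma and all that is used afterwards.
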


\begin{proof}
    Fix $K, R >0$, a deterministic time $T>0$, and set $\theta \coloneqq \tau_{0, K} \wedge \tau_{1, R}$. For $t \in [0, T]$, estimate
    \begin{multline*}
    \E{\norm{Z(t \wedge \theta)}_{\mathcal{W}^1}^2} \leq 3 \E{\norm{S(t \wedge \theta)Z_0}_{\mathcal{W}^1}^2}
    + 3\E{\norm{\int_0^{t \wedge \theta}S(t \wedge \theta-s)F(Z(s)) \dd s}_{\mathcal{W}^1}^2} \\
    + 3\E{\norm{\int_0^{t \wedge \theta}S(t \wedge \theta-s)B(Z(s)) \dd W(s)}_{\mathcal{W}^1}^2} =: I_1 + I_2 + I_3.
    \end{multline*}
    For $I_1$, we simply use contractivity of the semigroup to get
    $$
    I_1 \leq 3 \E{\norm{Z_0}_{\mathcal{W}^1}^2}.
    $$
    For \(I_2\), by definition we have
    \[
    I_2
    =
    3\mathbb{E}\left[
    \norm{
    \int_0^{t\wedge\theta}
    S(t\wedge\theta-s)F(Z(s))\dd s
    }_{\mathcal W^1}^{2}
    \right] = 3\mathbb{E}\left[
    \norm{
    \int_0^t
    \mathbf{1}_{[0,\theta]}(s)
    S(t\wedge\theta-s)F(Z(s))\dd s
    }_{\mathcal W^1}^{2}
    \right].
    \]
    By the definition of \(\tau_{0,K}\), for \(s<\theta\) we have $\|Z(s)\|_{\mathcal W^0}<K$ therefore using the local growth estimate for \(F\) gives
    \[
    \mathbf{1}_{[0,\theta]}(s)
    \|F(Z(s))\|_{\mathcal W^1}^{2}
    \leq
    C(K)\,
    \mathbf{1}_{[0,\theta]}(s)
    \|Z(s)\|_{\mathcal W^1}^{2}.
    \]
    Using the triangle inequality for the Bochner integral and the
    contractivity of the semigroup \(S\) on \(\mathcal W^1\), we obtain
    \[
    \begin{aligned}
    I_2
    &\leq
    3\mathbb{E}\left[
    \left(
    \int_0^t
    \mathbf{1}_{[0,\theta]}(s)
    \|F(Z(s))\|_{\mathcal W^1}\dd s
    \right)^2
    \right].
    \end{aligned}
    \]
    Applying the Cauchy--Schwarz inequality in time yields
    $$
    I_2
    \leq
    3t
    \int_0^t
    \mathbb{E}\left[
    \mathbf{1}_{[0,\theta]}(s)
    \|F(Z(s))\|_{\mathcal W^1}^{2}
    \right]\dd s \leq
    3TC(K)
    \int_0^t
    \mathbb{E}\left[
    \mathbf{1}_{[0,\theta]}(s)
    \|Z(s)\|_{\mathcal W^1}^{2}
    \right]\dd s.
    $$
    We conclude that
    \[
    I_2
    \leq
    C(K, T)
    \int_0^t
    \mathbb{E}\left[
    \mathbf{1}_{[0,\tau_{0, K} \wedge \tau_{1, R}]}(s)
    \|Z(s)\|_{\mathcal W^1}^{2}
    \right]\dd s
    \leq 
    C(K,T)
    \int_0^t
    \mathbb{E}\left[
    \|
    Z(s\wedge\tau_{0,K}\wedge\tau_{1,R})
    \|_{\mathcal W^1}^{2}
    \right]\dd s.
    \]  
For $I_3$, we first define the stopped stochastic convolution
$$
X(r) \coloneqq 
\int_0^r
S(r-s)\mathbf{1}_{\{s<\theta\}}B(Z(s))\,\dd W(s),
\qquad 0\leq r\leq t.
$$
It follows that
$$
I_3
=
3\mathbb{E}\left[
\|X(t\wedge\theta)\|_{\mathcal W^1}^{2}
\right]
\leq
3\mathbb{E}\left[
\sup_{0\leq r\leq t}
\|X(r)\|_{\mathcal W^1}^{2}
\right].
$$
Since \(\mathcal W^1\) is a Hilbert space and 
\((S(t))_{t\geq0}\) is a strongly continuous contraction semigroup on \(\mathcal W^1\), then the maximal inequality for stochastic convolutions \cite{van2011maximal} gives
\[
\mathbb{E}\left[
\sup_{0\leq r\leq t}
\norm{
\int_0^r S(r-s)\Phi(s)\,\dd W(s)
}_{\mathcal W^1}^{2}
\right]
\leq
C
\mathbb{E}\left[
\int_0^t
\|\Phi(s)\|_{\mathcal L_2(U,\mathcal W^1)}^{2}\,\dd s
\right]
\]
for every predictable process $\Phi\in L^2\bigl(\Omega\times[0,t]; \mathcal L_2(U,\mathcal W^1)\bigr)$. Applying this inequality with
$\Phi(s) = \mathbf{1}_{\{s<\theta\}}B(Z(s))$ gives
\[
I_3
\leq
C\mathbb{E}\left[
\int_0^t
\mathbf{1}_{\{s<\theta\}}
\|B(Z(s))|_{\mathcal L_2(U,\mathcal W^1)}^{2}
\,\dd s
\right]=
C\int_0^t
\mathbb{E}\left[
\mathbf{1}_{\{s<\theta\}}
\|B(Z(s))|_{\mathcal L_2(U,\mathcal W^1)}^{2}
\right]\dd s.
\]
By the definition of \(\tau_{0,K}\), on the set
\(\{s<\theta\}\) we have $\norm{Z(s)}_{\mathcal W^0}<K$ from which it follows that
\[
\mathbf{1}_{\{s<\theta\}}
\norm{B(Z(s))}_{\mathcal L_2(U,\mathcal W^1)}^{2}
\leq
C(K)\mathbf{1}_{\{s<\theta\}}
\norm{Z(s)}_{\mathcal W^1}^{2},
\]
using the local growth of $B$. Consequently, 
\[
I_3
\leq
C(K)
\int_0^t
\mathbb{E}\left[
\mathbf{1}_{\{s<\tau_{0, K} \wedge \tau_{1, R}\}}
\|Z(s)\|_{\mathcal W^1}^{2}
\right]\dd s \leq C(K)
\int_0^t
\mathbb{E}\left[
\|
Z(s\wedge\tau_{0,K}\wedge\tau_{1,R})
\|_{\mathcal W^1}^{2}
\right]\dd s.
\]
Altogether, we have
    $$
    \E{\norm{Z(t \wedge \tau_{0, K} \wedge \tau_{1, R})}_{\mathcal{W}^1}^2} \leq 3 \E{\norm{Z_0}_{\mathcal{W}^1}^2} + C(K, T) \int_0^t \E{\norm{Z(s \wedge \tau_{0, K} \wedge \tau_{1, R})}_{\mathcal{W}^1}^2} \dd s.
    $$
    Recalling that $\E{\norm{Z_0}_{\mathcal{W}^1}^2} < \infty$ by assumption and applying Gr\"onwall's inequality gives
    $$
    \E{\norm{Z(t \wedge \tau_{0, K} \wedge \tau_{1, R})}_{\mathcal{W}^1}^2} \leq 3 \E{\norm{Z_0}_{\mathcal{W}^1}^2} e^{C(K, T)}.
    $$
    By the stopping time definitions we have
    $$
    \mathbf{1}_{\{\tau_{1, R} \leq t \wedge \tau_{0, K}\}} \leq \frac{\norm{Z(t \wedge \tau_{0, K} \wedge \tau_{1, R}}_{\mathcal{W}^1}^2}{R^2},
    $$
    for any $R, K > 0$ and $t \in [0, T]$. Taking expectations gives
    $$
    \mathbb{P}(\{\tau_{1, R} \leq t \wedge \tau_{0, K}\}) \leq \frac{\E{\norm{Z(t \wedge \tau_{0, K} \wedge \tau_{1, R}}_{\mathcal{W}^1}^2}}{R^2} \leq 3 \E{\norm{Z_0}_{\mathcal{W}^1}^2}\frac{e^{C(K, T)}}{R^2}.
    $$
    By monotonicity of the stopping times $(\tau_{1, R})_{R > 0}$, we can pass to the limit to obtain
    $$
    \mathbb{P}(\{\tau_{1, \infty} \leq t \wedge \tau_{0, K}\}) = 0, \quad t \in [0, T].
    $$
    Take a sequence $(t_n)_{n \in \mathbb{N}} \uparrow T$ to extend the estimate to
    $$
    \mathbb{P}(\{\tau_{1, \infty} \leq T \wedge \tau_{0, K}\}) = 0, \quad T>0.
    $$
    An analogous argument then removes the dependence on $T$ to give the desired result.
\end{proof}

We now show existence of a continuous modification to our local solution using a truncation argument.

\begin{lemma}\label{lem: W0-cts-modification}
    For each $K>0$, the $\mathcal{W}^0$-valued solution has a continuous modification up to $\tau_{0, K}$.
\end{lemma}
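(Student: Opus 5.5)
Fix $K>0$ and use a truncation argument: replace the locally Lipschitz nonlinearity by a globally Lipschitz one that agrees with it on the ball of radius $K$, solve the truncated equation globally, and transfer its continuity back to the original solution up to $\tau_{0,K}$.

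\emph{Truncated equation.} Let $\chi_K\colon[0,\infty)\to[0,1]$ be smooth with $\chi_K\equiv1$ on $[0,K]$ and $\chi_K\equiv0$ on $[2K,\infty)$. Define
$$\tilde F_K(Z)\coloneqq\chi_K(\norm{Z}_{\mathcal W^0})\,\tilde F(Z)$$
on $\mathcal W^0$.

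\emph{Global Lipschitz bound.} On $\mathcal W^0=L^2\times L^2$ the map $\tilde F$ is bilinear. Since $a\in L^\infty$, Young's inequality gives
$$\norm{\rho(a*j)}\le\norm{a}_{L^\infty}\norm{\rho}\,\norm{j}_{L^1}\lesssim\norm{\rho}\,\norm{j},$$
using $L^2\subset L^1$ on $\bt$, and similarly for the other term. Hence $\tilde F$ is Lipschitz on $\mathcal W^0$-balls with constant $C(a,K)$, as in the $\mathcal W^0$ version of Lemma~\ref{lem: non-linearity-analysis}. The usual two-case argument (both points in the $2K$-ball, or one of them outside) then makes $\tilde F_K$ globally Lipschitz and of linear growth on $\mathcal W^0$.

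\emph{Global solution with continuous paths.} The semigroup from Lemma~\ref{lem: semigroup-analysis} is also a contraction semigroup on $\mathcal W^0$, by the same energy/skew-adjointness computation. The map $B$ is linear and bounded $\mathcal W^0\to L_2^0$, as in Lemma~\ref{lem: HS-analysis}. The truncated equation therefore satisfies the hypotheses of \cite[Theorem 7.2]{da2014stochastic}, exactly as in Proposition~\ref{prop: mild-solution-constant}. It has a unique global mild solution $Z_K$ with a continuous modification. The only nontrivial point is the continuity of the stochastic convolution, which follows from the contractivity of $S$ via the maximal inequality \cite{van2011maximal} or the factorisation method.

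\emph{Identification with $Z$.} Let $Z$ be the local $\mathcal W^0$ solution and define
$$\sigma_K\coloneqq\inf\{t:\norm{Z_K(t)}_{\mathcal W^0}\ge K\}.$$
On $[0,\tau_{0,K}]$ the truncation is inactive for $Z$, so $Z$ solves the truncated equation there, and likewise $Z_K$ solves the original equation on $[0,\sigma_K]$. Set $\theta=\tau_{0,K}\wedge\sigma_K$ and subtract the two mild formulations stopped at $\theta$. The difference has zero initial data, so the global Lipschitz property of $\tilde F_K$ and $B$, the It\^o isometry or maximal inequality, and Gr\"onwall give $\E{\norm{Z(t\wedge\theta)-Z_K(t\wedge\theta)}^2_{\mathcal W^0}}=0$ for every $t$. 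Thus $Z=Z_K$ on $[0,\theta]$ almost surely.

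\emph{Main obstacle.} I expect the hard step to be showing $\theta=\tau_{0,K}$ a.s., i.e.\ $\sigma_K\ge\tau_{0,K}$. On $\{\sigma_K<\tau_{0,K}\}$ the processes agree up to $\sigma_K$, and the continuity of $Z_K$ gives $\norm{Z_K(\sigma_K)}=K$, hence $\norm{Z(\sigma_K)}=K$. But the paper's solution is only known to be c\`adl\`ag, so $\norm{Z(\sigma_K)}=K$ does not immediately contradict the definition of $\tau_{0,K}$. I would argue at the hitting times directly using right-continuity: on this event, $Z$ reaching norm $K$ at $\sigma_K<\tau_{0,K}$ contradicts the definition of $\tau_{0,K}$. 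Failing that, I would make $\sigma_K$ and $\tau_{0,K}$ coincide by the reverse argument. With $\theta=\tau_{0,K}$, the process $Z_K(\cdot\wedge\tau_{0,K})$ is the required continuous modification of $Z$ up to $\tau_{0,K}$.
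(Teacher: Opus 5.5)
Your proposal is correct and follows the same route as the paper. Both arguments truncate the nonlinearity with a smooth cut-off so that it becomes globally Lipschitz on $\mathcal W^0$, obtain a global solution with a continuous modification from \cite[Theorem 7.2]{da2014stochastic}, and identify it with the local solution up to $\tau_{0,K}$ by uniqueness; the paper states that last step in one line, while you spell it out.

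The step you flag as the main obstacle is not a real difficulty. Since $Z$ itself solves the truncated equation on $[0,\tau_{0,K}]$, the Gr\"onwall comparison can be run directly up to $\tau_{0,K}$ without introducing $\sigma_K$. Even with your detour, $\norm{Z(\sigma_K)}_{\mathcal W^0}=K$ already gives $\tau_{0,K}\le\sigma_K$ straight from the definition of the infimum, with no continuity of $Z$ needed.
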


\begin{proof}
    For any $\varepsilon>0$, define a smooth cut-off function $\varphi_{\varepsilon}: [0, \infty) \rightarrow [0, 1]$ by
    $$
    \varphi_\varepsilon(x) = 
    \begin{cases}
        1, \quad &x \leq K^2, \\
        \text{smooth decay}, \quad &K^2 < x \leq K^2+\varepsilon, \\
        0, \quad &x > K^2 + \varepsilon.
    \end{cases}
    $$
    We also define $\hat \varphi: \mathcal{W}^0 \rightarrow \mathbb{R}_+$ by $\hat \varphi((\rho, j)) = \norm{(\rho, j)}_{\mathcal{W}^0}^2$, and set $\phi = \varphi_{\varepsilon} \circ \hat \varphi$. Then the truncated non-linearity $\phi(Z(t))F(Z(t))$ vanishes for $\norm{Z(t)}_{\mathcal{W}^0} > K + \varepsilon$ but coincides with $F(Z(t))$ up to $\tau_{0, K}$. For fixed $K>0$, one can argue similarly to the proof of Lemma \ref{lem: non-linearity-analysis} to show that $\phi(Z(t))F(Z(t))$ is globally Lipschitz. Combined with Lemmas \ref{lem: semigroup-analysis} and \ref{lem: HS-analysis}, we can apply \cite[Theorem 7.2]{da2014stochastic} to obtain a unique global $\mathcal{W}^0$-valued solution with a continuous modification. Finally, since the true and truncated SPDEs coincide up to $\tau_{0, K}$, so must their solutions by (local) uniqueness.
\end{proof}

\begin{corollary}
    For each $K>0$, the $\mathcal{W}^1$-valued solution has a continuous modification up to $\tau_{0, K}$.
\end{corollary}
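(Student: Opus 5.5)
The plan is to transfer the continuity of the $\mathcal{W}^0$-valued solution (Lemma \ref{lem: W0-cts-modification}) to the $\mathcal{W}^1$-valued solution by repeating the truncation argument in $\mathcal{W}^1$. The a.s.\ finiteness of the $\mathcal{W}^1$ norm up to $\tau_{0,K}$, given by Lemma \ref{lem: H1-control-via-L2}, lets us localise in $R$.

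\textbf{Step 1 (truncation in both norms).} Fix $K>0$ and $R>0$. I would introduce cut-offs $\phi_K=\varphi_\varepsilon(\|Z\|_{\mathcal W^0}^2)$ as in Lemma \ref{lem: W0-cts-modification}, together with an analogous cut-off $\psi_R$ in the $\mathcal W^1$ norm, and consider the truncated SPDE
\[
\dd Z=[AZ+\phi_K(Z)\psi_R(Z)\tilde F(Z)]\,\dd t+B(Z)\,\dd W
\]
on $\mathcal{W}^1$. By Lemma \ref{lem: non-linearity-analysis}, $\tilde F$ is locally Lipschitz and locally bounded on $\mathcal W^1$. Multiplying it by a smooth cut-off supported in a $\mathcal W^1$-ball therefore gives a globally Lipschitz map; this is the standard argument, splitting into cases according to whether both points, one point, or neither lies in the support. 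Lemma \ref{lem: semigroup-analysis} gives contractivity of $S$, and Lemma \ref{lem: HS-analysis} shows that $B$ is linear and bounded into $L_2^0$. Hence \cite[Theorem 7.2]{da2014stochastic} yields a unique global $\mathcal{W}^1$-valued mild solution $Z^{R}$ with a continuous modification.

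\textbf{Step 2 (identification).} The truncated and original equations coincide on $[0,\tau_{0,K}\wedge\tau_{1,R}]$. By local uniqueness of the $\mathcal W^1$ solution, $Z=Z^R$ on this interval, so $Z$ has a continuous $\mathcal W^1$ modification on $[0,\tau_{0,K}\wedge\tau_{1,R}]$. For $R'>R$ these modifications agree a.s.\ on the smaller interval, again by uniqueness. They therefore patch into a single process that is continuous on $[0,\tau_{0,K}\wedge\tau_{1,\infty})$.

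\textbf{Step 3 (removing $R$).} Lemma \ref{lem: H1-control-via-L2} gives $\tau_{0,K}<\tau_{1,\infty}$ a.s. Consequently, a.s.\ there is an $R$ (depending on $\omega$) with $\tau_{1,R}>\tau_{0,K}$, so $[0,\tau_{0,K}]$ is covered by a single stage of the patching, which yields the claim. I expect the main obstacle to be exactly this step. The claim is subtle: Lemma \ref{lem: H1-control-via-L2} only rules out $\tau_{1,\infty}\le\tau_{0,K}$, and the delicate points are whether $\tau_{1,R}\uparrow\tau_{1,\infty}$ strictly exceeds $\tau_{0,K}$ for finite $R$, and whether the endpoint is handled correctly. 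I would settle this through the monotonicity $\{\tau_{1,R}\le\tau_{0,K}\}\downarrow\{\tau_{1,\infty}\le\tau_{0,K}\}$, a null set, exactly as in the limiting step of the proof of Lemma \ref{lem: H1-control-via-L2}. I would also take the usual care that mild, weak and strong solutions coincide, as established for the constant-kernel case.
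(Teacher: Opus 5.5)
Your proof is correct, but it is organised differently from the paper's. The paper's proof is one line. Lemma \ref{lem: W0-cts-modification}, via a truncation in the $\mathcal{W}^0$ norm, already gives a continuous modification of the $\mathcal{W}^0$-valued solution up to $\tau_{0,K}$. The inclusion $\mathcal{W}^1\subset\mathcal{W}^0$ and local uniqueness identify the $\mathcal{W}^1$-valued solution with it. Lemma \ref{lem: H1-control-via-L2} ensures the $\mathcal{W}^1$ solution does not blow up before $\tau_{0,K}$. So the paper inherits continuity from the $\mathcal{W}^0$ level, while you redo the truncation directly in the $\mathcal{W}^1$ norm and patch over $R$.

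The paper's route is shorter and reuses an existing lemma, but what it transfers is continuity in the weaker $\mathcal{W}^0$ norm. Getting continuity in the $\mathcal{W}^1$ norm from it needs an extra ingredient, such as the c\`adl\`ag property of the $\mathcal{W}^1$ mild solution: a $\mathcal{W}^1$-c\`adl\`ag path that is $\mathcal{W}^0$-continuous cannot jump in $\mathcal{W}^1$. Your route costs a second cut-off and a patching step. In exchange it gives $\mathcal{W}^1$-norm continuity directly and never uses Lemma \ref{lem: W0-cts-modification}. The $\mathcal{W}^0$ cut-off $\phi_K$ in your truncated drift is superfluous, since $\psi_R$ alone already makes the drift globally Lipschitz on $\mathcal{W}^1$.

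One correction in Step 3. The point you flag is not actually delicate: if $\tau_{1,\infty}>\tau_{0,K}$, then $\tau_{1,R}>\tau_{0,K}$ for all large $R$ simply because $\tau_{1,R}\uparrow\tau_{1,\infty}$. The real subtlety is the event $\{\tau_{0,K}=\infty\}$. There $\tau_{1,\infty}\le\tau_{0,K}$ holds trivially, so the literal statement of Lemma \ref{lem: H1-control-via-L2} would force $\mathbb{P}(\tau_{0,K}=\infty)=0$. Your claim ``$\tau_{0,K}<\tau_{1,\infty}$ a.s.'' fails there, and no single $R$ covers $[0,\infty)$. What the lemma's proof actually establishes, through the finite-horizon bounds $\mathbb{P}(\tau_{1,\infty}\le T\wedge\tau_{0,K})=0$ for every $T$, is weaker but sufficient: a.s.\ $\tau_{1,\infty}>\tau_{0,K}$ on $\{\tau_{0,K}<\infty\}$, and $\tau_{1,\infty}=\infty$ on $\{\tau_{0,K}=\infty\}$. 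On the first event your single-$R$ argument covers the closed interval $[0,\tau_{0,K}]$. On the second, your Step 2 patching already gives continuity on $[0,\tau_{1,\infty})=[0,\infty)$. With this case distinction your conclusion stands.
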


\begin{proof}
    This follows from the inclusion $\mathcal{W}^1 \subset \mathcal{W}^0$, (local) uniqueness, Lemma \ref{lem: H1-control-via-L2} and Lemma \ref{lem: W0-cts-modification}.
\end{proof}

Finally, we need to upgrade our local $\mathcal{W}^1$-valued mild solution to a local $\mathcal{W}^1$-valued strong solution on $[0, \tau_{0, K}]$ for any $K>0$. To do so, we fix an arbitrary final time $T>0$ and $K>0$. We then consider the stopped solution $Z(t \wedge \tau_{0, K})$ for $t \in [0, T]$. Since we now have a deterministic time horizon, we can check the criteria of \cite[Propositions F.0.4, F.0.5]{prevot2007concise}. These follow from continuity and adaptedness (hence predictability) of the solution, estimates similar to our previous analysis of $\tilde F, B$, and that the $\mathcal{W}^1$-norm doesn't blow up on $[0, \tau_{0, K}]$.

This concludes the proof of Theorem \ref{thm: well-posedness}.

 \section{SPDE flocking in the case of common noise}\label{sec: proofs}

This section is devoted to proving SPDE flocking behaviour in the case when particles have a common noise. We first prove Theorem \ref{thm: constant-kernel-flocking} and then, via a sequence of auxiliary lemmas, prove Proposition \ref{thm:p=2-stopping-time-bound} and Theorem \ref{thm: stopping-time-exp-decay}.

\subsection{SPDE flocking with a constant kernel}\label{subsec_SPDE_constant_kernel_env_noise}

\begin{proof}[Proof of Theorem \ref{thm: constant-kernel-flocking}]
    By Theorem \ref{thm: well-posedness}, the SPDE admits a continuous, global-in-time $H^1 \times H^1$-valued mild solution. We first write
    $$
    \dd (j - \vbar \rho) = \sbrack{\rho(a * j) - j(a*\rho) - \vbar^2 \frac{\partial \rho}{\partial x} + \vbar \frac{\partial j}{\partial x}} \dd t + \sigma(j - \vbar \rho) \dd B(t),
    $$
    and then apply It\^o's formula to get 
    $$
    \dd \norm{j-\vbar \rho}^2 = 2\abrack{j - \vbar \rho, d(j-\vbar \rho)} + \sigma^2 \abrack{j-\vbar \rho, j-\vbar \rho} \dd t.
    $$
    For the first term, we write
    \begin{align*}
        &2\abrack{j - \vbar \rho, d(j-\vbar \rho)} \\
        &= 2\abrack{j - \vbar \rho, \sbrack{\rho(a * j) - j(a*\rho) - \vbar^2 \frac{\partial \rho}{\partial x} + \vbar \frac{\partial j}{\partial x}} \dd t + \sigma(j - \vbar \rho) \dd B(t)} \\
        &= 2\abrack{j - \vbar \rho, \sbrack{\rho(a * j) - j(a*\rho) - \vbar^2 \frac{\partial \rho}{\partial x} + \vbar \frac{\partial j}{\partial x}} \dd t} + 2\sigma \norm{j - \vbar \rho}^2 \dd B(t).
    \end{align*}
    Since $a \equiv C_a$ is a constant, we have
    $$
    -j(a * \rho) = -jC_a \int_\bt \rho(x, t) \dd x = -jC_a m_\rho(0) = -jC_a,
    $$
    by conservation of mass and our choice that $m_\rho = 1$. Similarly,
    $$
    \rho(a * j) = \rho C_a \int_\bt j(x, t) \dd x = \rho C_a \int_\bt m_j(0) = \rho C_a \vbar,
    $$
    using the relationship $m_j = \vbar m_\rho = \vbar$ (see Proposition \ref{prop: conservation-of-mass}). We then apply integration by parts to see that 
    $$
    \abrack{j - \vbar \rho, -\vbar^2 \frac{\partial \rho}{\partial x} + \vbar \frac{\partial j}{\partial x}} = 0.
    $$
    Together, these computations give
    $$
    2\abrack{j - \vbar \rho, d(j-\vbar \rho)} = 2C_a \abrack{j - \vbar \rho, \vbar \rho - j} \dd t + 2 \sigma \norm{j-\vbar \rho}^2 \dd B(t),
    $$
    and therefore,
    $$
    \dd \norm{j - \vbar \rho}^2 = (\sigma^2 - 2C_a) \norm{j-\vbar \rho}^2 \dd t + 2 \sigma \norm{j-\vbar \rho}^2 \dd B(t).
    $$
    Define $X(t) \coloneqq \norm{j(t) - \vbar \rho(t)}^2$ so that the flocking quantity solves the following SDE:
    $$
    \dd X(t) = (\sigma^2 - 2C_a)X(t) \dd t + 2 \sigma X(t) \dd B(t).
    $$
    This is a Geometric Brownian motion so we can write down the unique solution
    $$
    X(t) = X(0) \exp \sbrack{(\sigma^2 - 2C_a)t + 2 \sigma B(t) - \frac{1}{2}(2 \sigma)^2 t}.
    $$
    To get almost sure flocking, we take logs and divide by $t$ to get
    $$
    \frac{\log X(t)}{t} = \frac{\log X(0)}{t} - (\sigma^2 + 2C_a) + 2\sigma \frac{B(t)}{t}.
    $$
    Taking $t \rightarrow \infty$ gives
    $$
    \lim_{t \rightarrow \infty} \frac{\log X(t)}{t} = - (\sigma^2 + 2C_a) < 0,
    $$
    since $B(t)/t \rightarrow 0$ as $t \rightarrow \infty$. We therefore conclude that $X(t)$ and thus $\norm{j(t) - \vbar \rho(t)} \rightarrow 0$ almost surely as $t \rightarrow \infty$ for any choice of $\sigma > 0$. Additionally, we can take the expected value of the expression for $X(t)$ to get
    $$
    \E{X(t)} = \E{X(0)} \exp \sbrack{(\sigma^2 - 2C_a)t}.
    $$
    This follows by observing that $\exp \brack{2\sigma B(t) - \frac{1}{2}(2\sigma)^2t}$ is a martingale with mean $1$, and that it is independent of the initial data $X_0$. We can then take limits to deduce flocking in expectation provided $\sigma^2 < 2C_a$.   \qedhere 
\end{proof}

\subsection{SPDE flocking with the general kernel}

Going forwards, we use the notation $X(t) \coloneqq \norm{j(t) - \vbar \rho(t)}^2$. Conceptually, the proof for the general kernel shares aspects with the constant case but with additional steps and complexity. Noticeably, we still derive an evolution equation for $X(t)$ but it is no longer a Geometric Brownian motion. However, we can appeal to a stochastic comparison principle to bound its dynamics from above by a suitably chosen Geometric Brownian motion. This only holds locally-in-time but by deriving an energy estimate for $\vbar^2 \norm{\rho}^2 + \norm{j}^2$, 
we can use a Markov argument to estimate $\mathbb{P}(\tau < \infty)$.

We begin by deriving an evolution equation for $X(t)$.

\begin{lemma}\label{lem: Xt-SDE}
    The process $X(t)$ satisfies the following SDE:
    $$
    \dd X(t) = (\sigma^2 - 2C_a)X(t) \dd t + 2 \sigma X(t) \dd B(t) + 2 \theta \abrack{j-\vbar \rho, -j(g*\rho) + \rho(g * j)} \dd t.
    $$
\end{lemma}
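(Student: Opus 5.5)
The plan is to repeat the computation from the proof of Theorem \ref{thm: constant-kernel-flocking}, now with the decomposed kernel $a = C_a + \theta g$ from \eqref{eq: decomposed-kernel}. We work with the strong $H^1\times H^1$ solution from Theorem \ref{thm: well-posedness}, up to its lifetime, so the It\^o formula for the squared $L^2$ norm applies.

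First, write the equation for $u := j - \vbar\rho$. By \eqref{eq:spde-vanishing-common-1d} and the conservation of $\vbar$,
$$
\dd u = \sbrack{\rho(a*j) - j(a*\rho) - \vbar^2\partial_x\rho + \vbar\partial_x j}\dd t + \sigma u \dd B(t).
$$
It\^o's formula for $\norm{u}^2$ then gives
$$
\dd X = 2\abrack{u, \rho(a*j) - j(a*\rho) - \vbar^2\partial_x\rho + \vbar\partial_x j}\dd t + \sigma^2 X\dd t + 2\sigma X \dd B(t).
$$

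Second, handle the transport terms. Write $-\vbar^2\partial_x\rho + \vbar\partial_x j = \vbar\partial_x u$. Periodicity gives $\abrack{u,\partial_x u} = \tfrac12\int_\bt \partial_x(u^2) = 0$, so these terms drop out, exactly as in the constant case. This is where $d=1$ and $u\in H^1$ are used.

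Third, split the convolution terms by linearity of convolution:
$$
\rho(a*j) - j(a*\rho) = C_a\brack{\rho\, m_j - j\, m_\rho} + \theta\brack{\rho(g*j) - j(g*\rho)}.
$$
Proposition \ref{prop: conservation-of-mass} gives $m_\rho = 1$ and $m_j = \vbar$, so the constant-kernel part equals $C_a(\vbar\rho - j) = -C_a u$. This contributes $-2C_a X\,\dd t$. The remaining part contributes exactly $2\theta\abrack{u, -j(g*\rho) + \rho(g*j)}\dd t$. Collecting terms gives the claimed SDE.

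The only delicate step is justifying It\^o's formula for $\norm{u}^2$ rigorously. The drift contains $\partial_x$ terms, which lie only in $L^2$ for an $H^1$ solution. The cleanest route is to use that the solution is a strong solution in $H^1$, so the drift is $L^2$-valued and the standard Hilbert-space It\^o formula applies. Alternatively, one can use the It\^o formula in a Gelfand triple $H^1\subset L^2\subset H^{-1}$, or mollify and pass to the limit. Since the solution is only local for a general kernel, the identity should be read up to the lifetime, or on $[0,\tau]$ via Theorem \ref{thm: well-posedness}(b).
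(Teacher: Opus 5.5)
Your proposal is correct and follows the paper's proof. The paper also reruns the It\^o computation from the constant-kernel case: the transport terms vanish by integration by parts, conservation of mass turns the $C_a$ part of the convolutions into $-C_a(j-\vbar\rho)$, and the $\theta g$ part is left over as the extra drift. Your remarks on justifying It\^o's formula for the $H^1$ strong solution up to its lifetime add rigour that the paper leaves implicit.
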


\begin{proof}
    Apply It\^o's lemma to compute $\dd \norm{j(t) - \vbar \rho(t)}^2$. The computations mimic those in the proof of Theorem \ref{thm: constant-kernel-flocking} except that the convolution terms now pick up a non-local component from the spatial component of the kernel. In particular, we now have
    $$
    \rho(a * j) = \rho C_a \vbar + \theta \rho (g * j), \quad -j(a*\rho) = -jC_a - \theta j(g * \rho), 
    $$
    with the latter term in each expression being the extra contribution to the SDE for $X(t)$.
\end{proof}

We now prove Lemma \ref{lem: p=2-flocking-bound}. The essence of the proof is an application of a stochastic comparison theorem \cite[Chapter VI, Theorem 1.1]{ikeda2014stochastic} with an appropriately chosen Geometric Brownian motion.

\begin{proof}[Proof of Lemma \ref{lem: p=2-flocking-bound}]
    The proof is split into two parts: we first provide an estimate on the non-local term in the SDE for $X(t)$. Then we use a comparison theorem for SDEs to obtain the desired bound.

    \textbf{Step 1:} By adding and subtracting $2\theta \abrack{j - \vbar \rho, \vbar \rho (g*\rho)}$, we can write
    $$
        2 \theta \abrack{j-\vbar \rho, -j(g*\rho) + \rho(g * j)} = 2\theta \abrack{j-\vbar \rho, \rho(g*(j-\vbar \rho))} - 2\theta \abrack{j-\vbar \rho, (j-\vbar \rho)(g*\rho)}.
    $$
    Using Young's convolution inequality we estimate 
    $$
    \abrack{j-\vbar \rho, \rho(g*(j-\vbar \rho))} \leq \norm{g}\norm{\rho}\norm{j-\vbar \rho}^2,
    $$
    and 
    $$
    \abrack{j-\vbar \rho, (j-\vbar \rho)(g*\rho)} \leq \norm{g}\norm{\rho}\norm{j-\vbar \rho}^2.
    $$
    Together with the definition of the stopping time $\tau$ in \eqref{defn_stop_time}, these give
    $$
    2\theta \abrack{j-\vbar \rho, \rho(g*(j-\vbar \rho))} - 2\theta \abrack{j-\vbar \rho, (j-\vbar \rho)(g*\rho)} \leq 4\theta \norm{g}\norm{\rho}\norm{j-\vbar \rho}^2.
    $$
    Then, up to the stopping time $\tau$ we have 
    $$
    2 \theta \abrack{j-\vbar \rho, -j(g*\rho) + \rho(g * j)} \leq C_a \norm{j - \vbar \rho}^2 = C_aX(t).
    $$
    \textbf{Step 2:} We now look to apply \cite[Chapter VI, Theorem 1.1]{ikeda2014stochastic}. Consider $t \leq \tau$, and define two SDEs 
    \begin{align}
    dX(t) &= \beta_X(t) \dd t + 2\sigma X(t) \dd B(t), \label{sde_X}\\
    dY(t) &= \beta_Y(t) \dd t + 2\sigma Y(t) \dd B(t), \label{sde_Y_comparison}
    \end{align}
    with the same initial condition $X(0)=Y(0)$, and where
    \begin{align*}
        \beta_X(s, \omega) &\coloneqq (\sigma^2 - 2C_a)X(s, \omega) + 2 \theta \abrack{j-\vbar \rho, -j(g*\rho) + \rho(g * j)}(s, \omega), \\
        \beta_Y(s, \omega) &\coloneqq (\sigma^2 - C_a)Y(s, \omega).
    \end{align*}
    Since the diffusion is simply the map $(t,x) \mapsto 2\sigma x$, the assumption of \cite[Chapter VI, Theorem 1.1]{ikeda2014stochastic} are satisfied. 
    If we then define
    $
    b(t, x) \coloneqq b_1(t, x) = b_2(t, x) = (\sigma^2 - C_a)x,
    $
    we have $\beta_X(t) \leq b(t, X(t))$ and $\beta_Y(t) = b(t, Y(t))$ as consequences of the  stopping time restriction. 
    Moreover, the process $Y(t)$ solving \eqref{sde_Y_comparison} 
    is a Geometric Brownian motion, so we have pathwise uniqueness of solutions to this SDE. This allows us to work under the weaker assumption that $b_1(t, x) \leq b_2(t, x)$. We can therefore apply \cite[Chapter VI, Theorem 1.1]{ikeda2014stochastic} to deduce that $X(t) \leq Y(t)$ holds $\mathbb{P}$-a.s. Since $Y(t)$ is a Geometric Brownian motion with $Y(0) = X(0)$:
    $$
    Y(t) = X(0)\exp \brack{(\sigma^2 - C_a)t + 2\sigma B(t) - \frac{1}{2}(2\sigma)^2 t}.
    $$
    The above restrictions hold for $t \leq \tau$, but we can stop the processes to obtain $X(t \wedge \tau) \leq Y(t \wedge \tau)$ $\mathbb{P}$-a.s. for any $t \geq 0$, that is,
    $$
    X(t \wedge \tau) \leq X_0 \exp \brack{(\sigma^2 - C_a)(t \wedge \tau) + 2 \sigma B(t \wedge \tau) - \frac{1}{2}(2\sigma)^2(t \wedge \tau)}.
    \qedhere
    $$
    \end{proof}

We now want to prove Proposition \ref{thm:p=2-stopping-time-bound}. The key step in the proof is an application of Markov's inequality after observing that
$$
\mathbb{P}(\{\tau < t\}) \leq \mathbb{P}\brack{\cbrack{\norm{(\rho(t \wedge \tau), j(t \wedge \tau))}^2 \geq \brack{\frac{C_a \vbar}{4 \theta \norm{g}}}^2}}.
$$
We therefore require an energy estimate for the $L^2 \times L^2$ norm which we give in the following technical lemma.

\begin{lemma}\label{lem:p=2-energy-estimate}
    Let Assumption \ref{ass:IC} \ref{ass:idp-ic} 
    hold, assume that $\sigma^2 < C_a$ and define the energy $\eta(t) \coloneqq \vbar^2 \norm{\rho(t)}^2 + \norm{j(t)}^2$. Then for any $t \geq 0$ we have the moment bound
    $$
    \E{\eta(t \wedge \tau)} \leq 2 \brack{\E{\norm{j_0}^2} + \vbar^2\E{\norm{\rho_0}^2}} + 18\E{\norm{j_0 - \vbar \rho_0}^2} \brack{\frac{C_a}{C_a - \sigma^2}}^2.
    $$
\end{lemma}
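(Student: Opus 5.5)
The plan is to avoid a direct It\^o computation for $\eta$, whose drift contains the non-local terms with no obvious sign. Instead, I would split $j$ into its flocked part plus the flocking defect and control $\eta$ through $\rho$ and $X$.

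\textbf{Step 1: reduce to two quantities.} Writing $j = \vbar\rho + (j-\vbar\rho)$ gives
\[
\eta \leq 3\vbar^2\norm{\rho}^2 + 2X.
\]
It therefore suffices to bound $\E{\vbar^2\norm{\rho(t\wedge\tau)}^2}$ and $\E{X(t\wedge\tau)}$ separately.

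\textbf{Step 2: the defect $X$.} By Lemma \ref{lem: p=2-flocking-bound}, up to $\tau$ the defect $X$ is dominated by a stopped geometric Brownian motion with rate $-(C_a-\sigma^2)$. Taking expectations, using independence of $X_0$ from $B$ (Assumption \ref{ass:IC}\ref{ass:idp-ic}) and optional stopping for the exponential martingale, yields
\[
\E{X(t\wedge\tau)}\leq \E{X_0}.
\]
More usefully, it also gives the time-integrated bound
\[
\E{\int_0^{t\wedge\tau}X(s)\,\dd s}\leq \frac{\E{X_0}}{C_a-\sigma^2}.
\]
A square-root version, via Cauchy--Schwarz on the supermartingale $e^{(C_a-\sigma^2)s}X(s)$, controls $\int_0^{t\wedge\tau}\norm{j-\vbar\rho}\,\dd s$. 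For this I would exploit $\E{\sqrt{X(s)}}\le e^{-(C_a-\sigma^2)s/2}\E{X_0}^{1/2}$, integrated in time, which gives $2\E{X_0}^{1/2}/(C_a-\sigma^2)$.

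\textbf{Step 3: the density $\rho$.} Since $\dd\rho=-\partial_x j\,\dd t$ and $\abrack{\rho,\partial_x\rho}=0$ on the torus, integration by parts gives
\[
\frac{\dd}{\dd t}\norm{\rho}^2 = -2\abrack{\rho,\partial_x(j-\vbar\rho)} = 2\abrack{\partial_x\rho, j-\vbar\rho}.
\]
This involves a derivative, which is awkward. My preferred alternative is to work instead with the full energy $\eta$, where the transport terms cancel exactly: $\vbar^2\abrack{\rho,-\partial_x j}+\abrack{j,-\vbar^2\partial_x\rho}=0$. This leaves the drift
\[
2\abrack{j,\rho(a*j)-j(a*\rho)}+\sigma^2X,
\]
plus the martingale term $2\sigma\abrack{j,j-\vbar\rho}\,\dd B$.

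\textbf{Step 4: bound the drift by the defect.} The key observation is that the interaction term vanishes on flocked states. Substituting $j=\vbar\rho+w$ with $w=j-\vbar\rho$, the $\rho\rho$ pieces cancel and one gets
\[
\abrack{j,\rho(a*w)-w(a*\rho)}.
\]
Splitting $j=\vbar\rho+w$ once more, the constant-kernel part of this is $C_a\abrack{j,\vbar\rho-j}\cdot$-type terms, i.e. $-C_a\norm{w}^2$ plus cross terms. The $\theta g$ part is bounded using the stopping-time restriction $\norm{\rho}\le C_a/(4\theta\norm{g})$, exactly as in Lemma \ref{lem: p=2-flocking-bound}. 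This gives a drift bound of the form
\[
C\bigl(\sqrt{\eta}\,\norm{w}\,C_a + C_a\norm{w}^2\bigr).
\]
Integrating and taking expectations (the martingale has zero mean up to $\tau$, since $\eta$ is bounded there) gives
\[
\E{\eta(t\wedge\tau)}\le\E{\eta_0}+C\,\E{\sup_{s\le t\wedge\tau}\sqrt{\eta(s)}\int_0^{t\wedge\tau}C_a\norm{w}\,\dd s}+\dots
\]
Young's inequality $ab\le a^2/2+b^2/2$ then absorbs the sup, leaving roughly $2\E{\eta_0}$ plus the square of the Step 2 integral. That square is of order $\E{X_0}\,C_a^2/(C_a-\sigma^2)^2$, which matches the stated constants $2$ and $18$.

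\textbf{Main obstacle.} The hardest point is controlling the supremum in time inside the expectation, since the It\^o estimate naturally gives only $\E{\eta(t\wedge\tau)}$ and not $\E{\sup\eta}$. I would first try a pathwise argument: use the deterministic differential inequality $\frac{\dd}{\dd t}\sqrt{\eta}\lesssim C_a\norm{w}$, valid after treating the noise via a comparison or exponential-martingale factor, then square and take expectations.
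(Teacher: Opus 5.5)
Your Step 4 does not close, and it is the core of the lemma. After taking expectations, the stochastic integral vanishes only at the fixed stopped time, so the left-hand side you control is $\E{\eta(t\wedge\tau)}$. Young's inequality, however, produces $\tfrac12\E{\sup_{s\le t\wedge\tau}\eta(s)}$, and that cannot be absorbed into $\E{\eta(t\wedge\tau)}$. To get a supremum on the left you would need a BDG estimate on $2\sigma\int_0^{t\wedge\tau}\abrack{j,j-\vbar\rho}\,\dd B$. That adds $\sigma$-dependent terms and a BDG constant, so at best you get a bound of the same shape with worse constants.

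Even ignoring the martingale, $ab\le a^2/2+b^2/2$ gives $\E{\sup\eta}\le 2\E{\eta(0)}+\E{Z^2}$ with $Z=3C_a\int_0^{t\wedge\tau}\sqrt{X_s}\,\dd s$. Step 2 only controls $\E{Z}$, not $\E{Z^2}$. Bounding $\E{Z^2}$ via Minkowski's integral inequality gives $36\,\E{X_0}\,C_a^2/(C_a-\sigma^2)^2$, so your claimed match with $18$ does not hold.

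The pathwise fallback is also unavailable. Unlike $X$, the diffusion coefficient $2\sigma\abrack{j,j-\vbar\rho}$ of $\eta$ is not a function of $\eta$. So there is no geometric Brownian motion comparison as in Lemma \ref{lem: p=2-flocking-bound}, and no deterministic differential inequality for $\sqrt{\eta}$.

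Your drift bound is also too lossy. Before $\tau$, the terms quadratic in $w$ are $-(2C_a-\sigma^2)\norm{w}^2-2\theta\abrack{w(g*\rho),w}\le-(\tfrac32 C_a-\sigma^2)\norm{w}^2\le 0$. Hence the drift is at most $3C_a\norm{w}\sqrt{\eta}$, with no positive $\norm{w}^2$ term.

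The missing idea is to never take a supremum and instead close an inequality for $u(t)\coloneqq\E{\eta(t\wedge\tau)}$ itself. Start from
\[
\eta(t\wedge\tau)\le\eta(0)+3C_a\int_0^{t\wedge\tau}\sqrt{X_s}\sqrt{\eta_s}\,\dd s+2\sigma\int_0^{t\wedge\tau}\abrack{j,j-\vbar\rho}\,\dd B .
\]
Take expectations and extend the time integral to $[0,t]$ using $\mathbf{1}_{\{s\le\tau\}}\sqrt{X_s}\le X_0^{1/2}e^{-(C_a-\sigma^2)s/2}M_s^{1/2}$ from Lemma \ref{lem: p=2-flocking-bound}. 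Then apply Cauchy--Schwarz in $\Omega$ at each fixed $s$, using the independence of $X_0$ and $M_s$ and $\E{M_s}=1$. This yields the closed deterministic inequality
\[
u(t)\le u(0)+3C_a\E{X_0}^{1/2}\int_0^t e^{-(C_a-\sigma^2)s/2}\,u(s)^{1/2}\,\dd s .
\]
The square-root (Bihari-type) Gr\"onwall inequality gives $u(t)\le\bigl(u(0)^{1/2}+3C_a\E{X_0}^{1/2}/(C_a-\sigma^2)\bigr)^2$. After $(a+b)^2\le 2a^2+2b^2$ this produces exactly the constants $2$ and $18$; this is the paper's route. Your Step 2 decay estimate is the right ingredient, but it must enter jointly with $\sqrt{\eta}$ through Cauchy--Schwarz in probability at fixed times, not through a time supremum. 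The $\rho$--$X$ splitting of Steps 1 and 3 is not needed.
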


\begin{proof}

Fix $t \geq 0$. It\^o's lemma gives
    \begin{multline*}
        \dd \eta(t) = 2\vbar^2 \abrack{\rho, -\partial_x j} \dd t + 2 \abrack{j, \rho(a*j) - j(a*\rho) - \vbar^2 \partial_x \rho} \dd t, \\
        + 2 \sigma \abrack{j, (j-\vbar \rho) \dd B(t)} + \sigma^2 \abrack{j-\vbar \rho, j-\vbar \rho} \dd t.
    \end{multline*}
    Using integration by parts reduces the top line to $2 \abrack{j, \rho(a*j) - j(a*\rho)}$. We then add and subtract $\vbar \rho$ in suitable $j$-terms as in \cite{zimper2025reduced} to get 
    \begin{align*}
        \frac{1}{2}\dd \eta(t) & = \abrack{j, \rho(\theta g)*(j-\vbar \rho)}\dd t - (C_a-\frac{1}{2}\sigma^2)\norm{j-\vbar \rho}^2 \dd t - \abrack{j-\vbar \rho, C_a \vbar \rho} \dd t \\
        & \quad - \abrack{(j-\vbar \rho)(\theta g)*\rho, j-\vbar \rho} \dd t - \abrack{(j-\vbar \rho)(\theta g)*\rho, \vbar \rho} \dd t + \sigma \abrack{j, (j-\vbar \rho) \dd B(t)} \\
        & =: \sum_{i=1}^5 T_i(t) \dd t + \sigma \abrack{j, (j(t)-\vbar \rho(t)) \dd B(t)}.
    \end{align*}
    We then integrate both sides of the equality up to time $t \wedge \tau$. This gives
    $$
    \eta(t \wedge \tau) = \eta(0) + 2\int_0^{t \wedge \tau}\sum_{i=1}^5 T_i(s) \dd s + 2 \sigma \int_0^{t \wedge \tau}\abrack{j(s), j(s) - \vbar \rho(s)} \dd B(s).
    $$
    Again following \cite{zimper2025reduced}, the definition of $\tau$ means we can bound
    $$
    \absv{T_1(s)}+ \absv{T_3(s)} + \absv{T_5(s)} \leq \frac{3C_a}{2}\norm{j-\vbar \rho}(s)\sqrt{\eta(s)}, \quad s \in [0, t\wedge \tau].
    $$
    We then bound 
    $$
    T_4 = -\abrack{(j-\vbar \rho)(\theta g)*\rho, j-\vbar \rho} \leq \theta \norm{g} \norm{\rho} \norm{j-\vbar \rho}^2 \leq \frac{C_a}{4}\norm{j-\vbar \rho}^2,
    $$
    using Young's convolution inequality, and the definition of $\tau$ in the final inequality. Therefore,
    $$
    T_2 + T_4 \leq - (C_a - \frac{1}{2}\sigma^2)\norm{j-\vbar \rho}^2\dd t + \frac{C_a}{4}\norm{j-\vbar \rho}^2\dd t \leq 0,
    $$
    if we choose $\sigma^2 \leq 3C_a/2$. Note that we assume $\sigma^2 < C_a$ so this is immediately satisfied. Overall, this gives
    $$
    \eta(t \wedge \tau) \leq \eta(0) + 3C_a \int_0^{t \wedge \tau} \sqrt{X(s)} \sqrt{\eta(s)} \dd s + 2\sigma \int_0^{t \wedge \tau}\abrack{j(s), j(s) - \vbar \rho(s)} \dd B(s).
    $$
    The latter term vanishes 
    upon taking expectations, giving
    $$
    \E{\eta(t \wedge \tau)} \leq \E{\eta(0)} + 3C_a \E{\int_0^{t \wedge \tau} \sqrt{X(s)} \sqrt{\eta(s)} \dd s}.
    $$
    For $s \in [0, t \wedge \tau]$, Lemma \ref{lem: p=2-flocking-bound} gives the estimate
    $$
    X_s \leq X_0 \exp\brack{(\sigma^2 - C_a)s}M_s,
    $$
    where $M_s \coloneqq \exp \brack{2 \sigma B_s - \frac{1}{2}(2\sigma)^2s}$ is an exponential martingale. Using this bound, and the non-negativity of the integrand,  we obtain
    $$
    \E{\eta(t \wedge \tau)} \leq \E{\eta(0)} + 3C_a \E{\int_0^t X_0^{\frac{1}{2}}\exp\brack{\frac{\sigma^2-C_a}{2}s}M_s^{\frac{1}{2}} \sqrt{\eta(s \wedge \tau)} \dd s}.
    $$
    Applying Cauchy-Schwarz on the expectation gives
    $$
    \E{\eta(t \wedge \tau)} \leq \E{\eta(0)} + 3C_a \brack{\E{X_0}}^{\frac{1}{2}}\int_0^t \exp\brack{\frac{\sigma^2-C_a}{2}s}\brack{\E{\eta(s \wedge \tau)}}^{\frac{1}{2}} \dd s,
    $$
    where we have used the independence of $X_0$ and $M_s$, and that $\E{M_s} = 1$. Applying a non-linear generalisation of Gr\"onwall's inequality \cite[Theorem 21]{dragomir2003applications} gives
    $$
    \E{\eta(t \wedge \tau)} \leq \sbrack{\brack{\E{\eta(0)}}^{\frac{1}{2}} + \frac{3C_a}{2}\brack{\E{X_0}}^{\frac{1}{2}}\int_0^t \exp\brack{\frac{\sigma^2-C_a}{2}s} \dd s}^2.
    $$
    Noting that the integral is bounded above by $2/(C_a - \sigma^2)$ for any $t \geq 0$, and using $(a+b)^2 \leq 2a^2 + 2b^2$
    $$
    \E{\eta(t \wedge \tau)} \leq 2 \brack{\E{\norm{j_0}^2} + \vbar^2\E{\norm{\rho_0}^2}} + 18\E{\norm{j_0 - \vbar \rho_0}^2} \brack{\frac{C_a}{C_a - \sigma^2}}^2.
    \qedhere
    $$
\end{proof}

We are now in a position to prove Proposition \ref{thm:p=2-stopping-time-bound}. 

\begin{proof}[Proof of Proposition \ref{thm:p=2-stopping-time-bound}]
    
    Fix $t \geq 0$ and consider the event $\cbrack{\tau < t}$. Then by continuity of $t \mapsto \norm{(\rho(t), j(t))}$ we have
    $$
    \mathbb{P}(\cbrack{\tau < t}) \leq \mathbb{P}(\cbrack{\tau \leq t}) = \mathbb{P}\brack{\cbrack{\norm{(\rho(t \wedge \tau), j(t \wedge \tau))} = \frac{C_a \vbar}{4 \theta \norm{g}}}}.
    $$
    By definition of the stopping time, $\norm{(\rho(t \wedge \tau), j(t \wedge \tau))}$ is bounded above by $C_a\vbar/(4 \theta \norm{g})$, so we can write
    $$
    \mathbb{P}\brack{\cbrack{\norm{(\rho(t \wedge \tau), j(t \wedge \tau))} = \frac{C_a \vbar}{4 \theta \norm{g}}}} = \mathbb{P}\brack{\cbrack{\norm{(\rho(t \wedge \tau), j(t \wedge \tau))}^2 \geq \brack{\frac{C_a\vbar}{4 \theta \norm{g}}}^2}}.
    $$
    Therefore, using Markov's inequality gives
    $$
    \mathbb{P}(\{\tau < t\}) \leq \brack{\frac{4 \theta \norm{g}}{C_a \vbar}}^2\E{\norm{(\rho(t \wedge \tau), j(t \wedge \tau))}^2} = \brack{\frac{4 \theta \norm{g}}{C_a}}^2 \frac{\E{\eta(t \wedge \tau)}}{\vbar^2}.
    $$
    The moment bound from Lemma \ref{lem:p=2-energy-estimate} coupled with the smallness assumption gives
    $$
    \mathbb{P}(\{\tau < t\}) \leq \frac{4 \theta \norm{g}}{C_a}.
    $$
    We now consider an increasing sequence of times $(t_n)_{n \in \mathbb{N}}$ such that $t_n \rightarrow \infty$ as $n \rightarrow \infty$. Defining $E_n \coloneqq \{\tau < t_n\}$, we have $E_n \subseteq E_{n+1}$ and $\bigcup_n E_n = \{\tau < \infty\}$. Thus, using continuity of measure, we deduce that
    $$
    \mathbb{P}(\{\tau < \infty\}) = \mathbb{P}(\bigcup_n E_n) = \lim_{n \rightarrow \infty} \mathbb{P}(E_n) \leq \frac{4 \theta \norm{g}}{C_a},
    $$
    so that
    $\mathbb{P}(\{\tau = \infty\}) = 1 - \mathbb{P}(\{\tau < \infty\}) \geq 1 - \frac{4 \theta \norm{g}}{C_a}.$
\end{proof}

\subsection{Small noise analysis}

We now conduct a small noise analysis and show that we can send $\sigma \rightarrow 0$ and recover flocking with a general kernel. This is exactly the deterministic result in \cite{zimper2025reduced}. The proof strategy is largely the same as in the previous section except now we consider arbitrary moments of the energy $\eta$. We state the required estimate in the following technical lemma.

\begin{lemma}\label{lem:general-p-energy-estimate} 
    Let Assumption \ref{ass:IC}  \ref{ass:p} hold and for fixed $\sigma>0$, assume that $C_a > \sigma^2$, and that $p \in [2, 1 + C_a/\sigma^2)$. 
    Then for any $t \geq 0$, we have the moment bound
    $$
    \E{\eta^{p/2}(t \wedge \tau)} \leq \sbrack{A(p) + B(p)\brack{\frac{3C_a + \sqrt{2}\sigma^2(p-2)}{C_a - \sigma^2(p-1)}}}^p,
    $$
    where
    $$
    A(p) \coloneqq \brack{\E{\brack{\norm{j_0}^2 + \vbar^2 \norm{\rho_0}^2}^{p/2}}}^{1/p}, \quad B(p) \coloneqq \brack{\E{\norm{j_0 - \vbar \rho_0}^{p}}}^{1/p}.
    $$
\end{lemma}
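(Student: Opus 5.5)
We would mirror the proof of Lemma \ref{lem:p=2-energy-estimate}, now working with $\eta^{p/2}$ and with $p$-th moments of the flocking quantity. By Itô's formula applied to $\eta^{p/2}$, using the differential $\frac12\dd\eta = \sum_{i=1}^5 T_i\,\dd t + \sigma\langle j, j-\vbar\rho\rangle\dd B$ computed there, we get
$$
\dd \eta^{p/2} = \tfrac{p}{2}\eta^{p/2-1}\dd\eta + \tfrac{p}{2}\brack{\tfrac{p}{2}-1}\eta^{p/2-2}\,4\sigma^2\langle j, j-\vbar\rho\rangle^2\dd t .
$$
The drift terms from $\dd\eta$ are treated exactly as before up to $\tau$:
\begin{itemize}
\item $T_1+T_3+T_5$ is bounded by $\tfrac{3C_a}{2}\sqrt{X}\sqrt{\eta}$;
\item $T_2+T_4\le -(\tfrac34 C_a-\tfrac12\sigma^2)X \le 0$.
\end{itemize}
The new Itô correction is bounded by Cauchy--Schwarz, $\langle j,j-\vbar\rho\rangle^2\le \|j\|^2 X\le \eta X$. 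This gives a term of size $p(p-2)\sigma^2\eta^{p/2-1}X$. We split it as $\eta^{p/2-1}X = \eta^{(p-1)/2}\sqrt{X}\cdot \sqrt{X}/\sqrt{\eta}$ and use $X\le 2\eta$ (since $\|j-\vbar\rho\|^2\le 2\|j\|^2+2\vbar^2\|\rho\|^2$). It is then bounded by a multiple of $\sqrt2\,\sigma^2(p-2)\,\eta^{(p-1)/2}\sqrt X$, which is where the $\sqrt2\sigma^2(p-2)$ in the numerator comes from. Integrating to $t\wedge\tau$ and killing the stopped martingale in expectation, we obtain
$$
\E{\eta^{p/2}(t\wedge\tau)}\le \E{\eta^{p/2}(0)} + \tfrac{p}{2}\brack{3C_a+\sqrt2\sigma^2(p-2)}\int_0^t \E{\mathbf 1_{s\le\tau}\,\eta^{(p-1)/2}(s)\sqrt{X(s)}}\dd s .
$$

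Next we use Lemma \ref{lem: p=2-flocking-bound}, $X(s)\le X_0 e^{(\sigma^2-C_a)s}M_s$ on $s\le\tau$, with $M_s$ the exponential martingale of $2\sigma B$. Hölder with exponents $p/(p-1)$ and $p$ gives
$$
\E{\eta^{(p-1)/2}\sqrt X}\le \E{\eta^{p/2}(s\wedge\tau)}^{(p-1)/p}\,\E{X_0^{p/2}M_s^{p/2}}^{1/p}e^{(\sigma^2-C_a)s/2}.
$$
By independence of $X_0$ and $B$ (Assumption \ref{ass:IC}\ref{ass:p}), $\E{X_0^{p/2}M_s^{p/2}} = B(p)^p\,\E{M_s^{p/2}}$. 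The moment $\E{M_s^{p/2}}$ is explicit:
$$
\E{M_s^{p/2}}=\exp\brack{\tfrac{p}{2}\brack{\tfrac p2-1}2\sigma^2 s}=e^{\sigma^2 p(p-2)s/2},
$$
so its $1/p$-th power is $e^{\sigma^2(p-2)s/2}$. The total exponential rate is therefore
$$
\tfrac12\brack{\sigma^2-C_a+\sigma^2(p-2)} = -\tfrac12\brack{C_a-\sigma^2(p-1)},
$$
which is negative exactly when $p<1+C_a/\sigma^2$. This is the origin of the restriction on $p$ and of the denominator $C_a-\sigma^2(p-1)$.

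We are left with the integral inequality
$$
u(t)\le u(0)+c\int_0^t e^{-\kappa s}u(s)^{1-1/p}\dd s, \qquad u(t)=\E{\eta^{p/2}(t\wedge\tau)} .
$$
The Bihari/Dragomir-type nonlinear Grönwall inequality \cite[Theorem 21]{dragomir2003applications}, with power $1-1/p$, gives
$$
u(t)^{1/p}\le u(0)^{1/p}+\tfrac{c}{p}\int_0^t e^{-\kappa s}\dd s .
$$
Since the integral is at most $2/(C_a-\sigma^2(p-1))$, and $u(0)^{1/p}=A(p)$, this yields the bound $\sbrack{A(p)+B(p)\,\tfrac{3C_a+\sqrt2\sigma^2(p-2)}{C_a-\sigma^2(p-1)}}^p$, after tracking the factor $p/2$ against $1/p$ and the factor 2 from the time integral.

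The main obstacle is the bookkeeping of the Itô correction. It must be absorbed into a term of the form $\eta^{(p-1)/2}\sqrt X$ with exactly the stated constant, rather than into the dissipative term. One also has to check that the stochastic integral is a true martingale up to $\tau$, which holds because $\eta$ is bounded there.
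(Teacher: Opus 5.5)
Your proposal is correct and follows the paper's proof essentially step for step. The shared steps are: It\^o applied to $\eta^{p/2}$; the same bounds on $T_1,\dots,T_5$; Cauchy--Schwarz plus $X\le 2\eta$ to turn the It\^o correction into a multiple of $\eta^{(p-1)/2}\sqrt{X}$; the pathwise bound of Lemma~\ref{lem: p=2-flocking-bound}; H\"older with exponents $p/(p-1)$ and $p$; independence of $X_0$ and $B$; and the nonlinear Gr\"onwall inequality. Computing $\E{M_s^{p/2}}$ directly, rather than rewriting $M_s^{p/2}$ as a rescaled exponential martingale as the paper does, is purely cosmetic.

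One small slip: your displayed It\^o formula is missing the factor $\tfrac12$ in front of the second-order term. The correction is therefore $\tfrac{p(p-2)}{2}\sigma^2\eta^{p/2-1}X$, not $p(p-2)\sigma^2\eta^{p/2-1}X$. The corrected value is in fact what your final constant $\tfrac{p}{2}\bigl(3C_a+\sqrt2\sigma^2(p-2)\bigr)$ uses, so the conclusion is unaffected.
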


\begin{proof}
    Recall from Lemma \ref{lem:p=2-energy-estimate} that $\eta(t)$ satisfies
    $$
    \dd \eta(t) = 2 \sum_{i=1}^5 T_i(t) \dd t + 2 \sigma \abrack{j, j-\vbar \rho} \dd B(t).
    $$
    Applying It\^o's lemma with $\varphi(x) = x^{p/2}$ gives
    $$
    \dd \eta_t^{p/2} = \frac{p}{2}\eta_t^{p/2-1}\dd \eta_t + \frac{1}{2}\brack{\frac{p}{2}}\brack{\frac{p}{2}-1}\eta_t^{p/2-2} \dd \abrack{\eta}_t.
    $$
    Inserting the definitions of $\dd \eta_t$ and $\dd \abrack{\eta}_t$, and then integrating up to $t \wedge \tau$ gives
    \begin{multline*}
    \eta^{p/2}(t \wedge \tau) =  \eta^{p/2}(0) + p \int_0^{t \wedge \tau}\eta_s^{p/2-1} \sum_{i=1}^5 T_i(s) \dd s \\ + p \sigma^2 \brack{\frac{p}{2}-1}\int_0^{t \wedge \tau}\eta_s^{p/2-2}\abrack{j(s), j(s)-\vbar \rho(s)}^2 \dd s \\ + p\sigma \int_0^{t \wedge \tau} \eta_s^{p/2-1}\abrack{j(s), j(s) - \vbar \rho(s)} \dd B(s). 
    \end{multline*}
    As in the proof of Lemma \ref{lem:p=2-energy-estimate}, we estimate
    $$
    \absv{T_1} + \absv{T_3} + \absv{T_5} \leq \frac{3C_a}{2}\sqrt{X_t}\eta_t^{1/2}, \quad T_2 + T_4 \leq 0.
    $$
    The latter bound requires $\sigma^2 < 3C_a/2$ and this is automatically satisfied due to the assumption $p \in [2, 1+C_a/\sigma^2)$. For the middle term, we estimate
    $$
    \eta_s^{p/2-2}\abrack{j(s), j(s)-\vbar \rho(s)}^2 \leq \eta_s^{p/2-2}\norm{j(s)}^2X_s \leq \eta_s^{p/2-1}X_s,
    $$
    using Cauchy-Schwarz and that $\norm{j(s)}^2 \leq \eta(s)$ by definition. Plugging in these estimates gives
    \begin{multline*}
        \eta^{p/2}(t \wedge \tau) \leq \eta^{p/2}(0) + \frac{3C_a p }{2}\int_0^{t \wedge \tau}\sqrt{X_s}\eta_s^{(p-1)/2} \dd s + p\sigma^2 \brack{\frac{p}{2} - 1}\int_0^{t \wedge \tau}X_s \eta_s^{p/2-1} \dd s \\ + p\sigma \int_0^{t \wedge \tau} \eta_s^{p/2-1}\abrack{j(s), j(s) - \vbar \rho(s)} \dd B(s).
    \end{multline*}
    Now observe that
    $$
    X_s = \norm{j(s)-\vbar \rho(s)}^2 \leq 2\brack{\norm{j(s)}^2 + \vbar^2 \norm{\rho(s)}^2} = 2 \eta_s,
    $$
    so that
    $$
    X_s \eta_s^{p/2-1} = \sqrt{X_s}\sqrt{X_s} \eta_s^{p/2-1} \leq \sqrt{2}\sqrt{X_s}\eta^{(p-1)/2}.
    $$
    We can therefore merge the two deterministic integrals giving
    \begin{align}\label{pth_eta_before_expectation}
    \eta^{p/2}(t \wedge \tau) & \leq \eta^{p/2}(0) + \brack{\frac{3C_a p }{2} + \sqrt{2}p\sigma^2 \brack{\frac{p}{2} - 1}}\int_0^{t \wedge \tau}\sqrt{X_s}\eta_s^{(p-1)/2} \dd s \nonumber \\
    & \quad\quad + p\sigma \int_0^{t \wedge \tau} \eta_s^{p/2-1}\abrack{j(s), j(s) - \vbar \rho(s)} \dd B(s).
    \end{align}
    Taking expectations then gives
    \begin{align}\label{sublinear_gronwall_env_noise}
    \E{\eta^{p/2}(t \wedge \tau)} \leq \E{\eta^{p/2}_0} + \gamma(p) \E{\int_0^{t \wedge \tau}\sqrt{X_s}\eta_s^{(p-1)/2} \dd s},
    \end{align}
    where we have defined 
    $$
    \gamma(p) \coloneqq \frac{3C_a p }{2} + \sqrt{2}p\sigma^2 \brack{\frac{p}{2} - 1}.
    $$
    For $s \in [0, \tau]$, Lemma \ref{lem: p=2-flocking-bound} gives the estimate
    $$
    X_s \leq X_0 \exp \brack{(\sigma^2 - C_a)s}M_s,
    $$
    where $M_s \coloneqq \exp \brack{2\sigma B_s - \frac{1}{2}(2\sigma)^2s}$ is a martingale with mean 1. Using this estimate, and the non-negativity of $\eta$ gives
    $$
    \E{\eta^{p/2}(t \wedge \tau)} \leq \E{\eta^{p/2}_0} + \gamma(p) \E{\int_0^t X_0^{1/2} \exp \brack{\frac{\sigma^2-C_a}{2}s}M_s^{1/2}\brack{\eta^{p/2}(s \wedge \tau)}^{\frac{p-1}{p}} \dd s}.
    $$
    We then apply H\"older on the expectation with exponents $q=\frac{p}{p-1}$ and $r=p$ to get
    \begin{multline*}
        \E{\eta^{p/2}(t \wedge \tau)} \leq \E{\eta^{p/2}_0} \\ + \gamma(p) \int_0^t \brack{\E{X_0^{p/2}\exp \brack{ \frac{p(\sigma^2-C_a)}{2}s}M_s^{p/2}}}^{1/p}\brack{\E{\eta(s \wedge \tau)^{p/2}}}^{\frac{p-1}{p}} \dd s.
    \end{multline*}
    By adding and subtracting $\frac{1}{2}(p \sigma)^2 s$, we can write
    $$
    X_0^{p/2}\exp \brack{ \frac{p(\sigma^2-C_a)}{2}s}M_s^{p/2} = X_0^{p/2}\exp \brack{\frac{p}{2}(\sigma^2(p-1)-C_a)s}\tilde M_s,
    $$
    where $\tilde M_s \coloneqq \exp (p \sigma B_s - \frac{1}{2}(p \sigma)^2s)$ is a martingale with mean 1. Therefore, the estimate reduces to
    \begin{multline*}
    \E{\eta^{p/2}(t \wedge \tau)} \leq \E{\eta^{p/2}_0} \\ + \gamma(p)\brack{\E{X_0^{p/2}}}^{1/p}\int_0^t\exp \brack{\frac{1}{2}(\sigma^2(p-1) - C_a)s} \E{\eta(s \wedge \tau)^{p/2}}^{\frac{p-1}{p}} \dd s,
    \end{multline*}
    where we have used the independence of the initial data and the exponential martingale. Applying a non-linear generalisation of Gr\"onwall's inequality \cite[Theorem 21]{dragomir2003applications} gives
    $$
    \E{\eta^{p/2}(t \wedge \tau)} \leq \sbrack{\brack{\E{\eta^{p/2}_0}}^{1/p} + \frac{\gamma(p)}{p}\brack{\E{X_0^{p/2}}}^{1/p} \int_0^t \exp \brack{\frac{1}{2}(\sigma^2(p-1) - C_a)s} \dd s}^p.
    $$
    The integral of the exponential is bounded above by $2/(C_a - \sigma^2(p-1))$ for $p < 1 + C_a/\sigma^2$ so that
    \begin{align}\label{pth_eta_final}
    \E{\eta^{p/2}(t \wedge \tau)} \leq \sbrack{\brack{\E{\eta^{p/2}_0}}^{1/p} + \brack{\E{X_0^{p/2}}}^{1/p} \brack{\frac{3C_a + \sqrt{2}\sigma^2(p-2)}{C_a - \sigma^2(p-1)}}}^p.
    \end{align}
\end{proof}

We can now prove Theorem \ref{thm: stopping-time-exp-decay}. The proof proceeds in two steps: we first extend the argument in the proof of Proposition \ref{thm:p=2-stopping-time-bound} to estimate $\mathbb{P}(\{\tau < t\})$ except this time we consider arbitrary powers of the energy $\eta$. Secondly, for small enough $\sigma$, the bound is of the form $\mathbb{P}(\{\tau < t\}) \leq f(p(\sigma))^{p(\sigma)}$ which when optimised in $p(\sigma)$ (for some fixed $\sigma$) leads to an estimate that decays exponentially as $\sigma \rightarrow 0$.

\begin{proof}[Proof of Theorem \ref{thm: stopping-time-exp-decay}]
    Analogous to the proof of Proposition \ref{thm:p=2-stopping-time-bound}, we fix $t \geq 0$ and consider the event $\{\tau < t\}$. By continuity of $t \mapsto \norm{(\rho(t), j(t))}$, the definition of $\tau$, and Markov's inequality, we have
    \begin{align*}
        \mathbb{P}(\{\tau < t\}) &\leq \mathbb{P}\brack{\cbrack{\norm{(\rho(t \wedge \tau), j(t \wedge \tau))}^p \geq \brack{\frac{C_a \vbar}{4 \theta \norm{g}}}^p}} \\
        &\leq \frac{\E{\norm{(\rho(t \wedge \tau), j(t \wedge \tau))}^p}}{(C_a\vbar/4 \theta \norm{g})^p} \\
        &\leq \brack{\frac{4\theta \norm{g}}{C_a}}^p \frac{\E{\eta^{p/2}(t \wedge \tau)}}{\vbar^p},
    \end{align*}
    for any $p \geq 2$. 
    Defining $K \coloneqq 4 \theta \norm{g}/C_a$ and using the moment bound from Lemma \ref{lem:general-p-energy-estimate} gives
    $$
    \mathbb{P}(\{\tau < t\}) \leq \sbrack{\frac{K}{\vbar}\brack{A(p) + B(p)\brack{\frac{3C_a + \sqrt{2}\sigma^2(p-2)}{C_a - \sigma^2(p-1)}}}}^p =: f(p),
    $$
    where we now have the restriction $p \in [2, 1+C_a/\sigma^2)$. We now want to optimise $f(p)$ over $p$ in order to obtain an exponentially decaying estimate. First, we use the uniform bound on the initial data to bound
    $$
    A(p) \leq R_0 = A, \quad \text{and} \quad B(p) \leq \sqrt{2}R_0 = B,
    $$
    for any $p \geq 2$. Instead of pursuing a global minimiser, it is sufficient (and much simpler) to consider $p^* = \alpha C_a/\sigma^2$ for
    $$
    \alpha \in \brack{0, \frac{D-3}{D+\sqrt{2}}} \subset (0, 1), \quad D \coloneqq \frac{-A + \vbar/K}{B}.
    $$
    Then $p^* \geq 2$ if and only if $\sigma^2 \leq \alpha C_a/2$ which is true by assumption. Next, we estimate 
    $$
    f(p^*) \leq \sbrack{\frac{K}{ \vbar} \brack{A + B\brack{\frac{C_a(3+\sqrt{2}\alpha) -2\sqrt{2}\sigma^2}{C_a(1-\alpha) + \sigma^2}}}}^{\frac{\alpha C_a}{\sigma^2}} \leq \brack{g(\alpha)}^{\frac{\alpha C_a}{\sigma^2}},
    $$
    where we have defined
    $$
    g(\alpha) \coloneqq \frac{K}{ \vbar} \brack{A + B\brack{\frac{3+\sqrt{2}\alpha}{1-\alpha}}}.
    $$
    Taking logs in our estimate gives
    $$
    \log f(p^*) \leq \frac{\alpha C_a}{\sigma^2} \log g(\alpha).
    $$
    To get exponential decay we require $\log g(\alpha) < 0$ which occurs if and only if $g(\alpha) < 1$. This is true if and only if $\alpha \in \brack{0, \frac{D-3}{D+\sqrt{2}}}$, and $K(A + 3B) <  \vbar$. The latter requirement is necessary to ensure $D>3$ so that $\alpha \in (0, 1)$. In fact, it is nothing but a smallness assumption on the interaction kernel. Written explicitly it reads
    $$
    \frac{\brack{\E{\brack{\norm{j_0}^2 + \vbar^2 \norm{\rho_0}^2}^{p/2}}}^{1/p} + 3\brack{\E{\norm{j_0 - \vbar \rho_0}^{p/2}}}^{1/p}}{\vbar} \leq \frac{A+3B}{\vbar} < \frac{C_a}{4 \theta \norm{g}},
    $$
    and we notice that this is similar in flavour to the smallness assumption imposed in Proposition \ref{thm:p=2-stopping-time-bound}. To conclude, observe that
    $$
    \mathbb{P}(\{\tau < t\}) \leq \inf_{p} f(p) \leq f(p^*) \leq \exp \brack{- \frac{\alpha C_a \absv{\log g(\alpha)}}{\sigma^2}}.
    $$
    One can then mimic the argument in the proof of Proposition \ref{thm:p=2-stopping-time-bound} to send $t \rightarrow \infty$ and obtain
    $$
    \mathbb{P}(\{\tau = \infty\}) \geq 1 - \exp \brack{- \frac{\alpha C_a \absv{\log g(\alpha)}}{\sigma^2}}.
    \qedhere
    $$
\end{proof}

\section{Well-posedness for the independent noise case}\label{app: well-posedness-iid}

In this section, we prove Theorem \ref{thm:global_H1}, establishing the well-posedness of the system \eqref{const_reduced_CS_rho_iid_B}--\eqref{const_reduced_CS_k_iid_B}. We begin with an auxiliary result on the well-posedness of the velocity equation.

\begin{lemma}[Well-posedness of the velocity equation]
Let $\overline v(0)=\overline v_0$  and define the accumulated displacement
$V(t) := \int_0^t \overline{v}(s) ds$. Then the coupled system 
\begin{align}
    dV_t &= \overline{v}(t) dt \\
    d\overline{v} &= \sum_{z \geq 1} B_z(t,V(t)) d \beta_z(t), \label{velocitySDE}
\end{align}
where $B_z(t,V) := \sigma N^{-1/2}\sqrt{\lambda_z} e^{- \gamma t/2} \int_{\mathbb{T}}q_0(x-V) e_z(x) dx, \, z \geq 1$,
admits a unique strong solution with continuous paths.
\end{lemma}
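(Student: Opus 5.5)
The coupled system for $(V,\overline v)$ is a finite-dimensional (two-dimensional) SDE driven by countably many independent Brownian motions $(\beta_z)_{z\ge1}$. Its drift is $(\overline v,0)$ and its diffusion coefficients are $(0,B_z(t,V))$. I plan to apply the classical strong existence and uniqueness theorem for SDEs with globally Lipschitz coefficients and linear growth, in its version for an $\ell^2$-valued noise coefficient. Equivalently, one can view $\sum_z B_z\,d\beta_z$ as a stochastic integral against a cylindrical Wiener process on $\ell^2$ and use Da Prato--Zabczyk \cite[Theorem 7.2]{da2014stochastic} with trivial semigroup. This gives a unique strong solution with continuous paths on every $[0,T]$, and by consistency on $[0,\infty)$.

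\textbf{Step 1: well-definedness and boundedness.} We have $q_0=\sqrt{k_0}\in H^1(\mathbb T)\subset L^2$, and the translate $x\mapsto q_0(x-V)$ has the same $L^2$ norm for every $V$. Cauchy--Schwarz gives $|B_z(t,V)|\le \sigma N^{-1/2}\sqrt{\lambda_z}\,\|q_0\|\,\|e_z\|$, and $\|e_z\|=1$. Hence
\[
\sum_{z}|B_z(t,V)|^2\le \sigma^2N^{-1}\Tr(\xi)\|q_0\|^2<\infty,
\]
uniformly in $(t,V)$. So the diffusion coefficient is bounded in $\ell^2$, and the linear growth condition is immediate. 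The drift $(V,\overline v)\mapsto(\overline v,0)$ is linear.

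\textbf{Step 2: global Lipschitz continuity in $V$.} This is the only real point. For $V_1,V_2$ (identified mod $1$ on $\mathbb T$), I would write
\[
B_z(t,V_1)-B_z(t,V_2)=\sigma N^{-1/2}\sqrt{\lambda_z}e^{-\gamma t/2}\int_{\mathbb T}\bigl(q_0(x-V_1)-q_0(x-V_2)\bigr)e_z(x)\,dx .
\]
Since $q_0\in H^1$, the translation estimate $\|q_0(\cdot-V_1)-q_0(\cdot-V_2)\|\le |V_1-V_2|\,\|\partial_xq_0\|$ holds; it follows from the fundamental theorem of calculus and Minkowski's integral inequality. Summing over $z$, with $\|e_z\|=1$, gives
\[
\sum_z|B_z(t,V_1)-B_z(t,V_2)|^2\le \sigma^2N^{-1}\Tr(\xi)\|\partial_xq_0\|^2|V_1-V_2|^2,
\]
a global Lipschitz bound uniform in $t$. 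Alternatively, one can move the shift onto $e_z$ by a change of variables and use \eqref{summability_W_1_infty}. I would mention this as a backup, but the $H^1$ route is cleaner and needs only $\Tr(\xi)<\infty$. Measurability in $t$ is clear, since $e^{-\gamma t/2}$ is continuous.

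\textbf{Step 3: conclusion.} With Lipschitz and linear growth constants independent of $t$, the standard Picard iteration gives a unique continuous adapted solution on $[0,T]$ for each $T$. In that argument the BDG/It\^o isometry bounds the stochastic term via $\sum_z|B_z|^2$, and Gr\"onwall's inequality closes the estimate. Pathwise uniqueness on overlapping intervals then glues these into a global solution. The initial datum $\overline v_0$ only needs to be $\mathcal F_0$-measurable, since no moments enter the Lipschitz argument; if needed, one can localise on $\{|\overline v_0|\le n\}$. The expected obstacle is only the Lipschitz dependence of the shifted integral on $V$. It is handled by the $H^1$ regularity of $q_0$, which is why that assumption appears in Theorem \ref{thm:global_H1}.
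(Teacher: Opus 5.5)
Your proposal is correct and matches the paper's proof essentially step by step. The paper also uses the $H^1$ translation estimate for $q_0$ together with Cauchy--Schwarz to show that the diffusion coefficient is bounded in $\ell^2$ and globally Lipschitz in $V$, and then concludes with the standard finite-dimensional SDE theorem from Da Prato--Zabczyk, Chapter 7. The only slip is that you dropped the factor $e^{-\gamma t/2}$ from your bounds. The paper keeps it as $e^{|\gamma|T}$, since $\gamma$ need not be nonnegative here. Your constants are therefore uniform on each $[0,T]$ rather than for all $t$, which is all the Picard argument needs.
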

\begin{proof}
For a given continuous velocity path $\overline{v}$, the transport equation
\eqref{const_reduced_CS_k_iid_B} has the explicit solution
\[
    k(t,x)=e^{-\gamma t}k_0(x-V(t)),
\qquad
    \sqrt{k(t,x)}
    =e^{-\gamma t/2}q_0(x-V(t)).
\]
Substitution into \eqref{evol_average_j} adapted for coloured noise (see Remark \ref{rem_coloured_noise_v_av}) gives the stated equation for
$\overline v$.

For any $q_0\in H^1(\mathbb T)$ and $V,W\in\mathbb R$,
the fundamental theorem of calculus for Sobolev
functions and Minkowski's inequality yield
\begin{align}
\|q_0(\cdot-V)-q_0(\cdot-W)\|_{L^2}
&\leq
\int_{\min(V,W)}^{\max(V,W)}
\|\partial_xq_0(\cdot-s)\|_{L^2}\,ds
\nonumber\\
&=|V-W|\|\partial_xq_0\|_{L^2},
\label{eq:translation-estimate}
\end{align}
where we used the translation invariance of the
$L^2$ norm.

By the Cauchy--Schwarz inequality and \eqref{eq:translation-estimate},
\begin{align*}
\sum_{z\geq1}|B_z(t,V)-B_z(t,W)|^2
&\leq
\frac{\sigma^2}{N}e^{|\gamma|T}
|V-W|^2\|\partial_xq_0\|_{L^2}^2
\sum_{z\geq1}\lambda_z\|e_z\|_{L^2}^2.
\end{align*}
Similarly,
\begin{align*}
\sup_{\substack{t\in[0,T]\\V\in\mathbb R}}
\sum_{z\geq1}|B_z(t,V)|^2
&\leq
\frac{\sigma^2}{N}e^{|\gamma|T}
\|q_0\|_{L^2}^2
\sum_{z\geq1}\lambda_z\|e_z\|_{L^2}^2
<\infty.
\end{align*}
Since $\sum_{z\geq1}\lambda_z<\infty$,
these estimates show that
$B(t,V)=(B_z(t,V))_{z\geq1}$ is bounded
and globally Lipschitz in $V$ as an
$\ell^2$-valued map, uniformly for
$t\in[0,T]$.

Together with the global Lipschitz continuity
and linear growth of the drift
$(V,\overline v)\mapsto(\overline v,0)$,
the standard existence and pathwise uniqueness
theorem for finite-dimensional SDEs driven
by cylindrical Brownian motion \cite[Chapter 7]{da2014stochastic}, with
Hilbert--Schmidt diffusion coefficients,
implies that the coupled system admits a
unique global strong solution
$(V,\overline v)$ with continuous paths.
\end{proof}

\begin{proof}[Proof of Theorem \ref{thm:global_H1}]
By the previous lemma,  $(V,\overline v)$ is uniquely determined, with 
$  V(t)=\int_0^t\overline v(s)\,ds.$  
The function $k$ is then determined by the transport equation. Indeed,
along the characteristics $\mathcal{X}(t)=x+V(t)$, 
\[
    k(t,\mathcal{X}(t))=e^{-\gamma t}k_0(x).
\]
Hence,
\begin{equation}\label{eq:k-explicit}
 k(t,x)=e^{-\gamma t}k_0(x-V(t)),\qquad
 q(t,x):=\sqrt{k(t,x)}=e^{-\gamma t/2}q_0(x-V(t)).
\end{equation}
Translations act strongly continuously on $H^1(\mathbb T)$, so $q\in C([0,T];H^1)$ a.s. Since $H^1(\mathbb T)$ is an algebra in one dimension, $k=q^2\in C([0,T];H^1)$ a.s.

Set
\[
    q_z(t,x):=\sigma N^{-1/2}\sqrt{\lambda_z}\,q(t,x)e_z(x) ,\qquad
 \overline q_z(t)=\int_{\mathbb T}q_z(t,x)\,dx.
\]
Using $\|fg\|_{H^1}\leq C\|f\|_{H^1}\|g\|_{W^{1,\infty}}$ and translation invariance of the $H^1$ norm gives
\begin{equation}\label{bound_q_zs}
 \sup_{t\leq T}\sum_z\|q_z(t)\|_{H^1}^2
 \leq C\frac{\sigma^2}{N}e^{|\gamma|T}\|q_0\|_{H^1}^2
 \sum_z\lambda_z\|e_z\|_{W^{1,\infty}}^2=:A_*<\infty.
\end{equation}
Here $A_*$ is deterministic and uniform in $t$. The operator $G(t):\ell^2\to H^1$ defined by $G(t)\mathbf e_z=q_z(t)$ satisfies
$\|G(t)\|_{HS(\ell^2,H^1)}^2=\sum_z\|q_z(t)\|_{H^1}^2$; thus $G\in L^2(\Omega\times(0,T);HS(\ell^2,H^1))$. Moreover,
\begin{equation}\label{eq:qbar-bound}
 \sup_{t\leq T}\sum_z|\overline q_z(t)|^2
 \leq |\mathbb T|\sup_{t\leq T}\sum_z\|q_z(t)\|_{L^2}^2
 \leq |\mathbb T|A_*=:b_*<\infty,
\end{equation}
and
\begin{equation}\label{eq:q-H1}
 \mathbb E\int_0^T\sum_z\|q_z(t)\|_{H^1}^2\,dt\leq TA_*<\infty.
\end{equation}

With $q = \sqrt{k}$ already constructed, the remaining  linear stochastic system is
\begin{align}
    d\rho &= -\partial_x j dt\\
    dj &= \{ C_a (\overline{v}(t) \rho - j) - \overline{v}^2(t) \partial_x \rho\} dt + \sum_z \sigma N^{-1/2} \sqrt{\lambda_z} q(t) e_z d\beta_z(t).
\end{align}
We also have $d\overline{v}(t) = \sum_z \overline{q}_z(t) d\beta_z(t)$.

\textbf{Galerkin approximation.}
Let $P_m$ be the  Fourier approximation  onto modes $|n|\leq m$, and let  $V_m := P_m L^2(\mathbb{T})$. Consider 
\begin{align}
    d\rho_m &= -\partial_x j_m\,dt,\label{eq:rho-m}\\
    dj_m &= \left\{C_a(\overline{v}(t)\rho_m-j_m)-\overline{v}(t)^2\partial_x\rho_m\right\}\,dt
    +\sum_{z\ge 1}P_mq_z(t)\,d\beta_z(t).\label{eq:j-m}
\end{align}
with initial data $(P_m \rho_0, P_m j_0)$. Since $\overline v$ is independent of $x$ and $P_m$ commutes with $\partial_x$, the drift lies in $V_m$. The coefficients of this finite-dimensional linear SDE are predictable, its drift matrices are uniformly bounded by the assumed deterministic velocity bounds, and its additive diffusion has uniformly bounded Hilbert--Schmidt norm by \eqref{bound_q_zs}. It has a unique adapted continuous solution. In addition, the standard moment estimate for finite-dimensional linear SDEs gives, for each $m$,
\begin{equation}\label{eq:galerkin-fourth}
 \mathbb E\sup_{t\leq T}\bigl(\|\rho_m(t)\|_{H^1}^4+\|j_m(t)\|_{H^1}^4\bigr)<\infty.
\end{equation}
This estimate follows by applying the usual finite-dimensional SDE moment bound with exponent $4$: the drift has linear growth with a deterministic bound (depending on $m$), while the additive diffusion is bounded. We use \eqref{eq:galerkin-fourth} only to justify the expectation of the martingale below. Note that its constant does not need to be uniform in $m$.

Define the weighted  energy
\begin{equation}\label{eq:Em}
    \mathcal E_m(t)
    :=
    \overline v(t)^2\|\rho_m(t)\|_{L^2}^2
    +\|j_m(t)\|_{L^2}^2
    +\overline v(t)^2\|\partial_x\rho_m(t)\|_{L^2}^2
    +\|\partial_xj_m(t)\|_{L^2}^2.
\end{equation}
By $v_*\leq|\overline v|\leq v^*$, there are deterministic $c,C>0$ depending only on $v_*,v^*$ such that
\begin{equation}\label{eq:energy-equivalence}
 c(\|\rho_m\|_{H^1}^2+\|j_m\|_{H^1}^2)
 \leq\mathcal E_m\leq
 C(\|\rho_m\|_{H^1}^2+\|j_m\|_{H^1}^2).
\end{equation}
In the energy estimates below, $C$ denotes a deterministic constant that may change from line to line and may depend on $C_a,v_*,v^*,b_*$ (and on fixed model parameters), but not on $m$.

\textbf{Zero order term} Since $d\rho_m=-\partial_xj_m\,dt $, integration by parts on the torus yields 
\begin{equation}\label{eq:rho-L2}
    d\|\rho_m\|_{L^2}^2
    =-2\langle\rho_m,\partial_xj_m\rangle\,dt
    =2\langle\partial_x\rho_m,j_m\rangle\,dt.
\end{equation}
It\^o's formula gives
\[
    d(\overline v^2)
    =
    2\overline v\sum_{z\geq 1}\overline q_z\,d\beta_z
    +
    \sum_{z\geq 1}|\overline q_z|^2\,dt.
\]
Moreover, because $\|\rho_m\|_{L^2}^2$ has finite variation, its quadratic covariation with $\overline{v}^2$ vanishes. Consequently, 
\begin{align}\label{eq:vrho-L2}
    d\left(\overline v^2\|\rho_m\|_{L^2}^2\right)
    &=
    2\overline v^2
    \langle\partial_x\rho_m,j_m\rangle\,dt \notag\\
    &\quad
    +
    \|\rho_m\|_{L^2}^2
    \sum_z|\overline q_z|^2\,dt
    +
    2\overline v\|\rho_m\|_{L^2}^2
    \sum_z\overline q_z\,d\beta_z.
\end{align}

Likewise,
\begin{align}\label{eq:j-L2}
 d\|j_m\|_{L^2}^2
 &=2C_a\langle j_m,\overline v\rho_m-j_m\rangle\,dt
 -2\overline v^2\langle j_m,\partial_x\rho_m\rangle\,dt\notag\\
 &\quad+\sum_z\|P_mq_z\|_{L^2}^2\,dt
 +2\sum_z\langle j_m,P_mq_z\rangle\,d\beta_z.
\end{align}
Adding \eqref{eq:vrho-L2} and \eqref{eq:j-L2}, the principal terms cancel:
$2\overline v^2\langle\partial_x\rho_m,j_m\rangle-2\overline v^2\langle j_m,\partial_x\rho_m\rangle=0$. Hence
\begin{align*}
 d(\overline v^2\|\rho_m\|_{L^2}^2+\|j_m\|_{L^2}^2)
 &=2C_a\langle j_m,\overline v\rho_m-j_m\rangle\,dt
 +\|\rho_m\|_{L^2}^2\sum_z|\overline q_z|^2\,dt\\
 &\quad+\sum_z\|P_mq_z\|_{L^2}^2\,dt+dM_{m,0}(t),
\end{align*}
where
\[
 M_{m,0}(t)=2\sum_z\int_0^t\left[
 \overline v\|\rho_m\|_{L^2}^2\overline q_z
 +\langle j_m,P_mq_z\rangle\right]d\beta_z.
\]
Young's inequality gives
\[
 |2C_a\langle j_m,\overline v\rho_m-j_m\rangle|
 \leq C(\overline v^2\|\rho_m\|_{L^2}^2+\|j_m\|_{L^2}^2)
 \leq C\mathcal E_m.
\]
Also,
\[
 \|\rho_m\|_{L^2}^2\sum_z|\overline q_z|^2
 \leq v_*^{-2}\left(\sum_z|\overline q_z|^2\right)\mathcal E_m.
\]
Therefore
\begin{equation}\label{eq:L2-estimate}
 d(\overline v^2\|\rho_m\|_{L^2}^2+\|j_m\|_{L^2}^2)
 \leq C(1+\sum_z|\overline q_z|^2)\mathcal E_m\,dt
 +\sum_z\|P_mq_z\|_{L^2}^2\,dt+dM_{m,0}(t).
\end{equation}

\textbf{First order term.} 
Differentiating the Galerkin equations in $x$ gives
\begin{align*}
 d(\partial_x\rho_m)&=-\partial_x^2j_m\,dt,\\
 d(\partial_xj_m)&=\{C_a(\overline v\partial_x\rho_m-\partial_xj_m)
 -\overline v^2\partial_x^2\rho_m\}\,dt
 +\sum_z\partial_xP_mq_z\,d\beta_z.
\end{align*}

As before, $\|\partial_x\rho_m\|_{L^2}^2$ has finite variation, so
\begin{align}\label{eq:vrho-deriv}
    d\left(
       \overline v^2\|\partial_x\rho_m\|_{L^2}^2
    \right)
    &=
    -2\overline v^2
    \langle\partial_x\rho_m,\partial_x^2j_m\rangle\,dt
    +
    \|\partial_x\rho_m\|_{L^2}^2
    \sum_z|\overline q_z|^2\,dt
    \notag\\
    &\quad
    +
    2\overline v\|\partial_x\rho_m\|_{L^2}^2
    \sum_z\overline q_z\,d\beta_z.
\end{align}

Similarly,
\begin{align}\label{eq:j-deriv}
    d\norm{\partial_xj_m}_{L^2}^2
    &=2C_a\ip{\partial_xj_m}{\overline{v} \partial_x\rho_m-\partial_xj_m}\,dt 
    -2\vbar^2\ip{\partial_xj_m}{\partial_x^2\rho_m}\,dt \notag\\
    &\quad
    +\sum_z\norm{\partial_xP_mq_z}_{L^2}^2\,dt
    +2\sum_z\ip{\partial_xj_m}{\partial_xP_mq_z}\,d\beta_z.
\end{align}

The second-order terms cancel by integration by parts:
\[
 -2\overline v^2\langle\partial_x\rho_m,\partial_x^2j_m\rangle
 -2\overline v^2\langle\partial_xj_m,\partial_x^2\rho_m\rangle=0.
\]
Estimating the remaining terms exactly as in the zero-order calculation gives
\begin{equation}\label{eq:H1-estimate-derivative}
 d(\overline v^2\|\partial_x\rho_m\|_{L^2}^2+\|\partial_xj_m\|_{L^2}^2)
 \leq C(1+\sum_z|\overline q_z|^2)\mathcal E_m\,dt
 +\sum_z\|\partial_xP_mq_z\|_{L^2}^2\,dt+dM_{m,1}(t),
\end{equation}
where
\[
 M_{m,1}(t)=2\sum_z\int_0^t\left[
 \overline v\|\partial_x\rho_m\|_{L^2}^2\overline q_z
 +\langle\partial_xj_m,\partial_xP_mq_z\rangle\right]d\beta_z.
\]

\textbf{Uniform energy estimate.} 
Combining \eqref{eq:L2-estimate} and \eqref{eq:H1-estimate-derivative}, and using the deterministic bound \eqref{eq:qbar-bound}, we obtain
\begin{equation}\label{eq:Em-simple}
 d\mathcal E_m(t)\leq C\mathcal E_m(t)\,dt
 +C\sum_z\|P_mq_z(t)\|_{H^1}^2\,dt+dM_m(t),
\end{equation}
where $M_m=M_{m,0}+M_{m,1}$ is given by
\[
 M_m(t)=2\sum_z\int_0^t\left[
 \overline v(s)\|\rho_m(s)\|_{H^1}^2\overline q_z(s)
 +\langle j_m(s),P_mq_z(s)\rangle_{H^1}
 \right]d\beta_z(s).
\]
Writing $A_m(t)=\sum_z\|P_mq_z(t)\|_{H^1}^2$, its quadratic variation satisfies
\begin{align*}
 d[M_m]_t
 &\leq C\overline v(t)^2\|\rho_m(t)\|_{H^1}^4
 \sum_z|\overline q_z(t)|^2\,dt
 +C\|j_m(t)\|_{H^1}^2A_m(t)\,dt\\
 &\leq C\bigl(b_*\mathcal E_m(t)^2+\mathcal E_m(t)A_m(t)\bigr)\,dt.
\end{align*}
Here we used $\overline v^2\|\rho_m\|_{H^1}^4\leq v_*^{-2}\mathcal E_m^2$ and $\|j_m\|_{H^1}^2\leq\mathcal E_m$. Since $A_m\leq A_*$ and \eqref{eq:galerkin-fourth} holds, the quadratic variation has finite expectation for each fixed $m$. Thus $M_m$ is a square-integrable martingale and BDG can be applied directly, without introducing an energy stopping time.

Let $F_m(t)=\mathbb E\sup_{0\leq s\leq t}\mathcal E_m(s)$. By BDG and the preceding estimate,
\begin{align*}
 \mathbb E\sup_{s\leq t}|M_m(s)|
 &\leq C\mathbb E\left(\int_0^t\mathcal E_m(s)A_m(s)\,ds\right)^{1/2}
 +C\sqrt{b_*}\,\mathbb E\left(\int_0^t\mathcal E_m(s)^2\,ds\right)^{1/2}\\
 &\leq \varepsilon F_m(t)
 +C_\varepsilon\mathbb E\int_0^t A_m(s)\,ds
 +C_\varepsilon\int_0^tF_m(s)\,ds.
\end{align*}
For the first term, we factor out $(\sup_{r\leq t}\mathcal E_m(r))^{1/2}$ and use Young's inequality. For the second, use
\[
 \left(\int_0^t\mathcal E_m(s)^2\,ds\right)^{1/2}
 \leq\left(\sup_{r\leq t}\mathcal E_m(r)\right)^{1/2}
 \left(\int_0^t\mathcal E_m(s)\,ds\right)^{1/2}
\]
and then Young's inequality. Choosing $\varepsilon$ small enough, integrating \eqref{eq:Em-simple}, and applying Gronwall gives
\begin{equation}\label{eq:sup-estimate}
 \mathbb E\sup_{t\leq T}\mathcal E_m(t)
 \leq C_T\left[\mathbb E\mathcal E_m(0)
 +\mathbb E\int_0^T\sum_z\|P_mq_z(s)\|_{H^1}^2\,ds\right].
\end{equation}
Crucially, $C_T$ does not depend on $m$: the auxiliary fourth-moment bound was used only to justify taking expectations, not in the final inequality. Since $P_m$ is an $H^1$ contraction and \eqref{eq:energy-equivalence} holds,
\begin{equation}\label{eq:uniform-H1}
 \mathbb E\sup_{t\leq T}\bigl(\|\rho_m(t)\|_{H^1}^2+\|j_m(t)\|_{H^1}^2\bigr)
 \leq C_T\left[\|\rho_0\|_{H^1}^2+\|j_0\|_{H^1}^2
 +\mathbb E\int_0^T\sum_z\|q_z(s)\|_{H^1}^2\,ds\right].
\end{equation}

By linearity of the Galerkin system, the differences \((\rho_m-\rho_n,j_m-j_n)\) satisfy the same type of equations, with initial data and noise coefficients given by the corresponding differences of Fourier projections. Applying the same energy estimate and using the strong convergence of \(P_m\to I\) in \(H^1(\mathbb T)\), together with dominated convergence and \eqref{bound_q_zs}, shows that \((\rho_m,j_m)\) is Cauchy in \(L^2(\Omega;C([0,T];H^1(\mathbb T)^2))\). Hence the Galerkin approximations converge strongly in this space to an adapted limit \((\rho,j)\).

\textbf{Identification of the limit.}
The Galerkin formulation is
\[
 \rho_m(t)=P_m\rho_0-\int_0^t\partial_xj_m(s)\,ds,
\]
and
\begin{align*}
 j_m(t)&=P_mj_0+\int_0^t\{C_a(\overline v(s)\rho_m(s)-j_m(s))
 -\overline v(s)^2\partial_x\rho_m(s)\}\,ds\\
 &\quad+\sum_z\int_0^tP_mq_z(s)\,d\beta_z(s).
\end{align*}
Since $P_m\rho_0\to\rho_0$ and $P_mj_0\to j_0$ in $H^1$, and $(\rho_m,j_m)\to(\rho,j)$ in $L^2(\Omega;C([0,T];H^1)^2)$, the drift terms converge in $L^2(\Omega;L^2(0,T;L^2))$, here we use the deterministic bound on $|\overline v|$. For the stochastic integral, the Hilbert-space BDG inequality yields
\[
 \mathbb E\sup_{t\leq T}\left\|\sum_z\int_0^t(P_m-I)q_z(s)\,d\beta_z(s)\right\|_{H^1}^2
 \leq C\mathbb E\int_0^T\sum_z\|(P_m-I)q_z(s)\|_{H^1}^2\,ds\longrightarrow0,
\]
by dominated convergence. Passing to the limit gives, as identities in $L^2(\mathbb T)$, almost surely for every $t\in[0,T]$,
\begin{align*}
 \rho(t)&=\rho_0-\int_0^t\partial_xj(s)\,ds,\\
 j(t)&=j_0+\int_0^t\{C_a(\overline v(s)\rho(s)-j(s))
 -\overline v(s)^2\partial_x\rho(s)\}\,ds
 +\sum_z\int_0^tq_z(s)\,d\beta_z(s).
\end{align*}
Thus $(\rho,j)$ has $H^1$-continuous trajectories and solves the system in the indicated integral sense. Passing to the limit in \eqref{eq:uniform-H1} gives
\[
 \mathbb E\sup_{t\leq T}\bigl(\|\rho(t)\|_{H^1}^2+\|j(t)\|_{H^1}^2\bigr)
 \leq C_T\bigl(\|\rho_0\|_{H^1}^2+\|j_0\|_{H^1}^2+TA_*\bigr)<\infty.
\]

\textbf{Uniqueness.}
The closed subsystem for $(V,\overline v)$ is pathwise unique, and $k$ is uniquely determined by \eqref{eq:k-explicit}. For two solutions driven by the same Brownian motions and with the same initial data, set $R=\rho^1-\rho^2$ and $J=j^1-j^2$. Their stochastic forcing cancels, and
\[
 dR=-\partial_xJ\,dt,\qquad
 dJ=\{C_a(\overline vR-J)-\overline v^2\partial_xR\}\,dt,
 \qquad R(0)=J(0)=0.
\]
In fact, uniqueness can be proved without another stochastic energy estimate. We first apply the Fourier projection $P_\ell$ to the difference equations. Since $P_\ell$ commutes with $\partial_x$ and the coefficients depend only on time, the pair $(P_\ell R,P_\ell J)$ satisfies the finite-dimensional homogeneous linear system
\[
 d(P_\ell R)=-\partial_x(P_\ell J)\,dt,\qquad
 d(P_\ell J)=\{C_a(\overline v P_\ell R-P_\ell J)-\overline v^2\partial_x(P_\ell R)\}\,dt
\]
with zero initial data. For almost every sample path the coefficients are continuous and bounded on $[0,T]$, so uniqueness for finite-dimensional linear ordinary differential equations gives $P_\ell R=P_\ell J=0$ identically. Letting $\ell\to\infty$ in $L^2(\mathbb T)$ yields $R=J=0$ almost surely on $[0,T]$. This proves pathwise uniqueness without differentiating an $H^1$ solution or using a further localization argument.
\end{proof}

\begin{remark}
\label{rem:non-constant-kernel}
The well-posedness argument for a general kernel is
more delicate than in the constant-kernel case.

First, for a constant kernel, the $k$-equation reduces
to a closed transport-reaction equation admitting an
explicit solution. For a general kernel, the
$k$-equation is coupled to the remaining equations,
and such an explicit representation is no longer
available.

Second, the system for $(\rho,j,k)$ is fully coupled
and nonlinear. In particular, one must control
terms of the form
\[
    \rho(a\ast j)-j(a\ast\rho),
    \qquad
    (j-\vbar\rho)\bigl(a\ast(j-\vbar\rho)\bigr),
    \qquad
    (a\ast\rho)k.
\]

Third, the Galerkin Cauchy argument is less
straightforward, since the nonlinear terms introduce
additional differences of the form
\[
    P_mF(U_m)-P_nF(U_n),
\]
which require further stability and projection
estimates in $H^1$.

A possible approach is therefore to use truncation
and a local Picard argument based on $H^1$ estimates.
Under suitable regularity assumptions, this is
expected to yield local $H^1$ well-posedness for
the regularised coefficient $b_\varepsilon(k)$.
For the original coefficient $\sqrt{k}$, the
corresponding argument applies as long as $k$
remains uniformly strictly positive, up to the
maximal $H^1$ existence time.
\end{remark}

 \section{SPDE flocking in the case of i.i.d. noise}\label{sec:proofs_iid}

\subsection{Proof of Theorem \ref{thm: constant-kernel-flocking-iid}: SPDE flocking estimate for constant kernel $a$} 

We break the proof up into several steps. We sometimes omit time dependencies for notational convenience. Furthermore, $\delta > 0$ is a constant which may change from line to line, and which we do not wish to explicitly keep track of. 

\emph{Step 1: exponential decay of norms of $k$}. Testing \eqref{const_reduced_CS_k_iid_B} with $k$ with respect to the $H^\beta$-norm, for any $\beta\geq 0$, and relying on integration by parts, one easily gets 
$d\|k\|^2_{H^\beta} = -2\gamma \|k\|^2_{H^\beta}dt$,
or equivalently $\|k(t)\|^2_{H^\beta} = e^{- 2 \gamma t}\|k_0\|^2_{H^\beta}$, where $\gamma>0$ due to Assumption \ref{ass_smallness_iid}.

\emph{Step 2: cross-variation bounds for noises}. We use the symbol $\llangle \cdot , \cdot \rrangle$ to denote cross-variation. Using the definition of the coloured noise $\xi$ and the Parseval identity in $L^2(\mathbb{T})$, we obtain
\begin{align}
    \llangle \sqrt{k}d\xi \rrangle & = \llangle \sum_{z}{\sqrt{\lambda_z}}\sqrt{k}e_z(x)d\beta_z(t)\rrangle ^2 = \sum_{z}{\lambda_z\|\sqrt{k}e_z\|^2}\dd t \lesssim \Tr(\xi)\|k\|_{L^1} \dd t\label{noise_l2_cross}\\
    \left\llangle \int_{x}\sqrt{k}d\xi \right\rrangle & = \left \llangle \sum_{z}{\sqrt{\lambda_z}}\int_{\mathbb{T}}\sqrt{k}e_z(x)d\beta_z(t) \right\rrangle =
    \sum_{z}{\lambda_z}\left(\int_{\mathbb{T}}\sqrt{k}e_z(x)\dd x\right)^2 \dd t \lesssim \|k\|_{L^1}\dd t.\label{noise_v_cross}
\end{align}

\emph{Step 3: statistical properties of $\overline{v}(t)$}. 
 
Using the representation \eqref{evol_average_j} adapted for coloured noise (see Remark \ref{rem_coloured_noise_v_av}), \eqref{noise_v_cross}, \emph{Step 1}, and the Burkholder-Davis-Gundy inequality \cite[Theorem 3.2]{burkholder1973distribution}, we get 
\begin{align}\label{bound_fluct_v_bar}
\mathbb{E}\left[\sup_{t\in[0,T]}|\overline{v}(t)-\overline{v}(0)|^p\right] & = \mathbb{E}\left[\sup_{t\in[0,T]}\left|\int_{0}^{t}{\sigma N^{-1/2}\int_{\mathbb{T}}\sqrt{k}\dd x \dd\xi}\right|^p\right] \nonumber\\
& \lesssim p^{p} \mathbb{E}\left[ \left(\sigma^2 N^{-1} \int_{0}^{T}{\|k\|_{L^1}dt} \right)^{p/2}\right] \nonumber \\
& \lesssim p^{p} \left( \sigma^2 N^{-1} \frac{1}{ \gamma} \right)^{p/2} \propto p^{p}(\sigma^2 N^{-1}\gamma^{-1} )^{p/2}.
\end{align}
By continuity of probability, we may replace $T$ with $+\infty$ in the left-hand-side of \eqref{bound_fluct_v_bar}.  
Plugging \eqref{bound_fluct_v_bar} into the Markov's inequality $\mathbb{P}(\sup_{t\geq 0}|\overline{v}(t)-\overline{v}(0)|>\overline{v}(0)/2) \leq \mathbb{E}[\sup_{t\geq 0}|\overline{v}(t)-\overline{v}(0)|^p]/(\overline{v}(0)/2)^p$, and performing an optimisation in $p$, we deduce
\begin{align}\label{prob_set_A}
\mathbb{P}(\sup_{t\geq 0}|\overline{v}(t)-\overline{v}(0)|>\overline{v}(0)/2) \lesssim \exp\left\{ - C(\sigma^2 N^{-1} \gamma^{-1} \overline{v}^2(0))^{-\alpha} \right\}
\end{align}
for some $C>0,\alpha>0$. This means that, if $\omega \in A^c$, where $A := \{\omega: |\sup_{t\geq 0}|\overline{v}(t)-\overline{v}(0)|>\overline{v}(0)/2\}$, then $\overline{v}(t)\propto \overline{v}(0)$, allowing for $\overline{v}(t)$ to be handled `as if it were a constant'.

\emph{Step 4: adapting the stopping bound definition}. Define $\tau_\rho := \inf \cbrack{t \geq 0 : \norm{(\rho(t), j(t))} \geq \overline{v}(0)\sqrt{\Tr(\xi)}}$ and $\tau_v := \inf \cbrack{t \geq 0 : |\overline{v}(t)-\overline{v}(0)| \geq \overline{v}(0)/2}$ and the stopping time
\begin{align}
    \tau :=
    \begin{cases}\label{defn_stop_time_iid}
        \min\{
        \tau_\rho; \tau_v\}, \quad &\text{if either is finite}, \\
        \infty, \quad &\text{otherwise}.
    \end{cases}
\end{align}

\emph{Step 5: second moment of flocking quantity $X_t = \|j- \overline{v}(t)\rho\|^2$}. It\^o's formula gives
\begin{align}
    dX_t & = 2\langle j- \overline{v}(t)\rho, \left\{ C_a[\overline{v}(t)\rho  - j] - \overline{v}^2(t) \nabla \rho\right\}dt + \sigma N^{-1/2}\sqrt{k}\,d\xi + \overline{v}(t)\nabla \cdot j \dd t - d(\overline{v}(t))\rho \rangle \nonumber \\
    & \quad \quad + \frac{1}{2}2 \llangle d(j-\overline{v}(t)\rho)\rrangle \dd t\nonumber \\
    & = -2 C_a \|j- \overline{v}(t)\rho\|^2 \dd t + 2\langle j- \overline{v}(t)\rho, \sigma N^{-1/2}\sqrt{k}\,d\xi\rangle - 2\langle j - \overline{v}(t)\rho, (d\overline{v}(t))\rho \rangle \nonumber \\
    & \quad \quad + \frac{1}{2}2 \sigma^2 N^{-1}\left \llangle\sqrt{k}d\xi - \left(\int_{\mathbb{T}}{\sqrt{k}d\xi dx}\right)\rho\right\rrangle
    \label{pre_gronwall_X_2} \\
    & \lesssim -2 C_a \|j- \overline{v}(t)\rho\|^2 \dd t + 2\langle j- \overline{v}(t)\rho, \sigma N^{-1/2}\sqrt{k}\,d\xi\rangle - 2\langle j - \overline{v}(t)\rho, (d\overline{v}(t))\rho \rangle \nonumber \\
    & \quad \quad + \frac{1}{2}2 \sigma^2 N^{-1}\left(\|k\|_{H^1}\mathtt{Tr}(\xi) + \|k\|_{H^1}\|\rho\|^2\right) \dd t\label{d_flock_2}
    \end{align}
where we have used integration by parts in the first equality, and the cross-variation bounds \eqref{noise_l2_cross}--\eqref{noise_v_cross} together with $H^1\subset L^1$ in the last inequality.

In what follows, under the smallness conditions in Assumption \ref{ass_smallness_iid}, we will combine \eqref{pre_gronwall_X_2}--\eqref{d_flock_2} with 
$
d \|k\|_{H^1} = -\gamma \|k\|_{H^1}
$
from \emph{Step 1} 
and deduce flocking properties for the combined quantity
\begin{align}\label{total_floc_quantity}
Y_t := \|j-\overline{v}(t)\rho\|^2 + \|k\|_{H^1}
\end{align}    
up to the stopping time \eqref{defn_stop_time_iid}.

\emph{Step 6: higher moments of auxiliary flocking quantity $Y_t$ \eqref{total_floc_quantity}}. For $\mathbb{N} \ni p\geq 2$, we use It\^o's formula in combination with \eqref{pre_gronwall_X_2} and \emph{Step 1} to deduce
\begin{align}\label{pre_gronwall_X_p}
    dY_t^{p/2} & = \frac{p}{2}Y^{p/2-1}dY_t + \frac{1}{2}\frac{p}{2}
    \left(\frac{p}{2} -1 \right)Y_t^{p/2-2}\llangle dY_t \rrangle \dd t \nonumber\\
    & \stackrel{\eqref{pre_gronwall_X_2}}{=} \frac{p}{2}Y^{p/2-1} \left\{-2 C_a \|j- \overline{v}(t)\rho\|^2 \dd t + 2\langle j- \overline{v}(t)\rho, \sigma N^{-1/2}\sqrt{k}\,d\xi\rangle - 2\langle j - \overline{v}(t)\rho, (d\overline{v}(t))\rho \rangle \nonumber \right. \nonumber \\
    & \left. \quad \quad + \frac{1}{2}2 \sigma^2 N^{-1}\left\llangle \sqrt{k}d\xi - \left(\int_{x}{\sqrt{k}d\xi dx}\right)\rho\right\rrangle -\gamma \|k\|_{H^1}\right\} \nonumber\\
    & \quad \quad + \frac{1}{2}\frac{p}{2}
    \left(\frac{p}{2} -1 \right)Y_t^{p/2-2}\llangle 2\langle j- \overline{v}(t)\rho, \sigma N^{-1/2}\sqrt{k}\,d\xi\rangle - 2\langle j - \overline{v}(t)\rho, (d\overline{v}(t))\rho \rangle \rrangle \dd t.
\end{align}
In light of \eqref{noise_l2_cross}--\eqref{noise_v_cross}, we get 
\begin{align}\label{pre_gronwall_X_p_2nd} 
& \llangle 2\langle j- \overline{v}(t)\rho, \sigma N^{-1/2}\sqrt{k}\,d\xi\rangle - 2\langle j - \overline{v}(t)\rho, (d\overline{v}(t))\rho \rangle \rrangle \nonumber \\ 
& \quad \lesssim 
\sum_{z}4\sigma^2 N^{-1}\langle j-\overline{v}(t)\rho, \sqrt{k}e_z \rangle^2 dt + \sum_{z}4\sigma^2 N^{-1}\langle j-\overline{v}(t)\rho, \left(\int_{\mathbb{T}}{\sqrt{k}e_z \dd x}\right)\rho \rangle^2 \nonumber \\
& \quad \lesssim 4\sigma^2 N^{-1} \left\{ \|(j-\overline{v}(t)\rho)\sqrt{k}\|^2 + \|j-\overline{v}(t)\rho\|^2\|\rho\|^2\|k\|_{L^1} \right\}.
\end{align}
Using \eqref{pre_gronwall_X_p}--\eqref{pre_gronwall_X_p_2nd} we may continue and get  
\begin{align}
    dY_{t}^{p/2} & \lesssim \frac{p}{2}Y_t^{p/2-1} \left\{-2 C_a \|j- \overline{v}(t)\rho\|^2 \dd t + 2\langle j- \overline{v}(t)\rho, \sigma N^{-1/2}\sqrt{k}\,d\xi\rangle - 2\langle j - \overline{v}(t)\rho, (d\overline{v}(t))\rho \rangle \nonumber \right. \\
    & \left. \quad \quad + \frac{1}{2}2 \sigma^2 N^{-1} \left\llangle \sqrt{k}d\xi - \left(\int_{x}{\sqrt{k}d\xi dx}\right)\rho \right\rrangle^2 - \gamma \|k\|_{H^1}\dd t \right\} \nonumber \\
    & \quad \quad + \frac{1}{2}\frac{p}{2}
    \left(\frac{p}{2} -1 \right)Y_t^{p/2-2}\cdot 4 \cdot \sigma^2 N^{-1} \left\{ \|(j-\overline{v}(t)\rho)\sqrt{k}\|^2 + \|j-\overline{v}(t)\rho\|^2\|\rho\|^2\|k\|_{L^1} \right\} dt \label{requires_l_infty_bound} \\
    & \lesssim \frac{p}{2}Y_t^{p/2-1} \left\{-2 C_a \|j- \overline{v}(t)\rho\|^2 \dd t + 2\langle j- \overline{v}(t)\rho, \sigma N^{-1/2}\sqrt{k}\,d\xi\rangle - 2\langle j - \overline{v}(t)\rho, (d\overline{v}(t))\rho \rangle \nonumber \right. \\
    & \left. \quad \quad + \frac{1}{2}2 \sigma^2 N^{-1}\left(\|k\|_{H^1}\mathtt{Tr}(\xi) + \|k\|_{H^1}\|\rho\|^2\right)\dd t - \gamma \|k\|_{H^1} \dd t \right\} \nonumber \\
    & \quad \quad + \frac{1}{2}\frac{p}{2}
    \left(\frac{p}{2} -1 \right)Y_t^{p/2-2}\cdot 4 \cdot N^{-1}\sigma^2\|k\|_{H^1}\left\| j- \overline{v}(t)\rho\right\|^2(1+\|\rho\|^2) dt \label{final_y_p_2}
\end{align}
where we have also used the 1D embedding $H^1\subset L^\infty$ in \eqref{final_y_p_2}. 
Using the definition of the stopping time $\tau$ \eqref{defn_stop_time_iid} and $\|k\|_{H^1}\left\| j- \overline{v}(t)\rho\right\|^2 \leq Y^2_t$, we continue and get

\begin{align}
    & dY^{p/2}_{t\wedge \tau} \\
    & \lesssim \frac{p}{2}Y_{t \wedge \tau}^{p/2-1} \left\{-2 C_a \|j- \overline{v}(t \wedge \tau)\rho\|^2 \dd t + 2\langle j- \overline{v}(t \wedge \tau)\rho, \sigma N^{-1/2}\sqrt{k}\,d\xi\rangle \nonumber \right. \\
    & \left. \quad \quad \quad \quad - 2\langle j - \overline{v}(t \wedge \tau)\rho, (d\overline{v}(t \wedge \tau))\rho \rangle
    + \frac{1}{2}2 \sigma^2 N^{-1}\left(\|k\|_{H^1}\mathtt{Tr}(\xi) + \|k\|_{H^1}\|\rho\|^2\right)\dd t - \gamma \|k\|_{H^1}\dd t \right\} \nonumber \\
    & \quad \quad + \frac{1}{2}\frac{p}{2}
    \left(\frac{p}{2} -1 \right)Y_{t \wedge \tau}^{p/2-2}\cdot 4 \cdot N^{-1}\sigma^2 \Tr(\xi) Y^2_{t \wedge \tau} dt \label{final_y_higher_mom} \\
     & \lesssim \frac{p}{2}Y^{p/2}_{t\wedge \tau}\left( -2C_a + \sigma^2 N^{-1}\Tr(\xi) \right) \dd t \nonumber \\
    & \quad + \frac{p}{2}Y_{t \wedge \tau}^{p/2-1} \left\{2\langle j- \overline{v}(t \wedge \tau)\rho, \sigma N^{-1/2}\sqrt{k}\,d\xi\rangle - 2\langle j - \overline{v}(t \wedge \tau)\rho, (d\overline{v}(t \wedge \tau))\rho \rangle \nonumber \right\} \nonumber \\
    & \quad + \frac{1}{2}\frac{p}{2}\left(\frac{p}{2}-1\right)4\sigma^2 N^{-1}\Tr(\xi)Y^{p/2}_{t\wedge \tau}\dd t. \nonumber
\end{align}

Using Assumptions \ref{ass_smallness_iid}, and imposing the bound
\begin{align}\label{bound_p}
p\leq c\cdot C_a(N^{-1/2}\sigma \Tr^{1/2}(\xi))^{-2}
\end{align}
for some small enough $c>0$, and using Gronwall's Lemma that, for some $\delta > 0$,
\begin{align}\label{flocking_Y_pth_moment}
    \mathbb{E}[e^{\frac{p}{2}\delta C_a(t\wedge \tau)}Y^{p/2}_{t\wedge \tau}] \leq \mathbb{E}[Y_0^{p/2}].
\end{align}

\emph{Step 7: higher-order bounds on the energy $\eta := \|j\|^2 + \overline{v}^2(t)\|\rho\|^2 + \|k\|_{H^1}$}.
Using It\^o's formula once again, we obtain
\begin{align}
    d \eta & = 2\langle j , \left\{C_a[\overline{v}(t)\rho  - j] - \overline{v}^2(t) \nabla \rho\right\}dt + \sigma N^{-1/2}\sqrt{k}\,d\xi \rangle 
        + \frac{1}{2}\cdot 2\cdot \llangle dj \rrangle \nonumber \\
        & \quad + 2\overline{v}(t)d \overline{v}(t)\|\rho\|^2 + \frac{1}{2}\cdot 2\cdot \llangle d\overline{v}(t)\rrangle \|\rho\|^2 + \overline{v}^2(t)\cdot 2\langle \rho, -\nabla \cdot j\rangle \dd t - \gamma \|k\|_{H^1}\dd t \nonumber\\
        & = 2\langle j , \sigma N^{-1/2}\sqrt{k}\,d\xi \rangle + 2\overline{v}(t)d \overline{v}(t)\|\rho\|^2 \nonumber\\ 
        & \quad + 2\langle j , C_a[\overline{v}(t)\rho  - j]\rangle \dd t  + \sigma^2 N^{-1}\left\llangle \int_{x}{\sqrt{k}d\xi \dd x } \right\rrangle \|\rho\|^2
        +  \sigma^2 N^{-1} \llangle \sqrt{k}d\xi \rrangle - \gamma\|k\|_{H^1} \dd t\label{2nd_mom_energy_eta}. 
\end{align}

As for $d\eta^{p/2}$ with $p\geq 2$, we get
\begin{align}\label{pre_gronwall_eta_p}
d\eta^{p/2} & = \frac{p}{2}\eta^{p/2-1}d\eta + \frac{1}{2}\cdot\frac{p}{2}\cdot\left(\frac{p}{2}-1\right)\cdot \eta^{p/2-2}\cdot\llangle d\eta \rrangle \nonumber \\
& = \frac{p}{2}\eta^{p/2-1}\left\{2\langle j , \sigma N^{-1/2}\sqrt{k}\,d\xi \rangle + 2\overline{v}(t)d \overline{v}(t)\|\rho\|^2 \right. \nonumber\\ 
& \left.\quad\quad\quad + 2\langle j , C_a[\overline{v}(t)\rho  - j]\rangle \dd t  + \sigma^2 N^{-1}\left\llangle \int_{x}{\sqrt{k}d\xi} \right\rrangle \|\rho\|^2
        +  \sigma^2 N^{-1} \llangle \sqrt{k}d\xi \rrangle  - \gamma\|k\|_{H^1} \dd t \right\} \nonumber \\
& \quad + \frac{1}{2}\cdot\frac{p}{2}\cdot\left(\frac{p}{2}-1\right)\cdot \eta^{p/2-2}\cdot \llangle 2\langle j , \sigma N^{-1/2}\sqrt{k}\,d\xi \rangle + 2\overline{v}(t)d \overline{v}(t)\|\rho\|^2 \rrangle \\
&  \lesssim \frac{p}{2}\eta^{p/2-1}\left\{2\langle j , \sigma N^{-1/2}\sqrt{k}\,d\xi \rangle + 2\overline{v}(t)d \overline{v}(t)\|\rho\|^2 \right. \nonumber\\ 
& \left.\quad\quad\quad + 2\langle j , C_a[\overline{v}(t)\rho  - j]\rangle \dd t  + \sigma^2 N^{-1}\left\llangle \int_{x}{\sqrt{k}d\xi} \right\rrangle^2 \|\rho\|^2
        +  \sigma^2 N^{-1} \left\llangle \sqrt{k}d\xi \right\rrangle^2\right\} \nonumber \\
& \quad + \frac{1}{2}\cdot\frac{p}{2}\cdot\left(\frac{p}{2}-1\right)\cdot \eta^{p/2-2}\cdot \left[ 4 \sigma^2 N^{-1}\left\|j\sqrt{k}\right\|^2 + 4\overline{v}^2(t)\sigma^2 N^{-1} \|k\|_{H^1}\|\rho\|^4 \right]. \label{pth_mom_energy_eta}
\end{align}
Using the stopping time $\tau$ \eqref{defn_stop_time_iid} to bound two of the four powers in $\|\rho\|^4$ in the right-hand-side of \eqref{pth_mom_energy_eta}, 
reconstructing $\eta$ using the remaining two powers as well as $\overline{v}^2(t)$, 
noticing that $Y_{t \wedge \tau} = Y^{1/2}_{t \wedge \tau} Y^{1/2}_{t \wedge \tau} \leq  Y^{1/2}_{t \wedge \tau} \eta^{1/2}_{t \wedge \tau}$, 
using Assumption \ref{ass_smallness_iid}, as well as the 1D embedding $H^1\subset L^\infty$ to deal with the term $\norm{j \sqrt{k}}^2$,
we deduce that 
\begin{align}\label{pth_eta_compact}
    d\eta_{t\wedge \tau}^{p/2} & \lesssim p \cdot \eta_{t\wedge \tau}^{p/2-1}\left(C_a \eta_{t\wedge \tau}^{1/2}Y_{t\wedge \tau}^{1/2} + \sigma^2 N^{-1}\Tr(\xi)Y_{t\wedge \tau}\right) + p^2 \cdot \eta_{t\wedge \tau}^{p/2-2} \sigma^2 N^{-1} \Tr(\xi) \eta_{t\wedge \tau} Y_{t\wedge \tau}  \nonumber \\
    & \quad \quad + p\cdot \eta_{t\wedge \tau}^{p/2-1}\left\{2\langle j , \sigma N^{-1/2}\sqrt{k}\,d\xi \rangle + 2\overline{v}(t\wedge \tau)d \overline{v}(t \wedge \tau)\|\rho\|^2 \right\} \nonumber \\
    & \lesssim p \cdot C_a \eta_{t\wedge \tau}^{p/2-1/2}Y_{t\wedge \tau}^{1/2} 
    + p^2 \cdot \sigma^2 N^{-1}\Tr(\xi)\eta_{t\wedge \tau}^{p/2-1/2}Y^{1/2}_{t\wedge \tau} \nonumber \\
    & \quad \quad + p\cdot \eta_{t\wedge \tau}^{p/2-1}\left\{2\langle j , \sigma N^{-1/2}\sqrt{k}\,d\xi \rangle + 2\overline{v}(t\wedge \tau)d \overline{v}(t \wedge \tau)\|\rho\|^2 \right\}.
\end{align}

Looking at \eqref{pth_eta_compact}, we realise that it is structurally identical to \eqref{pth_eta_before_expectation} in the proof of Lemma \ref{lem:general-p-energy-estimate}, only with the size of the noise being $\sigma N^{-1/2}\sqrt{\Tr(\xi)}$ rather than $\sigma$. We can thus follow the same logic of the proof of Lemma \ref{lem:general-p-energy-estimate} and get to a bound similar to \eqref{pth_eta_final} (note that, even though we do not have pathwise bounds for $Y$, this does not affect the argument, as the application of the necessary H\"older can be carried out with the knowledge of the bounds \eqref{flocking_Y_pth_moment} only). Taking into account \eqref{bound_p} (in particular, with the smallness of the constant $c$ therein) we deduce that
 \begin{align}\label{bound_eta_iid}
    \E{\eta^{p/2}(t \wedge \tau)} \lesssim \sbrack{\brack{\E{\eta^{p/2}_0}}^{1/p} + \brack{\E{Y_0^{p/2}}}^{1/p} }^p.
 \end{align}

 We may now proceed in analogy to the proof of the Theorem \ref{thm: stopping-time-exp-decay}: using \eqref{bound_eta_iid} and \eqref{prob_set_A}, we deduce that 
 \begin{align*}
 \P{\tau <\infty} & = \lim_{t\rightarrow \infty}\P{0<\tau < t}  \leq  \P{\tau_v < \infty} + \lim_{t\rightarrow \infty} \P{\{0 < \tau_\rho < t\}\cap\{\tau_\rho \leq \tau_v\}}\\
 & \leq \exp\left\{ - C(\sigma^2 N^{-1} \gamma^{-1} \overline{v}^2(0))^{-\alpha} \right\} + \frac{\sbrack{\brack{\E{\eta^{p/2}_0}}^{1/p} + \brack{\E{Y_0^{p/2}}}^{1/p} }^p}{(\overline{v}(0)\sqrt{\mathtt{Tr}(\xi)})^p}.
 \end{align*}

 Upon smallness of the initial data $\eta_0$ and $Y_0$ with respect to $\overline{v}(0)\sqrt{\Tr(\xi)}$ stated in Assumption \ref{ass_smallness_iid} and the upper bound \eqref{bound_p}, we deduce 
\begin{align*}
    \mathbb{P}(\tau < \infty) \propto \exp\left\{ - C (\sigma^2 N^{-1} \gamma^{-1} \overline{v}^2(0))^{-\alpha} \right\} + \exp\{ - C(\sigma N^{-1/2}\Tr^{1/2}(\xi))^{-2}\},
\end{align*}
for some $C>0, \alpha > 0$ 
(this is \eqref{stop_time_bound_iid}), thus implying \eqref{flocking_est_iid} also using \eqref{flocking_Y_pth_moment}. The proof of Theorem \ref{thm: constant-kernel-flocking-iid} is concluded.
\begin{remark}
    Note that, in contrast to the common noise setting, due to the lack of the Geometric Brownian motion nature of the noise, we could not use the pathwise comparison principles for SDEs to get the bound \eqref{flocking_Y_pth_moment}. As a result, we only obtain SPDE flocking in expectation and not in the pathwise sense.
\end{remark}

\subsection{Remark \ref{thm: general-kernel-flocking-iid-exp}: SPDE flocking estimate for general kernel $a$} 

We choose to only provide a sketch of the methodology for Remark \ref{thm: general-kernel-flocking-iid-exp}. 

\begin{proof}[Sketch of proof for Remark \ref{thm: general-kernel-flocking-iid-exp}]
Replacing the continuous $L^2$-norm $\|\cdot\|$ with its discrete counterpart $\|\cdot\|_h$ (see paragraph before the statement of Remark \ref{thm: general-kernel-flocking-iid-exp} for the necessary notation) does not alter the algebraic structure of the It\^o formula, or the applications of Parseval's identities. However, in comparison to the proof of Theorem \ref{thm: constant-kernel-flocking-iid}, we face a few additional complications. These are all tied to the evolution equation for $k_h$ being no longer self-contained, due to the presence of $\rho_h$ and $j_h$. In particular, this prevents us from obtaining estimates for $k_h$ in the $H^1$-norm in a reasonable way. Since we do not have access to the $H^1$-norm bounds for $k_h$, this has a few consequences:
    \begin{itemize}
        \item we have to redefine the flocking quantity $Y_t$.
        \item we can not use the $L^1$-$L^\infty$ splitting of terms in \eqref{pth_mom_energy_eta} and \eqref{final_y_p_2}.
    \end{itemize}
As for the definition of $Y_t$, we choose $Y_t := \|j_h-\overline{v}(t)\rho_h\|^2_h + \|k_h\|_h$, i.e., we downgrade the $H^1$ norm on $k_h$ to a $L^2$ norm. This allows to obtain flocking bounds for $Y_t$ in the following way. Define the stopping time
\begin{align}
    \tau_{\rho} :=
    \begin{cases}\label{preliminary_stopping_time_exp}
        \inf \cbrack{t \geq 0 : \norm{\rho_h(t)}_h \geq \min\left\{\frac{C_a}{4\theta \|g\|_h}; \sqrt{\Tr(\xi_h)}\right\}}, \quad &\text{if infimum taken over non-empty set}, \\
        \infty, \quad &\text{otherwise},
    \end{cases}
\end{align}
for now avoiding keeping track of whether $\overline{v}(t)$ deviates significantly from $\overline{v}(0)$. We start by noticing that 
\begin{align}
    d k_h & = 
    2(j_h-\overline{v}(t)\rho_h)a\ast_h (j_h-\overline{v}(t)\rho_h)dt + \left(\sigma^2(1-2N^{-1})- 2 a\ast_h \rho_h\right)k_h dt - \overline{v}(t)\nabla_h k_h dt. \nonumber \\
    & = 
    2(j_h-\overline{v}(t)\rho_h) \theta g \ast_h (j_h-\overline{v}(t)\rho_h)dt + \left(\sigma^2(1-2N^{-1})- 2 C_a - 2 \theta g\ast_h \rho_h\right)k_h dt - \overline{v}(t)\nabla_h k_h dt. \label{reduced_CS_k_iid_B_expanded_exp}
\end{align}

Upon smallness of $\sigma$, and up to the stopping time $\tau_\rho$ (this allows us to bound the term $\theta g\ast_h \rho_h$), we deduce existence of $\gamma > 0$ such that
\begin{align}
    d\|k_h\|_h & = d\sqrt{\|k_h\|_h^2} \lesssim \frac{1}{2}\left(\|k_h\|_h^2\right)^{-1/2}\left( \langle 2(j_h-\overline{v}(t)\rho_h) \theta g \ast_h (j_h-\overline{v}(t)\rho_h) , k_h \rangle_h dt - \gamma \|k_h\|_h^2 dt \right) \nonumber\\
    & \lesssim \frac{1}{2}\left(\|k_h\|_h^2\right)^{-1/2}\left( \left\|2(j_h-\overline{v}(t)\rho_h) \theta g \ast_h (j_h-\overline{v}(t)\rho_h)\right\|_h\|k_h\|_h - \gamma \|k_h\|_h^2 \right)dt \nonumber \\
    & \lesssim \frac{1}{2}\left(\|k_h\|_h^2\right)^{-1/2}\left( X_t \|k_h\|_h - \gamma \|k_h\|_h^2 \right)dt \nonumber\\
    & \lesssim  \left( X_t - \gamma \|k_h\|_h \right)dt.\label{ev_norm_k_exp_1}
\end{align}
The possible degeneracy for $\|k_h\|_h = 0$ in \eqref{ev_norm_k_exp_1} has been dealt with by taking a non-negative $C^1$ approximation $f_\epsilon$ of $x\mapsto \sqrt{x}$ such that $f_\epsilon$ agrees with $x\mapsto \sqrt{x}$ in the region $[\epsilon,\infty)$, and such that $f'_\epsilon(x)\leq \frac{1}{2\sqrt{x}}$, and then sending $\epsilon\rightarrow 0$. We have omitted these details. 
Next, using the stopping time $\tau_\rho$ to bound the quadratic non-linearity it is straightforward to adapt \eqref{d_flock_2} to obtain
\begin{align}
    & dX_{t\wedge \tau_\rho} \nonumber \\
    & \lesssim - 2 C_a \|j_h- \overline{v}(t \wedge \tau_\rho)\rho_h\|_h^2 
    \nonumber \\
    & \quad \quad + 2\langle j_h- \overline{v}(t \wedge \tau_\rho)\rho_h, \sigma N^{-1/2}\sqrt{k_h}\,d\xi_h \rangle_h - 2\langle j_h - \overline{v}(t \wedge \tau_\rho)\rho_h, (d\overline{v}(t \wedge \tau_\rho))\rho_h \rangle_h \nonumber \\
    & \quad \quad + \frac{1}{2}2 \sigma^2 N^{-1}\left(\|k_h\|_h\mathtt{Tr}(\xi_h) + \|k_h\|_h\|\rho_h\|_h^2\right) \\
    & \lesssim - 2 C_a \|j_h- \overline{v}(t)\rho_h\|_h^2 + 2\langle j_h- \overline{v}(t \wedge \tau_\rho)\rho_h, \sigma N^{-1/2}\sqrt{k_h}\,d\xi_h \rangle_h - 2\langle j_h - \overline{v}(t)\rho_h, (d\overline{v}(t \wedge \tau_\rho))\rho_h \rangle_h \nonumber \\
    & \quad \quad + \frac{1}{2}2 \sigma^2 N^{-1}\Tr(\xi_h)\|k_h\|_h.\label{d_flock_2_general}
\end{align}
where we have used the embedding $\|k_h\|_{h,1} \lesssim \|k_h\|_{h}$ as opposed to the embedding $\|k_h\|_{h,1} \lesssim \|k_h\|_{H^1}$.  
Upon smallness of parameters (cfr. Assumption \ref{ass_smallness_iid}) we can add \eqref{d_flock_2_general} and  \eqref{ev_norm_k_exp_1} to obtain the exponentially decaying bound  $\E{e^{\delta C_a t\wedge \tau_\rho}Y(t\wedge \tau_{\rho})} \leq \E{Y_0}$. Furthermore, we can apply the It\^o formula to $Y_{t\wedge \tau}^{p/2}$ and get an expression analogous to those discussed in \emph{Step 6} of proof of Theorem \ref{thm: constant-kernel-flocking-iid}. The only difference now is that the the problematic term $\|(j_h - \overline{v}(t)\rho_h)\sqrt{k_h}\|^2_h$ is estimated as
\begin{align}\label{l4_embed}
 \|(j_h - \overline{v}(t)\rho_h)\sqrt{k_h}\|_h^2 & \leq \|j_h - \overline{v}(t)\rho_h\|_{4,h}^2  \|k_h\|_h \lesssim h^{-1/2} \|j_h - \overline{v}(t)\rho_h\|_{h}^2  \|k_h\|_h,
\end{align} 
where we have used the discrete embedding $\|f_h\|_{4,h} \lesssim h^{-1/4}\|f_h\|_h$. This estimate aligns the norms to those of the proof of Theorem \ref{thm: constant-kernel-flocking-iid} at the cost of introducing the factor $h^{-1/2}$. However, this is not an issue, as the term $\|(j_h - \overline{v}(t)\rho_h)\sqrt{k_h}\|^2$ is only multiplied by $\sigma^2 N^{-1}$ to begin with, so the overall constant $\sigma^2 N^{-1} h^{-1/2}$ is dominated by the term $\sigma^2 N^{-1} \Tr(\xi_h)$ by assumption. Overall, this allows us to reuse content from \emph{Step 6} from proof of Theorem \ref{thm: constant-kernel-flocking-iid} and get, for some $\delta > 0$,
\begin{align}\label{flocking_Y_pth_moment_exp}
    \mathbb{E}[e^{\frac{p}{2}\delta C_a(t\wedge \tau_\rho)}Y^{p/2}_{t\wedge \tau_\rho}] \leq \mathbb{E}[Y_0^{p/2}].
\end{align}

Next, we need to account for the deviations of $\overline{v}(t)$ from $\overline{v}(0)$, which we haven't needed so far for the estimates on $Y^{p/2}$, but which play into the final result. We can write 
\begin{align}
    \mathbb{E}\left[\sup_{t\in[0,T]}|\overline{v}(t \wedge \tau_\rho)-\overline{v}(0)|^p\right] & = \mathbb{E}\left[\sup_{t\in[0,T]}\left|\int_{0}^{t \wedge \tau_\rho}{\sigma N^{-1/2}(\sqrt{k_h},1)_h\dd x \dd\xi_h}\right|^p\right], \nonumber \\
    &\lesssim p^{p} \mathbb{E}\left[ \left(\sigma^2 N^{-1} \int_{0}^{T \wedge \tau_\rho}{\|k(t)\|_{h,1}dt} \right)^{p/2}\right], \nonumber \\
    &\lesssim p^p\E{\brack{\sigma^2 N^{-1}\int_0^T \mathbf{1}_{\{t \leq \tau_\rho\}} Y_t \dd t}^{p/2}}, \nonumber \\
    &\lesssim p^p (\sigma^2 N^{-1})^{p/2} \brack{\int_0^T \E{\mathbf{1}_{\{t \leq \tau_\rho\}}Y_t^{p/2}}^{2/p} \dd t}^{p/2} \label{bockner} \\
    & \lesssim p^{p} \left( \sigma^2 N^{-1} \frac{1}{ \delta C_a} \right)^{p/2} \propto 
    p^{p}\left(\sigma^2 N^{-1}\frac{1}{ \delta C_a} \right)^{p/2}\label{bound_fluct_v_bar_exp},
\end{align}
where we have used properties of the Bochner integral in \eqref{bockner} and \eqref{flocking_Y_pth_moment_exp} in \eqref{bound_fluct_v_bar_exp}.  
Similar arguments as in the proof of Theorem \ref{thm: constant-kernel-flocking-iid} grant 
$$
\P{\sup_{t\geq 0}{|\overline{v}(t \wedge \tau_\rho) - \overline{v}(0)|} > \overline{v}(0)/2} \lesssim \exp\{-C(\sigma^{-2} N^{}\delta C_a \overline{v}^2(0))^{\alpha} \},
$$
for some $C, \alpha > 0$. Now, we set $\tau_v := \inf \cbrack{t \geq 0 : |\overline{v}(t \wedge \tau_\rho)-\overline{v}(0)| \geq \overline{v}(0)/2}$, and define
\begin{align}\label{defn_stop_time_iid_general_exp}
    \tau := \min\{ \tau_\rho ; \tau_v \}.
\end{align}
Dealing with powers of $\eta := \|j_h\|_h^2 + \overline{v}^2(t)\|\rho_h\|^2_h + \|k_h\|_h$ up to the stopping time $\tau$ is handled similarly to the constant kernel case (with the caveat of introducing the -- non consequenatial -- term $h^{-1/2}$ in the estimate of the term $\|j_h\sqrt{k_h}\|^2_h$, same discussion as in \eqref{l4_embed}). Following the same logic as in the constant kernel case, this ultimately implies
\begin{align*}
\P{\tau < \infty} 
& \lesssim \exp\{-(\sigma^{-2} N^{}\delta C_a \overline{v}^2(0))^{\alpha} \} + \exp\{ - C(\sigma N^{-1/2}\Tr^{1/2}(\xi_h))^{-2}\},
\end{align*}
and the sketch of the proof is complete. 
\end{proof} 

 \section{Error proof}\label{app: error}

In this section we give an error proof measured in a weighted $H^{-2} \times H^{-2}$ norm between the particle system and reduced SPDE in the common noise setting in $d=1$.

We first need a flocking result for the particle system.

\begin{theorem}\label{thm: particle-flocking}
    Assume $v_i(0) \in L^2(\Omega; \mathbb{R})$ are $\mathcal{F}_0$-adapted and independent of the Brownian motion $B(t)$. Suppose we are in $d=1$ with positions on the torus. Then we have the following flocking result:
    $$
    \E{\sum_{i=1}^N (v_i - \vbar)^2}^{1/2} \leq N^{1/2}R_v \exp \brack{\frac{1}{2}(\sigma^2 - 2a_{\mathrm{min}})t},
    $$
    where $a_{\mathrm{min}} \coloneqq \inf_{x \in \mathbb{T}}a(x)$ and $R_v^2 \coloneqq \max_{1 \leq i \leq N} \E{(v_i(0) - \vbar)^2}$.
\end{theorem}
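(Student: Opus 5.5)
We prove the bound with a Lyapunov argument for the total velocity variance $\mathcal{V}(t) \coloneqq \sum_{i=1}^N (v_i(t) - \vbar)^2$, where $\vbar$ is constant in time because the common-noise dynamics \eqref{eq: stochastic-cs-vc} conserve the mean velocity. Writing $u_i \coloneqq v_i - \vbar$, we have $\dd u_i = N^{-1}\sum_j a(x_i - x_j)(u_j - u_i)\dd t + \sigma u_i \dd B(t)$. It\^o's formula then gives
\begin{align*}
\dd \mathcal{V} = 2N^{-1}\sum_{i,j} a(x_i-x_j)\, u_i (u_j - u_i)\dd t + \sigma^2 \mathcal{V}\dd t + 2\sigma \mathcal{V}\dd B(t).
\end{align*}

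\textbf{Step 1: symmetrise the interaction term.} Since $a$ is even, swapping $i \leftrightarrow j$ gives
\begin{align*}
2N^{-1}\sum_{i,j} a(x_i-x_j)\, u_i(u_j-u_i) = -N^{-1}\sum_{i,j} a(x_i-x_j)(u_i-u_j)^2 \leq -a_{\mathrm{min}} N^{-1}\sum_{i,j}(u_i-u_j)^2 .
\end{align*}
Expanding the square and using $\sum_i u_i = 0$, we get $N^{-1}\sum_{i,j}(u_i-u_j)^2 = 2\mathcal{V}$. The drift is therefore bounded by $(\sigma^2 - 2a_{\mathrm{min}})\mathcal{V}$. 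On $\mathbb{T}$ the kernel \eqref{eq: kernel-definition} satisfies $a_{\mathrm{min}} > 0$, because the torus distance is bounded.

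\textbf{Step 2: take expectations.} We have $\E{\mathcal{V}(0)} \leq N R_v^2 < \infty$ by assumption. To remove the stochastic integral, localise with the stopping times $\tau_n \coloneqq \inf\{t : \mathcal{V}(t) \geq n\}$. The stopped integral $\int_0^{t\wedge\tau_n} 2\sigma \mathcal{V}\dd B$ is then a true martingale, and
\begin{align*}
\E{\mathcal{V}(t\wedge\tau_n)} \leq \E{\mathcal{V}(0)} + (\sigma^2 - 2a_{\mathrm{min}})\E{\int_0^{t\wedge\tau_n}\mathcal{V}\dd s}.
\end{align*}

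\textbf{Main obstacle: Gr\"onwall with a possibly negative rate.} If $\sigma^2 - 2a_{\mathrm{min}} \geq 0$, the integral can be bounded by $\int_0^t \E{\mathcal{V}(s\wedge\tau_n)}\dd s$, since the integrand is nonnegative, and Gr\"onwall applies directly. If the rate is negative, this linear Gr\"onwall argument does not give decay. Instead, apply It\^o's formula to $e^{-(\sigma^2-2a_{\mathrm{min}})t}\mathcal{V}(t)$, whose drift is nonpositive; the localised expectation then gives $\E{e^{-(\sigma^2-2a_{\mathrm{min}})(t\wedge\tau_n)}\mathcal{V}(t\wedge\tau_n)} \leq \E{\mathcal{V}(0)}$.

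To pass to the limit $n\to\infty$, note that $\tau_n \to \infty$ almost surely by non-explosion of the globally Lipschitz particle SDE. Fatou's lemma then yields
\begin{align*}
\E{\mathcal{V}(t)} \leq e^{(\sigma^2-2a_{\mathrm{min}})t}\E{\mathcal{V}(0)} \leq N R_v^2\, e^{(\sigma^2-2a_{\mathrm{min}})t}.
\end{align*}
Taking square roots gives the claim.

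As an alternative to the localisation step, one can use the pathwise comparison with a geometric Brownian motion as in Lemma \ref{lem: p=2-flocking-bound}, together with the independence of the initial data from $B$, to compute the expectation directly.
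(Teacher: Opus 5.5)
Your proof is correct, and its first half is the same as the paper's: It\^o's formula for $\sum_i u_i^2$, symmetrisation of the interaction term, and the identity $N^{-1}\sum_{i,j}(u_i-u_j)^2 = 2\mathcal{V}$. You differ in how you pass from the differential inequality $\dd \mathcal{V} \leq (\sigma^2-2a_{\mathrm{min}})\mathcal{V}\dd t + 2\sigma\mathcal{V}\dd B$ to the moment bound.

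The paper invokes a pathwise stochastic comparison principle. It dominates $\mathcal{V}$ almost surely by the geometric Brownian motion $\hat w$ with the same initial value, then computes the expectation of $\hat w(t)$ using the independence of the initial velocities from $B$. This is exactly the alternative you mention in your last sentence.

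Your main route instead localises, applies It\^o to $e^{-(\sigma^2-2a_{\mathrm{min}})t}\mathcal{V}(t)$ (whose drift is nonpositive), removes the stopped martingale, and passes to the limit with Fatou. This is more elementary, since it avoids any comparison theorem for one-dimensional SDEs. It also treats both signs of $\sigma^2-2a_{\mathrm{min}}$ at once, so your case distinction in the ``main obstacle'' paragraph is unnecessary. The paper's route gives more in return: the almost sure domination $\mathcal{V}\leq \hat w$ yields almost sure flocking for every $\sigma>0$, and it gives all moments via explicit GBM formulas. A first-moment estimate like yours does not give this directly.

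One small correction. The particle system is not globally Lipschitz, because the term $a(x_i-x_j)(v_j-v_i)$ multiplies an $x$-dependent kernel by $v$. Non-explosion, and hence $\tau_n\to\infty$ almost surely, still holds: the coefficients are locally Lipschitz with linear growth, since $a$ is bounded.
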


\begin{proof}
    Define $u_i \coloneqq v_i - \vbar$. Then
    $$
    \dd u_i = N^{-1}\sum_{j=1}^N a(x_i - x_j)(u_j - u_i) \dd t + \sigma u_i \dd B(t).
    $$
    We now apply It\^o's lemma using $\varphi(x) = x^2$ to get
    $$
    \sum_{i=1}^N \dd u_i^2 = \sum_{i=1}^N \brack{2N^{-1}\sum_{j=1}^N a(x_i - x_j)(u_j - u_i) \dd t + 2\sigma u_i \dd B(t)}u_i + \sum_{i=1}^N \sigma^2 u_i^2 \dd t.
    $$
    Defining $w \coloneqq \sum_{i=1}^N u_i^2$, we rewrite the above as
    $$
    \dd w = \sigma^2 w \dd t + 2 \sigma w \dd B(t) + \mathcal{S} \dd t,
    $$
    where
    $\mathcal{S} \coloneqq 2N^{-1}\sum_{i, j=1}^N a(x_i - x_j)(u_j - u_i)u_i$.
    By symmetry, we can swap the dummy indices $(i, j)$ in the definition of $\mathcal{S}$ and add these expressions together to obtain
    \begin{align*}
    \mathcal{S} &= N^{-1}\sum_{i, j=1}^N a(x_i - x_j)\brack{u_i(u_j - u_i) + u_j(u_i - u_j)}, \\
    &= - N^{-1}\sum_{i, j=1}^N a(x_i-x_j)(u_i - u_j)^2, \\
    &\leq -a_{\mathrm{min}}N^{-1}\sum_{i, j=1}^N (u_i - u_j)^2,
    \end{align*}
    where $a_{\mathrm{min}}\coloneqq \inf_{x \in \mathbb{T}}a(x)$ is attained since we are on the torus. Then note that the cross terms in the sum vanish by definition of $u_i$ giving
    $$
    \mathcal{S} \leq -2a_{\mathrm{min}} \sum_{i=1}^N u_i^2 = -2a_{\mathrm{min}}w.
    $$
    Altogether, we have
    $$
    \dd w \leq (\sigma^2 - 2a_{\mathrm{min}})w \dd t + 2\sigma w \dd B(t).
    $$
    Appealing to a stochastic comparison principle we have $w \leq \hat{w}$ $\mathbb{P}$-a.s. where $\hat{w}$ satisfies
    $$
    \dd \hat{w} = (\sigma^2 - 2a_{\mathrm{min}})\hat{w}\dd t + 2 \sigma \hat{w} \dd B(t).
    $$
    Taking expectations and using the independence of the initial velocities and the Brownian motion, it follows that
    $$
    \E{w}^{1/2} = \E{\sum_{i=1}^N (v_i - \vbar)^2}^{1/2} \leq \E{\sum_{i=1}^N (v_i(0) - \vbar)^2}^{1/2} \exp \brack{\frac{1}{2}(\sigma^2 - 2a_{\mathrm{min}})t}.
    $$
    Moreover, if we further assume that the initial velocities $v_i(0) \in L^2(\Omega; \mathbb{R})$ for all $i=1, \ldots, N$ then for any $N \in \mathbb{N}$ we can define
    $$
    R_v^2 \coloneqq \max_{1 \leq i \leq N} \E{(v_i(0) - \vbar)^2}.
    $$
    It follows that
    $$
    \E{\sum_{i=1}^N (v_i(0) - \vbar)^2} \leq NR_v^2,
    $$
    which gives
    $$
    \E{\sum_{i=1}^N (v_i - \vbar)^2}^{1/2} \leq N^{1/2}R_v \exp \brack{\frac{1}{2}(\sigma^2 - 2a_{\mathrm{min}})t}.
    $$
\end{proof}

We then need an analogue of \cite[Proposition 5.1]{zimper2025reduced} which estimates the convolutional term in the (S)PDE when the Dirac delta is replaced with a von Mises kernel.

\begin{proposition}\label{prop: mollification-error}
    Assume the interaction potential $a$ has bounded first derivative. Then
    \begin{equation}\label{eq: replace-dirac-with-vm-conv}
    N^{-1}\sum_{i=1}^N \delta_\ve(x-x_i) N^{-1}\sum_{j=1}^N a(x_i - x_j)(v_j - v_i) = \rho_\ve(a*j_\ve) - j_\ve(a*\rho_\ve) + R_\ve,
    \end{equation}
    where
    \begin{multline}\label{eq: pathwise-Reps}
        R_\ve \lesssim \absv{\rho_\ve N^{-1}\sum_{j=1}^N (v_j - \vbar)r_{\ve, j}} + N^{-1}\sum_{i=1}^N \delta_\ve(x-x_i)\absv{x-x_i}\brack{N^{-1}\sum_{j=1}^N \absv{v_j - \vbar}}, \\
        + \absv{(j_\ve - \vbar \rho_\ve)N^{-1}\sum_{j=1}^N r_{\ve, j}} + N^{-1}\sum_{i=1}^N \absv{v_i - \vbar}\delta_\ve(x-x_i)\absv{x-x_i} =: \sum_{k=1}^4 I_k,
    \end{multline}
    where $\norm{r_{\ve, j}}_{L^\infty} \lesssim \sqrt{\ve}$ for each $j=1, \ldots, N$. Furthermore, we have
    $$
    \E{\norm{R_\ve}_{H^{-2}}} \lesssim C(a) R_v \exp \brack{\frac{1}{2}(\sigma^2 - 2a_{\mathrm{min}})t}\brack{\ve + \sqrt{\ve}}.
    $$
\end{proposition}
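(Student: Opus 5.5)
The plan is to write the left-hand side of \eqref{eq: replace-dirac-with-vm-conv} as a sum over $i,j$ and replace $a(x_i-x_j)$ by $a(x-x_j)$, since $\delta_\ve(x-x_i)$ concentrates $x_i$ near $x$. Before doing so, we recentre all velocities at $\vbar$. Because $\sum_j a(x_i-x_j)(v_j-v_i) = \sum_j a(x_i-x_j)\big((v_j-\vbar)-(v_i-\vbar)\big)$, every term in the expansion carries a factor $v_k-\vbar$. This factor is what will later be controlled by Theorem \ref{thm: particle-flocking}.

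\textbf{Step 1 (kernel swap).} Write $a(x_i-x_j)=a(x-x_j)+\big(a(x_i-x_j)-a(x-x_j)\big)$. By the mean value theorem, $|a(x_i-x_j)-a(x-x_j)|\le \|a'\|_\infty|x-x_i|$. Inserting the main part $a(x-x_j)$ and summing gives
\[
N^{-1}\sum_i\delta_\ve(x-x_i)N^{-1}\sum_j a(x-x_j)(v_j-v_i) = \rho_\ve(x)\,N^{-1}\sum_j a(x-x_j)v_j - j_\ve(x)\,N^{-1}\sum_j a(x-x_j).
\]
The swap error is bounded by $\|a'\|_\infty N^{-1}\sum_i\delta_\ve(x-x_i)|x-x_i|\big(N^{-1}\sum_j|v_j-\vbar|+|v_i-\vbar|\big)$, which is exactly $I_2+I_4$.

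\textbf{Step 2 (discrete to mollified convolution).} Next compare $N^{-1}\sum_j a(x-x_j)$ with $a*\rho_\ve(x)=N^{-1}\sum_j\int a(x-y)\delta_\ve(y-x_j)\,dy$. Define $r_{\ve,j}(x):=a(x-x_j)-(a*\delta_\ve(\cdot-x_j))(x)$. Lipschitz continuity of $a$ together with $\int\delta_\ve(y)|y|\,dy\lesssim\sqrt{\ve}$ (a moment bound for the von Mises kernel, whose standard deviation is of order $\ve$) gives $\|r_{\ve,j}\|_{L^\infty}\lesssim\sqrt\ve$. Writing $v_j=(v_j-\vbar)+\vbar$, the $\vbar$-parts recombine as
\[
\rho_\ve\,\vbar\,N^{-1}\sum_j a(x-x_j) - \vbar\rho_\ve\,N^{-1}\sum_j a(x-x_j)\ \text{(after swapping to mollified convolutions)}.
\]
The remainders from this step are $\rho_\ve N^{-1}\sum_j(v_j-\vbar)r_{\ve,j}$, which is $I_1$, and $(j_\ve-\vbar\rho_\ve)N^{-1}\sum_j r_{\ve,j}$, which is $I_3$. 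This yields the identity together with \eqref{eq: pathwise-Reps}.

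\textbf{Step 3 ($H^{-2}$ bound).} In $d=1$ we have $H^2\subset L^\infty$, so $\|f\|_{H^{-2}}\lesssim\|f\|_{L^1}$. Each $I_k$ is therefore estimated in $L^1$:
\begin{itemize}
\item $\|\rho_\ve\|_{L^1}=1$, so $I_1$ is bounded by $\sqrt\ve\,N^{-1}\sum_j|v_j-\vbar|$.
\item $\|j_\ve-\vbar\rho_\ve\|_{L^1}\le N^{-1}\sum_j|v_j-\vbar|$, so $I_3$ obeys the same bound.
\item $\int\delta_\ve(x-x_i)|x-x_i|\,dx\lesssim\ve$ (or $\sqrt\ve$, depending on how the kernel width is normalised), so $I_2$ and $I_4$ are bounded by $\ve\,N^{-1}\sum_j|v_j-\vbar|$.
\end{itemize}
Finally, Cauchy--Schwarz gives $\mathbb E\big[N^{-1}\sum_j|v_j-\vbar|\big]\le N^{-1/2}\,\mathbb E\big[\sum_j(v_j-\vbar)^2\big]^{1/2}$, and Theorem \ref{thm: particle-flocking} bounds this by $R_v e^{\frac12(\sigma^2-2a_{\min})t}$. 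Collecting the constants depending on $\|a'\|_\infty$ into $C(a)$ yields the stated estimate.

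The main obstacle is the precise von Mises moment bounds $\int\delta_\ve|y|\lesssim\ve$ or $\sqrt\ve$ on the torus. I would obtain these from the Gaussian-like concentration $e^{\ve^{-2}(\cos(2\pi y/L)-1)}$ together with the asymptotics of $I_0(\ve^{-2})$, accepting the cruder $\sqrt\ve$ rate wherever the sharp one is unclear. That cruder rate is already why the final bound reads $\ve+\sqrt\ve$.
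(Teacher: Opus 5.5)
Your proposal is correct and takes essentially the paper's route. The paper imports the pathwise decomposition \eqref{eq: replace-dirac-with-vm-conv}--\eqref{eq: pathwise-Reps} from \cite[Proposition 5.1]{zimper2025reduced}, which your kernel swap plus $r_{\ve,j}$ correction reproduces. It then bounds each $I_k$ in $H^{-2}$ exactly as you do, via $H^2\hookrightarrow L^\infty$, $\|r_{\ve,j}\|_{L^\infty}\lesssim\sqrt{\ve}$, $\int\delta_\ve(x-x_i)|x-x_i|\,\dd x\lesssim\ve$, Cauchy--Schwarz in $j$ with Jensen, and Theorem \ref{thm: particle-flocking}. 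The only slip is your displayed ``recombination'', which is identically zero. The correct bookkeeping is $\rho_\ve N^{-1}\sum_j v_j r_{\ve,j}-j_\ve N^{-1}\sum_j r_{\ve,j}=\rho_\ve N^{-1}\sum_j (v_j-\vbar) r_{\ve,j}-(j_\ve-\vbar\rho_\ve)N^{-1}\sum_j r_{\ve,j}$, which gives exactly the $I_1$ and $I_3$ you list.
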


\begin{proof}
    Obtaining \eqref{eq: replace-dirac-with-vm-conv} with the estimate \eqref{eq: pathwise-Reps} is identical to \cite[Proposition 5.1]{zimper2025reduced}. It remains to bound each term in \eqref{eq: pathwise-Reps} in expectation. We first recall that the $H^{-2}$ norm is defined by
    $$
    \norm{f}_{H^{-2}} \coloneqq \sup_{\varphi \in H^2:\norm{\varphi}_{H^2} = 1} \absv{\int f \varphi},
    $$
    and the inequality
    $$
    \sum_{i=1}^N \absv{b_i} \leq N^{1/2}\brack{\sum_{i=1}^N \absv{b_i}^2}^{1/2}.
    $$
    Moreover, we have the embeddings $H^2 \hookrightarrow H^1 \hookrightarrow L^\infty$, the bound $\norm{r_{\ve, j}}_{L^\infty} \lesssim \sqrt{\ve}$ and Gaussian moment bounds granting
    $$
    \int \delta_\ve(x-x_i)\absv{x-x_i} \dd x \lesssim \ve.
    $$
    Using these ingredients we obtain
    $$
    \absv{\int_{\mathbb{T}}I_1 \varphi \dd x} = \absv{\int_{\mathbb{T}} \rho_\ve(x) N^{-1}\sum_{j=1}^N (v_j-\vbar)r_{\ve, j}(x) \varphi(x) \dd x} \lesssim \norm{\varphi}_{H^2}\sqrt{\ve}N^{-1} N^{1/2} \brack{\sum_{j=1}^N(v_j - \vbar)^2}^{1/2}.
    $$
    Taking expectations, applying Jensen's inequality, using the flocking estimate in Theorem \ref{thm: particle-flocking} and then using the characterisation of the $H^{-2}$ norm, as well as the one-dimensional Sobolev embedding $H^2\subset L^\infty$, gives
    $$
    \E{\norm{I_1}_{H^{-2}}} \lesssim R_v \exp \brack{\frac{1}{2}(\sigma^2 - 2a_{\mathrm{min}})t}\sqrt{\ve}.
    $$
    Similar procedures give analogous bounds for $I_2, I_3, I_4$ with either a $\ve$ or $\sqrt{\ve}$ factor (arising from estimating $\int \delta_\ve(x-x_i)\absv{x-x_i} \dd x)$ or $r_{\ve, j}$ respectively) thereby giving the result.
\end{proof}

We now have the required ingredients to prove the following theorem that quantifies the error between the particle system and reduced SPDE. We use the weighted norm $\hmtwo$ defined by $\norm{(f, g)}_{\hmtwo}^2 \coloneqq \vbar^2 \norm{f}^2_{H^{-2}} + \norm{g}^2_{H^{-2}}$.

\begin{proof}[Proof of Theorem \ref{thm: error}] 

    \textbf{Step 1: functional setup.} Fix $T>0$ and some $t \in [0, T]$. Note that the linear differential operator $(-\partial_x j, -\vbar^2 \partial_x \rho)$ generates a $C^0$ semigroup of contractions in $\hmtwo$. Therefore, for any $s \leq t$ we have
    \begin{multline*}
    \xpde(s \wedge \tau) = S(s \wedge \tau)\xpde(0) + \int_0^{s \wedge \tau} S(s \wedge \tau - r) F(\xpde(r)) \dd r \\ 
    + \int_0^{s \wedge \tau} S(s \wedge \tau - r) B(\xpde(r)) \dd W(r),
    \end{multline*}
    and
    \begin{multline*}
    \xcs(s \wedge \tau) = S(s \wedge \tau)\xcs(0) + \int_0^{s \wedge \tau} S(s \wedge \tau - r) F(\xcs(r)) \dd r \\ 
    + \int_0^{s \wedge \tau}S(s \wedge \tau - r)R(r) \dd r + \int_0^{s \wedge \tau} S(s \wedge \tau - r) B(\xcs(r)) \dd W(r),
    \end{multline*}
    where $R(r) \coloneqq R_{\mathrm{closing}}(r) + R_\ve(r)$. The term $R_{\ve}(r)$ is bounded as in Proposition \ref{prop: mollification-error} and 
    $$
    R_{\mathrm{closing}}(r) \coloneqq \brack{0, -N^{-1}\sum_{i=1}^N (v_i^2(r) - \vbar^2) \delta'_\ve(x-x_i(r))}.
    $$

  Subtracting the two mild formulations, using the linearity of $B$, taking the supremum over times followed by the expectation, we get 
    \begin{multline*}
        \E{\sup_{s \in [0, t]}\norm{\xpde(s \wedge \tau) - \xcs(s \wedge \tau)}_{\hmtwo}} \lesssim \E{\sup_{s \in [0, t]}\norm{S(s \wedge \tau)(\xpde(0) - \xcs(0))}_{\hmtwo}} \\
        + \E{\sup_{s \in [0, t]}\norm{\int_0^{s \wedge \tau}S(s \wedge \tau - r) \brack{F(\xpde(r)) - F(\xcs(r))} \dd r}_{\hmtwo}} \\
        + \E{\sup_{s \in [0, t]}\norm{\int_0^{s \wedge \tau}S(s \wedge \tau - r) \brack{B(\xpde(r) - \xcs(r))} \dd W(r)}_{\hmtwo}} \\
        + \E{\sup_{s \in [0, t]}\norm{\int_0^{s \wedge \tau}S(s \wedge \tau - r)R(r) \dd r}_{\hmtwo}} =: \sum_{k=1}^4 J_k.
    \end{multline*}

\textbf{Step 2: dealing with $J_1$.} For $J_1$, we simply use contractivity of the semigroup to write
$$
J_1 \lesssim \E{\norm{\xpde(0) - \xcs(0)}_{\hmtwo}}.
$$

\textbf{Step 3: dealing with $J_2$.} We first recall from \cite[Proof of Theorem 3.2]{zimper2025reduced} that we have the following estimate:
\begin{multline*}
    \norm{F(\xpde) - F(\xcs)}_{\hmtwo} \lesssim C(a)\left(\norm{\rho - \rho_\ve}_{H^{-2}}\norm{j-\vbar \rho}_{H^{-2}} \right . \\
    + \left . \norm{\rho_\ve}_{H^{-2}}\norm{(j-\vbar \rho) - (j_\ve - \vbar\rho_\ve)}_{H^{-2}}\right ).
\end{multline*}
We then write
\begin{align*}
J_2 &= \E{\sup_{s \in [0, t]}\int_0^{s \wedge \tau}\norm{S(s \wedge \tau - r)\mathbf{1}_{[0, \tau]}(r) (F(\xpde(r)) - F(\xcs(r)))}_{\hmtwo} \dd r} \\
&\leq \int_0^t \E{\mathbf{1}_{[0, \tau]}(r) \norm{F(\xpde(r)) - F(\xcs(r)))}_{\hmtwo}} \dd r,
\end{align*}
where we have used contractivity of the semigroup. We now need to estimate the terms in $\norm{F(\xpde - F(\xcs)}_{\hmtwo}$ in order to get something we can apply a Gr\"onwall inequality to. We first notice that $\norm{\rho_\ve}_{H^{-2}} \lesssim 1$ due to the embedding $H^2 \hookrightarrow L^\infty$. We then write
$$
\E{\mathbf{1}_{[0, \tau]}(r) \norm{(j-\vbar \rho) - (j_\ve - \vbar \rho_\ve)}_{H^{-2}}} \lesssim \E{\mathbf{1}_{[0, \tau]}(r) \norm{j-\vbar \rho}} + \E{\norm{j_\ve - \vbar \rho_\ve}_{H^{-2}}}.
$$
For the first term we note that for $r \leq \tau$ we have 
$$
\norm{j-\vbar \rho} \leq \norm{j_0 - \vbar \rho_0}\exp \brack{\frac{1}{2}(\sigma^2 - C_a)r}\sqrt{M_r},
$$
where $M_r$ is an exponential martingale with mean $1$ (see Lemma \ref{lem: p=2-flocking-bound}). It follows that 
\begin{align*}
\E{\mathbf{1}_{[0, \tau]}(r) \norm{j-\vbar \rho}} &\lesssim \E{\norm{j_0 - \vbar \rho_0}\exp \brack{\frac{1}{2}(\sigma^2 - C_a)r}\sqrt{M_r}} \\
&\lesssim \E{\norm{j_0 - \vbar \rho_0}^2}^{1/2}\exp \brack{\frac{1}{2}(\sigma^2 - C_a)r},
\end{align*}
where we have used Cauchy-Schwarz and the independence of the initial data and the Brownian motion. For the second term, using the characterisation of the $H^{-2}$ norm and Theorem \ref{thm: particle-flocking}, it follows that
$$
\E{\norm{j_\ve(r) - \vbar \rho_\ve(r)}_{H^{-2}}} \lesssim N^{-1/2}\brack{\sum_{i=1}^N (v_i(r) - \vbar)^2}^{1/2} \lesssim R_v \exp \brack{\frac{1}{2}(\sigma^2 - 2a_{\mathrm{min}})r}.
$$
Bounding the other contribution to $\norm{F(\xpde - F(\xcs)}_{\hmtwo}$ is more subtle since we need to recover (a power $\in (0,1)$ of) the object we want to use for Gr\"onwall. We write 
\begin{multline*}
\E{\mathbf{1}_{[0, \tau]}(r) \norm{\rho-\rho_\ve}_{H^{-2}} \norm{j-\vbar \rho}_{H^{-2}}} \\ 
\lesssim \E{\mathbf{1}_{[0, \tau]}(r) \norm{\rho-\rho_\ve}_{H^{-2}}\cdot\norm{\rho-\rho_\ve}_{H^{-2}}}^{1/2}\E{\mathbf{1}_{[0, \tau]}(r) \norm{j-\vbar \rho}^2}^{1/2}.
\end{multline*}
For the first expectation, we bound each instance of $\norm{\rho-\rho_\ve}_{H^{-2}}$ separately. First note that $\norm{\rho - \rho_\ve}_{H^{-2}} \lesssim C(\vbar) \norm{\xpde(r) - \xcs(r)}_{\hmtwo}$. Moreover, for $r \leq \tau$ we have
$$
\norm{\rho(r) - \rho_\ve(r)}_{H^{-2}} \lesssim \norm{\rho(r)} + \norm{\rho_\ve(r)}_{H^{-2}} \lesssim \frac{C_a}{4 \theta \norm{g}} + 1.  
$$
The second expectation is bounded using the flocking estimate which holds for $r \leq \tau$ giving
$$
\E{\mathbf{1}_{[0, \tau]}(r) \norm{j-\vbar \rho}^2}^{1/2} \lesssim \E{\norm{j_0 - \vbar \rho_0}^2}^{1/2}\exp \brack{\frac{1}{2}(\sigma^2 - C_a)r}.
$$
Put together, it follows that
\begin{multline*}
\E{\mathbf{1}_{[0, \tau]}(r) \norm{\rho-\rho_\ve}_{H^{-2}} \norm{j-\vbar \rho}_{H^{-2}}} \\
\lesssim C(\vbar) \brack{1+ \frac{C_a}{4 \theta \norm{g}}}\E{\norm{\xpde(r \wedge \tau) - \xcs(r \wedge \tau)}_{H^{-2}}}^{1/2}\E{\norm{j_0 - \vbar \rho_0}^2}^{1/2}\exp \brack{\frac{1}{2}(\sigma^2 - C_a)r}.
\end{multline*}
Gathering all of these estimates gives
\begin{multline*}
    J_2 \lesssim \E{\norm{j_0 - \vbar \rho_0}^2}^{1/2} \int_0^t \exp \brack{\frac{1}{2}(\sigma^2-C_a)r} \dd r + R_v \int_0^t \exp \brack{\frac{1}{2}(\sigma^2 - 2a_{\mathrm{min}})r} \dd r \\
    + \biggl [ \biggr . C(\vbar)\brack{1+\frac{C_a}{4\theta \norm{g}}}\E{\norm{j_0 - \vbar \rho_0}^2}^{1/2} \\
    \times \int_0^t \exp \brack{\frac{1}{2}(\sigma^2 - C_a)r}\E{\norm{\xpde(r \wedge \tau) - \xcs(r \wedge \tau)}_{H^{-2}}}^{1/2} \dd r \biggl . \biggr ].
\end{multline*}
Integrating the first two terms in time and smuggling a supremum inside the latter term gives
\begin{multline*}
J_2 \lesssim \frac{2\E{\norm{j_0 - \vbar \rho_0}^2}^{1/2}}{C_a -  \sigma^2} + \frac{2R_v}{2a_{\mathrm{min}}-\sigma^2} + \biggl [ \biggr . C(\vbar)\brack{1+\frac{C_a}{4\theta \norm{g}}}\E{\norm{j_0 - \vbar \rho_0}^2}^{1/2} \\
\times \int_0^t \exp \brack{\frac{1}{2}(\sigma^2 - C_a)r}\E{\sup_{u \in [0, r]}\norm{\xpde(u \wedge \tau) - \xcs(u \wedge \tau)}_{H^{-2}}}^{1/2} \dd r \biggl . \biggr ].
\end{multline*}

\textbf{Step 4: dealing with $J_3$.} First observe that 
$$
J_3 \leq \E{\sup_{q \in [0, t]}\norm{\int_0^q S(q-r) \mathbf{1}_{[0, \tau]}(r) B(\xpde(r) - \xcs(r)) \dd W(r)}_{\hmtwo}}.
$$
We now apply a maximal estimate for stochastic convolutions \cite{van2011maximal} and use Jensen's inequality to get
\begin{align*}
J_3 &\lesssim \brack{\E{\int_0^t \mathbf{1}_{[0, \tau]}(r) \norm{B(\xpde(r) - \xcs(r))}_{L_2^0(\hmtwo)}^2} \dd r}^{1/2} \\
&\lesssim C(\sigma)\brack{\E{\int_0^t \mathbf{1}_{[0, \tau]}(r) \norm{(j-\vbar \rho) - (j_\ve - \vbar \rho_\ve)}_{H^{-2}}^2} \dd r}^{1/2} \\
&\lesssim C(\sigma) \brack{\int_0^t \E{\mathbf{1}_{[0, \tau]}(r) \norm{j-\vbar \rho}^2} + \E{\norm{j_\ve - \vbar \rho_\ve}_{H^{-2}}^2} \dd r}^{1/2}.
\end{align*}
The integrands can then be handled similarly to the previous step to give
$$
\int_0^t \E{\mathbf{1}_{[0, \tau]}(r) \norm{j-\vbar \rho}^2} \dd r \lesssim \frac{\E{\norm{j_0-\vbar \rho_0}^2}}{C_a - \sigma^2}
$$
and
$$
\int_0^t \E{\norm{j_\ve - \vbar \rho_\ve}_{H^{-2}}^2} \dd r \lesssim \frac{R_v^2}{2a_{\mathrm{min}}-\sigma^2}.
$$
Therefore,
$$
J_3 \lesssim C(\sigma) \brack{\frac{\E{\norm{j_0-\vbar \rho_0}^2}}{C_a - \sigma^2} + \frac{R_v^2}{2a_{\mathrm{min}}-\sigma^2}}^{1/2}.
$$

\textbf{Step 5: dealing with $J_4$.} Write
\begin{align*}
    J_4 &= \E{\sup_{s \in [0, t]} \norm{\int_0^{s \wedge \tau} S(s \wedge \tau - r)\mathbf{1}_{[0, \tau]}(r) R(r)}_{\hmtwo} \dd r} \\
    &\leq \E{\sup_{s \in [0, t]} \int_0^s \mathbf{1}_{[0, \tau]}(r) \norm{R(r)}_{\hmtwo}} \\
    &\leq \int_0^t \E{\norm{R(r)}_{\hmtwo}} \dd r \\
    &\leq \int_0^t \E{\norm{R_\ve(r)}_{\hmtwo}} + \E{\norm{R_{\mathrm{closing}}(r)}_{\hmtwo}} \dd r.
\end{align*}
Then $\E{\norm{R_\ve(r)}_{\hmtwo}}$ is bounded as in Proposition \ref{prop: mollification-error}. Moreover, similarly to the proof of Proposition \ref{prop: mollification-error} but using the one-dimensional embedding $H^2 \hookrightarrow W^{1, \infty}$, we can estimate
$$
\E{\norm{R_{\mathrm{closing}}(r)}_{\hmtwo}} \lesssim R_v \exp \brack{(\sigma^2 - 2a_{\mathrm{min}})r}.
$$
Inserting these estimates then integrating in time gives 
$$
J_4 \lesssim \frac{2R_v(1+C(a)(\ve + \sqrt{\ve}))}{2a_{\mathrm{min}}-\sigma^2}.
$$

\textbf{Step 6: writing the estimate in a form suitable for a Gr\"onwall inequality.}
Define the function
$$
\Psi(t) \coloneqq \E{\sup_{s \in [0, t]}\norm{\xpde(s \wedge \tau) - \xcs(s \wedge \tau)}_{\hmtwo}}.
$$
Then combining the results of Steps 2--5 gives
$$
\Psi(t) \lesssim c + \int_0^t b(r) \Psi^{1/2}(r) \dd r,
$$
where
$$
b(r) \coloneqq C(\vbar)\brack{1+\frac{C_a}{4\theta \norm{g}}}\E{\norm{j_0 - \vbar \rho_0}^2}^{1/2} \exp \brack{\frac{1}{2}(\sigma^2 - C_a)r} \geq 0
$$
and
\begin{multline*}
c \coloneqq \E{\norm{\xpde(0) - \xcs(0)}_{\hmtwo}} + \frac{2\E{\norm{j_0 - \vbar \rho_0}^2}^{1/2}}{C_a -  \sigma^2} \\
+ \frac{2R_v}{2a_{\mathrm{min}}-\sigma^2} + C(\sigma) \brack{\frac{\E{\norm{j_0-\vbar \rho_0}^2}}{C_a - \sigma^2} + \frac{R_v^2}{2a_{\mathrm{min}}-\sigma^2}}^{1/2} + \frac{2R_v(1+C(a)(\ve + \sqrt{\ve}))}{2a_{\mathrm{min}}-\sigma^2} \geq 0.
\end{multline*}

\textbf{Step 7: Application of Gr\"onwall and tidying up.}
We can now apply a non-linear generalisation of Gr\"onwall's inequality \cite[Theorem 21]{dragomir2003applications} to get
$$
\Psi(t) \lesssim \brack{c^{1/2} + \frac{1}{2}\int_0^t b(r) \dd r}^2.
$$
Note that
$$
\frac{1}{2}\int_0^t b(r) \dd r \leq C(\vbar)\brack{1+\frac{C_a}{4\theta \norm{g}}}\frac{\E{\norm{j_0 - \vbar \rho_0}^2}^{1/2}}{C_a - \sigma^2}.
$$
We then use that $(a+b)^2 \leq 2a^2 + 2b^2$ and note that $t \in [0, T]$ was arbitrary so we can extend the supremum up to the final time. Finally, since the right-hand side is independent of $T$ we can extend the supremum over all times $t \geq 0$ which gives the claimed result. 
\end{proof} 
\end{document}